\newlength{\myline}
\newcommandx*{\triplearrow}[4][1=0, 2=1]{
  \draw[line width=\myline,double distance=3\myline,#3] #4;
  \draw[line width=\myline,shorten <=#1\myline,shorten >=#2\myline,#3] #4;
}
\tikzset{Rightarrow/.style={double equal sign distance,>={Implies},->},
triple/.style={-,preaction={draw,Rightarrow}},
quadruple/.style={preaction={draw,Rightarrow,shorten >=0pt},shorten >=1pt,-,double,double
distance=0.2pt}}
\setlist{nosep}
\setlist[enumerate, 1]{label={\rm (}\emph{\alph*}{\rm )}}
\setlist[enumerate, 2]{label={\rm (}\emph{\alph{enumi}.\arabic*}{\rm )}}
\theoremstyle{plain}
\newtheorem{thm}{Theorem}[subsection]
\newtheorem{prop}[thm]{Proposition}
\newtheorem{lemme}[thm]{Lemma}
\newtheorem{coro}[thm]{Corollary}
\theoremstyle{remark}
\newtheorem{rem}[thm]{Remark}
\newtheorem{exem}[thm]{Example}
\theoremstyle{definition}
\newtheorem{definition}[thm]{Definition}
\newtheorem{paragr}[thm]{}
\newtheorem*{notations}{Notations and Terminology}
\newtheorem*{remerciements}{Acknowledgements}
\newtheorem*{organisation-fr}{Plan de la Th\`{e}se}
\numberwithin{equation}{thm}
\newcommand\forlang\emph
\newcommand\ndef\emph
\newcommand\nbd\nobreakdash
\newcommand\eps\epsilon
\newcommand\mybreak{\par\ifdim\lastskip<\myskipamount
  \removelastskip\penalty-100\myskip\fi}
\newcommand\myskip{\vskip\myskipamount}
\newskip\myskipamount
\newcommand\badbreak\vskip
\def\xpoint{\futurelet\@let@token\@xpoint}
\def\@xpoint{%
  \ifx\@let@token.\else
    .%
  \fi
  \@\xspace}
\newcommand\lp(
\newcommand\rp)
\newcommand\defssi{\overset{\text{\tiny def}}{\Longleftrightarrow}}
\DeclareMathAlphabet{\mathpzc}{OT1}{pzc}{m}{it}
\newcommand\quadtext[1]{\quad\text{#1}\quad}
\newcommand\quadmath[1]{\quadtext{$#1$}}
\newcommand\quadet{\quadtext{and}}
\newcommand\quaddefssi{\quadmath{\defssi}}
\renewcommand\le\leqslant
\renewcommand\leq\le
\renewcommand\ge\geqslant
\renewcommand\epsilon\varepsilon
\renewcommand\phi\varphi
\newcommand\longto\longrightarrow
\newcommand\ot\leftarrow
\newcommand\longot\longleftarrow
\newcommand\hookto\hookrightarrow
\newcommand\xto\xrightarrow
\newcommand\xot\xleftarrow
\newcommand\remtt[1]{\texttt{[#1]}}
\newcommand\todo[1]{\remtt{TODO : #1}}
\newcommand\var\bullet
\newcommand\leN{\le_\N}
\newcommand{\sauf}{\mathchoice{\raise 1.8pt\hbox{${\scriptstyle\kern
2.5pt\smallsetminus\kern 2.5pt}$}}{\raise 1.8pt\hbox{${\scriptstyle\kern
2.5pt\smallsetminus\kern 2.5pt}$}}{\raise
1.8pt\hbox{${\scriptscriptstyle\kern 1.5pt\smallsetminus\kern
1.5pt}$}}{\raise 1.8pt\hbox{${\scriptscriptstyle\kern
1.5pt\smallsetminus\kern 1.5pt}$}}}
\newcommand\joint\star
\newcommand\vide\varnothing
\newcommand\Z{\mathbb{Z}}
\newcommand\N{\mathbb{N}}
\newcommand\Part{\mathcal{P}}
\newcommand\limind\varinjlim
\newcommand\limproj\varprojlim
\newcommand\A{\mathcal{A}}
\newcommand{\tA}{\tilde{A}}
\newcommand\V{$\mathcal{V}$}
\newcommand\Vn{\V\nbd}
\newcommand\G{\mathbb{G}}
\newcommand\Gr{\mathbb{O}}
\newcommand{\Hom}{\operatorname{\mathsf{Hom}}}
\newcommand{\Homi}{\operatorname{\kern.5truept\underline{\kern-.5truept\mathsf{Hom}\kern-.5truept}\kern1truept}}
\newcommand{\pref}[1]{{\widehat{ #1 }}}
\newcommand\id[1]{1_{#1}}
\newcommand\op\circ
\newcommand{\Ob}{\operatorname{\mathsf{Ob}}}
\newcommand{\Fl}{\operatorname{\mathsf{Cell}}}
\newcommand{\Ens}{{\mathcal{S}\mspace{-2.mu}\it{et}}}
\newcommand{\Ord}{\mathcal{O}\mspace{-2.mu}\it{rd}}
\newcommand\Pos\Ord
\newcommand{\Cat}{{\mathcal{C}\mspace{-2.mu}\it{at}}}
\newcommand{\nCat}[1]{{#1}\hbox{\protect\nbd-}\kern1pt\Cat}
\newcommand{\ooCat}{\nCat{\infty}}
\newcommand\oo{$\infty$\nbd}
\newcommand{\dCat}{\nCat{3}_{\cDelta}}
\newcommand{\lnCat}{\widetilde{\nCat{3}}}
\newcommand{\Glt}[2][\empty]{%
	\ifx\empty#1
	t^{#2}%
	\else
	t_{#1}^{#2}%
	\fi
}
\newcommand{\Gls}[2][\empty]{%
	\ifx\empty#1
	s^{#2}%
	\else
	s_{#1}^{#2}%
	\fi
}
\newcommand{\Glk}[2][\empty]{%
	\ifx\empty#1
	r^{#2}%
	\else
	r_{#1}^{#2}%
	\fi
}
\newcommand{\Tht}[2][\empty]{%
	\ifx\empty#1
	\tau^{#2}_{}%
	\else
	\tau^{#1}_{#2}%
	\fi
}
\newcommand{\Ths}[2][\empty]{%
	\ifx\empty#1
	\sigma^{#2}_{}%
	\else
	\sigma^{#1}_{#2}%
	\fi
}
\newcommand{\Thk}[2][\empty]{%
	\ifx\empty#1
	\rho^{#2}_{}%
	\else
	\rho^{#1}_{#2}%
	\fi
}
\newcommand\comp\ast
\newcommand{\ti}[1]{\tau_{\le #1}^{\mathrm i}}
\newcommand\Dn[1]{\mathrm{D}_{#1}}
\newcommand\disk{\varsigma}
\newcommand\Homcocat\Hom
\newcommand{\SN}{N_\infty}
\newcommand{\SNn}[1]{N_{#1}}
\newcommand{\Sc}{c_\infty}
\newcommand{\Nl}{\SNn{l}}
\newcommand{\cCl}{c_l}
\newcommand\dgn[2]{\sigma^{#1}_{#2}}
\newcommand\cDelta{\mathbf{\Delta}}
\newcommand\Deltan[1]{\varDelta^{#1}}
\newcommand\EnsSimp{\pref{\cDelta}}
\newcommand\nd[2]{{#1}^{\mathrm{nd}}_{#2}}
\newcommand\sd{\mathrm{Sd}}
\newcommand\Sd\sd
\newcommand\cO{c_\infty}
\newcommand\cOn[1]{c_{#1}}
\newcommand\cC{\mathsf{c}}
\newcommand\nN{\mathsf{N}}
\newcommand\Or{\mathcal{O}}
\newcommand\On[1]{\mathcal{O}_{#1}}
\newcommand\Onm[2]{\mathcal{O}_{#1}^{\leq #2}}
\newcommand{\tr}[2]{\mathchoice
  {#1\raise -1.8pt\vbox{\hbox{$\kern -.8pt/#2$}}}
  {#1\raise -1.8pt\vbox{\hbox{$\kern -.8pt/#2$}}\kern .8pt}
  {#1\raise -1.8pt\vbox{\hbox{$\scriptstyle\kern -.8pt /#2$}}}
  {#1\raise -1.8pt\vbox{\hbox{$\scriptscriptstyle\kern -.8pt /#2$}}}}
\newcommand{\trm}[2]{\mathchoice
  {#1\raise -1.8pt\vbox{\hbox{$\kern -.8pt\!\stackrel{\,\rm co}{/}\!\!#2$}}}
  {#1\raise -1.8pt\vbox{\hbox{$\kern -.8pt\!\stackrel{\,\rm co}{/}\!\!#2$}}\kern .8pt}
  {#1\raise -1.8pt\vbox{\hbox{$\scriptstyle\kern -.8pt\!\stackrel{\,\,\rm co}{/}\!\!#2$}}\kern .8pt}
  {TODO}}
\newcommand{\cotr}[2]{\mathchoice
  {\raise -1.8pt\vbox{\hbox{$#2\backslash$}}#1}
  {\raise -1.8pt\vbox{\hbox{$#2\backslash$}}#1}
  {\raise -1.8pt\vbox{\hbox{$\scriptstyle#2\backslash$}}#1}
  {\raise -1.8pt\vbox{\hbox{$\scriptscriptstyle#2\backslash$}}#1}}
\newcommand{\cotrm}[2]{\mathchoice
  {\raise -1.8pt\vbox{\hbox{$#2\!\stackrel{\!\rm co}{\backslash}$}}#1}
  {\raise -1.8pt\vbox{\hbox{$#2\!\stackrel{\!\rm co}{\backslash}$}}#1}
  {\raise -1.8pt\vbox{\hbox{$\scriptstyle#2\!\stackrel{\!\rm co}{\backslash}$}}#1}
  {TODO}}
\newcommand{\tru}[2]{\mathchoice
  {#1\raise -1.8pt\vbox{\hbox{$\kern -.8pt/^1#2$}}}
  {#1\raise -1.8pt\vbox{\hbox{$\kern -.8pt\overset{\,\scriptscriptstyle1}{/}#2$}}\kern .8pt}
  {#1\raise -1.8pt\vbox{\hbox{$\scriptstyle\kern -.8pt /^1#2$}}}
  {#1\raise -1.8pt\vbox{\hbox{$\scriptscriptstyle\kern -.8pt /^1#2$}}}}
\newcommand{\cotru}[2]{\mathchoice
  {\raise -1.8pt\vbox{\hbox{$#2^1\backslash$}}#1}
  {\raise -1.8pt\vbox{\hbox{$#2\overset{\!\scriptscriptstyle1}{\backslash}$}}#1}
  {\raise -1.8pt\vbox{\hbox{$\scriptstyle#2^1\backslash$}}#1}
  {\raise -1.8pt\vbox{\hbox{$\scriptscriptstyle#2^1\backslash$}}#1}}
\newcommand{\Cda}{\mathcal{C}_{\mathrm{ad}}}
\newcommand{\atom}[1]{\langle{#1}\rangle}
\newcommand{\tabld}[2]{\begin{pmatrix}#1^0_0 &\dots &#1^0_{#2-1}
  &#1^0_{#2}\cr\noalign{\vskip 3pt} #1^1_0 &\dots &#1^1_{#2-1}
  &#1^1_{#2}\end{pmatrix}}
\newcommand{\tabll}[2]{\begin{pmatrix}#1^0_0 &#1^0_1 &\dots &#1^0_{#2-1}
  &#1^0_{#2}\cr\noalign{\vskip 3pt} #1^1_0 &#1^1_1 &\dots &#1^1_{#2-1}
  &#1^1_{#2}\end{pmatrix}}
\newcommand{\supp}{\operatorname{supp}}
\newcommand\Zdec{\underline{\mathbb{Z'}\kern -2.5pt}\kern 2pt}
\newcommand\VCatd{{\Vcat{V}^{\kern 1pt\mathrm d}}}       
\newcommand\VCatg{{\Vcat{V}^{\kern 1pt\mathrm g}}}       
\newcommand\Vcat[1]{\mathbb{#1}}   
\newcommand\VHom{\operatorname{\kern.5truept\underline{\kern-.5truept\mathsf{Hom}\kern-1truept}\kern1.5truept}^{}}    
\newcommand\HomCdad{\operatorname{\kern.5truept\underline{\kern-.5truept\mathsf{Hom}\kern-1.5truept}\kern1.9truept}^{\mathrm{d}}_{\Cda}}   
\newcommand\HomCdag{\operatorname{\kern.5truept\underline{\kern-.5truept\mathsf{Hom}\kern-1.5truept}\kern1.9truept}^{\mathrm{g}}_{\Cda}}   
\author{Andrea Gagna}
\address{Universita Karlova\\ Matematicko-fyzik\'{a}ln\'{i} fakulta \\ Matematická sekce \\ Sokolovská 83 \\ 186 75 Praha 8 \\ Czech Republic}
\email{gagna@karlin.mff.cuni.cz}
\urladdr{https://sites.google.com/view/andreagagna/home}
\title
    {On a notion of oplax 3-functor}
\subjclass[2020]{18N20, 18N30, 55U35}
\begin{document}

\begin{abstract}
	We introduce a notion of normalised oplax $3$-functor
	suitable for the elementary homotopy theory
	of strict $3$-categories, following the
	combinatorics of orientals. We show that any such
	morphism induces a morphism of simplicial sets
	between the Street nerves and we characterise
	those morphisms of simplicial sets coming from
	normalised oplax $3$-functors. This allows us
	to prove that normalised oplax $3$-functors
	compose. Finally we construct a strictification
	for normalised oplax $3$-functors whose source
	is a $1$-category without split-monos or split-epis.
\end{abstract}

\maketitle
\tableofcontents

 \setcounter{MaxMatrixCols}{20}
 \input{pent}
 \input{square}
 \input{square2}
 \tikzset{
between/.style args={#1 and #2}{
    at = ($(#1)!0.5!(#2)$)
},
betweenl/.style args={#1 and #2}{
    at = ($(#1)!0.35!(#2)$)
}
}
\tikzset{pics/.cd,
Pent/.style n args={4}{code={%
\begin{scope}[font=\footnotesize]
\foreach \XX [count=\r starting from 0] in {#3}
\node (\r) at (162 + \r * 72:#2) {$\XX$};
\draw[->] (0) -- node[midway,left] (ab) {$\ab$}  (1);
\draw[->] (1) -- node[midway,below] (bc) {$\bc$} (2);
\draw[->] (2) -- node[midway,right] (cd) {$\cd$} (3); 
\draw[->] (3) -- node[midway,above right] (de) {$\de$} (4); 
\draw[->] (0) -- node[midway,above left] (ea) {$\ae$} (4);
\ifcase#1     
    \draw[->] (0) -- node[midway,fill=white] (ac) {$\ac$}  (2);
    \draw[->] (0) -- node[midway,fill=white] (ad) {$\ad$}  (3);
    \node[between=1 and ac] {$\abc$};
    \node[betweenl=ad and 2] {$\acd$};
    \node[betweenl=ad and 4] {$\ade$};
\or
    \draw[->] (0) -- node[midway,fill=white] (ad) {$\ad$} (3);
    \draw[->] (1) -- node[midway,fill=white] (bd) {$\bd$}  (3);
    \node[betweenl=ad and 1] {$\abd$};
    \node[between=bd and 2] {$\bcd$};
    \node[betweenl=ad and 4] {$\ade$};
\or
    \draw[->] (1) -- node[midway,fill=white]  (bd) {$\bd$} (3);
    \draw[->] (1) -- node[midway,fill=white] (be) {$\be$} (4);
    \node[between=0 and be] {$\abe$};
    \node[betweenl=de and 1] {$\bde$};
    \node[between=bd and 2] {$\bcd$};
\or
    \draw[->] (1) -- node[midway,fill=white] (be) {$\be$} (4);
    \draw[->] (2) -- node[midway,fill=white] (ce) {$\ce$} (4);
    \node[between=0 and be] {$\abe$};
    \node[betweenl=bc and 4] {$\bce$};
    \node[between=ce and 3] {$\cde$};
\or
    \draw[->] (0) -- node[midway,fill=white] (ac) {$\ac$} (2);
    \draw[->] (2) -- node[midway,fill=white] (ce) {$\ce$} (4);
    \node[between=1 and ac] {$\abc$};
    \node[betweenl=ea and 2] {$\ace$};
    \node[between=ce and 3] {$\cde$};
\fi
\end{scope}
 }}}
\tikzset{pics/.cd,
Pentscript/.style n args={4}{code={%
\begin{scope}[font=\scriptsize]
\foreach \XX [count=\r starting from 0] in {#3}
\node (\r) at (162 + \r * 72:#2) {$\XX$};
\draw[->] (0) -- node[midway,left] (ab) {$\ab$}  (1);
\draw[->] (1) -- node[midway,below] (bc) {$\bc$} (2);
\draw[->] (2) -- node[midway,right] (cd) {$\cd$} (3); 
\draw[->] (3) -- node[midway,above right] (de) {$\de$} (4); 
\draw[->] (0) -- node[midway,above left] (ea) {$\ae$} (4);
\ifcase#1     
    \draw[->] (0) -- node[midway,fill=white] (ac) {$\ac$}  (2);
    \draw[->] (0) -- node[midway,fill=white] (ad) {$\ad$}  (3);
    \node[between=1 and ac] {$\abc$};
    \node[betweenl=ad and 2] {$\acd$};
    \node[betweenl=ad and 4] {$\ade$};
\or
    \draw[->] (0) -- node[midway,fill=white] (ad) {$\ad$} (3);
    \draw[->] (1) -- node[midway,fill=white] (bd) {$\bd$}  (3);
    \node[betweenl=ad and 1] {$\abd$};
    \node[between=bd and 2] {$\bcd$};
    \node[betweenl=ad and 4] {$\ade$};
\or
    \draw[->] (1) -- node[midway,fill=white]  (bd) {$\bd$} (3);
    \draw[->] (1) -- node[midway,fill=white] (be) {$\be$} (4);
    \node[between=0 and be] {$\abe$};
    \node[betweenl=de and 1] {$\bde$};
    \node[between=bd and 2] {$\bcd$};
\or
    \draw[->] (1) -- node[midway,fill=white] (be) {$\be$} (4);
    \draw[->] (2) -- node[midway,fill=white] (ce) {$\ce$} (4);
    \node[between=0 and be] {$\abe$};
    \node[betweenl=bc and 4] {$\bce$};
    \node[between=ce and 3] {$\cde$};
\or
    \draw[->] (0) -- node[midway,fill=white] (ac) {$\ac$} (2);
    \draw[->] (2) -- node[midway,fill=white] (ce) {$\ce$} (4);
    \node[between=1 and ac] {$\abc$};
    \node[betweenl=ea and 2] {$\ace$};
    \node[between=ce and 3] {$\cde$};
\fi
\end{scope}
 }}}

\section*{Introduction}

The homotopy theory of small categories was born
with the introduction by Grothendieck
of the nerve functor
\[ N \colon \Cat \to \EnsSimp \]
in~\cite{grothendieck_techniques_III},
where $\Cat$ is the category of small categories
and $\EnsSimp$ is the category of simplicial sets,
allowing us to define a class of weak equivalences in
$\Cat$: 
a functor is a weak equivalence precisely
when its nerve
is a simplicial weak homotopy equivalence.
We call these functors \ndef{Thomason equivalences} of $\Cat$.
The nerve functor preserves by definition the weak equivalences,
\ie maps Thomason equivalences to simplicial weak equivalences,
and therefore there is an induced functor
\[\bar{N} \colon \text{Ho}(\Cat) \longto \text{Ho}(\EnsSimp)\]
at the level of the homotopy categories.

The first striking result of this theory
appears in Illusie's thesis~\cite{cotangent} (who credits it
to Quillen) and states that this induced
functor $\bar{N} \colon \text{Ho}(\Cat) \longto \text{Ho}(\EnsSimp)$
is an equivalence of categories.
The homotopy inverse of the nerve functor $N \colon \Cat \to \EnsSimp$
is not induced by its left adjoint $c \colon \EnsSimp \to \Cat$, \ie the \ndef{categorical realisation} functor,
which behaves poorly homotopically, but instead by the \ndef{category of elements} functor
$i_{\Delta} \colon \EnsSimp \to \Cat$, mapping a simplicial set $X$ to its
category of elements $i_\Delta(X) = \cDelta/X$.
A careful study of the subtle homotopy theory
of small categories by 
Thomason~\cite{Cat_closed} led him to show
another important result: the existence
of a model category structure on $\Cat$ which is Quillen equivalent
to the Kan--Quillen model category structure
on simplicial sets.
This important result implies that
small categories and simplicial sets are not
only equivalent as homotopy categories, but
actually as weak $(\infty, 1)$-categories, \ie as homotopy theories.

One drawback of working with small categories as preferred
model for homotopy types is that there are no simple geometric
models of simplicial complexes.
For instance, the homotopy type of the two-dimensional
sphere~$S^2$ is often modelled by the poset
\[
 \begin{tikzcd}
  \bullet \ar[r] & \bullet \ar[r] & \bullet \\
  \bullet \ar[r] \ar[ur] & \bullet \ar[r] \ar[ur] & \bullet
  \ar[from=1-1, to=2-2, crossing over] \ar[from=1-2, to=2-3, crossing over]
 \end{tikzcd}\,,
\]
see for instance~\cite{FiniteSpaces}.
This is mainly due to the intrinsic $1$-dimensional shape
of categories. On the other hand, the homotopy type of $S^2$
can be easily modelled in a geometric fashion by a small $2$-category, namely
\begin{center}
 \begin{tikzpicture}[scale=1.5]
  \node (0) at (180:1) {$a$};
  \node (1) at (0:1) {$b$};
  \draw[->] (0) to [bend left=50] node [above] {$f$} (1);
  \draw[->] (0) to [bend right=50] node [below] {$g$} (1);
  \draw[double equal sign distance,>={Implies},->] (65:.4) to [bend left] 
  node [right] {$\alpha$} (-65:.4);
  \draw[dashed, double equal sign distance, >={Implies}, ->] (117:.38) to [bend right]
  node [left] {$\beta$} (-117:.38);
 \end{tikzpicture}
\end{center}
This suggests that strict higher categories may provide
a more convenient framework for setting up a categorical
model for homotopy types and a source of motivation for
generalising the homotopy theory of $\Cat$ to strict higher categories.
In fact, Ara and Maltsiniotis construct a functor $\Or$
assigning to any ordered simplicial complex an \oo-category,
see~\cite[§9]{AraMaltsiCondE}.

In a seminal article~\cite{Street}, Street introduced a nerve functor
\[
 \SN \colon \ooCat \to \EnsSimp
\]
for strict $\infty$-categories, allowing one to define
and study the homotopy theory of $\ooCat$, the category of small
strict $\infty$-categories, as well as of $\nCat{n}$,
the category of strict $n$-categories, for every positive integer $n$.
The class of weak equivalences of $\nCat{n}$ pulled back via the Street nerve
shall still be called \ndef{Thomason equivalences}.
This functor is homotopically meaningful, since for instance it
sends the above $2$\nbd-cat\-egorical model of~$S^2$ to a simplicial
set with the homotopy type of~$S^2$.

The particular case of small $2$-categories
was studied by Bullejos and Cegarra~\cite{BullejosCegarra},
Cegarra~\cite{Cegarra}, Chiche~\cite{chiche_homotopy} and del~Hoyo~\cite{del-Hoyo}.
Their approach stresses on the importance played by (normalised)
oplax $2$-functors. 
In fact, it was already noticed that oplax $2$-functors are geometrically meaningful,
see for instance~\cite[Section 10]{StreetCatStructures}.
This is consequence of the fact that
the Street nerve for $2$\nbd-cat\-egories $N_2 \colon \nCat{2} \to \EnsSimp$
is faithful but \emph{not} full; the set of morphisms of simplicial sets
between the nerves $N_2(A)$ and $N_2(B)$ of two small $2$-categories
$A$ and $B$ is in fact in bijection with the set of \ndef{normalised
oplax $2$-functors} from $A$ to $B$.
Ara and Maltsiniotis~\cite{AraMaltsiNThom} provide an abstract framework in which
to transfer the Kan--Quillen model category structure on simplicial sets
to strict $n$-categories and showed that
this is the case for small $2$-categories.
Their strategy makes use of normalised oplax $2$-functors in order to define some maps
needed for a homotopy cobase change property.

It is therefore natural to study a notion of
normalised oplax $n$-functor which could be
used to generalise the results listed above,
thus establishing a satisfactory homotopy theory
of $n$-categories. By this we mean showing
that $\nCat{n}$ can be equipped
with a Quillen model category structure
which is Quillen equivalent to the Kan--Quillen
model category structure on simplicial sets.
Providing a sensitive definition of such
a normalised oplax $n$-functor for the case $n = 3$,
which is the first
one not well understood,
is the aim of the present paper.

A normalised oplax $n$-functor $F \colon A \to B$, with $n\ge 1$, should roughly be
a morphism of $n$\nbd-graphs
which respects the identities on the nose, but that respects
compositions of arrows only up to oriented coherences. For example,
given a composition $a \xto{f} b \xto{g} c$ of two $1$-arrows of $A$,
the normalised oplax $n$-functor $F$ should provide:
\begin{itemize}
	\item a $1$-arrow $F(a) \xto{F(gf)} F(c)$ of $B$,
	\item two composable $1$-arrows $F(a) \xto{F(f)} F(b)$ and $F(b) \xto{F(g)} F(c)$ of $B$,
	\item a $2$-arrow $F(g, f) \colon F(gf) \Rightarrow F(g)F(f)$, which represents the coherence
	for the composition of $f$ and $g$.
\end{itemize}
We observed that a central tool for
the elementary homotopy theory of $2$-categories is the notion
of normalised oplax $2$-functor. Moreover, this turns out to be
a crucial ingredient in establishing the model category structure
\emph{à la} Thomason on $\nCat{2}$, too.

Normalised oplax $2$-functors can be composed and hence form
a category $\widetilde{\nCat{2}}$. There is a canonical
nerve functor $\widetilde N_2 \colon \widetilde{\nCat{2}} \to \EnsSimp$
extending the Street nerve for $2$-categories, that is, there is
a commutative triangle
\[
 \begin{tikzcd}[column sep=small]
  & \EnsSimp & \\
  \nCat{2} \ar[ur, "N_2"] \ar[rr] && \widetilde{\nCat{2}} \ar[lu, "\widetilde N_2"']
 \end{tikzcd}
\]
of functors, where the functor $\nCat{2} \to \widetilde{\nCat{2}}$ is simply
the embedding given by the fact that any $2$-functor is in particular
a normalised oplax $2$-functor.
The Street nerve $N_n$ is a faithful functor but not full for $n>1$.
In the $2$-categorical case, this deficiency is solved by
normalised oplax $2$-functors: the nerve $\widetilde N_2 \colon \widetilde{\nCat{2}} \to \EnsSimp$ is fully faithful.
Following this idea, Street 
proposes in~\cite{Conspectus} to define a normalised oplax $3$-functor from $A$ to $B$
as a simplicial morphism from $N_3(A)$ to $N_3(B)$.
A careful investigation of such a morphism
shows that this might not be an optimal definition since
in general simplicial morphisms between Street nerves of $3$\nbd-cat\-egories
fail to preserve the underlying $3$-graph.
Indeed, we analyse the case where $A$ is the ``categorical $2$-disk''
\[
	    	\begin{tikzcd}[column sep=4.5em]
		    	a
		    	\ar[r, bend left, "f", ""'{name=f}]
		    	\ar[r, bend right, "g"', ""{name=g}]
		    	\ar[Rightarrow, from=f, to=g, "\disk"]
		    	& a'
	    	\end{tikzcd}\ ,
\]
\ie the $2$-category with two parallel $1$-cells and a single $2$-cell between them,
and $B$ is the ``invertible categorical $3$-disk''
\[
	    	\begin{tikzcd}[column sep=4.5em]
	    	 \bullet
	    	 \ar[r, bend left=60, ""'{name=f}]
	    	 \ar[r, bend right=60, ""{name=g}]
	    	 \ar[Rightarrow, from=f, to=g, shift right=0.5em, bend right, shorten <=1mm, shorten >=1mm, ""{name=al}]
	    	 \ar[Rightarrow, from=f, to=g, shift left=0.5em, bend left, shorten <=1mm, shorten >=1mm, ""'{name=ar}]
	    	 \arrow[triple, from=al, to=ar, "\cong"]{}
	    	 & \bullet
	    	\end{tikzcd} ,
\]
\ie the $3$-category with two parallel $1$-cells, two parallel $2$-cells between them
and a single invertible $3$-cell between these $2$-cells, and we show that there are more
simplicial morphisms than expected between the respective Street nerves.
On the one hand, the $2$-category $A$ has no compositions and so
the normalised oplax $3$-functors from $A$ to $B$ should coincide with
the strict $3$-functors. On the other hand, there are simplicial morphisms
from $N_3(A)$ to $N_3(B)$ which do not come from the nerve of any strict $3$-functors.
This is a consequence of the fact that, for instance, there are two ways to
capture the $2$-cell $\disk$ of $A$ with a $2$-simplex of $N_3(A)$, namely
\[
    \begin{tikzcd}[column sep=small]
      &
       a'
       \ar[dr, equal, "1_{a'}"]
      & \\
      a
      \ar[rr, "f"', ""{name=f}]
      \ar[ru, "g"]
      &&
        a'
      \ar[Rightarrow, from=f, to=1-2, shorten >=3pt, "\disk"{near start}]
     \end{tikzcd}
     \quadet
     \begin{tikzcd}[column sep=small]
      &
       a
       \ar[dr, "g"]
      & \\
      a
      \ar[rr, "f"', ""{name=f}]
      \ar[ru, equal, "1_a"]
      &&
        a'
      \ar[Rightarrow, from=f, to=1-2, shorten >=3pt, "\disk"{near start}]
     \end{tikzcd}
     \ ,
    \]
and these two different ways are related by $3$-simplices, for instance
\begin{center}
 \begin{tikzpicture}[scale=1.6, font=\footnotesize]
     \squares{%
     	/squares/label/.cd,
     	0={$\bullet$}, 1={$\bullet$}, 2={$\bullet$}, 3=$\bullet$,
     	01={}, 12=$g$, 23={}, 02=$g$, 03=$f$, 13=$g$,
     	012={$=$}, 023=$\disk$, 123={$=$}, 013=$\disk$,
     	0123={$=$},
     	/squares/arrowstyle/.cd,
     	01={equal}, 23={equal},
     	012={phantom, description}, 123={phantom, description},
     	0123={phantom, description},
     	/squares/labelstyle/.cd,
     	012={anchor=center}, 123={anchor=center},
     	0123={anchor=center}
     }
\end{tikzpicture}\ ,
\end{center}
which are sent by any simplicial
morphism to $3$-simplices of $N_3(B)$ for which the principal $3$-cell is invertible, but non necessarily trivial.
Said otherwise, the different ways of encoding cells, or simple compositions of cells,
with simplices are linked together by higher simplices having the property that
the cell of greatest dimension is invertible; these higher simplices
act as invertible constraints for morphisms between Street nerves of $3$-categories
and it is therefore natural to imagine that a normalised oplax $3$-functor would
correspond to a simplicial morphism for which all these higher simplices acting
as constraints have \emph{trivial} greatest cell, instead of only invertible.
In order to determine a substantial set of these constraints,
we analyse the simplicial morphism canonically associated
to our notion of oplax normalised $3$-functor.
This provides a simplicial notion of oplax $3$-functor
preserving the underlying $3$-graph, that we call \ndef{simplicial oplax $3$-morphisms}.
We show that they compose and thus form a category whose objects are small $3$-categories.

The standard definition of a normalised oplax $2$-functor $F \colon A \to B$ has objects, $1$-cells, $2$-cells
and composition of $1$-cells as
datum, that is, to any object, $1$-arrow or $2$-arrow $x$ of $A$,
we associate an object, a $1$-arrow or a $2$-arrow, respectively, $F(x)$ of $B$
and for any pair $a \xto{f} b \xto{g} c$ of composable $1$-arrows of $A$, we
associate a $2$\nbd-arrow $F(g, f)$ of $B$ going from $F(gf)$ to $F(g)F(f)$.
These data must satisfy some normalisation conditions, for instance
$F(1_a) = 1_{F(a)}$, for any object $a$ of $A$, and $F(1_b, f) = 1_{F(f)}$, 
for any $1$-cell $f \colon a \to b$ of $A$,
and a cocycle condition, which is a coherence for the composition of three
$1$-arrows of $A$, and the vertical and horizontal
compatibility for $2$-arrows as coherences.
Another take on this notion is to see the data, the normalisations
and the coherences indexed by Joyal's cellular cat\-e\-gory~$\Theta_3$, whose objects are
trees of height at most $3$. Any tree has a dimension, given by the number of its edges,
and a normalised oplax $2$-functor can be defined as a set of maps index on the trees
$\treeDot$, $\treeL$, $\treeLL$ and $\treeV$ of dimension
at most $2$ for the data, which represents precisely objects, $1$-arrows, $2$-arrows and
compositions of two $1$-arrows; the same trees are the indices for the normalisation
conditions.
Finally the four trees $\treeW$, $\treeY$, $\treeVLeft$ and $\treeVRight$ of dimension $3$,
representing the composition of three composable $1$-arrows,
the vertical composition of two $2$-arrows
and the two possible whiskerings of a $2$-arrow with a $1$-arrow,
are the indices for the coherences.
More precisely, a normalised oplax $2$-functor $F \colon A \to B$ will consist
of a map $F_{\treeDot}$ from the objects of $A$ to the objects of~$B$,
a map $F_{\treeL}$ from the $1$-arrows of $A$ to the $1$-arrows of~$B$
respecting source and target, \ie for $f \colon a \to b$ in $A$ we get
$F_{\treeL}(f) \colon F_{\treeDot}(a) \to F_{\treeDot}(b)$ in~$B$,
a map $F_{\treeLL}$ from the $2$-arrows of $A$ to the $2$-arrows of~$B$
similarly respecting source and target and a map $F_{\treeV}$
that for any pair of composable $1$-arrows $a \xto{f} b \xto{g} c$ of $A$
associates a $2$-cell~$F_{\treeV}(g, f)$
\[
\begin{tikzcd}[column sep=small]
F_{\treeDot}(a)
\ar[rd, "F_{\treeLog}(f)"']
\ar[rr, "F_{\treeLog}(g f)"{name=gf}] &&
F_{\treeDot}(a'') \\
& F_{\treeDot}(a') \ar[ru, "F_{\treeL}(g)"'] & 
\ar[Rightarrow, shorten <=1.5mm, from=gf, to=2-2]
\end{tikzcd}
\]
as explained above. The coherence $F_{\treeW}(h, g, f)$ for a triple of composable
$1$-arrows of~$A$, say $a \xto{f} b \xto{g} c \xto{h} d$, can be represented by the following
diagram
\begin{center}
	\centering
	\begin{tikzpicture}[scale=2, font=\footnotesize, every label/.style={fill=white}]
	\squares{%
		/squares/label/.cd,
		0=$F_{\treeDot}(a)$, 1=$F_{\treeDot}(a')$, 2=$F_{\treeDot}(a'')$, 3=$F_{\treeDot}(a''')$, 
		01=${F_{\treeLog}(f)}$, 12=${F_{\treeL}(g)}$, 23=${F_{\treeL}(h)}$, 
		02=${F_{\treeL}(g\comp_0 f)}$, 03=${F_{\treeLog}(h\comp_0 g f)}$, 13=${F_{\treeL}(h g)}$,
		012=${F_{\treeV}(g, f)}$, 023={${F_{\treeV}(h, g  f)}$},
		123=${F_{\treeV}(h, g)}$, 013={${F_{\treeV}(h g, f)}$},
		0123={$F_{\treeW}(h, g, f)$},
		/squares/arrowstyle/.cd,
		0123={equal},
		/squares/labelstyle/.cd,
		012={below right = -1pt and -1pt},
		123={below left = -1pt and -1pt},
		023={swap, above left = -1pt and 3pt},
		013={swap, above right = -1pt and 3pt}
	}
	\end{tikzpicture}\ .
\end{center}
The tree $\treeY$ representing the vertical composition
of $2$-cells plays a key role, which in this low dimensional case is hidden
but becomes much clearer in the $3$-dimensional case.

We take this latter definition of normalised oplax $2$-functor, and the ``cellular'' point of view behind it,
as the starting point for a generalisation of this notion
for the case of $3$\nbd-cat\-e\-gories. Indeed, a normalised oplax $3$-functor shall consist
of a family of maps, the data, indexed by the trees of dimension at most $3$
\emph{except for $\treeY$},
subject to normalisation conditions indexed by these same trees as well as to
a set of coherences indexed by the trees of dimension $4$ joint with the tree $\treeY$.

Listing the tree $\treeY$, that is the only tree of dimension $3$ representing a composition of cells
which does not ``branch'' at height $0$, among the coherences is essential,
since the datum associated to such a tree must consist of a \emph{trivial cell}
(or invertible, but we do not follow this path) for the composition of two such normalised oplax
$3$-functors to be defined. It can be read as a condition of local strictness.
It is already crucial in showing that a normalised oplax $3$\nbd-func\-tor $F$
induces a canonical morphism of simplicial sets $\Nl(F)$, \ie that it can be pre-composed 
with normalised oplax $3$-functors with source a simplex; we show
that this induced morphism of simplicial sets is in fact a simplicial oplax $3$-morphism. 
Nevertheless, showing directly that
the composition of two normalised oplax $3$-functors is still a normalised oplax $3$-functor
is a very hard task. Indeed, the proof that a normalised oplax $3$-functor
induces a canonical simplicial oplax $3$-morphism boils down to
show that the coherence for the tree $\treeVV$, representing the
composition of four $1$-arrows, is satisfied by the ``obvious''
representative for the composition of normalised oplax $3$-functors;
this proof is highly non-trivial and involves a long and careful study
of pastings of all the coherences of the two composed normalised oplax $3$-functors.
This is just one of the 14 coherences that one would need to check
for the composition of normalised oplax $3$-functors to be well-defined.

As remarked above, a careful examination of $\Nl(F) \colon N_3(A) \to N_3(B)$,
the simplicial morphism  associated to a normalised
oplax $3$-functor $F \colon A \to B$,
reveals that certain non-trivial $3$-simplices
of $N_3(A)$ with trivial principal $3$-cell
are sent via $\Nl(F)$ to $3$-simplices of $N_3(B)$
where the principal cell is also trivial.
Such simplices were called constraints before
and if these constraints are taken as a property,
they allow us to give a simplicial definition
of a normalised oplax morphism between nerves of $3$-categories which preserves their underlying
$3$-graph. We called \emph{simplicial oplax $3$-morphisms} such morphisms of simplicial sets.
Hence, we come equipped with two notions
of normalised oplax morphisms for $3$-categories:
one that is cellular in spirit and the other that is simplicial.
The latter has the advantage that it is easy to see that it composes and forms a category;
while the advantage of the former is that it allows us to reason on cells or simple composition of cells
to define complicated morphisms, instead of describing it for objects of every dimension as
for a morphism of simplicial sets. We show how to associate a normalised oplax $3$-functor $\cCl(F)$
to any simplicial oplax $3$-morphism $F$. 
In order to do this, we have to check that the ``obvious''
data that we can associate to a simplicial oplax $3$-morphism
satisfies the normalisation conditions and the coherences of
a normalised oplax $3$-functor. The normalisations are simply
encoded in the degeneracies, while the coherences are non-trivially
encoded by appropriate $4$-simplices. As one might probably expect,
the coherence for the tree representing the horizontal composition
of $2$-cells is the hardest to prove.
We then show that this assignment going from simplicial to cellular is inverse to the map
giving a simplicial morphism to any normalised oplax $3$-functor.
Contrarily to what happens for the $2$-categorical case, this
is also a non-trivial task. For $F \colon N_3(A) \to N_3(B)$ a simplicial oplax $3$-morphism,
we are led to provide, for any $3$-simplex $x$ of $N_3(A)$,
an explicit description of
the principal $3$-cell of the $3$-simplex $\Nl\cCl(F)(x)$
in terms of the normalised oplax $3$-functor $\cCl(F)$.
This gives a bijection between the normalised oplax $3$\nbd-functors
from $A$ to $B$ and the simplicial oplax $3$\nbd-morphisms from $N_3(A)$
to $N_3(B)$.
We can then deduce
that normalised oplax $3$-functors compose and form a category and that this
category is isomorphic to that of simplicial oplax $3$\nbd-mor\-phisms.

The notion of oplax $3$-morphism already appears in the literature.
Indeed, Gordon--Power--Street in~\cite{GordonPowerStreet} and
Gurski in~\cite{GurskiCoherence} provide similar, although slightly different,
general definitions for
trimorphisms between tricategories, with oplax variants.
However, if we specialise to strict $3$-categories
we see that these notions are different from our.
In fact, the main difference lies in the oriented coherence,
by which we mean the datum, associated to the horizontal
composition of $2$-cells, that in our case
expresses instead a relation between the two
pieces of data associated to the two possible whiskerings
of a $2$-cell with a $1$-cell; the data of these
two whiskering are symmetric, as it is imposed by
the algebra of the orientals, which is incompatible
with the choice of a prescribed lax/oplax  orientation for the horizontal composition of $2$-cells, as it is imposed in the definition of trimorphism.

An important and motivating example of normalised oplax $3$-functor
is given by the ``sup'' morphism $\sup \colon \cDelta/N_3(A) \to A$, where $A$ is a $3$-category
and $\cDelta/N_3(A)$ is the category of elements of its Street nerve.
In the $1$-categorical case, this morphism is actually a $1$-functor
mapping an object $(\Deltan{n}, x)$ of $\Delta/N(A)$, where $x \colon \Deltan{n} \to A$
is a sequence of $n$ composable arrows of $A$, to the object $x(n)$ of $A$
and a morphism $f \colon (\Deltan{n}, y) \to (\Deltan{p}, y)$ of $\cDelta/N(A)$,
that we can depict by the triangle
\[
 \begin{tikzcd}[column sep=small]
  \Deltan{n} \ar[rr, "f"] \ar[rd, "x"'] && \Deltan{p} \ar[ld, "y"] \\
  & A &
 \end{tikzcd}\ ,
\]
to the arrow $y_{\{f(n), p\}} \colon y(f(n)) \to y(p)$. This
arrow can be seen as a functor
\[
\begin{tikzcd}
 \Deltan{1} \ar[rr, "{\{f(n), p\}}"] &&\Deltan{p} \ar[r, "y"] & A\ .
 \end{tikzcd}
\]
The sup $1$-functor is always a Thomason equivalence and it plays an important
role in the elementary homotopy theory of $1$-categories.
Del Hoyo~\cite{del-Hoyo} and Chiche~\cite{chiche_homotopy}
generalised and studied this $\sup$ morphism for the case of $2$-categories.
For instance, for a pair of composable $1$-arrows
$(\Deltan{m}, x) \xto{f} (\Deltan{n}, y) \xto{g} (\Deltan{p}, z)$ of~$\cDelta/N_2(A)$,
we assign the $2$-cell of $A$ given by the principal $2$-arrow of the $2$-functor
\[
 \begin{tikzcd}
 c_2N_2(\Deltan{2}) \ar[rrr, "{\{gf(m), g(n), p\}}"] &&& c_2N_2(\Deltan{p}) \ar[r, "z"] & A
 \end{tikzcd}\ .
\]
This normalised oplax $2$-functor proved to be crucial for the elementary homotopy theory of $2$-categories
and we provide a $3$-dimensional definition with our notion of normalised oplax $3$-functor.

We also study the ``strictification'' of such a morphism.
By this we mean the following general procedure: given a $1$-category $A$ and a $3$-category $B$,
there exists a $3$\nbd-cat\-e\-gory $\tilde A$ and a normalised oplax $3$-functor $\eta_A \colon A \to \tilde A$
such that the set of strict $3$-functors from $\tilde A$ to $B$
is in bijection with the set of normalised oplax $3$-functors from $A$ to $B$
and moreover this bijection is obtained by pre-composing by~$\eta_A$.
By the correspondence described above between normalised oplax $3$-functors
and simplicial oplax $3$-morphisms, the $3$-category $\tilde A$ is given
by $c_3N_3(A)$ and the morphism $\eta_A$ is just the unit $\eta_A \colon N_3(A) \to N_3c_3N_3(A)$.
A nice description for the $2$-categorical case has been given by del Hoyo~\cite{del-Hoyo},
so in particular we already know how to describe the $1$-cells of $\tilde A$.
We tackle this problem more generally and we provide a complete description of the
\oo-category $c_\infty \SN(A)$, for any $1$-category $A$ without split-monos or split-epis;
all posets and the subdivision $c \Sd N(C)$ of any $1$-category $C$ have such a property.
The objects of $c_\infty N(A)$ are the same as those of $A$ and, as we observed above,
we already know the $1$-cells, which are given by non-degenerate simplices of $N(A)$.
We define an \oo-category $c_\infty N(A)(f, g)$, for any pair of parallel
$1$-arrows $f$ and $g$ of $c_\infty N(A)$, as well as ``vertical compositions''
\[
 c_\infty N(A)(g, h) \times c_\infty N(A)(f, g) \to c_\infty N(A)(f, h)
\]
of $2$-cells and ``horizontal compositions''
\[
c_\infty N(A)(y, z) \times c_\infty N(A)(x, y) \to c_\infty N(A)(x, z)\,,
\]
where $x$, $y$ and $z$ are objects of $c_\infty N(A)$,
and we check that they satisfy all the axioms of \oo-category.
Finally, we prove that our definition of $c_\infty N(A)$ indeed satisfies
the expected universal property.

In an appendix, we recall some technicalities that we need in order
to develop this last section about strictifications.
In fact, we shall need some precise properties about the internal
Hom-$\infty$-categories of the orientals. We begin by recalling few
elements on Steiner's theory of augmented directed complexes.
We then give a brief glance at Joyal's $\Theta$ category and
finally we introduce the orientals together with some results
about them that we need in section~\ref{sec:tilde}.

\begin{notations}\label{notation}
The category of small strict $n$-category will be
denoted by $\nCat{n}$, for ay $1\leq n\leq \infty$.
For a definition of $\nCat{n}$, see Appendix~\ref{app:higher_cats}.
The simplex category shall be denoted by $\cDelta$
and $\EnsSimp$ shall denote the category of simplicial sets. The functor $\EnsSimp \to \Cat$
that to any simplicial set $X$ associate its
category of elements $\cDelta/X$ shall
be denoted by $i_\cDelta$.

All the \oo-categories will be strict, with
the composition operations denoted by $\comp_i$
and the identity of a cell $x$ denoted by $1_x$.
That is, if $A$ is an \oo-category, $0\leq i < j$ are integers and $x$ and $y$
are $j$-cells of $A$ such that the $i$-target $t_i(x)$ of $x$ is equal to the $i$-source $s_i(y)$
of $y$, then there exists a unique $j$-cell
$y \comp_i x$ of $A$ which is the $i$-composition
of $x$ and $y$; similarly, there is a $(j+1)$-cell
$1_x$ which is the identity of $x$. We shall often
call \emph{trivial} the identity cells of $A$.
We shall say that two $i$-cells $x$ and $y$ of $A$
are \emph{parallel} is they have same source and target;
if this is the case, we shall denote by $\Homi_A(x, y)$
the \oo-category whose $j$-cells, for $j \geq 0$,
are the $(i+j+1)$-cells of $A$ having $x$ as $i$-source
and $y$ as $i$-target.

The Street nerve from $n$-categories to simplicial sets shall always be denoted by $\SN \colon \nCat{n} \to \EnsSimp$, for any $1 \leq n \leq \infty$. This is justified by the fact that we can embed $n$-categories in \oo-categories. The left adjoint to the Street nerve shall be denoted
by $c_n \colon \EnsSimp \to \nCat{n}$.
\end{notations}

\begin{remerciements}
	I would like to thank Dimitri Ara for his support and guidance, who supervised my Ph.D.~thesis from which
	this work originated. I also want to thank Steve Lack
	for many interesting comments that overall improved
	the readability of this paper.
	I heartily thank Fosco Loregian and Dominic
	Verity for their invaluable \TeX{}nical help.
	Finally, I greatfully acknowledge the support of GA\v{C}R EXPRO 19-28628X.
\end{remerciements}

\section{Normalised oplax 3-functors}

 \subsection{Recall: normalised oplax 2-functors}
    
    We begin this chapter by recalling the classical notion of
    \ndef{normalised oplax $2$\nbd-func\-tor}.
    
    \begin{paragr}
     Let $A$ and $B$ be two $2$-categories.
     A \ndef{normalised oplax $2$-functor}\index{normalised oplax $2$-functor}
     $F \colon A \to B$ is given by:
     \begin{itemize}
      \item[-] a map $\Ob(A) \to \Ob(B)$ that
      to any object $x$ of $A$ associates an object
      $F(a)$ of $B$;
      \item[-] a map $\Fl_1(A) \to \Fl_1(B)$ that
      to any $1$-cell $f \colon x \to y$ of $A$ associates
      a $1$-cell $F(f) \colon F(x) \to F(y)$ of $B$;
      \item[-] a map $\Fl_2(A) \to \Fl_2(B)$ that
      to any $2$-cell $\alpha \colon f \to g$ of $A$
      associates a $2$-cell $F(\alpha) \colon F(f) \to F(g)$
      of $B$;
      \item[-] a map that to any composable $1$-cells
      $x \xrightarrow{f} y \xrightarrow{g} z$
       of $A$ associates a $2$-cell
       \[
        F(g, f) \colon F(g\comp_0 f) \to F(g) \comp_0 F(f)
       \]
       of $B$.
     \end{itemize}
     These data are subject to the following coherences:
     \begin{description}
      \item[normalisation] for any object $x$ of $A$ (resp.~any $1$-cell $f$ of $A$)
      we have $F(1_x) = 1_{F(x)}$ (resp.~$F(1_f) = 1_{F(f)}$); moreover for
      any $1$-cell $f\colon x \to y$ of $A$ we have
      \[F(1_y, f) = 1_{F(f)} = F(f, 1_x)\,;\]
      \item[cocycle] for any triple
      $x \xto{f} y \xto{g} z \xto{h} t$
      of composable $1$-cells of $A$ we have
      \[
       \bigl(F(h) \comp_0 F(g, f)\bigr) \comp_1 F(h, g\comp_0 f)
       = \bigl(F(h, g) \comp_1 F(f)\bigr) \comp_1 F(h\comp_0 g, f)\,;
      \]
      
      \item[vertical compatibility] for any pair
      \[
		 \begin{tikzcd}[column sep=4.5em]
		  a\phantom{'}
		  \ar[r, bend left=50, looseness=1.2, "f", ""{below, name=f}]
		  \ar[r, "g" description, ""{name=gu}, ""{below, name=gd}]
		  \ar[r, bend right=50, looseness=1.2, "h"', ""{name=h}]
		  \ar[Rightarrow, from=f, to=gu, "\alpha"]
		  \ar[Rightarrow, from=gd, to=h, "\beta"]&
		  a'
		 \end{tikzcd}
		\]
      of $1$-composable $2$-cells $\alpha$ and $\beta$ of $A$, we have
      $F(\beta\comp_1 \alpha) = F(\beta) \comp_1 F(\alpha)$;
      
      \item[horizontal compatibility] for any pair
      \[
      \begin{tikzcd}[column sep=4.5em]
        \bullet
        \ar[r, bend left, "f", ""{below, name=f1}]
        \ar[r, bend right, "f'"', ""{name=f2}]
        \ar[Rightarrow, from=f1, to=f2, "\alpha"]
        &
        \bullet
        \ar[r, bend left, "g", ""{below, name=g1}]
        \ar[r, bend right, "g'"', ""{name=g2}]
        \ar[Rightarrow, from=g1, to=g2, "\beta"]
        & \bullet
      \end{tikzcd}
     \]
     of $0$-composable $2$-cells $\alpha$ and $\beta$ of $A$,
     we have
     \[
      F(g', f') \comp_1 F(\beta \comp_0 \alpha) =
      \bigl( F(\beta) \comp_0 F(\alpha)\bigr) \comp_1 F(g, f)\,.
     \]
     \end{description}
    \end{paragr}
    
    \begin{paragr}
     The coherence given by the compatibility with respect to
     ``horizontal composition'' can be equivalently decomposed
     in two coherences, which correspond to the two possible
     ``whiskerings'' of a $2$-cell with a $1$-cell:
     \begin{description}
      \item[$\TreeVRight\ $] for any diagram
      \[
		 \begin{tikzcd}[column sep=4.5em]
		  a
		  \ar[r, bend left, "f", ""{below, name=f}]
		  \ar[r, bend right, "f'"', ""{name=fp}]
		  \ar[Rightarrow, from=f, to=fp, "\alpha"]
		  &
		  a' \ar[r,"g"] &
		  a''
		 \end{tikzcd}
		\]
		of $A$, we have
		\[
            F(g) \comp_0 F(\alpha) \comp_1 F(g, f)
             = 
            F(g, f') \comp_1 F(g \comp_0 \alpha)\,;
        \]
		\item[$\treeVLeft\ $] for any diagram
		\[
            \begin{tikzcd}[column sep=4.5em]
             a \ar[r, "f"] &
             a'
             \ar[r, bend left, "g", ""{below, name=g}]
		  \ar[r, bend right, "g'"', ""{name=gp}]&
		  a''
		  \ar[Rightarrow, from=g, to=gp, "\beta"]
            \end{tikzcd}
		\]
		of $A$, we have
		\[
		 F(g', f) \comp_1 F(\beta \comp_0 f)
		  =
		 F(\beta) \comp_0 F(f) \comp_1 F(g, f)\,.
		\]
     \end{description}
     These two coherences are a particular case of the horizontal coherence
     of the previous paragraph and reciprocally one checks immediately these
     two coherences joint with the vertical coherence imply the horizontal
     coherence.
     
     The advantage of this latter reformulation is that now
     the coherence datum of a normalised oplax $2$-functor
     is indexed over
     \begin{itemize}
      \item the trees of dimension $\le 2$ for the normalisation;
      \item the trees of dimension $3$ for the other coherences.
     \end{itemize}
     This will be the starting point in our generalisation towards
     a notion of normalised oplax $3$-functor.
     In order to clarify what we mean, we repropose the
     definition of normalised oplax $2$-functor
     with data and coherences indexed by trees.
    \end{paragr}
    
    \begin{paragr}
     A normalised oplax $2$-functor $F \colon A \to B$ consists
     of the following data:
     \begin{description}
		\item[$\TreeDot\:$] a map $F_{\treeDot}$ that
		to each object $a$ of $A$
		assigns an object $F_{\treeDot}(a)$ of $B$.
		
		\item[%
		\scalebox{0.3}{
			\begin{forest}
			for tree={%
				label/.option=content,
				grow'=north,
				content=,
				circle,
				fill,
				minimum size=3pt,
				inner sep=0pt,
				s sep+=15,
			}
			[[]]
			\end{forest}
		}] a map $F_{\treeLog}$ that to each $1$-cell $f \colon a \to a'$
		of $A$ assigns a $1$-cell $F_{\treeLog}(f) \colon F_{\treeDot}(a) \to F_{\treeDot}(a')$
		of $B$.
		
		\item[%
		\scalebox{0.3}{
			\begin{forest}
				for tree={%
					label/.option=content,
					grow'=north,
					content=,
					circle,
					fill,
					minimum size=3pt,
					inner sep=0pt,
					s sep+=15,
				}
				[[][]]
			\end{forest}
		}] a map $F_{\treeV}$ that to
			to each pair of $0$\hyp{}composable $1$\hyp{}cells
			\[
			 \begin{tikzcd}
			  a \ar[r, "f"] & a' \ar[r, "g"] &  a''
			 \end{tikzcd}
			\]
			of $A$ assigns a $2$-cell $F_{\treeV}(g, f)$
			\[
			 \begin{tikzcd}[column sep=small]
				 F_{\treeDot}(a)
				 \ar[rd, "F_{\treeLog}(f)"']
				 \ar[rr, "F_{\treeLog}(g\comp_0 f)"{name=gf}] &&
				 F_{\treeDot}(a'') \\
				 & F_{\treeDot}(a') \ar[ru, "F_{\treeL}(g)"'] & 
				 \ar[Rightarrow, shorten <=1.5mm, from=gf, to=2-2]
			 \end{tikzcd}
			\]
			of $B$, that is
			\[
			 F_{\treeV}(g, f) \colon F_{\treeLog}(g \comp_0 f) \to
			 F_{\treeLog}(g) \comp_0 F_{\treeLog}(f)
			\]

		\item[%
		\scalebox{0.3}{
			\begin{forest}
				for tree={%
					label/.option=content,
					grow'=north,
					content=,
					circle,
					fill,
					minimum size=3pt,
					inner sep=0pt,
					s sep+=15,
				}
				[ [ [] ] ]
			\end{forest}
		}] a map $F_{\treeLL}$
		that to each $2$-cell $\alpha \colon f \to g$ of $A$ associates
		a $2$-cell $F_{\treeLL}(\alpha) \colon F_{\treeLog}(f) \to F_{\treeLog}(g)$
		of $B$.
		\end{description}

		These data are subject to the following conditions of normalisation:
		\begin{description}
		\item[%
		\scalebox{0.3}{
			\begin{forest}
				for tree={%
					label/.option=content,
					grow'=north,
					content=,
					circle,
					fill,
					minimum size=3pt,
					inner sep=0pt,
					s sep+=15,
				}
				[]
			\end{forest}
		}
		] for any object $a$ of $A$, we have
		\[
			F_{\treeLog}(1_a) = 1_{F_{\treeDot}(a)}\,;
		\]
		
		\item[%
		\scalebox{0.3}{
			\begin{forest}
				for tree={%
					label/.option=content,
					grow'=north,
					content=,
					circle,
					fill,
					minimum size=3pt,
					inner sep=0pt,
					s sep+=15,
				}
				[ [] ]
			\end{forest}
		}
		] for any $1$-cell $f$ of $A$ we have
		\[
			F_{\treeLL}(1_f) = 1_{F_{\treeLog}(f)}\,;
		\]
		
		\item[%
		\scalebox{0.3}{
			\begin{forest}
				for tree={%
					label/.option=content,
					grow'=north,
					content=,
					circle,
					fill,
					minimum size=3pt,
					inner sep=0pt,
					s sep+=15,
				}
				[ [] [] ]
			\end{forest}
		}
		] for any $1$\nbd-cell $f \colon a \to a'$ of $A$, we have
		\[
		 F_{\treeV}(1_{a'}, f) = F_{\treeV}(f, 1_a) = 1_{F_{\treeL}(f)}\,.
		\]
		\end{description}

		Finally, we impose the following coherences:
		\begin{description}
		\item[%
		\scalebox{0.3}{
			\begin{forest}
				for tree={%
					label/.option=content,
					grow'=north,
					content=,
					circle,
					fill,
					minimum size=3pt,
					inner sep=0pt,
					s sep+=15,
				}
				[ [] [] [] ]
			\end{forest}
		}] for any triple of $0$-composable $1$ cells
		\[
		 \begin{tikzcd}[column sep=small]
		  a \ar[r, "f"] & a' \ar[r, "g"] & a'' \ar[r, "h"] & a'''
		 \end{tikzcd}
		\]
		of $A$ we have	
	\begin{center}
		\centering
		\begin{tikzpicture}[scale=2, font=\footnotesize, every label/.style={fill=white}]
		\squares{%
			/squares/label/.cd,
			0=$F_{\treeDot}(a)$, 1=$F_{\treeDot}(a')$, 2=$F_{\treeDot}(a'')$, 3=$F_{\treeDot}(a''')$, 
			01=${F_{\treeLog}(f)}$, 12=${F_{\treeL}(g)}$, 23=${F_{\treeL}(h)}$, 
			02=${F_{\treeL}(g\comp_0 f)}$, 03=${F_{\treeLog}(h\comp_0 g f)}$, 13=${F_{\treeL}(h g)}$,
			012=${F_{\treeV}(g, f)}$, 023={${F_{\treeV}(h, g  f)}$},
			123=${F_{\treeV}(h, g)}$, 013={${F_{\treeV}(h g, f)}$},
			0123={$F_{\treeW}(h, g, f)$},
			/squares/arrowstyle/.cd,
			0123={equal},
			/squares/labelstyle/.cd,
			012={below right = -1pt and -1pt},
			123={below left = -1pt and -1pt},
			023={above left = 1pt and 3pt},
			013={above right = 1pt and 3pt}
		}
		\end{tikzpicture}\ .
	\end{center}
		that is we impose the equality
		\[
            F_{\treeLog}(h) \comp_0 F_{\treeV}(g, f) \comp_1 F_{\treeV}(h, g \comp_0 f)
		=
            F_{\treeV}(h, g) \comp_0 F_{\treeLog}(f) \comp_1 F_{\treeV}(h \comp_0 g, f)\,;
		\]
		
		\item[%
		\scalebox{0.3}{
			\begin{forest}
				for tree={%
					label/.option=content,
					grow'=north,
					content=,
					circle,
					fill,
					minimum size=3pt,
					inner sep=0pt,
					s sep+=15,
				}
				[ [[][]] ]
			\end{forest}
		}] for any pair
		\[
		 \begin{tikzcd}[column sep=4.5em]
		  a\phantom{'}
		  \ar[r, bend left=50, looseness=1.2, "f", ""{below, name=f}]
		  \ar[r, "g" description, ""{name=gu}, ""{below, name=gd}]
		  \ar[r, bend right=50, looseness=1.2, "h"', ""{name=h}]
		  \ar[Rightarrow, from=f, to=gu, "\alpha"]
		  \ar[Rightarrow, from=gd, to=h, "\beta"]&
		  a'
		 \end{tikzcd}
		\]
      of $1$-composable $2$-cells $\alpha$ and $\beta$ of $A$, we impose
      \[F(\beta\comp_1 \alpha) = F(\beta) \comp_1 F(\alpha)\,;\]
		
		\item[%
		\scalebox{0.3}{
			\begin{forest}
				for tree={%
					label/.option=content,
					grow'=north,
					content=,
					circle,
					fill,
					minimum size=3pt,
					inner sep=0pt,
					s sep+=15,
				}
				[ [] [[]] ]
			\end{forest}
		}] for any whiskering
		\[
		 \begin{tikzcd}[column sep=4.5em]
		  a
		  \ar[r, bend left, "f", ""{below, name=f}]
		  \ar[r, bend right, "f'"', ""{name=fp}]
		  \ar[Rightarrow, from=f, to=fp, "\alpha"]
		  &
		  a' \ar[r,"g"] &
		  a''
		 \end{tikzcd}
		\]
		of $A$ we impose
		\[
            F_{\treeLog}(g) \comp_0 F_{\treeLL}(\alpha) \comp_1 F_{\treeV}(g, f)
            =
            F_{\treeV}(g, f') \comp_1 F_{\treeLL}(g \comp_0 \alpha)\,;
        \]

		\item[%
		\scalebox{0.3}{
			\begin{forest}
				for tree={%
					label/.option=content,
					grow'=north,
					content=,
					circle,
					fill,
					minimum size=3pt,
					inner sep=0pt,
					s sep+=15,
				}
				[ [[]] [] ]
			\end{forest}
		}] for any whiskering
		\[
            \begin{tikzcd}[column sep=4.5em]
             a \ar[r, "f"] &
             a'
             \ar[r, bend left, "g", ""{below, name=g}]
		  \ar[r, bend right, "g'"', ""{name=gp}]&
		  a''
		  \ar[Rightarrow, from=g, to=gp, "\beta"]
            \end{tikzcd}
		\]
		of $A$ we impose
		\[
		 F_{\treeV}(g', f) \comp_1 F_{\treeLL}(\beta \comp_0 f)
		 =
		 F_{\treeLL}(\beta) \comp_0 F_{\treeLog}(f) \comp_1 F_{\treeV}(g, f)\,.
		\]
    \end{description}
    \end{paragr}
    
    \begin{paragr}
     Given two normalised oplax $2$-functors $F \colon A \to B$
     and $G \colon B \to C$, there is an obvious candidate for
     the composition $GF \colon A \to C$ and one checks that
     this is still a normalised oplax $2$-functor; furthermore,
     the identity functor on a category is clearly an identity
     element for normalised oplax $2$-functor too. Hence,
     there is a category $\widetilde{\nCat{2}}$ with small
     $2$-categories as objects and normalised oplax $2$-functors
     as morphisms.
     
     The cosimplicial object $\Delta \to \nCat{2} \to \widetilde{\nCat{2}}$
     of $\widetilde{\nCat{2}}$ induces a nerve functor
     $\widetilde{N_2} \colon \widetilde{\nCat{2}} \to \EnsSimp$.
     For any $n \ge 0$, the normalised oplax $2$-functors
     $\Deltan{n} \to A$ correspond precisely to $2$\nbd-func\-tors
     $\Onm{n}{2} \to A$
     (see, for instance, \cite[\href{https://kerodon.net/tag/00BE}{Tag 00BE}]{kerodon}).
     Hence, we get a triangle diagram of functors
     \[
     \begin{tikzcd}
      & \EnsSimp & \\
      \nCat{2} \ar[ru, "N_2"] \ar[rr, hook] &&
      \widetilde{\nCat{2}} \ar[lu, "\widetilde{N_2}"']
     \end{tikzcd}
     \]
     which is commutative (up to a canonical isomorphism). Moreover,
     it is a standard fact that the functor $\widetilde{N_2}$
     is fully faithful (see, for instance~\cite{BullejosFaroBlanco}, \cite{LackPaoli}
     or~\cite[\href{https://kerodon.net/tag/00AU}{Tag 00AU}]{kerodon}).
    \end{paragr}

\subsection{Definition of normalised oplax 3-functor}

	Let $A$ and $B$ be two $3$-cat\-e\-go\-ries. We now give the
	definition of \ndef{normalised oplax $3$-functor}
	\index{normalised oplax $3$-functor}
	$F \colon A \to B$, which amounts to giving the structure
	of a family of maps indexed by the objects of $\Theta$
	of dimension at most $3$ subject to a family of relations
	indexed by the objects of $\Theta$ of dimension $4$
	as well as normalisation for identities of every dimension. A quick description of~$\Theta$ is provided
	in Appendix~\ref{app:theta}.
	The choice of the orientation for the structural maps is
	strongly guided by the algebra of the orientals and presents
	therefore a symmetry (or better, a duality) for symmetric trees of $\Theta$,
	see for instance the structural cells for the trees $\TreeVLeft$ and $\TreeVRight$
	denoting the two possible whiskerings of a $2$-cell with a $1$-cell.
	
	The arboreal rule for indexing structure, normalisation and coherence that we have stated
	right above has an exception. In fact, we are forced to list
	the tree $\TreeY$ denoting the vertical composition of two $2$-cells
	among the coherences. This imposes a local strictness on the lax $3$-functor,
	meaning that as a result a normalised $3$-functor will induce a strict $2$-functor
	on the hom-$2$-categories, and it is necessary in order to provide a reasonable
	coherence for the tree $\treeVLR$ representing the horizontal composition of two $2$-cells;
	we shall say more about this in a remark right after the definition.
	Another explanation for this choice, more simplicial in spirit, will be offered in
	section~\ref{sec:simplicial} (see in particular
	Remark~\ref{rem:treeY}).

\begin{paragr}[Data]\label{paragr:lax_3functor_cellular_data}
    A \ndef{normalised oplax $3$-functor} $F$ from $A$ to $B$ consists
    of:
	\begin{description}
		\item[$\TreeDot\ $] a map $F_{\treeDot}$ that
		to each object $a$ of $A$
		assigns an object $F_{\treeDot}(a)$ of $B$.
		
		\item[%
		\scalebox{0.3}{
			\begin{forest}
			for tree={%
				label/.option=content,
				grow'=north,
				content=,
				circle,
				fill,
				minimum size=3pt,
				inner sep=0pt,
				s sep+=15,
			}
			[[]]
			\end{forest}
		}] a map $F_{\treeLog}$ that to each $1$-cell $f \colon a \to a'$
		of $A$ assigns a $1$-cell $F_{\treeLog}(f) \colon F_{\treeDot}(a) \to F_{\treeDot}(a')$
		of $B$;
		
		\item[%
		\scalebox{0.3}{
			\begin{forest}
				for tree={%
					label/.option=content,
					grow'=north,
					content=,
					circle,
					fill,
					minimum size=3pt,
					inner sep=0pt,
					s sep+=15,
				}
				[[][]]
			\end{forest}
		}] a map $F_{\treeV}$ that to
			to each pair of $0$\hyp{}composable $1$\hyp{}cells
			\[
			 \begin{tikzcd}
			  a \ar[r, "f"] & a' \ar[r, "g"] &  a''
			 \end{tikzcd}
			\]
			of $A$ assigns a $2$-cell $F_{\treeV}(g, f)$
			\[
			 \begin{tikzcd}[column sep=small]
				 F_{\treeDot}(a)
				 \ar[rd, "F_{\treeLog}(f)"']
				 \ar[rr, "F_{\treeLog}(g\comp_0 f)"{name=gf}] &&
				 F_{\treeDot}(a'') \\
				 & F_{\treeDot}(a') \ar[ru, "F_{\treeL}(g)"'] & 
				 \ar[Rightarrow, shorten <=1.5mm, from=gf, to=2-2]
			 \end{tikzcd}
			\]
			of $B$, that is
			\[
			 F_{\treeV}(g, f) \colon F_{\treeLog}(g \comp_0 f) \to
			 F_{\treeLog}(g) \comp_0 F_{\treeLog}(f)\,;
			\]

		\item[%
		\scalebox{0.3}{
			\begin{forest}
				for tree={%
					label/.option=content,
					grow'=north,
					content=,
					circle,
					fill,
					minimum size=3pt,
					inner sep=0pt,
					s sep+=15,
				}
				[ [ [] ] ]
			\end{forest}
		}] a map $F_{\treeLL}$
		that to each $2$-cell $\alpha \colon f \to g$ of $A$ associates
		a $2$-cell $F_{\treeLL}(\alpha) \colon F_{\treeLog}(f) \to F_{\treeLog}(g)$
		of $B$;
		
		\item[%
		\scalebox{0.3}{
			\begin{forest}
				for tree={%
					label/.option=content,
					grow'=north,
					content=,
					circle,
					fill,
					minimum size=3pt,
					inner sep=0pt,
					s sep+=15,
				}
				[ [] [] [] ]
			\end{forest}
		}] a map $F_{\treeW}$,
		that to each triple of $0$-composable $1$ cells
		\[
		 \begin{tikzcd}[column sep=small]
		  a \ar[r, "f"] & a' \ar[r, "g"] & a'' \ar[r, "h"] & a'''
		 \end{tikzcd}
		\]
		of $A$ associates a $3$-cell $F_{\treeW}(h, g, f)$
		\begin{center}
			\centering
			\begin{tikzpicture}[scale=1.8, font=\footnotesize, every label/.style={fill=white}]
			\squares{%
				/squares/label/.cd,
				0=$F_{\treeDot}(a)$, 1=$F_{\treeDot}(a')$, 2=$F_{\treeDot}(a'')$, 3=$F_{\treeDot}(a''')$, 
				01=${F_{\treeLog}(f)}$, 12=${F_{\treeL}(g)}$, 23=${F_{\treeL}(h)}$, 
				02=${F_{\treeL}(g\comp_0 f)}$, 03=${F_{\treeLog}(h\comp_0 g f)}$, 13=${F_{\treeL}(h g)}$,
				012=${F_{\treeV}(g, f)}$, 023={${F_{\treeV}(h, g  f)}$},
				123=${F_{\treeV}(h, g)}$, 013={${F_{\treeV}(h g, f)}$},
				0123={$F_{\treeW}(h, g, f)$},
				/squares/labelstyle/.cd,
				012={below right = -1pt and -1pt},
				123={below left = -1pt and -1pt},
				023={above left = 1pt and 3pt},
				013={above right = 1pt and 3pt}
			}
			\end{tikzpicture}\ ,
		\end{center}
		that is the $3$-cell $F_{\treeW}(h, g, f)$ has
		\[
            F_{\treeLog}(h) \comp_0 F_{\treeV}(g, f) \comp_1 F_{\treeV}(h, g \comp_0 f)
		\]
		as source and
		\[
            F_{\treeV}(h, g) \comp_0 F_{\treeLog}(f) \comp_1 F_{\treeV}(h \comp_0 g, f)
		\]
		as target;
		
		\item[%
		\scalebox{0.3}{
			\begin{forest}
				for tree={%
					label/.option=content,
					grow'=north,
					content=,
					circle,
					fill,
					minimum size=3pt,
					inner sep=0pt,
					s sep+=15,
				}
				[ [] [[]] ]
			\end{forest}
		}] a map $F_{\treeVRight}$
		that to any whiskering
		\[
		 \begin{tikzcd}[column sep=4.5em]
		  a
		  \ar[r, bend left, "f", ""{below, name=f}]
		  \ar[r, bend right, "f'"', ""{name=fp}]
		  \ar[Rightarrow, from=f, to=fp, "\alpha"]
		  &
		  a' \ar[r,"g"] &
		  a''
		 \end{tikzcd}
		\]
		of $A$ associates a $3$-cell 
		\[
            F_{\treeVRight}(g, \alpha) \colon
            F_{\treeLog}(g) \comp_0 F_{\treeLL}(\alpha) \comp_1 F_{\treeV}(g, f)
            \to
            F_{\treeV}(g, f') \comp_1 F_{\treeLL}(g \comp_0 \alpha)
        \]
        of $B$;

		\item[%
		\scalebox{0.3}{
			\begin{forest}
				for tree={%
					label/.option=content,
					grow'=north,
					content=,
					circle,
					fill,
					minimum size=3pt,
					inner sep=0pt,
					s sep+=15,
				}
				[ [[]] [] ]
			\end{forest}
		}] a map $F_{\treeVLeft}$,
		that to each whiskering
		\[
            \begin{tikzcd}[column sep=4.5em]
             a \ar[r, "f"] &
             a'
             \ar[r, bend left, "g", ""{below, name=g}]
		  \ar[r, bend right, "g'"', ""{name=gp}]&
		  a''
		  \ar[Rightarrow, from=g, to=gp, "\beta"]
            \end{tikzcd}
		\]
		of $A$ associates a $3$-cell
		\[
		 F_{\treeVLeft}(\beta, f) \colon
		 F_{\treeV}(g', f) \comp_1 F_{\treeLL}(\beta \comp_0 f)
		 \to
		 F_{\treeLL}(\beta) \comp_0 F_{\treeLog}(f) \comp_1 F_{\treeV}(g, f)
		\]
		of $B$;

		\item[%
		\scalebox{0.3}{
			\begin{forest}
				for tree={%
					label/.option=content,
					grow'=north,
					content=,
					circle,
					fill,
					minimum size=3pt,
					inner sep=0pt,
					s sep+=15,
				}
				[ [[[]]] ]
			\end{forest}
		}] a map $F_{\treeLLL}$,
		that to any $3$-cell $\gamma \colon \alpha \to \alpha'$ of $A$,
		\ie any tree of $A_{\treeLLL}$, associates
		a $3$-cell \[F_{\treeLLL}(\gamma) \colon F_{\treeLL}(\alpha) \to F_{\treeLL}(\alpha')\]
		of $B$.
	\end{description}
	\end{paragr}

	\begin{paragr}[Normalisation]\label{paragr:lax_3functor_cellular_norm}
		The normalisation requires the following constraints:
		\begin{description}
		\item[%
		\scalebox{0.3}{
			\begin{forest}
				for tree={%
					label/.option=content,
					grow'=north,
					content=,
					circle,
					fill,
					minimum size=3pt,
					inner sep=0pt,
					s sep+=15,
				}
				[]
			\end{forest}
		}
		] for any object $a$ of $A$, we have
		\[
			F_{\treeLog}(1_a) = 1_{F_{\treeDot}(a)}\,;
		\]
		
		\item[%
		\scalebox{0.3}{
			\begin{forest}
				for tree={%
					label/.option=content,
					grow'=north,
					content=,
					circle,
					fill,
					minimum size=3pt,
					inner sep=0pt,
					s sep+=15,
				}
				[ [] ]
			\end{forest}
		}
		] for any $1$-cell $f$ of $A$ we have
		\[
			F_{\treeLL}(1_f) = 1_{F_{\treeLog}(f)}\,;
		\]
		
		\item[%
		\scalebox{0.3}{
			\begin{forest}
				for tree={%
					label/.option=content,
					grow'=north,
					content=,
					circle,
					fill,
					minimum size=3pt,
					inner sep=0pt,
					s sep+=15,
				}
				[ [] [] ]
			\end{forest}
		}
		] for any $1$\nbd-cell $f \colon a \to a'$ of $A$, we have
		\[
		 F_{\treeV}(1_{a'}, f) = F_{\treeV}(f, 1_a) = 1_{F_{\treeL}(f)}\,;
		\]

		\item[%
		\scalebox{0.3}{
			\begin{forest}
				for tree={%
					label/.option=content,
					grow'=north,
					content=,
					circle,
					fill,
					minimum size=3pt,
					inner sep=0pt,
					s sep+=15,
				}
				[ [ [] ] ]
			\end{forest}
		}
		] for any $2$-cell $\alpha$ of $A$ we have
		\[
		F_{\treeLLL}(1_\alpha) = 1_{F_{\treeLL}(\alpha)}\,.
		\]

		\item[%
		\scalebox{0.3}{
			\begin{forest}
				for tree={%
					label/.option=content,
					grow'=north,
					content=,
					circle,
					fill,
					minimum size=3pt,
					inner sep=0pt,
					s sep+=15,
				}
				[ [] [] [] ]
			\end{forest}
		}
		]
		for any pair $a \xto{f} a' \xto{g} a''$ of composable $1$-cell of $A$, we have
		\[
		 F_{\treeW}(g, f, 1_a) = F_{\treeW}(g, 1_{a'}, f) = F_{\treeW}(1_{a''}, g, f) = 1_{F_{\treeV}(g, f)}\,;
		\]

		\item[%
						\scalebox{0.3}{
							\begin{forest}
								for tree={%
									label/.option=content,
									grow'=north,
									content=,
									circle,
									fill,
									minimum size=3pt,
									inner sep=0pt,
									s sep+=15,
								}
								[ [] [[]] ]
							\end{forest}
						}] for any pair $a \xto{f} a' \xto{g} a''$ of composable $1$\nbd-cells
						of~$A$, we have
						\[
						F_{\treeVRight}(g, 1_f) = 1_{F_{\treeV}(g, f)}\,,
						\]
						and for any $2$-cell $\alpha \colon f \to f'$ of $A$, we have
						\[
						F_{\treeVRight}(1_{a'}, \alpha) = 1_{F_{\treeLL}(\alpha)}\,;
						\]

		\item[%
						\scalebox{0.3}{
							\begin{forest}
								for tree={%
									label/.option=content,
									grow'=north,
									content=,
									circle,
									fill,
									minimum size=3pt,
									inner sep=0pt,
									s sep+=15,
								}
								[ [[]] [] ]
							\end{forest}
						}] for any pair $a \xto{f} a' \xto{g} a''$ of composable $1$\nbd-cells
						of~$A$, we have
						\[
						F_{\treeVLeft}(1_g, f) = 1_{F_{\treeV}(g, f)}\,,
						\]
						and for any $2$-cell $\beta \colon g \to g'$ of $A$, we have
						\[
						F_{\treeVLeft}(\beta, 1_{a'}) = 1_{F_{\treeLL}(\beta)}\,;
						\]

		\end{description}
		
		\end{paragr}

%
%
%
%
%
%
%
%
%
%

	\begin{paragr}[Coherences]\label{paragr:lax_3functor_cellular_coherences}
	These maps are subject to the following relations:
	\begin{description}
	\item[%
     \scalebox{0.3}{
     	\begin{forest}
     		for tree={%
     			label/.option=content,
     			grow'=north,
     			content=,
     			circle,
     			fill,
     			minimum size=3pt,
     			inner sep=0pt,
     			s sep+=15,
     		}
     		[ [[][]] ]
     	\end{forest}
     }] for any pair of $1$-composable $2$-cells
     \[
     \begin{tikzcd}[column sep=4.5em]
     a\phantom{'}
     \ar[r, bend left=50, looseness=1.2, "f", ""{below, name=f}]
     \ar[r, "g" description, ""{name=gu}, ""{below, name=gd}]
     \ar[r, bend right=50, looseness=1.2, "h"', ""{name=h}]
     \ar[Rightarrow, from=f, to=gu, "\alpha"]
     \ar[Rightarrow, from=gd, to=h, "\beta"]&
     a'
     \end{tikzcd}
     \]
     of $A$, \ie to any tree of $A_{\treeY}$, we have an equality
     \[
     F_{\treeLL}(\beta) \comp_1 F_{\treeLL}(\alpha)
     =
     F_{\treeLL}(\beta \comp_1 \alpha)
     \]
     in $B$. We shall sometimes write $F_{\treeY}(\beta, \alpha)$
     for the identity $3$-cell of this $2$-cell above.

	 \item[%
        \scalebox{0.3}{
            \begin{forest}
                for tree={%
                    label/.option=content,
                    grow'=north,
                    content=,
                    circle,
                    fill,
                    minimum size=3pt,
                    inner sep=0pt,
                    s sep+=15,
                }
                [
                    [] [] [] []
                ]
            \end{forest}
        }
     ] For any quadruple
     \[
      \begin{tikzcd}
       \bullet \ar[r, "f"] & \bullet \ar[r, "g"] & \bullet \ar[r, "h"] & \bullet \ar[r, "i"] & \bullet
      \end{tikzcd}
     \]
     of $0$-composable $1$-cells of $A$ we impose that the $3$-cells
     \begin{gather*}
       F_{\treeV}(i, h) \comp_0 F_{\treeLog}(g) \comp_0 F_{\treeLog}(f)
       \comp_1 F_{\treeW}(ih, g, h) \\
       \comp_2\\
       F_{\treeLog}(i) \comp_0 F_{\treeLog}(h) \comp_0 F_{\treeV}(g, f)
       \comp_1 F_{\treeW}(i, h, gf)
     \end{gather*}
     and
     \begin{gather*}
       F_{\treeW}(i, h, g) \comp_0 F_{\treeLog}(f) \comp_1 F_{\treeV}(i\comp_0 h \comp_0 g, f)\\
       \comp_2\\
       F_{\treeLog}(i) \comp_0 F_{\treeV}(h, g) \comp_0 F_{\treeLog}(f) \comp_1
       F_{\treeW}(i, h \comp_0 g, f)\\
       \comp_2 \\
       F_{\treeLog}(i) \comp_0 F_{\treeW}(h, g, f) \comp_1 F_{\treeV}(i, h\comp_0 g \comp_0 f)
     \end{gather*}
     of $B$ are equal.
     
     \item[%
        \scalebox{0.3}{
            \begin{forest}
                for tree={%
                    label/.option=content,
                    grow'=north,
                    content=,
                    circle,
                    fill,
                    minimum size=3pt,
                    inner sep=0pt,
                    s sep+=15,
                }
                [
                    [ [] ] [] []
                ]
            \end{forest}
        }
     ] For any triple
     \[
      \begin{tikzcd}[column sep=4.5em]
       \bullet \ar[r, "f"] & \bullet \ar[r, "g"] &
       \bullet
       \ar[r, bend left, "h", ""{below, name=h}]
       \ar[r, bend right, "h'"', ""{name=h2}]
       \ar[Rightarrow, from=h, to=h2, "\alpha"]
       & \bullet
      \end{tikzcd}
     \]
     of $0$-composable cells $f$, $g$ and $\alpha$ of $A$
     we impose the $3$-cells
     \begin{gather*}
        F_{\treeLL}(\alpha) \comp_0 F_{\treeLog}(g) \comp_0 F_{\treeLog}(f)
        \comp_1 F_{\treeW}(h, g, f)\\
        \comp_2\\
        F_{\treeLog}(h') \comp_0 F_{\treeV}(g, f) \comp_1 F_{\treeVLeft}(\alpha, g \comp_0 f)
     \end{gather*}
     and
     \begin{gather*}
        F_{\treeVLeft}(\alpha, g) \comp_0 F_{\treeLog}(f) \comp_1 F_{\treeV}(h\comp_0 g, f)\\
        \comp_2\\
        F_{\treeV}(h', g) \comp_0 F_{\treeLog}(f) \comp_1 F_{\treeVLeft}(\alpha \comp_0 g, f)\\
        \comp_2\\
        F_{\treeW}(h', g, f) \comp_1 F_{\treeLL}(\alpha \comp_0 g \comp_0 f)
     \end{gather*}
     of $B$ to be equal.
     
     \item[%
        \scalebox{0.3}{
            \begin{forest}
                for tree={%
                    label/.option=content,
                    grow'=north,
                    content=,
                    circle,
                    fill,
                    minimum size=3pt,
                    inner sep=0pt,
                    s sep+=15,
                }
                [
                    [] [ [] ] []
                ]
            \end{forest}
        }
     ] For any triple
     \[
      \begin{tikzcd}[column sep=4.5em]
       \bullet \ar[r, "f"] &
       \bullet
       \ar[r, bend left, "g", ""{below, name=g}]
       \ar[r, bend right, "g'"', ""{name=g2}]
       \ar[Rightarrow, from=g, to=g2, "\alpha"] &
       \bullet \ar[r, "h"] & \bullet
      \end{tikzcd}
     \]
     of $0$-composable cells $f$, $\alpha$ and $h$ of $A$,
     we impose that the $3$-cells
     \begin{gather*}
        F_{\treeV}(h, g') \comp_0 F_{\treeLog}(f)
        \comp_1 F_{\treeVLeft}(h\comp_0 \alpha, f)\\
        \comp_2\\
        F_{\treeW}(h, g', f) \comp_1 F_{\treeLL}(h\comp_0 \alpha \comp_0 f)\\
        \comp_2\\
        F_{\treeLog}(h) \comp_0 F_{\treeV}(g', f) \comp_1
        F_{\treeVRight}(h, \alpha \comp_0 f)
     \end{gather*}
     and
     \begin{gather*}
        F_{\treeVRight}(h, \alpha) \comp_0 F_{\treeLog}(f)
        \comp_1 F_{\treeV}(h \comp_0 g, f)\\
        \comp_2\\
        F_{\treeLog}(h) \comp_0 F_{\treeLL}(\alpha) \comp_0 F_{\treeLog}(f)
        \comp_1 F_{\treeW}(h, g, f)\\
        \comp_2\\
        F_{\treeLog}(h) \comp_0 F_{\treeVLeft}(\alpha, f)
        \comp_1 F_{\treeV}(h, g\comp_0 f)
     \end{gather*}
     of $B$ to be equal.
     
     \item[%
        \scalebox{0.3}{
            \begin{forest}
                for tree={%
                    label/.option=content,
                    grow'=north,
                    content=,
                    circle,
                    fill,
                    minimum size=3pt,
                    inner sep=0pt,
                    s sep+=15,
                }
                [
                    [] [] [ [] ]
                ]
            \end{forest}
        }
     ] For any triple
     \[
      \begin{tikzcd}[column sep=4.5em]
       \bullet
       \ar[r, bend left, "f", ""{below, name=f}]
       \ar[r, bend right, "f'"', ""{name=f2}]
       \ar[Rightarrow, from=f, to=f2, "\alpha"] &
       \bullet \ar[r, "g"] &
       \bullet \ar[r, "h"] & \bullet
      \end{tikzcd}
     \]
     of $0$-composable cells $\alpha$, $g$ and $h$ of $A$
     we impose the $3$-cells
     \begin{gather*}
        F_{\treeV}(h, g) \comp_0 F_{\treeLog}(f')
        \comp_1 F_{\treeVRight}(h\comp_0 g, \alpha)\\
        \comp_2\\
        F_{\treeLog}(h) \comp_0 F_{\treeLog}(g) \comp_0 F_{\treeLL}(\alpha)
        \comp_1 F_{\treeW}(h, g, f)
     \end{gather*}
     and
     \begin{gather*}
        F_{\treeW}(h, g, f') \comp_1 F_{\treeLL}(h \comp_0 g \comp_0 \alpha)\\
        \comp_2\\
        F_{\treeLog}(h) \comp_0 F_{\treeV}(g, f')
        \comp_1 F_{\treeVRight}(h, g \comp_0 \alpha)\\
        \comp_2\\
        F_{\treeLog}(h) \comp_0 F_{\treeVRight}(g, \alpha)
        \comp_1 F_{\treeV}(h, g \comp_0 f)
     \end{gather*}
     of $B$ to be equal.
     
    \item[%
        \scalebox{0.3}{
            \begin{forest}
                for tree={%
                    label/.option=content,
                    grow=north,
                    content=,
                    circle,
                    fill,
                    minimum size=3pt,
                    inner sep=0pt,
                    s sep+=15,
                }
                [
                    [
                        [] []
                    ]
                    []
                ]
            \end{forest}
        }
     ] For any triple
     \[
      \begin{tikzcd}[column sep=4.5em]
       \bullet
       \ar[r, bend left=55, looseness=1.3, "f", ""{below, name=f1}]
       \ar[r, "f'"{description}, ""{name=f2u}, ""{below, name=f2d}]
       \ar[r, bend right=50, looseness=1.3, "f''"', ""{name=f3}]
       \ar[Rightarrow, from=f1, to=f2u, "\alpha"]
       \ar[Rightarrow, from=f2d, to=f3, "\beta"]
       &
       \bullet \ar[r, "g"] & \bullet
      \end{tikzcd}
     \]
     of cells $\alpha$, $\beta$ and $g$ of $A$
     we impose the equality of the $3$-cells
     \[
      F_{\treeV}(g, f'') \comp_1 F_{\treeVRight}(g, \beta \comp_1 \alpha)
     \]
     and
     \[
      F_{\treeVRight}(g, \beta) \comp_1 F_{\treeLL}(g \comp_0 \alpha)\ 
        \comp_2\ 
        F_{\treeLog}(g) \comp_0 F_{\treeLL}(\beta)
        \comp_1 F_{\treeVRight}(g, \alpha)
     \]
     of $B$.
     
     \item[%
        \scalebox{0.3}{
            \begin{forest}
                for tree={%
                    label/.option=content,
                    grow'=north,
                    content=,
                    circle,
                    fill,
                    minimum size=3pt,
                    inner sep=0pt,
                    s sep+=15,
                }
                [
                    [
                        [] []
                    ]
                    []
                ]
            \end{forest}
        }
     ] For any triple
     \[
      \begin{tikzcd}[column sep=4.5em]
        \bullet \ar[r, "f"] &
        \bullet
        \ar[r, bend left=55, looseness=1.3, "g", ""{below, name=g1}]
       \ar[r, "f'"{description}, ""{name=g2u}, ""{below, name=g2d}]
       \ar[r, bend right=50, looseness=1.3, "g''"', ""{name=g3}]
       \ar[Rightarrow, from=g1, to=g2u, "\alpha"]
       \ar[Rightarrow, from=g2d, to=g3, "\beta"]
       &
       \bullet
      \end{tikzcd}
     \]
     of cells $\alpha$, $\beta$ and $g$ of $A$ we impose
     that the $3$-cells
     \[
        F_{\treeVLeft}(\beta \comp_1 \alpha, f)
     \]
     and
     \[
        F_{\treeLL}(\beta) \comp_0 F_{\treeLog}(f)
        \comp_1 F_{\treeVLeft}(\alpha, f)\ 
        \comp_2\ 
        F_{\treeVLeft}(\beta, f) \comp_1 F_{\treeLL}(\alpha \comp_0 f)
     \]
     of $B$ to be equal.
     
     \item[%
        \scalebox{0.3}{
            \begin{forest}
                for tree={%
                    label/.option=content,
                    grow'=north,
                    content=,
                    circle,
                    fill,
                    minimum size=3pt,
                    inner sep=0pt,
                    s sep+=15,
                }
                [
                    [ [] ]
                    [ [] ]
                ]
            \end{forest}
        }
     ] Notice first that for any pair
     \[
      \begin{tikzcd}[column sep=4.5em]
        \bullet
        \ar[r, bend left, "f", ""{below, name=f1}]
        \ar[r, bend right, "f'"', ""{name=f2}]
        \ar[Rightarrow, from=f1, to=f2, "\alpha"]
        &
        \bullet
        \ar[r, bend left, "g", ""{below, name=g1}]
        \ar[r, bend right, "g'"', ""{name=g2}]
        \ar[Rightarrow, from=g1, to=g2, "\beta"]
        & \bullet
      \end{tikzcd}
     \]
     of $0$-composable $2$-cells $\alpha$ and $\beta$ of $A$,
     we have an equality of $2$-cells
     \[
      \begin{tikzcd}[column sep=-5.5em]
       \null &
       F_{\treeLL}(g' \comp_0 \alpha \comp_1  \beta \comp_0 f)
       \ar[ldd, equal, "{\treeY}"']
       = F_{\treeLL}(\beta \comp_0 f' \comp_1 g \comp_0 \alpha)
       \ar[rdd, equal, "{\treeY}"]
       & \null \\ \\
       F_{\treeLL}(g' \comp_0 \alpha) \comp_1 F_{\treeLL}(\beta \comp_0 f)
       & \null &
       F_{\treeLL}(\beta \comp_0 f') \comp_1 F_{\treeLL}(g \comp_0 \alpha)
      \end{tikzcd}
     \]
     of $B$, where the equality in the higher row is just the exchange law.
     We shall denote by $F_{\text{ex}}(\beta, \alpha)$ the identity $3$-cell
     going from $F_{\treeLL}(g' \comp_0 \alpha) \comp_1 F_{\treeLL}(\beta \comp_0 f)$
     to $F_{\treeLL}(\beta \comp_0 f') \comp_1 F_{\treeLL}(g \comp_0 \alpha)$.

     For any pair of $2$-cells $\alpha$ and $\beta$ of $A$
     as above, we impose the $3$-cells
     \[
        F_{\treeLL}(\beta) \comp_0 F_{\treeLog}(f')
        \comp_1 F_{\treeVRight}(g, \alpha)\ 
        \comp_2\ 
        F_{\treeLog}(g') \comp_0 F_{\treeLL}(\alpha)
        \comp_1 F_{\treeVLeft}(\beta, f)
     \]
     and
     \[
        F_{\treeVLeft}(\beta, f') \comp_1 F_{\treeLL}(g\comp_0 \alpha)\ 
        \comp_2\ 
        F_{\treeV}(g', f') \comp_1 F_{\text{ex}}(\beta, \alpha)\ 
        \comp_2\ 
        F_{\treeVRight}(g', \alpha) \comp_1 F_{\treeLL}(\beta \comp_0 f)
     \]
     of $B$ to be equal. Since $F_{\text{ex}}(\beta, \alpha)$ is a trivial
     $3$-cell, this coherence is actually imposing the equality between
     the $3$-cells
     \[
      F_{\treeLL}(\beta) \comp_0 F_{\treeLog}(f')
        \comp_1 F_{\treeVRight}(g, \alpha)\ 
        \comp_2\ 
        F_{\treeLog}(g') \comp_0 F_{\treeLL}(\alpha)
        \comp_1 F_{\treeVLeft}(\beta, f)
     \]
     and
     \[
        F_{\treeVLeft}(\beta, f') \comp_1 F_{\treeLL}(g\comp_0 \alpha)\ 
        \comp_2\ 
        F_{\treeVRight}(g', \alpha) \comp_1 F_{\treeLL}(\beta \comp_0 f)
     \]
     of $B$.

     \item[%
     \scalebox{0.3}{
     	\begin{forest}
     		for tree={%
     			label/.option=content,
     			grow'=north,
     			content=,
     			circle,
     			fill,
     			minimum size=3pt,
     			inner sep=0pt,
     			s sep+=15,
     		}
     		[
     		[
     		[] [] []
     		]
     		]
     	\end{forest}
     }
     ] For any
     triple
     \[
     \begin{tikzcd}[column sep=4.7em]
     \bullet
     \ar[r, bend left=80, looseness=1.6, ""{below, name=1}]
     \ar[r, bend left, ""{name=2u}, ""{below, name=2d}]
     \ar[r, bend right, ""{name=3u}, ""{below, name=3d}]
     \ar[r, bend right=80, looseness=1.6, ""{name=4}]
     \ar[Rightarrow, from=1, to=2u, "\alpha"]
     \ar[Rightarrow, from=2d, to=3u, "\beta"]
     \ar[Rightarrow, from=3d, to=4, "\gamma"] &
     \bullet
     \end{tikzcd}
     \]
     of $1$-composable $2$-cells $\alpha$, $\beta$ and $\gamma$
     of $A$ we have the equalities between the identity $3$-cell
     \[
     F_{\treeY}(\gamma, \beta\comp_1 \alpha) \comp_2
     F_{\treeLL}(\gamma) \comp_1 F_{\treeY}(\beta, \alpha)
     \]
     and the identity $3$-cell
     \[
     F_{\treeY}(\gamma\comp_1 \beta, \alpha) \comp_2
     F_{\treeY}(\gamma, \beta) \comp_1 F_{\treeLL}(\alpha)
     \]
     of $B$.

     \item[%
     \scalebox{0.3}{
     	\begin{forest}
     		for tree={%
     			label/.option=content,
     			grow=north,
     			content=,
     			circle,
     			fill,
     			minimum size=3pt,
     			inner sep=0pt,
     			s sep+=15,
     		}
     		[
     		[
     		[
     		[] []
     		]
     		]
     		]
     	\end{forest}
     }
     ] For any pair
     \[
     \begin{tikzcd}[column sep=7em]
     \bullet
     \ar[r, bend left=60, looseness=1.2, "\phantom{bullet}"{below, name=1}]
     \ar[r, bend right=60, looseness=1.2, "\phantom{bullet}"{name=3}]
     \ar[Rightarrow, from=1, to=3, shift right=4ex, bend right, ""{name=beta1}]
     \ar[Rightarrow, from=1, to=3, ""'{name=beta2d}, ""{name=beta2u}]
     \ar[Rightarrow, from=1, to=3, shift left=4ex, bend left, ""'{name=beta3}]
     \arrow[triple, from=beta1, to=beta2d, "\gamma"]{}
     \arrow[triple, from=beta2u, to=beta3, "\delta"]{}
     &
     \bullet
     \end{tikzcd}
     \]
     of $2$-composable $3$-cells $\gamma$ and $\delta$ of $A$ we impose
     the equality
     \[
     F_{\treeLLL}(\delta \comp_2 \gamma) = F_{\treeLLL}(\delta) \comp_2 F_{\treeLLL}(\gamma)
     \]
     between these two $3$-cells of $B$.

     \item[%
     \scalebox{0.3}{
     	\begin{forest}
     		for tree={%
     			label/.option=content,
     			grow=north,
     			content=,
     			circle,
     			fill,
     			minimum size=3pt,
     			inner sep=0pt,
     			s sep+=15,
     		}
     		[
     		[
     		[] [ [] ]
     		]
     		]
     	\end{forest}
     }
     ] For any pair
     \[
     \begin{tikzcd}[column sep=5em]
     \bullet
     \ar[r, bend left=60, looseness=1.2, ""{below, name=1}]
     \ar[r, ""{name=2u}, ""{below, name=2d}]
     \ar[r, bend right=60, looseness=1.2, ""{name=3}]
     \ar[Rightarrow, from=1, to=2u, "\alpha"]
     \ar[Rightarrow, from=2d, to=3, shift right=2.6ex, ""{name=beta1}]
     \ar[Rightarrow, from=2d, to=3, shift left=2.6ex, ""'{name=beta2}]
     \arrow[triple, from=beta1, to=beta2, "\gamma"]{}
     &
     \bullet
     \end{tikzcd}
     \]
     of $1$-composable cells $\alpha$ and $\gamma$ of $A$, we impose that
     the $3$-cells
     \[
     F_{\treeLLL}(\gamma) \comp_1 F_{\treeLL}(\alpha)
     \]
     and
     \[
     F_{\treeLLL}(\gamma \comp_1 \alpha)
     \]
     of $B$ are equal.

     \item[%
     \scalebox{0.3}{
     	\begin{forest}
     		for tree={%
     			label/.option=content,
     			grow=north,
     			content=,
     			circle,
     			fill,
     			minimum size=3pt,
     			inner sep=0pt,
     			s sep+=15,
     		}
     		[
     		[
     		[ [] ] []
     		]
     		]
     	\end{forest}
     }
     ] For any pair
     \[
     \begin{tikzcd}[column sep=5em]
     \bullet
     \ar[r, bend left=60, looseness=1.2, ""{below, name=1}]
     \ar[r, ""{name=2u}, ""{below, name=2d}]
     \ar[r, bend right=60, looseness=1.2, ""{name=3}]
     \ar[Rightarrow, from=2d, to=3, "\beta"]
     \ar[Rightarrow, from=1, to=2u, shift right=2.6ex, ""{name=beta1}]
     \ar[Rightarrow, from=1, to=2u, shift left=2.6ex, ""'{name=beta2}]
     \arrow[triple, from=beta1, to=beta2, "\gamma"]{}
     &
     \bullet
     \end{tikzcd}
     \]
     of $1$-composable cells $\gamma$ and $\beta$ of $A$, we impose that
     the $3$-cells
     \[
     F_{\treeLL}(\beta) \comp_1 F_{\treeLLL}(\gamma)
     \]
     and
     \[
     F_{\treeLLL}(\beta \comp_1 \gamma)
     \]
     of $B$ are equal.

    \item[%
     \scalebox{0.3}{
     	\begin{forest}
     		for tree={%
     			label/.option=content,
     			grow=north,
     			content=,
     			circle,
     			fill,
     			minimum size=3pt,
     			inner sep=0pt,
     			s sep+=15,
     		}
     		[
     		[]
     		[ [ [] ] ]
     		]
     	\end{forest}
     }
     ] For any pair
     \[
     \begin{tikzcd}[column sep=5em]
     \bullet
     \ar[r, "f"]
     &
     \bullet
     \ar[r, bend left=60, looseness=1.2, "g", "\phantom{bullet}"'{name=1}]
     \ar[r, bend right=60, looseness=1.2, "g'"', "\phantom{bullet}"{name=3}]
     \ar[Rightarrow, from=1, to=3, shift right=2ex, bend right, ""{name=beta1}]
     \ar[Rightarrow, from=1, to=3, shift left=2ex, bend left, ""'{name=beta3}]
     \arrow[triple, from=beta1, to=beta3, "\Gamma"]{}
     &
     \bullet
     \end{tikzcd}
     \]
     of $0$-composable cells $f$ and $\Gamma$ of $A$, we impose the equality
     \[
     F_{\treeV}(g', f) \comp_1 F_{\treeLLL}(\Gamma \comp_0 f) = F_{\treeLLL}(\Gamma) \comp_0 F_{\treeLog}(f) \comp_1 F_{\treeV}(g, f)
     \]
     between these two $3$-cells of $B$.

     \item[%
     \scalebox{0.3}{
     	\begin{forest}
     		for tree={%
     			label/.option=content,
     			grow=north,
     			content=,
     			circle,
     			fill,
     			minimum size=3pt,
     			inner sep=0pt,
     			s sep+=15,
     		}
     		[
	     		[
		     		[ [] ]
	     		]
	     		[]
     		]
     	\end{forest}
     }
     ] For any pair
     \[
     \begin{tikzcd}[column sep=5em]
     \bullet
     \ar[r, bend left=60, looseness=1.2, "f", "\phantom{bullet}"'{name=1}]
     \ar[r, bend right=60, looseness=1.2, "f'"', "\phantom{bullet}"{name=3}]
     \ar[Rightarrow, from=1, to=3, shift right=2ex, bend right, ""{name=beta1}]
     \ar[Rightarrow, from=1, to=3, shift left=2ex, bend left, ""'{name=beta3}]
     \arrow[triple, from=beta1, to=beta3, "\Gamma"]{}
     &
     \bullet
     \ar[r, "g"]
     &
     \bullet
     \end{tikzcd}
     \]
     of $0$-composable cells $\Gamma$ and $g$ of $A$, we impose the equality
     \[
	     F_{\treeV}(g, f') \comp_1 F_{\treeLLL}(g \comp_0 \Gamma) = F_{\treeLog}(g) \comp_0 F_{\treeLLL}(\Gamma) \comp_1 F_{\treeV}(g, f)
     \]
     between these two $3$-cells of $B$.     
	\end{description}

	\begin{rem}
		Let us clarify a bit better why we need the data for the tree $\treeY$
		representing the vertical composition of two $2$-cells to be trivial.
		In fact, one would expect that
		for any pair of $1$-composable $2$-cells $x$ and $y$ of $A$,
		a (normalised) oplax $3$-functor would associate a $3$-cell
		\[ F_{\treeY}(x, y) \colon F_{\treeLL}(x \comp_1 y) \to F_{\treeLL}(x) \comp_1 F_{\treeLL}(y). \] 
		
		At the same time, for any pair
		\[
		\begin{tikzcd}[column sep=4.5em]
		\bullet
		\ar[r, bend left, "f", ""{below, name=f1}]
		\ar[r, bend right, "f'"', ""{name=f2}]
		\ar[Rightarrow, from=f1, to=f2, "\alpha"]
		&
		\bullet
		\ar[r, bend left, "g", ""{below, name=g1}]
		\ar[r, bend right, "g'"', ""{name=g2}]
		\ar[Rightarrow, from=g1, to=g2, "\beta"]
		& \bullet
		\end{tikzcd}
		\]
		of $0$-composable $2$-cells $\alpha$ and $\beta$ of $A$,
		the coherence for the tree $\treeVLR$ should express
		a relationship among $F_{\treeVRight}(g\comp_0 \alpha)$,
		$F_{\treeVLeft}(\beta \comp_0 f)$, $F_{\treeVRight}(g'\comp_0 \alpha)$
		and $F_{\treeVLeft}(\beta \comp_0 f')$.
		On the one hand, we can compose
		\[
		 \bigl(F_{\treeLL}(\beta) \comp_0 F_{\treeLog}(f')
		 \comp_1 F_{\treeVRight}(g, \alpha)\bigr)\ 
		 \comp_2\ 
		 \bigl(F_{\treeLog}(g') \comp_0 F_{\treeLL}(\alpha)
		 \comp_1 F_{\treeVLeft}(\beta, f)\bigr),
		\]
		getting a $3$-cell from
		\[
		 F_{\treeL}(g') \comp_0 F_{\treeLL}(\alpha) \comp_1 F_{\treeV}(g', f) \comp_1 F_{\treeLL}(\beta \comp_0 f)
		\]
		to
		\[
		 F_{\treeLL}(\beta)\comp_0 F_{\treeL}(f') \comp_1 F_{\treeV}(g, f') \comp_1 F_{\treeLL}(g \comp_0 \alpha).
		\]
		On the other hand, we have the $3$-cell
		\begin{equation}\label{cell:VR}
		 F_{\treeVRight}(g', \alpha) \comp_1 F_{\treeLL}(\beta \comp_0 f), \tag{A}
		\end{equation}
		with source
		\[
		 F_{\treeL}(g') \comp_0 F_{\treeLL}()\alpha) \comp_1 F_{\treeV}(g', f) \comp_1 F_{\treeLL}(\beta \comp_0 f)
		\]
		and target
		\[
		 F_{\treeV}(g', f') \comp_1  F_{\treeL}(g'\comp_0 \alpha) \comp_1 F_{\treeLL}(\beta \comp_0 f),
		\]
		as well as the $3$-cell
		\begin{equation}\label{cell:VL}
		 F_{\treeVLeft}(\beta, f') \comp_1 F_{\treeLL}(g\comp_0 \alpha), \tag{B}
		\end{equation}
		with source
		\[
		 F_{\treeV}(g', f') \comp_1  F_{\treeL}(\beta\comp_0 f') \comp_1 F_{\treeLL}(g \comp_0 \alpha)
		\]
		and target
		\[
		 F_{\treeLL}(\beta)\comp_0 F_{\treeL}(f') \comp_1 F_{\treeV}(g, f') \comp_1 F_{\treeLL}(g \comp_0 \alpha).
		\]
		The $3$-cells \eqref{cell:VR} and \eqref{cell:VL} are \emph{not} composable,
		since the target of the first one and the source of the second one
		are respectively the bottom left and the bottom right $2$-cells of
		the following diagram
		\[
		\begin{tikzcd}[column sep=-13.5em]
		\null &
		F_{\treeV}(g', f') \comp_1 F_{\treeLL}(g' \comp_0 \alpha \comp_1  \beta \comp_0 f)
		\ar[ldd, triple, "{\treeY}"']
		= F_{\treeV}(g', f') \comp_1 F_{\treeLL}(\beta \comp_0 f' \comp_1 g \comp_0 \alpha)
		\ar[rdd, triple, "{\treeY}"]
		& \null \\ \\
		F_{\treeV}(g', f') \comp_1 F_{\treeLL}(g' \comp_0 \alpha) \comp_1 F_{\treeLL}(\beta \comp_0 f)
		& \null &
		F_{\treeV}(g', f') \comp_1 F_{\treeLL}(\beta \comp_0 f') \comp_1 F_{\treeY}(g \comp_0 \alpha)
		\end{tikzcd}
		\]
		of $B$, where the equality in the higher row is just the exchange law.
		Unless the data for the tree $\treeY$ is trivial, it is impossible to
		provide a relationship among the $3$-cells listed above and involving
		the whiskerings.
	\end{rem}
	
	\begin{rem}
	 Gurski defines in~\cite{GurskiCoherence} a notion of lax trimorphism
	 between tricategories, see Definition~4.11 of~\loccit; one can easily
	 adapt and dualise suitably the definition and get a notion of
	 normalised oplax trimorphism between tricategories.
	 Consider two strict $3$-categories $A$ and $B$, 
	 a normalised oplax $3$-functor $F \colon A \to B$ and
	 a normalised oplax trimorphism $G \colon A \to B$.
	 There are two main
	 differences between Gurski's notion of normalised oplax trimorphism
	 and the notion of normalised oplax $3$-functor that we presented above.
	 \begin{itemize}
	  \item The first difference concerns the tree~$\treeY$. Gurski's
	  notion requires that, for any pair $(\beta, \alpha)$ of $1$-composable
	  $2$-cells of~$A$, there is a $3$-cell of~$B$ going from
	  $G(\beta \comp_1 \alpha)$ to $G(\beta) \comp_1 G(\alpha)$
	  which is \emph{not} invertible in general. It is essential
	  in our definition of normaised oplax $3$-functor that the
	  tree~$\treeY$ appears in the coherences. This is slightly
	  unnatural even from our arboreal point of view and it may be
	  reasonable to actually impose this condition on Gurski's
	  normalised oplax trimorphisms, in light of a comparison with
	  normalised oplax $3$-functors. Gurski calls this condition
      \emph{local strictness}. Notice that it implies in particular
      that~$F$ as well as~$G$ induce strict $2$-functors on the
      hom-$2$-categories.
	  
	  \item The second difference is deeper and somehow irreconcilable.
	  For any pair $(\beta, \alpha)$ of $0$-composable $2$-cells of~$A$,
	  the normalised oplax trimorphism provides a $3$-cell
	  \[
	   G_{\treeVLR}(\beta, \alpha) \colon 
	    G(t_1(\beta), t_1(\alpha)) \comp_1 G(\beta \comp_0 \alpha) \to
	    \bigl(G(\beta) \comp_0 G(\alpha)\bigr) \comp_1 G(s_1(\beta), s_1(\alpha))\,.
	  \]
	  This is incompatible with the algebra of the orientals,
	  as we shall better explain in the following section.
	  In fact, a normalised oplax $3$-functor has the tree~$\treeVLR$
	  as a coherence and instead the trees~$\treeVLeft$ and~$\treeVRight$
	  as part of the data. But these pieces of data are symmetric
	  (or better dual), hence they cannot fit as a particular case
	  of a single lax or oplax datum for~$\treeVLR$.
	 \end{itemize}

	\end{rem}

	\begin{exem}\label{exem:sup}
		Let $C$ be a $3$-category. We now define a normalised oplax $3$-functor
		$\sup \colon i_{\cDelta}(N_{3}(C)) \to C$ (cf.~\hyperref[notation]{Notations and Terminology}).
		\begin{description}
			\item[$\TreeDot\ \ $] The map $\sup_{\treeDot}$ is defined by mapping
			an object $(a, x)$, where $x \colon \On{m} \to C$, to $x(\atom{m})$.
			
			\item[$\TreeLog\ \ \:$] The map $\sup_{\treeLog}$ assigns to any
			morphism $f \colon (a, x) \to (b, y)$
			of $i_{\cDelta}(N_{3}(C))$, where $x \colon \On{m} \to C$ and
			$y \colon \On{n} \to C$, the $1$-cell $x'(\atom{f(m), n})$ of $C$.
			
			\item[$\TreeV\ $] The map $\sup_{\treeV}$ assigns to any
			pair of composable morphisms
			\[
			\begin{tikzcd}
			(a, x) \ar[r, "f"] & (b, y) \ar[r, "g"] &  (c, z)
			\end{tikzcd}
			\]
			of $i_{\cDelta}(N_{3}(C))$,
			with $x \colon \On{m} \to C$, $y \colon \On{n} \to C$ and $z \colon \On{p} \to C$,
			the $2$-cell $\sup_{\treeV}(g, f)$ of $C$ given by
			\[
			z\bigl(\atom{gf(m), g(n), p}\bigr)
			\]
			
			\item[$\TreeW\ $] The map $\sup_{\treeW}$,
			assigns to any triple of composable morphisms
			\[
			\begin{tikzcd}[column sep=small]
			(a,x) \ar[r, "f"] & (b, y) \ar[r, "g"] & (c, z) \ar[r, "h"] & (d, t)
			\end{tikzcd}
			\]
			of $i_{\cDelta}(N_{3}(C))$,
			with $x \colon \On{m} \to C$, $y \colon \On{n} \to C$, $z \colon \On{p} \to C$ and
			$t \colon \On{q} \to C$,
			the $3$-cell $\sup_{\treeW}(h, g, f)$ of $C$
			given by
			\[
			t\bigl(\atom{hgf(m), hg(n), h(p), q}\bigr)
			\]
		\end{description}
		
		Notice that by definition we have that $1_{\sup_{\treeDot}(a, x)}$
		is precisely $\sup_{\treeL}(1_{(a, x)})$ and that
		the other conditions of normalisation are equally trivial by definition.
		We now check the
		coherence for the tree $\treeVV$.
		
		Consider four composable morphisms of~$i_{\cDelta}(N_{3}(C))$
		\[
		\begin{tikzcd}[column sep=small]
		(a, x) \ar[r, "f"] &
		(b, y) \ar[r, "g"] &
		(c, z) \ar[r, "h"] &
		(d, t) \ar[r, "i"] &
		(e, w)
		\end{tikzcd}\ ,
		\]
		with $x \colon \On{m} \to C$, $y \colon \On{n} \to C$, $z \colon \On{p} \to C$,
		$t \colon \On{q} \to C$ and $w \colon \On{r} \to C$.
		We have to show that the $3$-cells
		\begin{gather*}
		\textstyle
		\sup_{\treeW}(i, h, g) \comp_0 \sup_{\treeLog}(f) \comp_1\sup_{\treeV}(i\comp_0 h \comp_0 g, f)\\
		\comp_2\\
		\textstyle
		\sup_{\treeLog}(i) \comp_0 \sup_{\treeV}(h, g) \comp_0 \sup_{\treeLog}(f) \comp_1
		\sup_{\treeW}(i, h \comp_0 g, f)\\
		\comp_2 \\
		\textstyle
		\sup_{\treeLog}(i) \comp_0 \sup_{\treeW}(h, g, f) \comp_1 \sup_{\treeV}(i, h\comp_0 g \comp_0 f)
		\end{gather*}
		and
		\begin{gather*}
		\textstyle
		\sup_{\treeV}(i, h) \comp_0 \sup_{\treeLog}(g) \comp_0 \sup_{\treeLog}(f)
		\comp_1 \sup_{\treeW}(ih, g, f) \\
		\comp_2\\
		\textstyle
		\sup_{\treeLog}(i) \comp_0 \sup_{\treeLog}(h) \comp_0 \sup_{\treeV}(g, f)
		\comp_1 \sup_{\treeW}(i, h, gf)\,.
		\end{gather*}
		But these two $3$-cells are precisely the target and the source of
		the main $4$-cell of $\On{4}$ via the \oo-functor $\On{4} \xto{\phi} \On{r} \xto{w} C$,
		where
		\[
		\phi = \On{\{ihgf(m), ihg(n), ih(p), i(q), r\}}\,.
		\]
	\end{exem}
\end{paragr}

\subsection{From cellular to simplicial}\label{section:cellular-to-simplicial}

In this subsection we shall show that to any normalised oplax $3$-functor
$F \colon B \to C$ there is a canonically associated
morphism $\Nl(F) \colon \SN(B) \to \SN(C)$ of simplicial sets.
Throughout this section we shall sometimes
write oplax $3$-functor for normalised oplax $3$-functor,
since no confusion is possible.

\begin{paragr}\label{paragr:def_cellular_to_simplicial}
 Let $A$ be a $1$-category and $B$ and $C$ be two $3$\nbd-cat\-egories.
 Fix two oplax $3$\nbd-func\-tors $F \colon A \to B$ and $G \colon B \to C$.
 We now define a candidate $GF$ for the composition of $F$ and $G$
 and we dedicate the rest of the subsection to prove the coherences.
 
 Since $A$ is a $1$-category, the amount of data
 that we have to provide in order to define an
 oplax $3$-functor, \ie the trees of dimension
 less than~$3$, is limited to the trees $\TreeDot$, $\TreeLog$, $\TreeV$
 and $\TreeW$. The $3$-functor $GF$ is defined as follows:
 
	\begin{description}
		\item[%
		\scalebox{0.3}{
			\begin{forest}
				for tree={%
					label/.option=content,
					grow'=north,
					content=,
					circle,
					fill,
					minimum size=3pt,
					inner sep=0pt,
					s sep+=15,
				}
				[ ]
			\end{forest}
		}
		] The map $GF_{\treeDot}$ assigns
		to any object $a$ of $A$ the object $G_{\treeDot}(F_{\treeDot}(a))$ of $C$.
		
		\item[%
		\scalebox{0.3}{
			\begin{forest}
				for tree={%
					label/.option=content,
					grow'=north,
					content=,
					circle,
					fill,
					minimum size=3pt,
					inner sep=0pt,
					s sep+=15,
				}
				[ [] ]
			\end{forest}
		}
		] The map $GF_{\treeLog}$ assigns to any $1$-cell $f \colon a \to a'$
		of $A$, \ie any tree of $A_{\treeLog}$,
		the $1$-cell
		\[G_{\treeLog}(F_{\treeLog}(f)) \colon GF_{\treeDot}(a) \to GF_{\treeDot}(a')\]
		of $C$.
		
		\item[%
		\scalebox{0.3}{
			\begin{forest}
				for tree={%
					label/.option=content,
					grow'=north,
					content=,
					circle,
					fill,
					minimum size=3pt,
					inner sep=0pt,
					s sep+=15,
				}
				[ [][] ]
			\end{forest}
		}
		] The map $GF_{\treeV}$ assigns to any
		pair of $0$\hyp{}composable $1$\hyp{}cells
			\[
			 \begin{tikzcd}
			  a \ar[r, "f"] & a' \ar[r, "g"] &  a''
			 \end{tikzcd}
			\]
			of $A$ the $2$-cell $GF_{\treeV}(g, f)$
			\[
			 \begin{tikzcd}[row sep=1.35em]
                a^{\phantom\prime}
                \ar[rr, bend left=75, "{GF_{\treeLog}(gf)}", ""'{name=f}]
                \ar[rr, "{G_{\treeLog}(F_{\treeLog}(g)F_{\treeLog}(f))}"{description, name=g}]
                \ar[rd, bend right, "{GF_{\treeLog}(f)}"']
                &&
                    a''
                \\
                & a'
                    \ar[ru, bend right, "{GF_{\treeLog}(g)}"']
                &
                \ar[Rightarrow, from=f, to=g, shorten <=1mm, shorten >=2mm, "\alpha"]
                \ar[Rightarrow, from=g, to=2-2, shorten <=1mm, pos=0.4, "\beta" near end]
            \end{tikzcd}
			\]
			of $C$, where $\alpha = G_{\treeLL}(F_{\treeV}(g, f))$
			and $\beta = G_{\treeV}(F_{\treeLog}(g), F_{\treeLog}(f))$, that is
			\[
			 G_{\treeV}(F_{\treeLog}(g), F_{\treeLog}(f)) \comp_1 G_{\treeLL}(F_{\treeV}(g, f))\,.
			\]

		\item[%
		\scalebox{0.3}{
			\begin{forest}
				for tree={%
					label/.option=content,
					grow'=north,
					content=,
					circle,
					fill,
					minimum size=3pt,
					inner sep=0pt,
					s sep+=15,
				}
				[ [][][] ]
			\end{forest}
		}
		] The map $GF_{\treeW}$,
		assigns to any triple of $0$-composable $1$ cells
		\[
		 \begin{tikzcd}[column sep=small]
		  a \ar[r, "f"] & a' \ar[r, "g"] & a'' \ar[r, "h"] & a'''
		 \end{tikzcd}
		\]
		of $A$ the $3$-cell $GF_{\treeW}(h, g, f)$
		defined as
		\begin{gather*}
			G_{\treeV}(F_{\treeLog}h, F_{\treeLog}g) \comp_0 GF_{\treeLog}(f) \comp_1
			G_{\treeVLeft}(F_{\treeV}(h, g), F_{\treeLog}f) \comp_1
			G_{\treeLL}(F_{\treeV}(hg, f)\\
			\comp_2\\
			G_{\treeW}(F_{\treeLog}h, F_{\treeLog}g, F_{\treeLog}f) \comp_1
			G_{\treeLLL}(F_{\treeW}(h, g, f)) \\
			\comp_2\\
			GF_{\treeLog}(h) \comp_0 G_{\treeV}(F_{\treeLog}g, F_{\treeLog}f) \comp_1
			G_{\treeVRight}(F_{\treeLog}h, F_{\treeV}(g, f)) \comp_1
			G_{\treeLL}(F_{\treeV}(h, gf))\,.
		\end{gather*}
		Notice that for the $3$-cell $G_{\treeLLL}(F_{\treeW}(h, g, f))$ in the second line
		we are implicitly using the coherence $G_{\treeY}$, as we use the equality
		\[
		 G_{\treeLL}(F_{\treeLog}h \comp_0 F_{\treeV}(g, f)) \comp_1
		 G_{\treeLL}(F_{\treeV}(h, gf)) =
		 G_{\treeLL}(F_{\treeLog}h \comp_0 F_{\treeV}(g, f) \comp_1 F_{\treeV}(h, gf))
		\]
		as well as the equality
		\[
		 G_{\treeLL}(  F_{\treeV}(h, g) \comp_0 F_{\treeLog}f) \comp_1
		 G_{\treeLL}(F_{\treeV}(hg, f)) =
		 G_{\treeLL}(F_{\treeV}(h, g) \comp_0 F_{\treeLog}f \comp_1 F_{\treeV}(hg, f))
		\]
		of $2$-cells of $A$, which are respectively the source and target of the $3$-cell
		$G_{\treeLLL}(F_{\treeW}(h, g, f))$.
	\end{description}
\end{paragr}

\begin{paragr}
	The conditions of normalisation for the composite $GF$ are tedious but straightforward.
	We give here a few examples and we leave the other similar verifications to the reader.
	
	\begin{description}
	\item[%
	\scalebox{0.3}{
		\begin{forest}
			for tree={%
				label/.option=content,
				grow'=north,
				content=,
				circle,
				fill,
				minimum size=3pt,
				inner sep=0pt,
				s sep+=15,
			}
			[ [] ] 
		\end{forest}
	}
	] For any $0$-cell $a$ of $A$ we have
	\[
	GF_{\treeLL}(1_a) = s^0_1(GF_{\treeL}(a)) = 1_{GF_{\treeL}(a)}\,.
	\]
	
	\item[%
	\scalebox{0.3}{
		\begin{forest}
			for tree={%
				label/.option=content,
				grow'=north,
				content=,
				circle,
				fill,
				minimum size=3pt,
				inner sep=0pt,
				s sep+=15,
			}
			[ [][] ] 
		\end{forest}
	}
	] For any $1$-cell $f \colon a \to a'$ of $A$, we have
	\[
	 G_{\treeV}\bigl(F_{\treeL}(f), F_{\treeL}(1_a)\bigr) = 
	 G_{\treeV}\bigl(F_{\treeL}(f), 1_{F_{\treeDot}(a)}\bigr) = 1_{G_{\treeL}(F_{\treeL}(f))}
	 = 1_{GF_{\treeL}(f)}
	\]
	and also
	\[
	 G_{\treeLL}\bigl(F_{\treeV}(f, 1_a)\bigr) =
	 G_{\treeLL}(1_{F_{\treeL}(f)}) = 1_{G_{\treeL}(F_{\treeL}(f))}
	 = 1_{GF_{\treeL}(f)}\,,
	\]
	so that
	\[
	 GF_{\treeV}(f, 1_a) = 1_{GF_{\treeL}(f)} \comp_1 1_{GF_{\treeL}(f)} = 1_{GF_{\treeL}(f)}\,.
	\]
	
	\item[%
	\scalebox{0.3}{
		\begin{forest}
			for tree={%
				label/.option=content,
				grow'=north,
				content=,
				circle,
				fill,
				minimum size=3pt,
				inner sep=0pt,
				s sep+=15,
			}
			[ [][][] ] 
		\end{forest}
	}
	] For any pair $a \xto{f} a' \xto{g} a''$ of $1$-cells of $A$, we have
	\[
	 G_{\treeVLeft}\bigl(F_{\treeV}(g, f), F_{\treeL}(1_a)\bigr) = 
	 G_{\treeVLeft}\bigl(F_{\treeV}(g, f), 1_{F_{\treeDot}(a)}\bigr) =
	 1_{G_{\treeLL}(F_{\treeV}(g, f))}
	\]
	and
	\[
	 G_{\treeW}\bigl(F_{\treeL}(g), F_{\treeL}(f), F_{\treeL}(1_a)\bigr) =
	 G_{\treeW}\bigl(F_{\treeL}(g), F_{\treeL}(f), 1_{F_{\treeDot}(a)}\bigr) =
	 1_{G_{\treeLL}(F_{\treeL}(g), F_{\treeL}(f))}
	\]
	and
	\[
	 G_{\treeLLL}\bigl(F_{\treeW}(g, f, 1_a)\bigr) = 
	 G_{\treeLLL}(1_{F_{\treeV}(g, f)}) = 1_{G_{\treeLL}(F_{\treeV}(g, f))}
	\]
	and
	\[
	 G_{\treeVRight}\bigl(F_{\treeL}(g), F_{\treeV}(f, 1_a)\bigr) =
	 G_{\treeVRight}\bigl(F_{\treeL}(g), 1_{F_{\treeL}(f)}\bigr) =
	 1_{F_{\treeV}(g, f)}\,.
	\]
	Hence, we get that
	\[
	 GF_{\treeW}(g, f, 1_a) =
	 1_{G_{\treeLL}(F_{\treeL}(g), F_{\treeL}(f))} \comp_1 1_{G_{\treeLL}(F_{\treeV}(g, f))}
	 = 1_{GF_{\treeV}(g, f)}\,.
	\]
	\end{description}
\end{paragr}

\begin{paragr}\label{paragr:pentagon_coherence}
	Since $A$ is a $1$-category, the only coherence we have to prove is the
	coherence associated to the tree
	\scalebox{0.3}{
		\begin{forest}
			for tree={%
				label/.option=content,
				grow'=north,
				content=,
				circle,
				fill,
				minimum size=3pt,
				inner sep=0pt,
				s sep+=15,
			}
			[
			[] [] [] []
			]
		\end{forest}
	}.
	Consider four composable $1$\nbd-cells of~$A$
	\[
	\begin{tikzcd}[column sep=small]
	 \bullet \ar[r, "f"] &
	 \bullet \ar[r, "g"] &
	 \bullet \ar[r, "h"] &
	 \bullet \ar[r, "i"] &
	 \bullet
	\end{tikzcd}\ .
	\]
	We have to show that the $3$-cells
		 \begin{gather*}
		    GF_{\treeW}(i, h, g) \comp_0 GF_{\treeLog}(f) \comp_1 GF_{\treeV}(i\comp_0 h \comp_0 g, f)\\
		     \comp_2\\
		     GF_{\treeLog}(i) \comp_0 GF_{\treeV}(h, g) \comp_0 GF_{\treeLog}(f) \comp_1
		     GF_{\treeW}(i, h \comp_0 g, f)\\
		     \comp_2 \\
		     GF_{\treeLog}(i) \comp_0 GF_{\treeW}(h, g, f) \comp_1 GF_{\treeV}(i, h\comp_0 g \comp_0 f)
	     \end{gather*}
	     and
	     \begin{gather*}
	     GF_{\treeV}(i, h) \comp_0 GF_{\treeLog}(g) \comp_0 GF_{\treeLog}(f)
	     \comp_1 GF_{\treeW}(ih, g, f) \\
	     \comp_2\\
	     GF_{\treeLog}(i) \comp_0 GF_{\treeLog}(h) \comp_0 GF_{\treeV}(g, f)
	     \comp_1 GF_{\treeW}(i, h, gf)
	     \end{gather*}
	     of $C$ are equal. The five $3$-cells involved in this compositions are:
	     \begin{enumerate}
	     	\item\label{item:h-g-f} the $3$-cell $GF_{\treeW}(h, g, f)$ of $C$, which is defined as
	     	\begin{gather*}
	     	G_{\treeV}(F_{\treeLog}h, F_{\treeLog}g) \comp_0 GF_{\treeLog}(f) \comp_1
	     	G_{\treeVLeft}(F_{\treeV}(h, g), F_{\treeLog}f) \comp_1
	     	G_{\treeLL}(F_{\treeV}(hg, f))\\
	     	\comp_2\\
	     	G_{\treeW}(F_{\treeLog}h, F_{\treeLog}g, F_{\treeLog}f) \comp_1
	     	G_{\treeLLL}(F_{\treeW}(h, g, f)) \\
	     	\comp_2\\
	     	GF_{\treeLog}(h) \comp_0 G_{\treeV}(F_{\treeLog}g, F_{\treeLog}f) \comp_1
	     	G_{\treeVRight}(F_{\treeLog}h, F_{\treeV}(g, f)) \comp_1
	     	G_{\treeLL}(F_{\treeV}(h, gf))\,,
	     	\end{gather*}
	     	which is a suitably whiskered $1$-composition of the $3$-cells
	     	\begin{enumerate}
	     		\item\label{item:h-g-f-1} $G_{\treeVRight}(F_{\treeLog}h, F_{\treeV}(g, f))$,
	     		\item\label{item:h-g-f-2} $G_{\treeW}(F_{\treeLog}h, F_{\treeLog}g, F_{\treeLog}f) \comp_1
	     		G_{\treeLLL}(F_{\treeW}(h, g, f))$,
	     		\item\label{item:h-g-f-3} $G_{\treeVLeft}(F_{\treeV}(h, g), F_{\treeLog}f)$,
	     	\end{enumerate}
	     	of $C$;
	     	
	     	\item\label{item:i-hg-f} the $3$-cell $GF_{\treeW}(i, h \comp_0 g, f)$ of $C$, which is defined as
	     	\begin{gather*}
	     	G_{\treeV}(F_{\treeLog}i, F_{\treeLog}hg) \comp_0 GF_{\treeLog}(f) \comp_1
	     	G_{\treeVLeft}(F_{\treeV}(i, hg), F_{\treeLog}f) \comp_1
	     	G_{\treeLL}(F_{\treeV}(ihg, f))\\
	     	\comp_2\\
	     	G_{\treeW}(F_{\treeLog}i, F_{\treeLog}hg, F_{\treeLog}f) \comp_1
	     	G_{\treeLLL}(F_{\treeW}(i, hg, f)) \\
	     	\comp_2\\
	     	GF_{\treeLog}(i) \comp_0 G_{\treeV}(F_{\treeLog}hg, F_{\treeLog}f) \comp_1
	     	G_{\treeVRight}(F_{\treeLog}i, F_{\treeV}(hg, f)) \comp_1
	     	G_{\treeLL}(F_{\treeV}(i, hgf))\,,
	     	\end{gather*}
	     	which is a suitably whiskered $1$-composition of the $3$-cells
	     	\begin{enumerate}
	     		\item\label{item:i-hg-f-1}\label{item:b1} $G_{\treeVRight}(F_{\treeLog}i, F_{\treeV}(hg, f))$,
	     		\item\label{item:i-hg-f-2}\label{item:b2} $G_{\treeW}(F_{\treeLog}i, F_{\treeLog}hg, F_{\treeLog}f) \comp_1
	     		G_{\treeLLL}(F_{\treeW}(i, hg, f))$,
	     		\item\label{item:i-hg-f-3}\label{item:b3} $G_{\treeVLeft}(F_{\treeV}(i, hg), F_{\treeLog}f)$,
	     	\end{enumerate}
	     	of $C$;
	     	
	     	\item\label{item:i-h-g} the $3$-cell $GF_{\treeW}(i, h, g)$ of $C$, which is defined as
	     	\begin{gather*}
	     	G_{\treeV}(F_{\treeLog}i, F_{\treeLog}h) \comp_0 GF_{\treeLog}(g) \comp_1
	     	G_{\treeVLeft}(F_{\treeV}(i, h), F_{\treeLog}g) \comp_1
	     	G_{\treeLL}(F_{\treeV}(ih, g))\\
	     	\comp_2\\
	     	G_{\treeW}(F_{\treeLog}i, F_{\treeLog}h, F_{\treeLog}g) \comp_1
	     	G_{\treeLLL}(F_{\treeW}(i, h, g)) \\
	     	\comp_2\\
	     	GF_{\treeLog}(i) \comp_0 G_{\treeV}(F_{\treeLog}h, F_{\treeLog}g) \comp_1
	     	G_{\treeVRight}(F_{\treeLog}i, F_{\treeV}(h, g)) \comp_1
	     	G_{\treeLL}(F_{\treeV}(i, hg))\,,
	     	\end{gather*}
	     	which is a suitably whiskered $1$-composition of the $3$-cells
	     	\begin{enumerate}
	     		\item\label{item:i-h-g-1} $G_{\treeVRight}(F_{\treeLog}i, F_{\treeV}(h, g))$,
	     		\item\label{item:i-h-g-2} $G_{\treeW}(F_{\treeLog}i, F_{\treeLog}h, F_{\treeLog}g) \comp_1
	     		G_{\treeLLL}(F_{\treeW}(i, h, g))$,
	     		\item\label{item:i-h-g-3} $G_{\treeVLeft}(F_{\treeV}(i, h), F_{\treeLog}g)$,
	     	\end{enumerate}
	     	of $C$;
	     	
	     	\item\label{item:i-h-gf} the $3$-cell $GF_{\treeW}(i, h, g)$ of $C$, which is defined as
	     	\begin{gather*}
	     	G_{\treeV}(F_{\treeLog}i, F_{\treeLog}h) \comp_0 GF_{\treeLog}(gf) \comp_1
	     	G_{\treeVLeft}(F_{\treeV}(i, h), F_{\treeLog}gf) \comp_1
	     	G_{\treeLL}(F_{\treeV}(ih, gf))\\
	     	\comp_2\\
	     	G_{\treeW}(F_{\treeLog}i, F_{\treeLog}h, F_{\treeLog}gf) \comp_1
	     	G_{\treeLLL}(F_{\treeW}(i, h, gf)) \\
	     	\comp_2\\
	     	GF_{\treeLog}(i) \comp_0 G_{\treeV}(F_{\treeLog}h, F_{\treeLog}gf) \comp_1
	     	G_{\treeVRight}(F_{\treeLog}i, F_{\treeV}(h, gf)) \comp_1
	     	G_{\treeLL}(F_{\treeV}(i, hgf))\,,
	     	\end{gather*}
	     	which is a suitably whiskered $1$-composition of the $3$-cells
	     	\begin{enumerate}
	     		\item\label{item:i-h-gf-1} $G_{\treeVRight}(F_{\treeLog}i, F_{\treeV}(h, gf))$,
	     		\item\label{item:i-h-gf-2} $G_{\treeW}(F_{\treeLog}i, F_{\treeLog}h, F_{\treeLog}gf) \comp_1
	     		G_{\treeLLL}(F_{\treeW}(i, h, gf))$,
	     		\item\label{item:i-h-gf-3} $G_{\treeVLeft}(F_{\treeV}(i, h), F_{\treeLog}gf)$,
	     	\end{enumerate}
	     	of $C$;
	     	
	     	\item\label{item:ih-g-f} the $3$-cell $GF_{\treeW}(ih, g, f)$ of $C$, which is defined as
	     	\begin{gather*}
	     	G_{\treeV}(F_{\treeLog}ih, F_{\treeLog}g) \comp_0 GF_{\treeLog}(f) \comp_1
	     	G_{\treeVLeft}(F_{\treeV}(ih, g), F_{\treeLog}f) \comp_1
	     	G_{\treeLL}(F_{\treeV}(ihg, f))\\
	     	\comp_2\\
	     	G_{\treeW}(F_{\treeLog}ih, F_{\treeLog}g, F_{\treeLog}f) \comp_1
	     	G_{\treeLLL}(F_{\treeW}i(h, g, f)) \\
	     	\comp_2\\
	     	GF_{\treeLog}(ih) \comp_0 G_{\treeV}(F_{\treeLog}g, F_{\treeLog}f) \comp_1
	     	G_{\treeVRight}(F_{\treeLog}ih, F_{\treeV}(g, f)) \comp_1
	     	G_{\treeLL}(F_{\treeV}(ih, gf))\,,
	     	\end{gather*}
	     	which is a suitably whiskered $1$-composition of the $3$-cells
	     	\begin{enumerate}
	     		\item\label{item:ih-g-f-1} $G_{\treeVRight}(F_{\treeLog}ih, F_{\treeV}(g, f))$,
	     		\item\label{item:ih-g-f-2} $G_{\treeW}(F_{\treeLog}ih, F_{\treeLog}g, F_{\treeLog}f) \comp_1
	     		G_{\treeLLL}(F_{\treeW}(ih, g, f))$,
	     		\item\label{item:ih-g-f-3} $G_{\treeVLeft}(F_{\treeV}(ih, g), F_{\treeLog}f)$,
	     	\end{enumerate}
	     	of $C$.
	     \end{enumerate}
	     
	     In summary, we have to show that the pentagon
	     \begin{center}
		  \begin{tikzpicture}[scale=1.5]
		   \foreach \i in {0,1,2,3,4} {
		   	\tikzmath{\a = 270 - (72 * \i);}
		   	\node (\i) at (\a:2) {$\bullet$};
		   }
		   \draw[->, >=latex] (1) -- node [left] {\ref{item:h-g-f}} (2);
		   \draw[->, >=latex] (2) -- node [above] {\ref{item:i-hg-f}} (3);
		   \draw[->, >=latex] (3) -- node [right] {\ref{item:i-h-g}} (4);
		   \draw[->, >=latex] (1) -- node [below left] {\ref{item:i-h-gf}} (0);
		   \draw[->, >=latex] (0) -- node [below right] {\ref{item:ih-g-f}} (4);
		  \end{tikzpicture}
	     \end{center}
	     of $2$-compositions of $3$-cells of $C$ is commutative. Using the decomposition of each of these
	     $3$-cells as suitably whiskered $1$-composition of other $3$-cells of $C$,
	     we have to show that the following diagram
	     \begin{center}
	     	\begin{tikzpicture}[scale=1.2]
	     	\foreach \i in {0,1,2,3,4} {
	     		\tikzmath{\a = 270 - (72 * \i);}
	     		\node (e\i) at (\a:3) {$\bullet$};
	     	}
	     	\foreach \j in {1,2,3,4} {
	     		\tikzmath{\a = 270 - (72 * \j);}
	     		\pgfmathtruncatemacro\bj{\j-1}
	     		\node (e\j-a) at ($(e\bj)!0.33!(e\j)$) {$\bullet$};
	     		\node (e\j-b) at ($(e\bj)!0.67!(e\j)$) {$\bullet$};
	     	}
	     	\node (e0-a) at ($(e0)!0.33!(e4)$) {$\bullet$};
	     	\node (e0-b) at ($(e0)!0.66!(e4)$) {$\bullet$};
	     	\draw[->, >=latex] (e1) to node [left] {\ref{item:h-g-f-1}} (e2-a);
	     	\draw[->, >=latex] (e2-a) to node [left] {\ref{item:h-g-f-2}} (e2-b);
	     	\draw[->, >=latex] (e2-b) to node [left] {\ref{item:h-g-f-3}} (e2);
	     	\draw[->, >=latex] (e2) to node [above] {\ref{item:i-hg-f-1}} (e3-a);
	     	\draw[->, >=latex] (e3-a) to node [above] {\ref{item:i-hg-f-2}} (e3-b);
	     	\draw[->, >=latex] (e3-b) to node [above] {\ref{item:i-hg-f-3}} (e3);
	     	\draw[->, >=latex] (e3) to node [right] {\ref{item:i-h-g-1}} (e4-a);
	     	\draw[->, >=latex] (e4-a) to node [right] {\ref{item:i-h-g-2}} (e4-b);
	     	\draw[->, >=latex] (e4-b) to node [right] {\ref{item:i-h-g-3}} (e4);
	     	\draw[->, >=latex] (e1) to node [below left] {\ref{item:i-h-gf-1}} (e1-b);
	     	\draw[->, >=latex] (e1-b) to node [below left] {\ref{item:i-h-gf-2}} (e1-a);
	     	\draw[->, >=latex] (e1-a) to node [below left] {\ref{item:i-h-gf-3}} (e0);
	     	\draw[->, >=latex] (e0) to node [below right] {\ref{item:ih-g-f-1}} (e0-a);
	     	\draw[->, >=latex] (e0-a) to node [below right] {\ref{item:ih-g-f-2}} (e0-b);
	     	\draw[->, >=latex] (e0-b) to node [below right] {\ref{item:ih-g-f-3}} (e4);
	      \end{tikzpicture}
	     \end{center}
	     of $2$-compositions of $3$-cells of $C$ commutes;
	     notice that in the latter diagram the referenced $3$-cells of $C$ are not
	     $2$-composable: we are making the abuse of denoting each arrow of the diagram
	     with the reference to a particular $3$-cell, without the suitable whiskerings
	     making all these $3$-cells $2$-composable. These whiskerings are written explicitly
	     above.
\end{paragr}

\begin{paragr}
		In order to show that the diagram of the previous paragraph is commutative,
		we shall decompose it in several smaller diagrams and we shall show that each
		of them is commutative. This decomposition is displayed in figure~\ref{fig:diagram_composition}.
		There is a duality involving the diagram numbered with ($n$) and with ($n'$)
		and the one commutes if and only if the other one does.
		We shall illustrate this phenomenon with the diagrams (1) and (1') and (2) and (2'),
		but then we will limit ourself to prove the commutativity of the diagrams of the type
		($n$), leaving the diagrams of type ($n'$) to the reader.
\end{paragr}
\begin{figure}
	\centering
	\begin{tikzpicture}[scale=2]
	\foreach \i in {0,1,2,3,4} {
		\tikzmath{\a = 270 - (72 * \i);}
		\node (i\i) at (\a:1) {i\i};
		\node (e\i) at (\a:3) {e\i};
	}
	\foreach \j in {1,2,3,4} {
		\tikzmath{\a = 270 - (72 * \j);}
		\pgfmathtruncatemacro\bj{\j-1}
		\node (m\j) at (\a:2) {m\j};
		\node (e\j-a) at ($(e\bj)!0.33!(e\j)$) {e\j-a};
		\node (e\j-b) at ($(e\bj)!0.67!(e\j)$) {e\j-b};
	}
	\node (e0-a) at ($(e0)!0.33!(e4)$) {e0-a};
	\node (e0-b) at ($(e0)!0.66!(e4)$) {e0-b};
	%
	%
	\draw[->, >=latex] (e1) to (e2-a);
	\draw[->, >=latex] (e2-a) to (m1);
	\draw[->, >=latex] (e1) to (e1-b);
	\draw[->, >=latex] (e1-b) to (m1);
	\node at ($(e1)!0.5!(m1)$) {(1)};
	\draw[->, >=latex] (m4) to (e4-b);
	\draw[->, >=latex] (e4-b) to (e4);
	\draw[->, >=latex] (m4) to (e0-b);
	\draw[->, >=latex] (e0-b) to (e4);
	\node at ($(m4)!0.5!(e4)$) {(1')};
	%
	%
	\draw[->, >=latex] (m1) to (i1);
	\draw[->, >=latex] (e2-a) to (i1);
	\node at (190:1.8) {(2)};
	\draw[->, >=latex] (i4) to (m4);
	\draw[->, >=latex] (i4) to (e4-b);
	\node at (-10:1.8) {(2')};
	%
	%
	\draw[->, >=latex] (e2-a) to (e2-b);
	\draw[->, >=latex] (e2-b) to (i2);
	\draw[->, >=latex] (i1) to (i2);
	\node at ($(e2-a)!0.5!(i2)$) {(3)};
	\draw[->, >=latex] (e4-a) to (e4-b);
	\draw[->, >=latex] (i3) to (e4-a);
	\draw[->, >=latex] (i3) to (i4);
	\node at ($(i3)!0.5!(e4-b)$) {(3')};
	%
	%
	\draw[->, >=latex] (e2-b) to (m2);
	\draw[->, >=latex] (m2) to (i2);
	\node at (134:1.8) {(4)};
	\draw[->, >=latex] (i3) to (m3);
	\draw[->, >=latex] (m3) to (e4-a);
	\node at (46:1.8) {(4')};
	%
	%
	\draw[->, >=latex] (e2-b) to (e2);
	\draw[->, >=latex] (e2) to (e3-a);
	\draw[->, >=latex] (m2) to (e3-a);
	\node at ($(m2)!0.5!(e2)$) {(5)};
	\draw[->, >=latex] (e3-b) to (e3);
	\draw[->, >=latex] (e3) to (e4-a);
	\draw[->, >=latex] (e3-b) to (m3);
	\node at ($(m3)!0.5!(e3)$) {(5')};
	%
	%
	\draw[->, >=latex] (e3-a) to (e3-b);
	\draw[->, >=latex] (i2) to (i3);
	\node at ($(m2)!0.5!(m3)$) {(6)};
	%
	%
	\draw[->, >=latex] (i1) to (i0);
	\draw[->, >=latex] (e1-b) to (e1-a);
	\draw[->, >=latex] (e1-a) to (i0);
	\node at ($(e1-b)!0.5!(i0)$) {(7)};
	\draw[->, >=latex] (i0) to (i4);
	\draw[->, >=latex] (i0) to (e0-a);
	\draw[->, >=latex] (e0-a) to (e0-b);
	\node at ($(i0)!0.5!(e0-b)$) {(7')};
	%
	%
	\draw[->, >=latex] (e1-a) to (e0);
	\draw[->, >=latex] (e0) to (e0-a);
	\node at ($(i0)!0.6!(e0)$) {(8)};
	%
	%
	\node at (0:0) {(9)};
	\end{tikzpicture}
	\caption{The diagram for the coherence $\protect\treeVV$.}
	\label{fig:diagram_composition}
\end{figure}

\begin{paragr}[1]
	Consider the diagram (1), where again we abuse of notation by forgetting
	the suitable whiskerings making these $3$-cells of $C$ actually $2$-composable:
	\begin{center}
	 \begin{tikzpicture}
		\node (e1) at (180:2) {e1};
		\node (e2-a) at (90:2) {e2-a};
		\node (m1) at (0:2) {m1};
		\node (e1-b) at (270:2) {e1-b};
		\draw[->, >=latex] (e1) -- node [left] {\ref{item:h-g-f-1}} (e2-a);
		\draw[->, >=latex] (e1) -- node [left] {\ref{item:i-h-gf-1}} (e1-b);
		\draw[->, >=latex] (e2-a) -- node [right] {\ref{item:i-h-gf-1}} (m1);
		\draw[->, >=latex] (e1-b) -- node [right] {\ref{item:h-g-f-1}} (m1);
	 \end{tikzpicture}
	\end{center}
	To be precise, the $3$-cell from e1 to e2-a is
	\begin{gather*}
	 GF_{\treeLog}(i) \comp_0 GF_{\treeLog}(h) \comp_0 G_{\treeV}(F_{\treeLog}g, F_{\treeLog}f) \comp_1
	 G_{\treeVRight}(F_{\treeLog}h, F_{\treeV}(g, f))
	 \\\comp_1\\
	 GF_{\treeLog}(i) \comp_0 G_{\treeLL}(F_{\treeV}(h, gf))\bigr) \comp_1 
	 GF_{\treeV}(i, h\comp_0 g \comp_0 f)
	\end{gather*}
	and the $3$-cell from e1-b to m1 is
	\begin{gather*}
	 GF_{\treeLog}(i) \comp_0 GF_{\treeLog}(h) \comp_0 G_{\treeV}(F_{\treeLog}g, F_{\treeLog}f) \comp_1
	 G_{\treeVRight}(F_{\treeLog}h, F_{\treeV}(g, f))
	 \\ \comp_1 \\
	 G_{\treeV}(F_{\treeLog}i, F_{\treeLog}(h)\comp_0 F_{\treeLog}(gf)) \comp_1
	 G_{\treeLL}(F_{\treeLog}i \comp_0 F_{\treeV}(h, gf)) \comp_1
	 G_{\treeLL}(F_{\treeLL}(i, hgf))\,,
	\end{gather*}
	while the $3$-cell from e1 to e1-b is
	\begin{gather*}
	 GF_{\treeLog}(i) \comp_0 \bigl(GF_{\treeLog}(h) \comp_0 GF_{\treeV}(g, f) \comp_1
	 G_{\treeV}(F_{\treeLog}h, F_{\treeLog}gf)\bigr)
	 \\ \comp_1 \\
	 G_{\treeVRight}(F_{\treeLog}i, F_{\treeV}(h, gf)) \comp_1
	 G_{\treeLL}(F_{\treeV}(i, hgf))
	\end{gather*}
	and the $3$-cell from e2-a to m1 is
	\begin{gather*}
	 GF_{\treeLog}(i) \comp_0 \bigl(GF_{\treeLog}(h) \comp_0 G_{\treeV}(F_{\treeLog}g, F_{\treeLog}f) \comp_1
	 G_{\treeV}(F_{\treeLog}h, F_{\treeLog}g\comp_0 F_{\treeLog}f) \comp_1
	 G_{\treeLL}(F_{\treeLog}h \comp_0 F_{\treeV}(g, f))\bigr)
	 \\\comp_1\\
	 G_{\treeVRight}(F_{\treeLog}i, F_{\treeV}(h, gf)) \comp_1
	 G_{\treeLL}(F_{\treeV}(i, hgf))\,.
	\end{gather*}
	The commutativity of the diagram is simply an instance of the interchange law.
	Indeed, the source  and the target of the $3$-cell of $C$
	\[
	 GF_{\treeLog}(h) \comp_0 G_{\treeV}(F_{\treeLog}g, F_{\treeLog}f)
	 \comp_1
	 G_{\treeVRight}(F_{\treeLog}h, F_{\treeV}(g, f))
	\]
	are the $2$-cells of $C$
	\[
	 GF_{\treeLog}(h) \comp_0 GF_{\treeV}(g, f) \comp_1
	 G_{\treeV}(F_{\treeLog}h, F_{\treeLog}gf)
	\]
	and
	\[
	 GF_{\treeLog}(h) \comp_0 G_{\treeV}(F_{\treeLog}g, F_{\treeLog}f) \comp_1
	 G_{\treeV}(F_{\treeLog}h, F_{\treeLog}g\comp_0 F_{\treeLog}f) \comp_1
	 G_{\treeLL}(F_{\treeLog}h \comp_0 F_{\treeV}(g, f))
	\]
	respectively;
	while the source and target of the $3$-cell
	\[
	 G_{\treeVRight}(F_{\treeLog}i, F_{\treeV}(h, gf))\comp_1
	 G_{\treeLL}(F_{\treeV}(i, hgf))
	\]
	of $C$ are the $2$-cells
	\[
	 GF_{\treeLog}(i) \comp_0 G_{\treeLL}(F_{\treeV}(h, gf))\bigr) \comp_1 
	 GF_{\treeV}(i, h\comp_0 g \comp_0 f)
	\]
	and
	\[
	 G_{\treeV}(F_{\treeLog}i, F_{\treeLog}(h)\comp_0 F_{\treeLog}(gf)) \comp_1
	 G_{\treeLL}(F_{\treeLog}i \comp_0 F_{\treeV}(h, gf)) \comp_1
	 G_{\treeLL}(F_{\treeLL}(i, hgf))
	\]
	respectively.
\end{paragr}

\begin{paragr}[1']
	Consider the diagram (1'), where we abuse of notation by forgetting
	the suitable whiskerings making these $3$-cells of $C$ actually $2$-composable:
	\begin{center}
	 \begin{tikzpicture}
		\node (m4) at (180:2) {m4};
		\node (e4-b) at (90:2) {e4-b};
		\node (e4) at (0:2) {e4};
		\node (e0-b) at (270:2) {e0-b};
		\draw[->, >=latex] (m4) -- node [left] {\ref{item:ih-g-f-3}} (e4-b);
		\draw[->, >=latex] (m4) -- node [left] {\ref{item:i-h-g-3}} (e0-b);
		\draw[->, >=latex] (e4-b) -- node [right] {\ref{item:ih-g-f-3}} (e4);
		\draw[->, >=latex] (e0-b) -- node [right] {\ref{item:i-h-g-3}} (e4);
	 \end{tikzpicture}
	\end{center}
	To be precise, the $3$-cell from e4-b to e4 is
	\begin{gather*}
	 G_{\treeV}(F_{\treeLog}i, F_{\treeLog}h) \comp_0
	 GF_{\treeLog}(g) \comp_0 GF_{\treeLog}(f)  \comp_1
	 G_{\treeVLeft}(F_{\treeV}(i, h), F_{\treeLog}g)
	 \\\comp_1\\
	  G_{\treeLL}(F_{\treeV}(ih, g))\bigr) \comp_0 GF_{\treeLog}(f) \comp_1 
	 GF_{\treeV}(i \comp_0 h \comp_0 g, f)
	\end{gather*}
	and the $3$-cell from m4 to e0-b is
	\begin{gather*}
	 G_{\treeV}(F_{\treeLog}i, F_{\treeLog}h) \comp_0
	 GF_{\treeLog}(g) \comp_0 GF_{\treeLog}(f)  \comp_1
	 G_{\treeVLeft}(F_{\treeV}(i, h), F_{\treeLog}g)
	 \\ \comp_1 \\
	 G_{\treeV}(F_{\treeLog}i \comp_0 F_{\treeLog}(hg), F_{\treeLog}(f)) \comp_1
	 G_{\treeLL}(F_{\treeV}(ih, g) \comp_0 F_{\treeLog}f) \comp_1
	 G_{\treeLL}(F_{\treeLL}(ihg, f))\,,
	\end{gather*}
	while the $3$-cell from e0-b to e4 is
	\begin{gather*}
     \bigl(GF_{\treeV}(i, h)  \comp_0 GF_{\treeLog}(g) \comp_1
	 G_{\treeV}(F_{\treeLog}ih, F_{\treeLog}g)\bigr) \comp_0 GF_{\treeLog}(i)
	 \\ \comp_1 \\
	 G_{\treeVLeft}(F_{\treeV}(ih, g), F_{\treeLog}f) \comp_1
	 G_{\treeLL}(F_{\treeV}(ihg, f))
	\end{gather*}
	and the $3$-cell from m4 to e4-b is
	\begin{gather*}
	 \bigl(G_{\treeV}(F_{\treeLog}i, F_{\treeLog}h) \comp_0  GF_{\treeLog}(g) \comp_1
	 G_{\treeV}(F_{\treeLog}i \comp_0 F_{\treeLog}h, F_{\treeLog}g) \comp_1
	 G_{\treeLL}(F_{\treeV}(i, h) \comp_0 F_{\treeLog}g)\bigr) \comp_0  GF_{\treeLog}(f)
	 \\\comp_1\\
	 G_{\treeVLeft}( F_{\treeV}(ih, g), F_{\treeLog}f) \comp_1
	 G_{\treeLL}(F_{\treeV}(ihg, f))\,.
	\end{gather*}
	Analogously to the previous case,
	the commutativity of the diagram is simply an instance of the interchange law.
\end{paragr}

\begin{paragr}[2]
	Consider the diagram (2), where we always adopt the same notational abuse:
	\begin{center}
	 \begin{tikzpicture}[scale=2]
		\node (e2-a) at (135:1.5) {e2-a};
		\node (m1) at (0:0) {m1};
		\node (i1) at (0:1.5) {i1};
		\draw[->, >=latex] (e2-a) -- node [left] {\ref{item:i-h-gf-1}} (m1);
		\draw[->, >=latex] (m1) -- node [below]
			{$(f1)$} (i1);
		\draw[->, >=latex] (e2-a) -- node [above right]
			{$(f2)$} (i1);
	 \end{tikzpicture}
	\end{center}
	where
	\begin{equation}
	\tag*{$(f1)$}
	 G_{\treeVRight}(F_{\treeLog}i, F_{\treeLog} h \comp_0 F_{\treeV}(g, f))
	 \label{item:f1}
	\end{equation}
	is the principal $3$-cell of ($f1$) and
	\begin{equation}
	\tag*{$(f2)$}
	 G_{\treeVRight}(F_{\treeLog}i, F_{\treeLog}h \comp_0 F_{\treeV}(g, f) \comp_1 F_{\treeV}(h, gf)  )
	 \label{item:f2}
	\end{equation}
	is the principal $3$-cell of ($f2$). More precisely, the $3$-cell of $C$ from e2-a to m1 is
	\begin{gather*}
	 GF_{\treeLog}(i) \comp_0 \bigl(GF_{\treeLog}(h) \comp_0 G_{\treeV}(F_{\treeLog}g, F_{\treeLog}f) \comp_1
	 G_{\treeV}(F_{\treeLog}h, F_{\treeLog}g\comp_0 F_{\treeLog}f) \bigr)
	 \\\comp_1\\
	 GF_{\treeLog}(i) \comp_0 G_{\treeLL}(F_{\treeLog}h \comp_0 F_{\treeV}(g, f)) \comp_1 G_{\treeVRight}(F_{\treeLog}i, F_{\treeV}(h, gf))
	 \\\comp_1\\
	 G_{\treeLL}(F_{\treeV}(i, hgf))\,,
	\end{gather*}
	the $3$-cell from m1 to i1 is
	\begin{gather*}
	 GF_{\treeLog}(i) \comp_0 \bigl(GF_{\treeLog}(h) \comp_0 G_{\treeV}(F_{\treeLog}g, F_{\treeLog}f) \comp_1
	 G_{\treeV}(F_{\treeLog}h, F_{\treeLog}g\comp_0 F_{\treeLog}f)\bigr)
	 \\\comp_1\\
	 G_{\treeVRight}(F_{\treeLog}i, F_{\treeLog} h \comp_0 F_{\treeV}(g, f))
	 \comp_1 G_{\treeLL}(F_{\treeLog}i \comp_0 F_{\treeV}(h, gf))
	 \\\comp_1\\
	  G_{\treeLL}(F_{\treeV}(i, hgf))
	\end{gather*}
	and the $3$-cell from e2-a to i1 is
	\begin{gather*}
	 GF_{\treeLog}(i) \comp_0 \bigl(GF_{\treeLog}(h) \comp_0 G_{\treeV}(F_{\treeLog}g, F_{\treeLog}f) \comp_1
	 G_{\treeV}(F_{\treeLog}h, F_{\treeLog}g\comp_0 F_{\treeLog}f)\bigr)
	 \\\comp_1\\
	 G_{\treeVRight}\bigl(F_{\treeLog}i, F_{\treeLog}h \comp_0 F_{\treeV}(g, f) \comp_1 F_{\treeV}(h, gf)\bigr)
	 \\\comp_1\\
	 G_{\treeLL}(F_{\treeV}(i, hgf))\,.
	\end{gather*}
	The $3$-cells appearing in the middle lines of these $1$-compositions are precisely
	the $3$-cells of the coherence for $G$ for the tree
	\scalebox{0.3}{
		\begin{forest}
			for tree={%
				label/.option=content,
				grow=north,
				content=,
				circle,
				fill,
				minimum size=3pt,
				inner sep=0pt,
				s sep+=15,
			}
			[
			 [ [] [] ]
			 []
			]
		\end{forest}
	} for the pasting diagram
	\[
	\begin{tikzcd}[column sep=4.5em]
	\bullet
	\ar[r, bend left=55, looseness=1.3, "", ""{below, name=f1}]
	\ar[r, ""{name=f2u}, ""{below, name=f2d}]
	\ar[r, bend right=50, looseness=1.3, ""', ""{name=f3}]
	\ar[Rightarrow, from=f1, to=f2u, "\alpha"]
	\ar[Rightarrow, from=f2d, to=f3, "\beta"]
	&
	\bullet \ar[r, "l"] & \bullet
	\end{tikzcd}
	\]
	where $\alpha = F_{\treeV}(h, gf)$, $\beta = F_{\treeLog}(h)\comp_0 F_{\treeV}(g, f)$
	and $l = F_{\treeLog}(i)$. Hence the diagram commutes.
\end{paragr}

\begin{paragr}[2']
	Consider the diagram (2'), where we always adopt the same notational abuse:
	\begin{center}
	 \begin{tikzpicture}[scale=2]
		\node (e4-b) at (45:1.5) {e4-b};
		\node (m4) at (0:0) {m4};
		\node (i4) at (0:1.5) {i4};
		\draw[->, >=latex] (m4) -- node [left] {\ref{item:ih-g-f-3}} (e4-b);
		\draw[->, >=latex] (i4) -- node [below]
			{$(f1')$} (m4);
		\draw[->, >=latex] (i4) -- node [above right]
			{$(f2')$} (e4-b);
	 \end{tikzpicture}
	\end{center}
	where the $3$-cell of $C$ from m4 to e4-b is
	\begin{gather*}
	 \bigl(G_{\treeV}(F_{\treeLog}i, F_{\treeLog}h) \comp_0  GF_{\treeLog}(g)\comp_1
	 G_{\treeV}(F_{\treeLog}i \comp_0 F_{\treeLog}h, F_{\treeLog}g) \bigr) \comp_0 GF_{\treeLog}(f) 
	 \\\comp_1\\
	 G_{\treeLL}(F_{\treeV}(i, h) \comp_0 F_{\treeLog}g) \comp_0 GF_{\treeLog}(f) \comp_1 G_{\treeVLeft}(F_{\treeV}(ih, g), F_{\treeLog}f)
	 \\\comp_1\\
	 G_{\treeLL}(F_{\treeV}(ihg, f))\,,
	\end{gather*}
	the $3$-cell from i4 to m4 is
	\begin{gather*}
	 \bigl(G_{\treeV}(F_{\treeLog}i, F_{\treeLog}h) \comp_0 GF_{\treeLog}(g) \comp_1
	 G_{\treeV}(F_{\treeLog}i \comp_0 F_{\treeLog}h, F_{\treeLog}g)\bigr) \comp_0 GF_{\treeLog}(f)
	 \\\comp_1\\
	 G_{\treeVLeft}(F_{\treeV}(i, h) \comp_0 F_{\treeLog}g, F_{\treeLog}f)
	 \comp_1 G_{\treeLL}(F_{\treeV}(ih, g) \comp_0 F_{\treeLog}f)
	 \\\comp_1\\
	  G_{\treeLL}(F_{\treeV}(ihg, f))
	\end{gather*}
	and the $3$-cell from i4 to e4-b is
	\begin{gather*}
	 \bigl(G_{\treeV}(F_{\treeLog}i, F_{\treeLog}h)  \comp_0 GF_{\treeLog}(g)\comp_1
	 G_{\treeV}(F_{\treeLog}i \comp_0 F_{\treeLog}h, F_{\treeLog}g)\bigr) \comp_0 GF_{\treeLog}(f)
	 \\\comp_1\\
	 G_{\treeVLeft}\bigl(F_{\treeV}(ih, g) \comp_1 F_{\treeV}(i, h) \comp_0 F_{\treeLog}g, F_{\treeLog}f\bigr)
	 \\\comp_1\\
	 G_{\treeLL}(F_{\treeV}(ihg, f))\,.
	\end{gather*}
	The $3$-cells appearing in the middle lines of these $1$-compositions are precisely
	the $3$-cells of the coherence for $G$ for the tree
	\scalebox{0.3}{
		\begin{forest}
			for tree={%
				label/.option=content,
				grow=north,
				content=,
				circle,
				fill,
				minimum size=3pt,
				inner sep=0pt,
				s sep+=15,
			}
			[
			 []
			 [ [] [] ]
			]
		\end{forest}
	} for the pasting diagram
	\[
	\begin{tikzcd}[column sep=4.5em]
	\bullet \ar[r, "r"] & \bullet
	\ar[r, bend left=55, looseness=1.3, "", ""{below, name=f1}]
	\ar[r, ""{name=f2u}, ""{below, name=f2d}]
	\ar[r, bend right=50, looseness=1.3, ""', ""{name=f3}]
	\ar[Rightarrow, from=f1, to=f2u, "\alpha"]
	\ar[Rightarrow, from=f2d, to=f3, "\beta"]
	&
	\bullet
	\end{tikzcd}
	\]
	where $\alpha = F_{\treeV}(ih, g)$, $\beta = F_{\treeV}(i, h)\comp_0 F_{\treeLog}(g)$
	and $r = F_{\treeLog}(f)$. Hence the diagram commutes.
\end{paragr}

\begin{paragr}[3]
 Consider the diagram (3):
 \begin{center}
  \begin{tikzpicture}
  \node (e2-b) at (150:2) {e2-b};
  \node (e2-a) at (210:2) {e2-a};
  \node (i2) at (30:2) {i2};
  \node (i1) at (-30:2) {i1};
  \draw[->, >=latex] (e2-a) -- node [left] {\ref{item:h-g-f-2}} (e2-b);
  \draw[->, >=latex] (e2-a) -- node [below]
    {\ref{item:f2}} (i1);
  \draw[->, >=latex] (e2-b) -- node [above]
    {$(f3)$} (i2);
  \draw[->, >=latex] (i1) -- node [right]
  {$(g3)$} (i2);
  \end{tikzpicture}
 \end{center}
 where
 \begin{equation}
 \tag*{$(f3)$}
  G_{\treeVRight}(F_{\treeLog}i, F_{\treeV}(h, g)\comp_0F_{\treeLog}f \comp_1 F_{\treeV}(hg, f))
  \label{item:f3}
 \end{equation}
 is the principal $3$-cell of ($f3$) and
 \begin{equation}
 \tag*{$(g3)$}
  GF_{\treeLog}(i)\comp_0 G_{\treeW}(F_{\treeLog}h, F_{\treeLog}g, F_{\treeLog}f) 
  \comp_1
  G_{\treeV}(F_{\treeLog}i, F_{\treeLog}h\,F_{\treeLog}g\,F_{\treeLog}f)
  \comp_1
   G_{\treeLLL}(F_{\treeLog}(i) \comp_0 F_{\treeW}(h, g, f))
   \label{item:g3}
 \end{equation}
 is the principal $3$-cell of ($g3$). More precisely, the $3$-cell from e2-a to e2-b is
    \begin{gather*}
     G_{\treeW}(F_{\treeLog}h, F_{\treeLog}g, F_{\treeLog}f) \comp_1
	 G_{\treeLLL}(F_{\treeW}(h, g, f))
     \\ \comp_1 \\
     GF_{\treeV}(i, hgf)\,;
    \end{gather*}
    the $3$-cell from e2-a to i1 is
	\begin{gather*}
	 GF_{\treeLog}(i) \comp_0 \bigl(GF_{\treeLog}(h) \comp_0 G_{\treeV}(F_{\treeLog}g, F_{\treeLog}f) \comp_1
	 G_{\treeV}(F_{\treeLog}h, F_{\treeLog}g\comp_0 F_{\treeLog}f)\bigr)
	 \\ \comp_1 \\
	 G_{\treeVRight}(F_{\treeLog}i, F_{\treeV}(h, gf) \comp_1 F_{\treeLog}h \comp_0 F_{\treeV}(g, f))
	 \\ \comp_1 \\
	 G_{\treeLL}(F_{\treeV}(i, hgf))\,;
	\end{gather*}
	the $3$-cell from e2-b to i2 is
	\begin{gather*}
	 GF_{\treeLog}(i) \comp_0
	 \bigl(G_{\treeV}(F_{\treeLog}h F_{\treeLog}g)\comp_0 GF_{\treeLog}(f)
	 \comp_1 G_{\treeV}(F_{\treeLog}h\,F_{\treeLog}g, F_{\treeLog}f)\bigr)
	 \\ \comp_1 \\
	 G_{\treeVRight}\bigl(F_{\treeLog}(i), F_{\treeV}(hg,f) \comp_1 F_{\treeV}(h, g) \comp_0 F_{\treeLog}(f)\bigr)
	 \\ \comp_1 \\
	 G_{\treeLL}(F_{\treeV}(i, ghf))\,;
	\end{gather*}
	the $3$-cell from i1 to i2 is
	\begin{gather*}
	 GF_{\treeL}(i) \comp_0 G_{\treeW}\bigl(F_{\treeL}(h), F_{\treeL}(g), F_{\treeL}(f)\bigr)
	 \\ \comp_1 \\
	 G_{\treeV}\bigl(F_{\treeL}(i), F_{\treeL}(h) \comp_0 F_{\treeL}(g) \comp_0 F_{\treeL}(f)\bigr)
	 \\ \comp_1 \\
	 G_{\treeLLL}\bigl(F_{\treeL}(i) \comp_0 F_{\treeW}(h, g, f)\bigr)
	 \\ \comp_1 \\
	 G_{\treeLL}\bigl(F_{\treeV}(i, hgf)\bigr)\,.
	\end{gather*}
	The coherence for the tree
	\scalebox{0.3}{
		\begin{forest}
			for tree={%
				label/.option=content,
				grow=north,
				content=,
				circle,
				fill,
				minimum size=3pt,
				inner sep=0pt,
				s sep+=15,
			}
			[
			 [ [ [] ] ]
			 [ ]
			]
		\end{forest}
	}
	applied to the pasting diagram
	\[
	 \begin{tikzcd}[column sep=5em]
     \bullet
     \ar[r, bend left=60, looseness=1.2, "", "\phantom{bullet}"'{name=1}]
     \ar[r, bend right=60, looseness=1.2, ""', "\phantom{bullet}"{name=3}]
     \ar[Rightarrow, from=1, to=3, shift right=2ex, bend right, ""{name=beta1}]
     \ar[Rightarrow, from=1, to=3, shift left=2ex, bend left, ""'{name=beta3}]
     \arrow[triple, from=beta1, to=beta3, "\Gamma"]{}
     &
     \bullet
     \ar[r, "l"]
     &
     \bullet
     \end{tikzcd}
	\]
	where $\Gamma = F_{\treeW}(h, g, f)$ and $l = F_{\treeL}(i)$
	gives the equality
	\begin{gather*}
	 G_{\treeVRight}\bigl(F_{\treeL}(i), F_{\treeV}(h, g) \comp_0 F_{\treeL}(f)
	 \comp_1 F_{\treeV}(hg, f)\bigr)
	 \\ \comp_2 \\
	 GF_{\treeL}(i) \comp_0 G_{\treeLLL}\bigl(F_{\treeW}(h, g, f)\bigr)
	 \\ = \\
	 G_{\treeLLL}\bigl(F_{\treeL}(i) \comp_0 F_{\treeW}(h, g, f)\bigr)
	 \\ \comp_2 \\
	 G_{\treeVRight}\bigl(F_{\treeL}(i), F_{\treeL}(h) \comp_0 F_{\treeV}(g, f)
	 \comp_1 F_{\treeV}(h, gf) \bigr)
	\end{gather*}
	of $3$-cells of $C$; $1$-precomposing, \ie whiskering by $\comp_1$, this equality on both sides by
	the $2$-cell $G_{\treeLL}(F_{\treeV}(i, hgf))$ and 
	$1$-post-composing, \ie whiskering by $\comp_1$ again,
	both sides by the $3$-cell
	\[
	 GF_{\treeL}(i) \comp_0  G_{\treeW}\bigl(F_{\treeL}(h), F_{\treeL}(g), F_{\treeL}(f)\bigr)\,,
    \]
    the two sides of the equality are precisely the two paths of diagram (3),
    which is therefore commutative.
\end{paragr}

\begin{paragr}[4]
    Consider the diagram (4)
    \begin{center}
     \begin{tikzpicture}[scale=2]
      \node (m2) at (0:0) {m2};
      \node (e2-b) at (180:1.5) {e2-b};
      \node (i2) at (-45:1.5) {i2};
      \draw [->, >=latex] (e2-b) -- node [below left] {\ref{item:f3}} (i2);
      \draw [->, >=latex] (e2-b) -- node [above] {\ref{item:b1}} (m2);
      \draw [->, >=latex] (m2) -- node [above right] {$(g4)$} (i2);
     \end{tikzpicture}
    \end{center}
    where the principal $3$-cell of ($g4$) is
    \begin{equation}
     \tag*{$(g4)$}
     \label{item:g4}
     G_{\treeVRight}\bigl(F_{\treeL}(i), F_{\treeV}(h, g) \comp_0 F_{\treeL}(f)\bigr)\,.
    \end{equation}
    More precisely, we have already seen that the $3$-cell
    from e2-b to i2 is set to be
    \begin{gather*}
	 GF_{\treeLog}(i) \comp_0
	 \bigl(G_{\treeV}(F_{\treeLog}(h) F_{\treeLog}(g))\comp_0 GF_{\treeLog}(f)
	 \comp_1 G_{\treeV}(F_{\treeLog}(h)\,F_{\treeLog}(g), F_{\treeLog}(f))\bigr)
	 \\ \comp_1 \\
	 G_{\treeVRight}\bigl(F_{\treeLog}(i), F_{\treeV}(hg,f) \comp_1 F_{\treeV}(h, g) \comp_0 F_{\treeLog}(f)\bigr)
	 \\ \comp_1 \\
	 G_{\treeLL}(F_{\treeV}(i, ghf))\,;
	\end{gather*}
	the $3$-cell from m2 to i2 is
	\begin{gather*}
	 GF_{\treeLog}(i) \comp_0
	 \bigl(G_{\treeV}(F_{\treeL}(h), F_{\treeL}(g))\comp_0 GF_{\treeL}(f)
	 \comp_1 G_{\treeV}(F_{\treeL}(h)\,F_{\treeL}(g), F_{\treeL}(f))\bigr)
	 \\ \comp_1 \\
	 G_{\treeVRight}\bigl(F_{\treeL}(i), F_{\treeV}(h, g)\comp_0F_{\treeL}(f)\bigr)
	 \\ \comp_1 \\
	 G_{\treeLL}\bigl(F_{\treeL}(i) \comp_0 F_{\treeV}(hg, f)\bigr)
	 \\ \comp_1 \\ G_{\treeLL}\bigl(F_{\treeV}(i, hgf)\bigr)\,;
	\end{gather*}
	the $3$-cell of $C$ from e2-b to m2 is
	\begin{gather*}
	 GF_{\treeLog}(i) \comp_0
	 \bigl(G_{\treeV}(F_{\treeL}(h), F_{\treeL}(g))\comp_0 GF_{\treeL}(f)
	 \comp_1 G_{\treeV}(F_{\treeL}(h)\,F_{\treeL}(g), F_{\treeL}(f))\bigr)
	 \\ \comp_1 \\
	 GF_{\treeLog}(i) \comp_0 G_{\treeL}(F_{\treeV}(h, g)\comp_0 F_{\treeL}(f)\bigr)
	 \\ \comp_1 \\
     G_{\treeVRight}(F_{\treeLog}(i), F_{\treeV}(hg, f))
     \\ \comp_1 \\
     G_{\treeLL}(F_{\treeV}(i, hgf))\,.
	\end{gather*}
	The coherence for the tree
	\scalebox{0.3}{
		\begin{forest}
			for tree={%
				label/.option=content,
				grow=north,
				content=,
				circle,
				fill,
				minimum size=3pt,
				inner sep=0pt,
				s sep+=15,
			}
			[
			 [ [] [] ] []
			]
		\end{forest}
	}
	applied to the pasting diagram
	\[
	 \begin{tikzcd}[column sep=4.5em]
       \bullet
       \ar[r, bend left=55, looseness=1.3, ""{below, name=f1}]
       \ar[r, ""{name=f2u}, ""{below, name=f2d}]
       \ar[r, bend right=50, looseness=1.3, ""{name=f3}]
       \ar[Rightarrow, from=f1, to=f2u, "\alpha"]
       \ar[Rightarrow, from=f2d, to=f3, "\beta"]
       &
       \bullet \ar[r, "l"] & \bullet
      \end{tikzcd}
	\]
	of $C$, where $\alpha = F_{\treeV}(hg, f)$,
	$\beta = F_{\treeV}(h, g)\comp_0 F_{\treeL}(f)$
	and $l = F_{\treeL}(i)$ gives the equality
	\begin{gather*}
	 G_{\treeVRight}\bigl(F_{\treeL}(i), F_{\treeV}(h, g) \comp_0 F_{\treeL}(f)
	 \comp_1 F_{\treeV}(hg, f)\bigr)
	 \\ = \\
	 G_{\treeV}\bigl(F_{\treeL}(i), F_{\treeV}(h, g)\comp_0 F_{\treeL}(f)\bigr)
	 \comp_1 G_{\treeLL}\bigl(F_{\treeL}(i)\comp_0 F_{\treeV}(hg, f)\bigr)
	 \\ \comp_2 \\
	 GF_{\treeL}(i) \comp_0 G_{\treeLL}\bigl(F_{\treeV}(hg, f)\bigr)
	 \comp_1 G_{\treeVRight}\bigl(F_{\treeL}(i), F_{\treeV}(h, g) \comp_0 F_{\treeL}(f)\bigr)
	\end{gather*}
	of $3$-cells of $C$; $1$-precomposing,\ie whiskering by $\comp_1$,
	both members of the equality by the $2$-cell $G_{\treeLL}\bigl(F_{\treeV}(i, hgf)\bigr)$
	and $1$-post-composing, \ie whiskering by $\comp_1$, by the $2$-cell
	\[
	 GF_{\treeLog}(i) \comp_0
	 \bigl(G_{\treeV}(F_{\treeL}(h), F_{\treeL}(g))\comp_0 GF_{\treeL}(f)
	 \comp_1 G_{\treeV}(F_{\treeL}(h)\,F_{\treeL}(g), F_{\treeL}(f))\bigr)
	\]
	we get precisely the $3$-cells of diagram (4), which therefore commutes.
\end{paragr}

\begin{paragr}[5]
    Consider the diagram (5)
    \begin{center}
     \begin{tikzpicture}
        \node (e2) at (150:2) {e2};
        \node (e2-b) at (210:2) {e2-b};
        \node (e3-a) at (30:2) {e3-a};
        \node (m2) at (-30:2) {m2};
        \draw[->, >=latex] (e2-b) -- node [left] {\ref{item:h-g-f-3}} (e2);
        \draw[->, >=latex] (e2-b) -- node [below]
            {\ref{item:b1}} (m2);
        \draw[->, >=latex] (e2) -- node [above]
            {\ref{item:b1}} (e3-a);
        \draw[->, >=latex] (m2) -- node [right]
        {\ref{item:h-g-f-3}} (e3-a);
     \end{tikzpicture}\ .
    \end{center}
    More precisely, we already know that the $3$-cell from
    e2 to e3-a is
    \begin{gather*}
     GF_{\treeLog}(i) \comp_0
	 \bigl(GF_{\treeV}(h, g)\comp_0 GF_{\treeL}(f)\bigr)
	 \\ \comp_1 \\
	 GF_{\treeLog}(i) \comp_0 G_{\treeV}\bigl(F_{\treeL}(hg), F_{\treeL}(f)\bigr)
	 \\ \comp_1 \\
     G_{\treeVRight}(F_{\treeLog}(i), F_{\treeV}(hg, f))
     \\ \comp_1 \\
     G_{\treeLL}(F_{\treeV}(i, hgf))\,,
    \end{gather*}
    the $3$-cell from e2-b to e2 is
    \begin{gather*}
     GF_{\treeL}(i) \comp_0 G_{\treeV}\bigl(F_{\treeLog}(h), F_{\treeLog}(g)\bigr) \comp_0 GF_{\treeLog}(f)
     \\ \comp_1 \\
     GF_{\treeL}(i) \comp_0  G_{\treeVLeft}(F_{\treeV}(h, g), F_{\treeLog}f)
     \\ \comp_1 \\
     G_{\treeLL}\bigl(F_{\treeV}(h, g) \comp_0 F_{\treeL}(f)\bigr)
     \\ \comp_1 \\
     GF_{\treeV}(i, hgf)
    \end{gather*}
    and the $3$-cell from e2-b to m2 is
    \begin{gather*}
	 GF_{\treeLog}(i) \comp_0
	 \bigl(G_{\treeV}(F_{\treeL}(h), F_{\treeL}(g))\comp_0 GF_{\treeL}(f)\bigr)
	 \\ \comp_1 \\
	 GF_{\treeLog}(i) \comp_0 \bigl(G_{\treeV}(F_{\treeL}(h)\,F_{\treeL}(g), F_{\treeL}(f)) \comp_1 G_{\treeL}(F_{\treeV}(h, g)\comp_0 F_{\treeL}(f)\bigr)
	 \\ \comp_1 \\
     G_{\treeVRight}(F_{\treeLog}(i), F_{\treeV}(hg, f))
     \\ \comp_1 \\
     G_{\treeLL}(F_{\treeV}(i, hgf))\,;
	\end{gather*}
	the $3$-cell from m2 to e3-a is
	\begin{gather*}
	 GF_{\treeL}(i) \comp_0 G_{\treeV}\bigl(F_{\treeLog}(h), F_{\treeLog}(g)\bigr) \comp_0 GF_{\treeLog}(f)
     \\ \comp_1 \\
     GF_{\treeL}(i) \comp_0  G_{\treeVLeft}(F_{\treeV}(h, g), F_{\treeLog}f)
     \\ \comp_1 \\
     G_{\treeV}\bigl(F_{\treeL}(i), F_{\treeL}(hg) \comp_0 F_{\treeL}(f)\bigr)
     \\ \comp_1 \\
     G_{\treeLL}\bigl(F_{\treeL}(i) \comp_0 F_{\treeV}(hg, f) \bigr) \comp_1 G_{\treeLL}\bigl(F_{\treeV}(i, hgf)\bigr)\,.
	\end{gather*}
	It is clear from this explicit description of the $3$-cells involved
	that diagram (5) is commutative by virtue of the interchange law.
\end{paragr}

\begin{paragr}[6]
 Consider the diagram (6)
 \begin{center}
  \begin{tikzpicture}[scale=2]
   \node (m3) at (0:1) {m3};
   \node (e3-b) at (60:1) {e3-b};
   \node (e3-a) at (120:1) {e3-a};
   \node (m2) at (180:1) {m2};
   \node (i2) at (240:1) {i2};
   \node (i3) at (300:1) {i3};
   \draw[->, >=latex] (m2) -- node [left] {\ref{item:h-g-f-3}} (e3-a);
   \draw[->, >=latex] (e3-a) -- node [above] {\ref{item:b2}} (e3-b);
   \draw[->, >=latex] (e3-b) -- node [right] {\ref{item:i-h-g-1}} (m3);
   \draw[->, >=latex] (m2) -- node [left] {\ref{item:g4}} (i2);
   \draw[->, >=latex] (i2) -- node [below] {\ref{item:b2}} (i3);
   \draw[->, >=latex] (i3) -- node [right] {$(g4')$} (m3);
  \end{tikzpicture}
 \end{center}
 where the principal $3$-cell of ($g4'$) is
 \begin{equation}
  \tag*{$(g4')$}
  \label{item:g4p}
  G_{\treeVLeft} \bigl(F_{\treeL}(i) \comp_0 F_{\treeV}(h, g), F_{\treeL}(f)\bigr)\,.
 \end{equation}
More precisely, the $3$-cell from i3 to m3 is
\begin{gather*}
 GF_{\treeL}(i) \comp_0 G_{\treeV}\bigl(F_{\treeL}(h), F_{\treeL}(g)\bigr)
 \comp_0 GF_{\treeL}(f)
 \\ \comp_1 \\
 G_{\treeV}\bigl(F_{\treeL}(i), F_{\treeL}(h) \comp_0 F_{\treeL}(g)\bigr)
 \comp_0 GF_{\treeL}(f)
  \comp_1
 G_{\treeVLeft} \bigl(F_{\treeL}(i) \comp_0 F_{\treeV}(h, g), F_{\treeL}(f)\bigr)
 \\ \comp_1 \\
 G_{\treeLL}\bigl(F_{\treeV}(i, hg)\comp_0F_{\treeL}(f)\bigr)
  \comp_1 
 G_{\treeLL}\bigl(F_{\treeV}(ihg, f)\bigr)\,;
\end{gather*}
the $3$-cell from i2 to i3 is
\begin{gather*}
 GF_{\treeL}(i) \comp_0 G_{\treeV}\bigl(F_{\treeL}(h), F_{\treeL}(g)\bigr)
 \comp_0 GF_{\treeL}(f)
 \\ \comp_1 \\
 G_{\treeW}\bigl(F_{\treeL}(i), F_{\treeL}(h)\comp_0 F_{\treeL}(g), F_{\treeL}(f)\bigr)
 \comp_1 G_{\treeLL}\bigl(F_{\treeL}(h) \comp_0 F_{\treeL}(h, g)\comp_0 F_{\treeL}(f)\bigr)
 \\ \comp_1 \\
 G_{\treeLLL}\bigl(F_{\treeW}(i, hg, f)\bigr)\,;
\end{gather*}
we already know that the $3$-cell from m2 to i2 is
\begin{gather*}
     GF_{\treeLog}(i) \comp_0 G_{\treeV}\bigl(F_{\treeL}(h), F_{\treeL}(g)\bigr)\comp_0 GF_{\treeL}(f)
	 \\ \comp_1 \\
	 GF_{\treeLog}(i) \comp_0 G_{\treeV}\bigl(F_{\treeL}(h)\comp_0 F_{\treeL}(g), F_{\treeL}(f)\bigr)
	  \comp_1 
	 G_{\treeVRight}\bigl(F_{\treeL}(i), F_{\treeV}(h, g)\comp_0 F_{\treeL}(f)\bigr)
	 \\ \comp_1 \\
	 G_{\treeLL}\bigl(F_{\treeL}(i) \comp_0 F_{\treeV}(hg, f)\bigr)
	  \comp_1  G_{\treeLL}\bigl(F_{\treeV}(i, hgf)\bigr)\,;
\end{gather*}
the $3$-cell from e3-b to m3 is
\begin{gather*}
 GF_{\treeLog}(i) \comp_0 G_{\treeV}\bigl(F_{\treeL}(h), F_{\treeL}(g)\bigr)\comp_0 GF_{\treeL}(f)
	 \\ \comp_1 \\
	 G_{\treeVRight}\bigl(F_{\treeL}(i), F_{\treeV}(h, g)\bigr) \comp_0 GF_{\treeL}(f)
	 \comp_1 G_{\treeV}\bigl(F_{\treeL}(i) \comp_0 F_{\treeL}(h, g), F_{\treeL}(f)\bigr)
	 \\ \comp_1 \\
 G_{\treeLL}\bigl(F_{\treeV}(i, hg)\comp_0F_{\treeL}(f)\bigr)
  \comp_1 
 G_{\treeLL}\bigl(F_{\treeV}(ihg, f)\bigr)\,;
\end{gather*}
we already know that the $3$-cell from e3-a to e3-b is
\begin{gather*}
 GF_{\treeL}(i) \comp_0 G_{\treeV}\bigl(F_{\treeL}(h), F_{\treeL}(g)\bigr)
 \comp_0 GF_{\treeL}(f)
 \\ \comp_1 \\
 GF_{\treeL}(i) \comp_0 G_{\treeLL}\bigl(F_{\treeL}(h, g)\bigr) \comp_0 F_{\treeL}(f)
 \comp_1
 G_{\treeW}\bigl(F_{\treeL}(i), F_{\treeL}(hg), F_{\treeL}(f)\bigr)
 \\ \comp_1 \\
 G_{\treeLLL}\bigl(F_{\treeW}(i, hg, f)\bigr)\,;
\end{gather*}
we also know that the $3$-cell from m2 to e3-a is
\begin{gather*}
    GF_{\treeL}(i) \comp_0 G_{\treeV}\bigl(F_{\treeLog}(h), F_{\treeLog}(g)\bigr) \comp_0              GF_{\treeLog}(f)
     \\ \comp_1 \\
     GF_{\treeL}(i) \comp_0  G_{\treeVLeft}\bigl(F_{\treeV}(h, g), F_{\treeLog}(f)\bigr)
      \comp_1 
     G_{\treeV}\bigl(F_{\treeL}(i), F_{\treeL}(hg) \comp_0 F_{\treeL}(f)\bigr)
     \\ \comp_1 \\
     G_{\treeLL}\bigl(F_{\treeL}(i) \comp_0 F_{\treeV}(hg, f) \bigr) \comp_1 G_{\treeLL}\bigl(F_{\treeV}(i, hgf)\bigr)\,.
\end{gather*}
Notice that there is a complete duality between the $3$-cell from m2 to e3-a and
the $3$-cell from e3-b to m3 and also between the $3$-cell from m2 to i2
and the $3$-cell from i3 to m3.
The coherence for the tree
\scalebox{0.3}{
		\begin{forest}
			for tree={%
				label/.option=content,
				grow=north,
				content=,
				circle,
				fill,
				minimum size=3pt,
				inner sep=0pt,
				s sep+=15,
			}
			[
			 []
			 [ [] ]
			 []
			]
		\end{forest}
}
applied to the pasting diagram
\[
    \begin{tikzcd}[column sep=4.5em]
    \bullet \ar[r, "k"] &
    \bullet
     \ar[r, bend left, ""{below, name=g}]
     \ar[r, bend right, ""{name=g2}]
     \ar[Rightarrow, from=g, to=g2, "\alpha"] &
    \bullet \ar[r, "l"] & \bullet
    \end{tikzcd}
\]
of $C$ with $k= F_{\treeL}(f)$, $\alpha = F_{\treeV}(h, g)$
and $l = F_{\treeL}(i)$ gives the equality between the
$3$\nbd-cell
\begin{gather*}
 G_{\treeV}\bigl(F_{\treeL}(i), F_{\treeL}(h) \comp_0 F_{\treeL}(g)\bigr)
 \comp_0 GF_{\treeL}(f)
  \comp_1
 G_{\treeVLeft} \bigl(F_{\treeL}(i) \comp_0 F_{\treeV}(h, g), F_{\treeL}(f)\bigr)
 \\ \comp_1 \\
 G_{\treeW}\bigl(F_{\treeL}(i), F_{\treeL}(h)\comp_0 F_{\treeL}(g), F_{\treeL}(f)\bigr)
 \comp_1 G_{\treeLL}\bigl(F_{\treeL}(h) \comp_0 F_{\treeL}(h, g)\comp_0 F_{\treeL}(f)\bigr)
 \\ \comp_1 \\
 GF_{\treeLog}(i) \comp_0 G_{\treeV}\bigl(F_{\treeL}(h)\comp_0 F_{\treeL}(g), F_{\treeL}(f)\bigr)
  \comp_1 
 G_{\treeVRight}\bigl(F_{\treeL}(i), F_{\treeV}(h, g)\comp_0 F_{\treeL}(f)\bigr)
 \end{gather*}
 of $C$ and the $3$-cell
 \begin{gather*}
 G_{\treeVRight}\bigl(F_{\treeL}(i), F_{\treeV}(h, g)\bigr) \comp_0 GF_{\treeL}(f)
	 \comp_1 G_{\treeV}\bigl(F_{\treeL}(i) \comp_0 F_{\treeL}(h, g), F_{\treeL}(f)\bigr)
 \\ \comp_1 \\
 GF_{\treeL}(i) \comp_0 G_{\treeLL}\bigl(F_{\treeL}(h, g)\bigr) \comp_0 F_{\treeL}(f)
 \comp_1
 G_{\treeW}\bigl(F_{\treeL}(i), F_{\treeL}(hg), F_{\treeL}(f)\bigr)
 \\ \comp_1 \\
 GF_{\treeL}(i) \comp_0  G_{\treeVLeft}\bigl(F_{\treeV}(h, g), F_{\treeLog}(f)\bigr)
      \comp_1 
     G_{\treeV}\bigl(F_{\treeL}(i), F_{\treeL}(hg) \comp_0 F_{\treeL}(f)\bigr)
\end{gather*}
of $C$.
We get the diagram (6) by $1$-precomposing, \ie whiskering by $\comp_1$,
both terms of this equality with the $2$-cell
\[
 GF_{\treeL}(i) \comp_0 G_{\treeV}\bigl(F_{\treeLog}(h), F_{\treeLog}(g)\bigr) \comp_0              GF_{\treeLog}(f)
\]
 of $C$ and by $1$-post-composing with the $3$-cell $G_{\treeLLL}\bigl(F_{\treeW}(i, hg, f)\bigr)$, \ie a ``vertical composition'' of $3$-cells. Hence, the diagram is commutative.
\end{paragr}

\begin{paragr}[7]
 Consider the diagram (7)
 \begin{center}
  \begin{tikzpicture}
   \node (e1-a) at (270:2) {e1-a};
   \node (e1-b) at (198:2) {e1-b};
   \node (m1) at (126:2) {m1};
   \node (i1) at (54:2) {i1};
   \node (i0) at (-18:2) {i0};
   \draw[->,>=latex] (e1-b) -- node [below left] {\ref{item:i-h-gf-2}} (e1-a);
   \draw[->,>=latex] (e1-a) -- node [below right] {$(g1)$} (i0);
   \draw[->,>=latex] (e1-b) -- node [left] {\ref{item:h-g-f-1}} (m1);
   \draw[->,>=latex] (m1) -- node [above] {\ref{item:f1}} (i1);
   \draw[->,>=latex] (i1) -- node [right] {$(g2)$} (i0);
  \end{tikzpicture}
 \end{center}
 where
 \begin{equation}
 \tag*{$(g1)$}
  \label{item:g1}
  G_{\treeVRight}\bigl(F_{\treeL}(i)\comp_0 F_{\treeL}(h), F_{\treeV}(g, f)\bigr)
 \end{equation}
 is the principal $3$-cell of ($g1$) and
 \begin{equation}
  \tag*{$(g2)$}
  \label{item:g2}
  G_{\treeW}\bigl(F_{\treeL}(i), F_{\treeL}(h), F_{\treeL}(g)\comp_0 F_{\treeL}(f)\bigr)
 \end{equation}
 is the principal $3$-cell of ($g2$). More precisely, the $3$-cell of $C$ from e1-a to i0 is
 \begin{gather*}
  GF_{\treeL}(i)\comp_0 GF_{\treeL}(h) \comp_0 G_{\treeV}\bigl(F_{\treeL}(g), F_{\treeL}(f)\bigr)
  \\ \comp_1 \\
  G_{\treeV}\bigl(F_{\treeL}(i), F_{\treeL}(h)\bigr) \comp_0
  G_{\treeL}\bigl(F_{\treeL}(g)\comp_0 F_{\treeL}(f)\bigr) \comp_1
  G_{\treeVRight}\bigl(F_{\treeL}(i)\comp_0 F_{\treeL}(h), F_{\treeV}(g, f)\bigr)
  \\ \comp_1 \\
  G_{\treeLL}\bigl(F_{\treeV}(i, h)\comp_0 F_{\treeL}(gf)\bigr)
   \comp_1
  G_{\treeLL}\bigl(F_{\treeV}(ih, gf)\bigr)\,;
 \end{gather*}
 the $3$-cell from e1-b to e1-a, as we know, is
 \begin{gather*}
  GF_{\treeL}(i)\comp_0 GF_{\treeL}(h) \comp_0 G_{\treeV}\bigl(F_{\treeL}(g), F_{\treeL}(f)\bigr)
  \\ \comp_1 \\
  GF_{\treeL}(i) \comp_0 GF_{\treeL}(h) \comp_0 G_{\treeLL}\bigl(F_{\treeV}(g, f)\bigr)
   \comp_1
  G_{\treeW}\bigl(F_{\treeL}(i), F_{\treeL}(h), F_{\treeL}(gf)\bigr)
  \\ \comp_1 \\
  G_{\treeLLL}\bigl(F_{\treeW}(i, h, gf)\bigr)\,;
 \end{gather*}
 the $3$-cell from i1 to i0 is
 \begin{gather*}
  GF_{\treeL}(i)\comp_0 GF_{\treeL}(h) \comp_0 G_{\treeV}\bigl(F_{\treeL}(g), F_{\treeL}(f)\bigr)
  \\ \comp_1 \\
  G_{\treeW}\bigl(F_{\treeL}(i), F_{\treeL}(h), F_{\treeL}(g)\comp_0 F_{\treeL}(f)\bigr)
   \comp_1
  G_{\treeLL}\bigl(F_{\treeL}(i)\comp_0 F_{\treeL}(h) \comp_0 F_{\treeV}(g, f)\bigr)
  \\ \comp_1 \\
  G_{\treeLLL}\bigl(F_{\treeW}(i, h, gf)\bigr)\,;
 \end{gather*}
 the $3$-cell from m1 to i1, as we know, is
 \begin{gather*}
  GF_{\treeL}(i)\comp_0 GF_{\treeL}(h) \comp_0 G_{\treeV}\bigl(F_{\treeL}(g), F_{\treeL}(f)\bigr)
  \\ \comp_1 \\
   GF_{\treeL}(i)\comp_0 G_{\treeV}\bigl(F_{\treeLog}(h), F_{\treeLog}(g)\comp_0 F_{\treeLog}(f)\bigr)
	 \comp_1
    G_{\treeVRight}\bigl(F_{\treeLog}(i), F_{\treeLog}(h) \comp_0 F_{\treeV}(g, f)\bigr)
	\\ \comp_1 \\
  G_{\treeLL}\bigl(F_{\treeV}(i, h)\comp_0 F_{\treeL}(gf)\bigr)
   \comp_1
  G_{\treeLL}\bigl(F_{\treeV}(ih, gf)\bigr)\,;
 \end{gather*}
 finally we know that the $3$-cell from e1-b to m1 is
 \begin{gather*}
 GF_{\treeL}(i)\comp_0 GF_{\treeL}(h) \comp_0 G_{\treeV}\bigl(F_{\treeL}(g), F_{\treeL}(f)\bigr)
  \\ \comp_1 \\
	 GF_{\treeL}(i) \comp_0 G_{\treeVRight}\bigl(F_{\treeLog}(h), F_{\treeV}(g, f)\bigr)
	  \comp_1 
	 G_{\treeV}\bigl(F_{\treeLog}(i), F_{\treeLog}(h)\comp_0 F_{\treeLog}(gf)\bigr)
	 \\ \comp_1 \\
	 G_{\treeLL}\bigl(F_{\treeV}(i, h)\comp_0 F_{\treeL}(gf)\bigr)
   \comp_1
  G_{\treeLL}\bigl(F_{\treeV}(ih, gf)\bigr)\,.
 \end{gather*}
 The coherence for the tree
 \scalebox{0.3}{
		\begin{forest}
			for tree={%
				label/.option=content,
				grow'=north,
				content=,
				circle,
				fill,
				minimum size=3pt,
				inner sep=0pt,
				s sep+=15,
			}
			[
			 []
			 []
			 [ [] ]
			]
		\end{forest}
}
applied to the pasting diagram
\[
      \begin{tikzcd}[column sep=4.5em]
       \bullet
       \ar[r, bend left, ""{below, name=f}]
       \ar[r, bend right, ""{name=f2}]
       \ar[Rightarrow, from=f, to=f2, "\alpha"] &
       \bullet \ar[r, "k"] &
       \bullet \ar[r, "l"] & \bullet
      \end{tikzcd}
\]
of $C$, with $\alpha = F_{\treeV}(g, f)$, $k = F_{\treeL}(h)$
and $l = F_{\treeL}(i)$ give the equality
\begingroup
\allowdisplaybreaks
\begin{gather*}
 G_{\treeV}\bigl(F_{\treeL}(i), F_{\treeL}(h)\bigr) \comp_0
  G_{\treeL}\bigl(F_{\treeL}(g)\comp_0 F_{\treeL}(f)\bigr) \comp_1
  G_{\treeVRight}\bigl(F_{\treeL}(i)\comp_0 F_{\treeL}(h), F_{\treeV}(g, f)\bigr) \nobreak
  \\ \comp_1  \nobreak \\
  GF_{\treeL}(i) \comp_0 GF_{\treeL}(h) \comp_0 G_{\treeLL}\bigl(F_{\treeV}(g, f)\bigr)
   \comp_1
  G_{\treeW}\bigl(F_{\treeL}(i), F_{\treeL}(h), F_{\treeL}(gf)\bigr)  \nobreak
  \\ =  \nobreak \\
  G_{\treeW}\bigl(F_{\treeL}(i), F_{\treeL}(h), F_{\treeL}(g)\comp_0 F_{\treeL}(f)\bigr)
   \comp_1
  G_{\treeLL}\bigl(F_{\treeL}(i)\comp_0 F_{\treeL}(h) \comp_0 F_{\treeV}(g, f)\bigr)  \nobreak
  \\ \comp_1  \nobreak \\
  GF_{\treeL}(i)\comp_0 G_{\treeV}\bigl(F_{\treeLog}(h), F_{\treeLog}(g)\comp_0 F_{\treeLog}(f)\bigr)
	 \comp_1
    G_{\treeVRight}\bigl(F_{\treeLog}(i), F_{\treeLog}(h) \comp_0 F_{\treeV}(g, f)\bigr)  \nobreak
	\\ \comp_1   \nobreak \\
  GF_{\treeL}(i) \comp_0 G_{\treeVRight}\bigl(F_{\treeLog}(h), F_{\treeV}(g, f)\bigr)
   \comp_1 
  G_{\treeV}\bigl(F_{\treeLog}(i), F_{\treeLog}(h)\comp_0 F_{\treeLog}(gf)\bigr)
\end{gather*}
\endgroup
of $3$-cells of $C$. If we $1$-post-compose, \ie we whisker with $\comp_1$,
with the $2$-cell
\[
 GF_{\treeL}(i)\comp_0 GF_{\treeL}(h) \comp_0 G_{\treeV}\bigl(F_{\treeL}(g), F_{\treeL}(f)\bigr)
\]
of $C$ and we $1$-precompose with the $3$-cell
$G_{\treeLLL}\bigl(F_{\treeW}(i, h, gf)\bigr)$,
\ie we perform a ``vertical composition''
of $3$-cells, both members of the equality above,
then we get precisely diagram (7), which therefore commutes.
\end{paragr}

\begin{paragr}[8]
	Consider the diagram (8)
	\begin{center}
		\begin{tikzpicture}
		\node (e1-a) at (180:2) {e1-a};
		\node (i0) at (90:2) {i0};
		\node (e0-a) at (0:2) {e0-a};
		\node (e0) at (270:2) {e0};
		\draw[->, >=latex] (e1-a) -- node [left] {\ref{item:g1}} (i0);
		\draw[->, >=latex] (e1-a) -- node [left] {\ref{item:i-h-g-3}} (e0);
		\draw[->, >=latex] (i0) -- node [right] {$(g1')$} (e0-a);
		\draw[->, >=latex] (e0) -- node [right] {\ref{item:ih-g-f-1}} (e0-a);
		\end{tikzpicture}
	\end{center}
where the principal $3$-cell of ($g1'$) is
\begin{equation}
G_{\treeVLeft}\bigl(F_{\treeV}(i, h), F_{\treeL}(g)\comp_0 F_{\treeL}(f)\bigr)\,.
\end{equation}
More precisely, the $3$-cell of $C$ from e0 to e0-a, as we know, is
\begin{gather*}
 GF_{\treeL}(i)\comp_0 GF_{\treeL}(h) \comp_0 G_{\treeV}\bigl(F_{\treeL}(g), F_{\treeL}(f)\bigr)
 \\ \comp_1 \\
 G_{\treeLL}\bigl(F_{\treeV}(i, h)\bigr) \comp_0 G_{\treeL}\bigl(F_{\treeL}(g) \comp_0 F_{\treeL}(f)\bigr)
 \comp_1 G_{\treeVLeft}\bigl(F_{\treeV}(i, h), F_{\treeL}(g)\comp_0 F_{\treeL}(f)\bigr)
 \\ \comp_1 \\
 G_{\treeLL}\bigl(F_{\treeV}(ih, gf)\bigr)\,;
\end{gather*}
the $3$-cell from e1-a to e0 is, as we know,
\begin{gather*}
 GF_{\treeL}(i)\comp_0 GF_{\treeL}(h) \comp_0 G_{\treeV}\bigl(F_{\treeL}(g), F_{\treeL}(f)\bigr)
 \\ \comp_1 \\
 G_{\treeL}\bigl(F_{\treeL}(i) \comp_0 F_{\treeL}(h)\bigr) \comp_0 G_{\treeLL}\bigl(F_{\treeV}(g, f)\bigr)
 \comp_1 G_{\treeVLeft}\bigl(F_{\treeV}(i, h) \comp_0 F_{\treeL}(gf)\bigr)
 \\ \comp_1 \\
 G_{\treeLL}\bigl(F_{\treeV}(ih, gf)\bigr)\,;
\end{gather*}
the $3$-cell from e0 to e0-a is
\begin{gather*}
 GF_{\treeL}(i)\comp_0 GF_{\treeL}(h) \comp_0 G_{\treeV}\bigl(F_{\treeL}(g), F_{\treeL}(f)\bigr)
 \\ \comp_1 \\
 G_{\treeVLeft}\bigl(F_{\treeV}(i, h), F_{\treeL}(g) \comp_0 F_{\treeL}(f)\bigr)
 \comp_1 G_{\treeLL}\bigl(F_{\treeL}(ih) \comp_0 F_{\treeV}(g, f)\bigr)
 \\ \comp_1 \\
 G_{\treeLL}\bigl(F_{\treeV}(ih, gf)\bigr)\,;
\end{gather*}
finally the $3$-cell from e1-a to e0, as we know, is
\begin{gather*}
 GF_{\treeL}(i)\comp_0 GF_{\treeL}(h) \comp_0 G_{\treeV}\bigl(F_{\treeL}(g), F_{\treeL}(f)\bigr)
 \\ \comp_1 \\
 G_{\treeVRight}\bigl(F_{\treeL}(i) \comp_0 F_{\treeL}(h), F_{\treeV}(g, f)\bigr)
 \comp_1 G_{\treeLL}\bigl(F_{\treeV}(i, h) \comp_0 F_{\treeL}(gf)\bigr)
	\\ \comp_1 \\
	G_{\treeLL}\bigl(F_{\treeV}(ih, gf)\bigr)\,.
\end{gather*}
The coherence for the tree
\scalebox{0.3}{
	\begin{forest}
		for tree={%
			label/.option=content,
			grow'=north,
			content=,
			circle,
			fill,
			minimum size=3pt,
			inner sep=0pt,
			s sep+=15,
		}
		[
		[ [] ]
		[ [] ]
		]
	\end{forest}
}
applied to the pasting diagram
\[
\begin{tikzcd}[column sep=4.5em]
\bullet
\ar[r, bend left, ""{below, name=f1}]
\ar[r, bend right, ""{name=f2}]
\ar[Rightarrow, from=f1, to=f2, "\alpha"]
&
\bullet
\ar[r, bend left, ""{below, name=g1}]
\ar[r, bend right, ""{name=g2}]
\ar[Rightarrow, from=g1, to=g2, "\beta"]
& \bullet
\end{tikzcd}
\]
of $C$, where $\alpha = F_{\treeV}(g, f)$ and $\beta = F_{\treeV}(h, i)$,
gives us the equality
\begingroup
\allowdisplaybreaks
\begin{gather*}
 G_{\treeLL}\bigl(F_{\treeV}(i, h)\bigr) \comp_0 G_{\treeL}\bigl(F_{\treeL}(g) \comp_0 F_{\treeL}(f)\bigr)
 \comp_1 G_{\treeVLeft}\bigl(F_{\treeV}(i, h), F_{\treeL}(g)\comp_0 F_{\treeL}(f)\bigr) \nobreak
 \\ \comp_1 \nobreak\\
 G_{\treeL}\bigl(F_{\treeL}(i) \comp_0 F_{\treeL}(h)\bigr) \comp_0 G_{\treeLL}\bigl(F_{\treeV}(g, f)\bigr)
 \comp_1 G_{\treeVLeft}\bigl(F_{\treeV}(i, h) \comp_0 F_{\treeL}(gf)\bigr)
 \\ = \\
 G_{\treeVLeft}\bigl(F_{\treeV}(i, h), F_{\treeL}(g) \comp_0 F_{\treeL}(f)\bigr)
 \comp_1 G_{\treeLL}\bigl(F_{\treeL}(ih) \comp_0 F_{\treeV}(g, f)\bigr) \nobreak
 	\\ \comp_1  \nobreak \\
 G_{\treeVRight}\bigl(F_{\treeL}(i) \comp_0 F_{\treeL}(h), F_{\treeV}(g, f)\bigr)
 \comp_1 G_{\treeLL}\bigl(F_{\treeV}(i, h) \comp_0 F_{\treeL}(gf)\bigr)
\end{gather*}
\endgroup
of $3$-cells of $C$. By $1$-post-composing with the $2$-cell
\[
 GF_{\treeL}(i)\comp_0 GF_{\treeL}(h) \comp_0 G_{\treeV}\bigl(F_{\treeL}(g), F_{\treeL}(f)\bigr)
\]
of $C$ and $1$-precomposing with the $2$-cell $G_{\treeLL}\bigl(F_{\treeV}(ih, gf)\bigr)$
we get the commutativity of the diagram (8).
\end{paragr}

\begin{paragr}[9]
Consider the diagram (9)
\begin{center}
	\begin{tikzpicture}
	\node (i0) at (270:2) {i0};
	\node (i1) at (198:2) {i1};
	\node (i2) at (126:2) {i2};
	\node (i3) at (54:2) {i3};
	\node (i4) at (-18:2) {i4};
	\draw[->,>=latex] (i1) -- node [below left] {\ref{item:g2}} (i0);
	\draw[->,>=latex] (i0) -- node [below right] {$(g2')$} (i4);
	\draw[->,>=latex] (i1) -- node [left] {\ref{item:g3}} (i2);
	\draw[->,>=latex] (i2) -- node [above] {\ref{item:b2}} (i3);
	\draw[->,>=latex] (i3) -- node [right] {$(g3')$} (i4);
	\end{tikzpicture}
\end{center}
where
\begin{gather*}
 G_{\treeW}\bigl(F_{\treeL}(i)\comp_0 F_{\treeL}(h), F_{\treeL}(g), F_{\treeL}(f)\bigr)
 \\\comp_1 \\
 \notag G_{\treeLLL}\bigl(F_{\treeW}(ih, g, f)\bigr)
\end{gather*}
is the principal $3$-cell of ($g2'$) and
\begin{gather*}
 G_{\treeW}\bigl(F_{\treeL}(i), F_{\treeL}(h)\comp_0 F_{\treeL}(g), F_{\treeL}(f)\bigr)
 \\ \comp_1 \\
 \notag G_{\treeLLL}\bigl(F_{\treeW}(i, hg, f)\bigr)
\end{gather*}
is the principal $3$-cell of ($g3'$). More precisely, the $3$-cell of $C$ from i0 to i4 is
\begin{gather*}
 G_{\treeV}\bigl(F_{\treeL}(i), F_{\treeL}(h)\bigr) \comp_0 GF_{\treeL}(g) \comp_0 GF_{\treeL}(f)
 \comp_1 G_{\treeW}\bigl(F_{\treeL}(i)\comp_0 F_{\treeL}(h), F_{\treeL}(g), F_{\treeL}(f)\bigr)
 \\ \comp_1 \\
 G_{\treeLL}\bigl(F_{\treeV}(i, h) \comp_0 F_{\treeL}(g) \comp_0 F_{\treeL}(f) \bigr)
 \comp_1 G_{\treeLLL}\bigl(F_{\treeW}(ih, g, f)\bigr)\,;
\end{gather*}
the $3$-cell from i1 to i0, as we know, is
\begin{gather*}
 GF_{\treeL}(i)\comp_0 GF_{\treeL}(h) \comp_0 G_{\treeV}\bigl(F_{\treeL}(g), F_{\treeL}(f)\bigr)
  \comp_1 
 G_{\treeW}\bigl(F_{\treeL}(i), F_{\treeL}(h), F_{\treeL}(g)\comp_0 F_{\treeL}(f)\bigr)
 \\ \comp_1 \\
 G_{\treeLL}\bigl(F_{\treeL}(i)\comp_0 F_{\treeL}(h) \comp_0 F_{\treeV}(g, f)\bigr)
  \comp_1 
 G_{\treeLLL}\bigl(F_{\treeW}(i, h, gf)\bigr)\,;
\end{gather*}
the $3$-cell from i3 to i4 is
\begin{gather*}
 G_{\treeW}\bigl(F_{\treeL}(i), F_{\treeL}(h),  F_{\treeL}(g)\bigr) \comp_0 GF_{\treeL}(f)
 \comp_1 G_{\treeV}\bigl(F_{\treeL}(i)\comp_0 F_{\treeL}(H) \comp_0 F_{\treeL}(g), F_{\treeL}(f)\bigr)
 \\ \comp_1 \\
 G_{\treeLLL}\bigl(F_{\treeW}(i, h, g) \comp_0 F_{\treeL}(f)\bigr) \comp_1
 G_{\treeLL}\bigl(F_{\treeV}(ihg, f)\bigr)\,;
\end{gather*}
the $3$-cell from i2 to i3, as we know, is
\begin{gather*}
 GF_{\treeL}(i) \comp_0 G_{\treeV}\bigl(F_{\treeL}(h), F_{\treeL}(g)\bigr)
 \comp_0 GF_{\treeL}(f)
  \comp_1
 G_{\treeW}\bigl(F_{\treeL}(i), F_{\treeL}(h)\comp_0 F_{\treeL}(g), F_{\treeL}(f)\bigr)
 \\ \comp_1 \\
 G_{\treeLL}\bigl(F_{\treeL}(h) \comp_0 F_{\treeL}(h, g)\comp_0 F_{\treeL}(f)\bigr)
  \comp_1 
 G_{\treeLLL}\bigl(F_{\treeW}(i, hg, f)\bigr)\,;
\end{gather*}
finally we also already know that the $3$-cell from i1 to i2 is
\begin{gather*}
 GF_{\treeL}(i) \comp_0 G_{\treeW}\bigl(F_{\treeL}(h), F_{\treeL}(g), F_{\treeL}(f)\bigr)
  \comp_1 
 G_{\treeV}\bigl(F_{\treeL}(i), F_{\treeL}(h) \comp_0 F_{\treeL}(g) \comp_0 F_{\treeL}(f)\bigr)
 \\ \comp_1 \\
 G_{\treeLLL}\bigl(F_{\treeL}(i) \comp_0 F_{\treeW}(h, g, f)\bigr)
  \comp_1 
 G_{\treeLL}\bigl(F_{\treeV}(i, hgf)\bigr)\,.
\end{gather*}
Notice that diagram (9) is actually formed by ``vertical compositions'' of
these $3$-cells; that is, the first lines of these $3$-cells are $2$-composable
and the same holds for the $3$-cells of the second line. Observe that in order to
$2$-compose the $3$-cell from i1 to i0 with the $3$-cell from i0 to i4 we use
the relations
\begin{gather*}
 G_{\treeLL}\bigl(F_{\treeL}(i)\comp_0 F_{\treeL}(h) \comp_0 F_{\treeV}(g, f)\bigr)
 \comp_1
 G_{\treeLL}\bigl(F_{\treeV}(i, h) \comp_0 F_{\treeL}(g\comp_0 f)\bigr)
 \\ = \\
 G_{\treeLL}\bigl(F_{\treeL}(i)\comp_0 F_{\treeL}(h) \comp_0 F_{\treeV}(g, f)
	 \comp_1 F_{\treeV}(i, h) \comp_0 F_{\treeL}(g \comp_0 f)\bigr)
 \\ = \\
 G_{\treeLL}\bigl(F_{\treeV}(i, h) \comp_0 F_{\treeL}(g) \comp_0 F_{\treeL}(f)
	 \comp_1 F_{\treeL}(i \comp_0 h) \comp_0 F_{\treeV}(g, f)\bigr)
 \\ = \\
 G_{\treeLL}\bigl(F_{\treeV}(i, h) \comp_0 F_{\treeL}(g) \comp_0 F_{\treeL}(f)\bigr)
 \comp_1 G_{\treeLL}\bigl(F_{\treeL}(i \comp_0 h) \comp_0 F_{\treeV}(g, f)\bigr)\,,
\end{gather*}
where the first and the last equality are instances of the coherence for the tree
\scalebox{0.3}{
	\begin{forest}
		for tree={%
			label/.option=content,
			grow'=north,
			content=,
			circle,
			fill,
			minimum size=3pt,
			inner sep=0pt,
			s sep+=15,
		}
		[ 
		 [ [] ]
		 [ [] ]
		]
	\end{forest}
},
\ie the $0$-composition of $2$-cells, and the equality in the middle is simply given
by the interchange law.
Now, the coherence for the tree
\scalebox{0.3}{
	\begin{forest}
		for tree={%
			label/.option=content,
			grow'=north,
			content=,
			circle,
			fill,
			minimum size=3pt,
			inner sep=0pt,
			s sep+=15,
		}
		[
		 [][][][]
		]
	\end{forest}
}
applied to the pasting diagram
\[
\begin{tikzcd}
\bullet \ar[r, "f"] & \bullet \ar[r, "g"] & \bullet \ar[r, "h"] & \bullet \ar[r, "i"] & \bullet
\end{tikzcd}
\]
of $0$-composable $1$-cells of $A$ gives us the equality
\begingroup
\allowdisplaybreaks
\begin{gather*}
 F_{\treeV}(i, h) \comp_0 F_{\treeLog}(g) \comp_0 F_{\treeLog}(f)
 \comp_1 F_{\treeW}(ih, g, h) \nobreak \\
 \comp_2 \nobreak \\
 F_{\treeLog}(i) \comp_0 F_{\treeLog}(h) \comp_0 F_{\treeV}(g, f)
 \comp_1 F_{\treeW}(i, h, gf) \nobreak
 \\ = \\
 F_{\treeW}(i, h, g) \comp_0 F_{\treeLog}(f) \comp_1 F_{\treeV}(i\comp_0 h \comp_0 g, f) \nobreak \\
 \comp_2 \nobreak \\
 F_{\treeLog}(i) \comp_0 F_{\treeV}(h, g) \comp_0 F_{\treeLog}(f) \comp_1
 F_{\treeW}(i, h \comp_0 g, f) \nobreak \\
 \comp_2 \nobreak \\
 F_{\treeLog}(i) \comp_0 F_{\treeW}(h, g, f) \comp_1 F_{\treeV}(i, h\comp_0 g \comp_0 f)
\end{gather*}
\endgroup
of $3$-cells of $B$.
Applying $G_{\treeLLL}$ to both terms of this equality and using the coherences
for the trees
\scalebox{0.3}{
	\begin{forest}
		for tree={%
			label/.option=content,
			grow'=north,
			content=,
			circle,
			fill,
			minimum size=3pt,
			inner sep=0pt,
			s sep+=15,
		}
		[
		 [[[][]]]
		]
	\end{forest}
},
\scalebox{0.3}{
	\begin{forest}
		for tree={%
			label/.option=content,
			grow'=north,
			content=,
			circle,
			fill,
			minimum size=3pt,
			inner sep=0pt,
			s sep+=15,
		}
		[
		[[[]][]]
		]
	\end{forest}
} and
\scalebox{0.3}{
	\begin{forest}
		for tree={%
			label/.option=content,
			grow'=north,
			content=,
			circle,
			fill,
			minimum size=3pt,
			inner sep=0pt,
			s sep+=15,
		}
		[
		[[][[]]]
		]
	\end{forest}
}
we get the following equality
\begingroup
\allowdisplaybreaks
\begin{gather*}
 G_{\treeLL}\bigl( F_{\treeV}(i, h) \comp_0 F_{\treeLog}(g) \comp_0 F_{\treeLog}(f)\bigr)
 \comp_1 G_{\treeLLL}\bigl(F_{\treeW}(ih, g, h) \bigr) \nobreak
 \\ \comp_2  \nobreak \\
 G_{\treeLL}\bigl( F_{\treeLog}(i) \comp_0 F_{\treeLog}(h) \comp_0 F_{\treeV}(g, f)	\bigr)
 \comp_1 G_{\treeLLL}\bigl(F_{\treeW}(i, h, gf) \bigr) \nobreak
 \\ = \\
 G_{\treeLLL}\bigl( F_{\treeW}(i, h, g) \comp_0 F_{\treeLog}(f)\bigr)
 \comp_1 G_{\treeLL}\bigl(F_{\treeV}(i\comp_0 h \comp_0 g, f) \bigr) \nobreak
 \\ \comp_2  \nobreak\\
 G_{\treeLL}\bigl( F_{\treeLog}(i) \comp_0 F_{\treeV}(h, g) \comp_0 F_{\treeLog}(f)\bigr) \comp_1
 G_{\treeLLL}\bigl(F_{\treeW}(i, h \comp_0 g, f) \bigr) \nobreak
 \\ \comp_2 \nobreak \\
 G_{\treeLLL}\bigl( F_{\treeLog}(i) \comp_0 F_{\treeW}(h, g, f)\bigr)
 \comp_1 G_{\treeLL} \bigl(F_{\treeV}(i, h\comp_0 g \comp_0 f)\bigr)\,;
\end{gather*}
\endgroup
applying instead the coherence for the tree
\scalebox{0.3}{
	\begin{forest}
		for tree={%
			label/.option=content,
			grow'=north,
			content=,
			circle,
			fill,
			minimum size=3pt,
			inner sep=0pt,
			s sep+=15,
		}
		[
		[][][][]
		]
	\end{forest}
}
to the pasting scheme given by the four $0$-composable $1$-cells of $B$
\[
\begin{tikzcd}
\bullet \ar[r, "F_{\treeL}(f)"] & \bullet \ar[r, "F_{\treeL}(g)"] &
\bullet \ar[r, "F_{\treeL}(h)"] & \bullet \ar[r, "F_{\treeL}(i)"] & \bullet
\end{tikzcd}
\]
we get the equality
\begingroup
\allowdisplaybreaks
\begin{gather*}
 G_{\treeV}\bigl(F_{\treeL}(i), F_{\treeL}(h)\bigr) \comp_0 GF_{\treeL}(g) \comp_0 GF_{\treeL}(f)
 \comp_1 G_{\treeW}\bigl(F_{\treeL}(i)\comp_0 F_{\treeL}(h), F_{\treeL}(g), F_{\treeL}(f)\bigr) \nobreak
 \\ \comp_1 \nobreak\\
 GF_{\treeL}(i)\comp_0 GF_{\treeL}(h) \comp_0 G_{\treeV}\bigl(F_{\treeL}(g), F_{\treeL}(f)\bigr)
 \comp_1 
 G_{\treeW}\bigl(F_{\treeL}(i), F_{\treeL}(h), F_{\treeL}(g)\comp_0 F_{\treeL}(f)\bigr) \nobreak
 \\ = \\
 G_{\treeW}\bigl(F_{\treeL}(i), F_{\treeL}(h), F_{\treeL}(g)\bigr) \comp_0 GF_{\treeL}(f)
 \comp_1 G_{\treeV}\bigl(F_{\treeL}(i)\comp_0 F_{\treeL}(H) \comp_0 F_{\treeL}(g), F_{\treeL}(f)\bigr) \nobreak
 \\ \comp_2 \nobreak\\
 GF_{\treeL}(i) \comp_0 G_{\treeV}\bigl(F_{\treeL}(h), F_{\treeL}(g)\bigr)
 \comp_0 GF_{\treeL}(f)
 \comp_1
 G_{\treeW}\bigl(F_{\treeL}(i), F_{\treeL}(h)\comp_0 F_{\treeL}(g), F_{\treeL}(f)\bigr) \nobreak
 \\ \comp_1 \nobreak \\
 GF_{\treeL}(i) \comp_0 G_{\treeW}\bigl(F_{\treeL}(h), F_{\treeL}(g), F_{\treeL}(f)\bigr)
 \comp_1 
 G_{\treeV}\bigl(F_{\treeL}(i), F_{\treeL}(h) \comp_0 F_{\treeL}(g) \comp_0 F_{\treeL}(f)\bigr)\,.
\end{gather*}
\endgroup
Notice that the $1$-composition line by line of the $3$-cells of these two equalities give
precisely the $3$-cells defining diagram (9), which by the interchange law is therefore commutative.
\end{paragr}

The previous paragraph ends the proof of the coherence presented
in paragraph~\ref{paragr:pentagon_coherence}, hence achieving the
following proposition.

\begin{prop}
	Let $F \colon A \to B$ and $G \colon B \to C$ be two normalised oplax $3$-functors.
	If $A$ is a $1$-category, then the data of $GF$ defined in paragraph~\ref{paragr:def_cellular_to_simplicial}
	define a normalised oplax $3$-functor.
\end{prop}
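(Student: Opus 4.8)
The plan is to verify, for the candidate $GF$ of paragraph~\ref{paragr:def_cellular_to_simplicial}, the three families of conditions defining a normalised oplax $3$-functor: that the data are well defined, that they satisfy the normalisations, and that they satisfy the coherences. Since $A$ is a $1$-category it has no non-identity cells above dimension~$1$, so the only trees carrying non-trivial data are $\treeDot$, $\treeLog$, $\treeV$ and $\treeW$, and these have already been specified. I would first check that $GF_{\treeV}(g,f)$ and $GF_{\treeW}(h,g,f)$ have the prescribed sources and targets; for $GF_{\treeW}$ this is exactly where the local-strictness datum $G_{\treeY}$ enters, as it is what licenses the rewriting $G_{\treeLL}(x)\comp_1 G_{\treeLL}(y)=G_{\treeLL}(x\comp_1 y)$ needed to form $G_{\treeLLL}(F_{\treeW}(h,g,f))$. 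The normalisation conditions likewise reduce to those indexed by $\treeDot$, $\treeLog$, $\treeV$ and $\treeW$, and each follows mechanically from the normalisations of $F$ and $G$ together with the strictness of identities, as in the representative computations already recorded.

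The substance is the coherence step. Every coherence of a normalised oplax $3$-functor is indexed by a tree of dimension~$4$ (or by $\treeY$), and all but one of these trees branch above height~$0$; since the source $A$ has no cells above dimension~$1$, each such coherence is vacuous for $GF$. The sole surviving coherence is the one indexed by $\treeVV$, for a string $\bullet\xto{f}\bullet\xto{g}\bullet\xto{h}\bullet\xto{i}\bullet$ of $0$-composable $1$-cells. I would therefore fix such a string and expand each of the five occurrences of $GF_{\treeW}$ appearing in the two sides of the coherence into its defining whiskered $\comp_1$-composite of three constituent $3$-cells of $C$, obtaining the pentagon whose full boundary is displayed in paragraph~\ref{paragr:pentagon_coherence}.

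To prove that this pentagon commutes, the plan is to cut it into the nine regions $(1)$--$(9)$ of figure~\ref{fig:diagram_composition}, exploiting the duality pairing a region $(n)$ with its mirror $(n')$ so that only one of each pair need be treated. Two mechanisms suffice. The regions $(1)$, $(1')$, $(5)$ and $(5')$ commute by the interchange law alone. Each remaining region is an instance of a single coherence of the \emph{target} morphism $G$, applied to a pasting diagram whose cells are assembled from the structure data of $F$ and then whiskered appropriately by $\comp_1$ and $\comp_2$: for example region $(2)$ uses a coherence of $G$ on the pasting $\alpha=F_{\treeV}(h,gf)$, $\beta=F_{\treeLog}(h)\comp_0 F_{\treeV}(g,f)$, $l=F_{\treeLog}(i)$; regions $(3)$ and $(3')$ use the coherences expressing compatibility of $G_{\treeLLL}$ with whiskering by a $1$-cell; region $(8)$ uses the horizontal-composition coherence $\treeVLR$ of $G$; and so on. The recurring difficulty here is purely bookkeeping: one must track the many whiskerings that render the constituent $3$-cells genuinely $2$-composable, and match each region with precisely the right coherence of $G$ on precisely the right pasting scheme.

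The keystone is region~$(9)$, and it is where I expect the real work to lie. Here the two paths of the diagram are reconciled by starting from the $\treeVV$-coherence satisfied by $F$ itself on $f,g,h,i$, applying $G_{\treeLLL}$ to both of its sides while invoking the coherences that relate $G_{\treeLLL}$ to $\comp_0$ and $\comp_1$, then applying the $\treeVV$-coherence of $G$ to the image string $F_{\treeLog}(f),\dots,F_{\treeLog}(i)$, and finally reconciling the two resulting identities by the interchange law; fusing the adjacent pieces into genuinely $2$-composable cells along the way again requires the horizontal-composition coherence together with the interchange law. Once all nine regions are shown to commute, the pentagon commutes, the $\treeVV$-coherence holds for $GF$, and—together with the data and normalisations—this establishes that $GF$ is a normalised oplax $3$-functor. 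The main obstacle throughout is the sheer combinatorial bookkeeping of whiskerings and the silent but essential reliance on the local-strictness datum $G_{\treeY}$, without which neither $GF_{\treeW}$ nor the manipulations in region~$(9)$ would even be defined.
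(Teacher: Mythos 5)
Your proposal is correct and follows essentially the same route as the paper: the same reduction of data, normalisation and coherence to the trees surviving when $A$ is a $1$-category, the same pentagon for $\treeVV$ cut into the nine regions of figure~\ref{fig:diagram_composition} with the $(n)$/$(n')$ duality, the same mechanisms (interchange law for (1), (1'), (5), (5'); a single coherence of $G$ on the appropriate pasting for the other regions), and the same treatment of region (9) by applying $G_{\treeLLL}$ to the $\treeVV$-coherence of $F$ and combining it with the $\treeVV$-coherence of $G$ via the compatibility coherences and the interchange law. Nothing essential is missing, including the explicit reliance on the local-strictness datum $G_{\treeY}$ needed to even define $GF_{\treeW}$.
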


\begin{rem}
	It is suggestive, in light of the coherences described above, to decorate the polygons of figure~\eqref{fig:diagram_composition}
	with Stasheff trees. We give such a representation
	in figure~\eqref{fig:diagram_composition_stasheff}
	(the unlabelled polygons being commutative by exchange law),
	but we do not pursue this informal approach any further.
\end{rem}

\begin{figure}
	\centering
	\begin{tikzpicture}[scale=2]
	\foreach \i in {0,1,2,3,4} {
		\tikzmath{\a = 270 - (72 * \i);}
		\node (i\i) at (\a:1) {i\i};
		\node (e\i) at (\a:3) {e\i};
	}
	\foreach \j in {1,2,3,4} {
		\tikzmath{\a = 270 - (72 * \j);}
		\pgfmathtruncatemacro\bj{\j-1}
		\node (m\j) at (\a:2) {m\j};
		\node (e\j-a) at ($(e\bj)!0.33!(e\j)$) {e\j-a};
		\node (e\j-b) at ($(e\bj)!0.67!(e\j)$) {e\j-b};
	}
	\node (e0-a) at ($(e0)!0.33!(e4)$) {e0-a};
	\node (e0-b) at ($(e0)!0.66!(e4)$) {e0-b};
	%
	%
	\draw[->, >=latex] (e1) to (e2-a);
	\draw[->, >=latex] (e2-a) to (m1);
	\draw[->, >=latex] (e1) to (e1-b);
	\draw[->, >=latex] (e1-b) to (m1);
	%
	\draw[->, >=latex] (m4) to (e4-b);
	\draw[->, >=latex] (e4-b) to (e4);
	\draw[->, >=latex] (m4) to (e0-b);
	\draw[->, >=latex] (e0-b) to (e4);
	%
	%
	\draw[->, >=latex] (m1) to (i1);
	\draw[->, >=latex] (e2-a) to (i1);
	\node at (190:1.8) {$\sttwo$};
	\draw[->, >=latex] (i4) to (m4);
	\draw[->, >=latex] (i4) to (e4-b);
	\node at (-10:1.8) {$\sttwop$};
	%
	%
	\draw[->, >=latex] (e2-a) to (e2-b);
	\draw[->, >=latex] (e2-b) to (i2);
	\draw[->, >=latex] (i1) to (i2);
	\node at ($(e2-a)!0.5!(i2)$) {$\stthree$};
	\draw[->, >=latex] (e4-a) to (e4-b);
	\draw[->, >=latex] (i3) to (e4-a);
	\draw[->, >=latex] (i3) to (i4);
	\node at ($(i3)!0.5!(e4-b)$) {$\stthreep$};
	%
	%
	\draw[->, >=latex] (e2-b) to (m2);
	\draw[->, >=latex] (m2) to (i2);
	\node at (134:1.8) {$\stfour$};
	\draw[->, >=latex] (i3) to (m3);
	\draw[->, >=latex] (m3) to (e4-a);
	\node at (46:1.8) {$\stfourp$};
	%
	%
	\draw[->, >=latex] (e2-b) to (e2);
	\draw[->, >=latex] (e2) to (e3-a);
	\draw[->, >=latex] (m2) to (e3-a);
	%
	\draw[->, >=latex] (e3-b) to (e3);
	\draw[->, >=latex] (e3) to (e4-a);
	\draw[->, >=latex] (e3-b) to (m3);
	%
	%
	\draw[->, >=latex] (e3-a) to (e3-b);
	\draw[->, >=latex] (i2) to (i3);
	\node at ($(m2)!0.5!(m3)$) {$\stsix$};
	%
	%
	\draw[->, >=latex] (i1) to (i0);
	\draw[->, >=latex] (e1-b) to (e1-a);
	\draw[->, >=latex] (e1-a) to (i0);
	\node at ($(e1-b)!0.5!(i0)$) {$\stseven$};
	\draw[->, >=latex] (i0) to (i4);
	\draw[->, >=latex] (i0) to (e0-a);
	\draw[->, >=latex] (e0-a) to (e0-b);
	\node at ($(i0)!0.5!(e0-b)$) {$\stsevenp$};
	%
	%
	\draw[->, >=latex] (e1-a) to (e0);
	\draw[->, >=latex] (e0) to (e0-a);
	\node at ($(i0)!0.6!(e0)$) {$\steight$};
	%
	%
	\node at (0:0) {$\stnine$};
	\end{tikzpicture}
	\caption{The diagram for the coherence $\protect\treeVV$, again.}
	\label{fig:diagram_composition_stasheff}
\end{figure}

\begin{prop}
    Let $u \colon A \to A'$, $F \colon A \to B$ and $G \colon B \to C$
    be normalised oplax $3$-functors, where $A$ and $a'$ are $1$-categories.
    Then $G(Fu) = (GF)u$.
\end{prop}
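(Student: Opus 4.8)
The plan is to reduce associativity to an equality of the finitely many structural maps and to exploit the fact that, the sources being $1$\nbd-categories, almost all of those maps are forced to be trivial. First I would record the elementary observation that a normalised oplax $3$-functor between two $1$-categories is just an ordinary functor: since $A'$ is a $1$-category its only $2$-cells are identities, so for every composable pair $a \xto{f} a' \xto{g} a''$ of $A$ the constraint $u_{\treeV}(g, f) \colon u_{\treeLog}(g \comp_0 f) \to u_{\treeLog}(g) \comp_0 u_{\treeLog}(f)$ must be a trivial $2$-cell; hence $u_{\treeLog}(g \comp_0 f) = u_{\treeLog}(g) \comp_0 u_{\treeLog}(f)$, the map $u_{\treeV}$ is trivial, and likewise $u_{\treeW}(h, g, f)$ is a trivial $3$-cell. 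In particular all the structural cells of $u$ of positive dimension vanish.

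Next, since $A$ is a $1$-category, both $G(Fu)$ and $(GF)u$ are normalised oplax $3$-functors whose data are carried only by the trees $\treeDot$, $\treeLog$, $\treeV$ and $\treeW$, exactly as in paragraph~\ref{paragr:def_cellular_to_simplicial}; so it suffices to compare the two functors on these four trees. The cases $\treeDot$ and $\treeLog$ are immediate, being the associativity of composition of maps of sets. For $\treeV$ I would expand the defining composite and use the normalisation $F_{\treeLL}(1) = 1$ and $G_{\treeLL}(1) = 1$ to annihilate the contribution of the trivial cell $u_{\treeV}$; both $G(Fu)_{\treeV}(g, f)$ and $((GF)u)_{\treeV}(g, f)$ then reduce to
\[
 G_{\treeV}\bigl(F_{\treeLog}(u_{\treeLog}g), F_{\treeLog}(u_{\treeLog}f)\bigr)
 \comp_1 G_{\treeLL}\bigl(F_{\treeV}(u_{\treeLog}g, u_{\treeLog}f)\bigr)\,.
\]

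The heart of the argument is the tree $\treeW$, and this is where I expect the real work to lie. The key point is that in the long $2$-composite defining a $\treeW$-constraint of a composite functor, the outer functor is evaluated precisely on the $\treeV$- and $\treeW$-cells of the inner functor; when the inner functor is $u$ these cells are all trivial, so the normalisation conditions for the trees $\treeVLeft$, $\treeVRight$, $\treeLL$ and $\treeLLL$ force every auxiliary $3$-cell to become an identity. Consequently the first and third lines of the formula collapse to identity $3$-cells and the middle line simplifies, yielding
\[
 ((GF)u)_{\treeW}(h, g, f) = (GF)_{\treeW}(u_{\treeLog}h, u_{\treeLog}g, u_{\treeLog}f)
\]
and, by the same reduction, $(Fu)_{\treeW}(h, g, f) = F_{\treeW}(u_{\treeLog}h, u_{\treeLog}g, u_{\treeLog}f)$ together with $(Fu)_{\treeV}(g, f) = F_{\treeV}(u_{\treeLog}g, u_{\treeLog}f)$.

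Finally I would substitute these values for the data of $Fu$ into the defining formula for $G(Fu)_{\treeW}(h, g, f)$; using that $u$ preserves composites of $1$-cells on the nose, so that $u_{\treeLog}(h \comp_0 g) = u_{\treeLog}h \comp_0 u_{\treeLog}g$, this expansion matches term by term the expansion of $(GF)_{\treeW}(u_{\treeLog}h, u_{\treeLog}g, u_{\treeLog}f)$ given by the composition formula for $G \circ F$, so the two $\treeW$-data coincide and the proof is complete. The main obstacle is therefore not conceptual but bookkeeping: laying out the two instances of the $\treeW$-formula so that the vanishing of the $u$-constraints is manifest and the two surviving expressions are visibly the same.
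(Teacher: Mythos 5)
Your proof is correct and takes essentially the same approach as the paper: the paper's proof consists of the single observation that a normalised oplax $3$-functor between $1$-categories is just a $1$-functor (reading the data, as you implicitly do, with $u \colon A' \to A$ so that the composites typecheck), after which it asserts that the equality "is immediately checked". Your tree-by-tree collapse of the structural cells of $u$ via the normalisation conditions is precisely the immediate check the paper leaves to the reader, spelled out in full.
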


\begin{proof}
    Notice that a normalised oplax $3$-functor $u \colon A' \to A$
    between $1$\nbd-categories is simply a $1$-functor and
    one immediately checks that the equality
    $G(Fu) = (GF)u$ of normalised oplax $3$-functor
    is verified.
\end{proof}

\begin{lemme}
	Let $B$ be a $3$-category.
	The $n$-simplices of $\SN(B)$ are in bijection with the
	oplax normalised $3$-functors $\Deltan{n} \to B$.
\end{lemme}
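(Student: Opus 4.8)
The plan is to exhibit an explicit bijection between the set of $n$-simplices $(\SN B)_n$ and the set of normalised oplax $3$-functors $\Deltan{n} \to B$. Since $\SN(B)_n = \Hom_{\ooCat}(\cO_\infty(\Deltan{n}), B) = \Hom_{\nCat{3}}(\On{n}, B)$ by definition of the Street nerve and its left adjoint (here $\On{n} = \cOn{3}\SN(\Deltan{n})$ is the $n$-th oriental, truncated to dimension $3$, as recalled in the appendix), it suffices to produce a natural bijection between strict $3$-functors $\On{n} \to B$ and normalised oplax $3$-functors $\Deltan{n} \to B$. The simplex category object $\Deltan{n}$ is a $1$-category, so we are precisely in the situation of the cellular-to-simplicial discussion: a normalised oplax $3$-functor out of a $1$-category requires only the data indexed by the trees $\TreeDot$, $\TreeLog$, $\TreeV$ and $\TreeW$, subject to the normalisations and the single pentagon coherence for the tree $\treeVV$, every other coherence being vacuous as there are no non-identity $2$-cells or $3$-cells in $\Deltan{n}$.

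\textbf{From a functor to oplax data and back.} First I would define, from a strict $3$-functor $\phi \colon \On{n} \to B$, a normalised oplax $3$-functor $F^\phi \colon \Deltan{n} \to B$. This mimics the $\sup$ construction of Example~\ref{exem:sup}: an object $i$ of $\Deltan{n}$ maps to $\phi(i)$; a $1$-cell $\{i,j\}$ maps to $\phi$ applied to the generating $1$-cell of $\On{n}$ spanning $i$ to $j$; a composable pair gives $F^\phi_{\treeV} = \phi(\atom{ijk})$, the image of the generating $2$-cell; and a composable triple gives $F^\phi_{\treeW} = \phi(\atom{ijkl})$, the image of the generating $3$-cell. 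The normalisation conditions hold because the relevant generators degenerate (the principal cell of $\On{n}$ indexed by a non-strictly-increasing tuple is an identity), and the pentagon coherence for $\treeVV$ is exactly the boundary relation satisfied by the image under $\phi$ of the principal $4$-cell of the oriental $\On{4}$ pulled back along the relevant face map, just as in Example~\ref{exem:sup}. Conversely, given a normalised oplax $3$-functor $F$, I would read off a strict $3$-functor $\On{n} \to B$ by sending each generator $\atom{S}$ of $\On{n}$ (indexed by an increasing tuple $S$) to the corresponding structural cell of $F$, and then extend to all of $\On{n}$ by the universal property of the oriental as the free $3$-category on its generators. The two constructions are manifestly inverse on the underlying data, and naturality in $\Deltan{n}$ (hence simpliciality) is immediate since both transports commute with the cofaces and codegeneracies.

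\textbf{The main obstacle.} The delicate point is verifying that the passage $F \mapsto \phi$ is well-defined, i.e.\ that assigning the structural cells of $F$ to the generators of $\On{n}$ actually \emph{extends} to a strict $3$-functor respecting all the relations of the oriental. The orientals are presented by imposing precisely the commutativity of the pentagon (source $=$ target of the principal $4$-cell) together with the lower-dimensional associativity and unit relations among the generating $2$- and $3$-cells; the normalisation and pentagon coherence of $F$ are engineered to match exactly these relations. I would therefore need the structural description of $\On{n}$ recalled in the appendix—most importantly that its $3$-skeleton is freely generated on the $\atom{S}$ for $|S|\le 4$ modulo the $4$-cell boundaries—and then the verification reduces to checking that $F$'s coherence datum $F_{\treeW}$ fills each such boundary compatibly. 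This is the analogue for arbitrary $n$ of the single computation carried out for $\On{4}$ in Example~\ref{exem:sup}, and it is where all the bookkeeping lives; by contrast the bijectivity and simpliciality, once well-definedness is secured, are routine. The statement is in fact the $3$-categorical analogue of the classical $2$-categorical fact cited in the excerpt that $\Deltan{n}\to A$ normalised oplax $2$-functors correspond to $2$-functors $\Onm{n}{2}\to A$.
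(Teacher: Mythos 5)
Your argument is correct, but it takes a genuinely different route from the paper's. The paper's proof is a coskeletality argument: it quotes Street's theorem that $\SN(B)$ is $4$-coskeletal when $B$ is a $3$-category, so that an $n$-simplex, i.e.\ a simplicial morphism $\Deltan{n} \to \SN(B)$, is determined by its values on simplices of dimension at most $4$; the images of the $0$-, $1$-, $2$- and $3$-simplices are then read off as the data for $\treeDot$, $\treeL$, $\treeV$ and $\treeW$, and the existence of compatible images for the $4$-simplices is exactly the coherence $\treeVV$. You instead work on the other side of the adjunction: since $\SN(B)_n = \Hom(\On{n}, B) \cong \Hom(\ti{3}\On{n}, B)$ by the very definition of the Street nerve, you reduce the lemma to identifying strict $3$-functors out of the truncated oriental with normalised oplax $3$-functors out of the $1$-category $\Deltan{n}$, and you prove this via the polygraphic freeness of $\On{n}$ on its atoms (which follows from $\On{n}$ being a strong Steiner category, as recalled in the appendix): atoms of dimension $\le 3$ carry the data, the atoms $\atom{ijklm}$ become identities in $\ti{3}\On{n}$ and impose precisely the pentagon coherences, and atoms of dimension $\ge 5$ impose nothing since all cells of $B$ above dimension $3$ are identities. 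The paper's route is shorter because Street's coskeletality theorem is invoked wholesale; yours in effect re-proves the relevant instance of it and has the merit of making the universal property explicit, in the spirit of Example~\ref{exem:sup} and of the quoted $2$-categorical fact about $\Onm{n}{2}$. Two points in your write-up deserve tightening: the relations presenting $\ti{3}\On{n}$ are only the boundary relations of the $4$-dimensional atoms (there are no separate ``associativity and unit relations'' among the generating $2$- and $3$-cells --- strictness supplies those for free in the free $3$-category); and the oriental only sees strictly increasing tuples, so in the direction $\phi \mapsto F^\phi$ you must check that the instances of $\treeVV$ for quadruples containing identity $1$-cells follow from the normalisation conditions --- a routine whiskering computation, which the paper's proof elides just as silently for degenerate simplices.
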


\begin{proof}
	For any $3$-category $B$ the simplicial set $\SN(B)$
	is $4$-coskeletal (see~\cite{Street}) and so we have to check that
	the set of normalised oplax $3$-functors $x \colon \Deltan{n} \to B$
	are in bijection with the set of morphisms of simplicial sets
	$x \colon \text{Sk}_4(\Deltan{n}) \to \SN(B)$; it is enough to define
	the latter ones on the $i$-simplices of $\Deltan{n}$, with $i=0,1, \dots, 4$.
	But such a definition corresponds precisely to the data
	$\treeDot$, $\treeL$, $\TreeV$ and $\TreeW$ joint with the
	coherence $\treeVV$ of a normalised oplax $3$-functor from
	the $1$-category $\Deltan{i}$ to the $3$-category $B$.
\end{proof}

\begin{thm}\label{thm:cellular-simplicial}
 Let $G \colon B \to C$ be a normalised oplax $3$-functor.
 Then there is a morphism of simplicial sets $\Nl(G) \colon \SN(B) \to \SN(C)$,
 where, for any $n  \ge 0$, an $n$\nbd-simplex of $\SN(B)$ corresponding to
 a normalised oplax $3$-functor $x \colon \Deltan{n} \to B$ is sent to
 the $n$-simplex of $C$ corresponding to the normalised oplax $3$-functor
 $Gx \colon \Deltan{n}\to C$.
\end{thm}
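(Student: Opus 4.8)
The plan is to define $\Nl(G)$ one simplicial degree at a time, using the bijection of the preceding Lemma as a dictionary between simplices and normalised oplax $3$-functors out of a simplex. Fix $n \ge 0$. By the Lemma applied to $B$, an $n$-simplex of $\SN(B)$ is the same datum as a normalised oplax $3$-functor $x \colon \Deltan{n} \to B$. Since $\Deltan{n}$ is a $1$-category, the composition Proposition established above (whose verification occupied the entire pentagon-coherence computation for the tree $\treeVV$) guarantees that the composite $Gx \colon \Deltan{n} \to C$, defined as in paragraph~\ref{paragr:def_cellular_to_simplicial}, is again a normalised oplax $3$-functor. Applying the Lemma to $C$, this $Gx$ names an $n$-simplex of $\SN(C)$. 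I therefore set $\Nl(G)$ in degree $n$ to be the map $x \mapsto Gx$; this is exactly the assignment described in the statement.

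It remains to check that this family of maps is natural in $[n]$, that is, that it commutes with the face and degeneracy operators, so as to constitute a morphism of simplicial sets. First I would record that the bijections of the Lemma are natural in $[n] \in \cDelta$, assembling into an isomorphism of simplicial sets between $\SN(B)$ and the simplicial set $[n] \mapsto \{$normalised oplax $3$-functors $\Deltan{n} \to B\}$; concretely, for a morphism $\varphi \colon [m] \to [n]$ of $\cDelta$ with associated $1$-functor $\varphi_* \colon \Deltan{m} \to \Deltan{n}$, the corresponding simplicial operator $\SN(B)_n \to \SN(B)_m$ sends $x$ to $x\varphi_*$. This holds because the simplicial structure on the Street nerve is induced by precomposition with the cosimplicial object $\Deltan{\bullet}$. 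Granting this, naturality of $\Nl(G)$ reduces to the identity $G(x\varphi_*) = (Gx)\varphi_*$ for every such $\varphi$ and every $x \colon \Deltan{n} \to B$.

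This last identity is precisely the associativity Proposition proved just above, applied with the $1$-functor $u = \varphi_*$ between the $1$-categories $\Deltan{m}$ and $\Deltan{n}$ and with $F = x$. The genuinely substantive obstacles---that $Gx$ is a well-defined normalised oplax $3$-functor, and that such composition is associative against $1$-functors---have therefore already been discharged; the only point requiring care here is the bookkeeping verification that the simplicial operators of $\SN(B)$ correspond to precomposition by the coface and codegeneracy $1$-functors under the Lemma's identification. Once that compatibility is in hand, both the well-definedness of $\Nl(G)$ in each degree and its commutation with all the simplicial operators follow directly, so that $\Nl(G) \colon \SN(B) \to \SN(C)$ is a morphism of simplicial sets, as claimed.
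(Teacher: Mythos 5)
Your proof is correct and takes essentially the same route as the paper's own: both define $\Nl(G)$ degreewise via the Lemma's bijection between $n$-simplices and normalised oplax $3$-functors $\Deltan{n} \to B$, invoke the composition Proposition (using that $\Deltan{n}$ is a $1$-category) for well-definedness, and deduce naturality from the associativity Proposition $G(Fu) = (GF)u$ applied with $u$ the $1$-functor induced by a morphism of $\cDelta$. The only point you spell out that the paper leaves implicit is the bookkeeping check that the Lemma's bijection intertwines the simplicial operators of $\SN(B)$ with precomposition by these coface and codegeneracy functors, which is indeed routine.
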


\begin{proof}
 To any normalised oplax $3$-functor $G \colon B \to C$,
 we have seen along this section that we can define a composition
 $Gx \colon \Deltan{n} \to C$ which is still a normalised oplax $3$\nbd-functor
 and so it canonically corresponds to an $n$-simplex $\Nl G_n(x)$ of $\SN(C)$.
 The functoriality of this correspondence, given by the preceding proposition,
 implies the naturality of the functions $\Nl G_n$, which assemble to a morphism
 of simplicial sets $\Nl(G) \colon \SN(B) \to \SN(C)$.
\end{proof}

\begin{exem}\label{exem:sup_we}
	Let $C$ be a small $3$-category and consider the normalised oplax
	$3$-functor $\sup \colon i_{\cDelta}(N_3(C))\to C$ defined
	in Example~\ref{exem:sup}. One checks that the
	associated morphism of simplicial sets
	$\Nl(\sup)\colon N i_{\cDelta}(N_3(C)) \to N_3(C)$ coincides
	with the morphism of simplicial sets called $\tau_{N_3(C)}$
	in paragraph~7.3.14 of~\cite{CisinskiHigherCats}.
	Hence, Proposition~7.3.15 of~\loccit implies that
	$\Nl(\sup)$ is a simplicial weak equivalence.
\end{exem}

\section{The simplicial definition}\label{sec:simplicial}

	It is expected that a good notion of normalised oplax $3$-functor
	would satisfy the following property: for any $3$-categories $A$ and $B$,
	the set of normalised oplax $3$\nbd-func\-tors from $A$ to $B$ is in bijection
	with the set of simplicial morphisms from $N_3(A)$ to $N_3(B)$.
	Nevertheless, a careful investigation of this latter notion
	shows that they might not be optimal as they fail to preserve
	the underlying $3$-graph.
	Indeed, we will analyse the case where $A$ is the ``$2$-disk'', \ie
	the $2$-category with two parallel $1$-cells and a single $2$-cell between them,
	and $B$ is the ``invertible $3$-disk'', \ie
	the $3$-category with two parallel $1$-cells, two parallel $2$-cells between them
	and a single invertible $3$-cell between these $2$-cells, and we show that there are more
	simplicial morphisms than expected between the respective Street nerves.
	On the one hand, the $2$-category $A$ has no compositions and so
	the normalised oplax $3$-functors from $A$ to $B$ should coincide with
	the strict $3$-functors. On the other hand, there are simplicial morphisms
	from $N_3(A)$ to $N_3(B)$ which do not come from the nerve of strict $3$-functors.
	This is a consequence of the fact that, for instance, there are two ways to
	capture the $2$-cell of $A$ with a $2$-simplex of $N_3(A)$ and these two
	different ways are related by $3$-simplices which are sent by any simplicial
	morphism $N_3(A) \to N_3(B)$ to $3$-simplices of $N_3(B)$ for which the main $3$-cell is invertible.
	Said otherwise, the different ways to encode cells, or simple compositions of cells,
	with simplices are linked together by higher simplices with the property
	of having the cell of greatest dimension invertible; these higher simplices
	act as invertible constraints for morphisms between Street nerves of $3$-categories
	and it is therefore natural to imagine that a normalised oplax $3$-functor would
	correspond to a simplicial morphism for which all these higher simplices acting
	as constraints have \emph{trivial} greatest cell, instead of only invertible. In order to identify such constraints,
	we shall examine in further detail the nerve $\Nl(F)$ of any normalised oplax $3$-functor $F$.
	
\subsection{Case study: {$\Dn{2}$}}
    
    \begin{paragr}
    	Consider the $2$-category $\Dn{2}$
    	\[
	    	\begin{tikzcd}[column sep=4.5em]
		    	\disk^0_0
		    	\ar[r, bend left, "\disk^0_1", ""'{name=f}]
		    	\ar[r, bend right, "\disk^1_1"', ""{name=g}]
		    	\ar[Rightarrow, from=f, to=g, "\disk"]
		    	& \disk^1_0
	    	\end{tikzcd}\ .
    	\]
    	We know from paragraphs~\ref{paragr:encode_2cell} that the simplicial set~$\SN(\Dn{2})$
    	has at least two non-degenerate $2$-simplices $\disk_l$ and $\disk_r$,
    	at least two non-degenerate $3$\hyp{}simplices that we shall call
    	$\tau_{\text{u}}$ and $\tau_{\text{d}}$
    	and also at least two non-degenerate $4$\hyp{}simplices
    	that we shall name $x_\tau$ and $y_\tau$.
    	In fact, Ozornova and Rovelli have shown in~\cite{OzornovaRovelliDisk} that
    	these are the only non-degenerate $i$-simplices, for $i=2, 3, 4$.
    	Therefore we get an explicit description of the $3$\hyp{}category $\ti{3}\cO\SN(\Dn{2})$
    	given by
    	\[
	    	\begin{tikzcd}[column sep=4.5em]
	    	 \disk^0_0
	    	 \ar[r, bend left=60, "\disk^0_1", ""'{name=f}]
	    	 \ar[r, bend right=60, "\disk^1_1"', ""{name=g}]
	    	 \ar[Rightarrow, from=f, to=g, shift right=0.5em, bend right, shorten <=1mm, shorten >=1mm, "\disk_l"', ""{name=al}]
	    	 \ar[Rightarrow, from=f, to=g, shift left=0.5em, bend left, shorten <=1mm, shorten >=1mm, "\disk_r", ""'{name=ar}]
	    	 \arrow[triple, from=al, to=ar, "\tau_{\text{d}}", "\cong"']{}
	    	 & \disk^1_0
	    	\end{tikzcd}
    	\]
    	where the inverse of the $3$-cell $\tau_{\text{d}}$ is given by the $3$-cell
    	$\tau_{\text{u}} \colon \disk_r \to \disk_l$.
    	We shall call $\Dn 3^\sharp$
    	this $3$-category. This is motivated by the fact that the $2$-skeleton of this
    	$3$-category is equal to that of~$\Dn{3}$,
    	but the top dimensional cell is invertible.
    \end{paragr}
    
    \begin{paragr}
    	There are no compositions of cells in the $3$-category $\Dn{2}$
    	and therefore a good notion of oplax $3$-functor $F$ with source
    	$\Dn{2}$ and target a $3$\hyp{}category $B$ should coincide
    	with a strict $3$\hyp{}functor, since there is no composition
    	to ``laxify'' in $\Dn{2}$. This is not the case if we set
    	the oplax $3$-functors from $A$ to $B$, where $A$ and $B$ are two
    	small $3$-categories, to be the set
    	\[
	    	\Hom_{\EnsSimp}(\SN(A), \SN(B)) \cong \Hom_{\nCat{3}}(\cOn{3}\SN (A), B)\,.
    	\]
    	Indeed, let $A = \Dn{2}$ and $B = \Dn{3}^\sharp$ and let us restrict our attention
    	to the $3$\hyp{}functors mapping $\disk^\eps_1$ to $\disk^\eps_1$, for $\eps=0, 1$,
    	\ie mapping the top cell $\disk$ of $\Dn 2$
    	to a non-trivial $2$-cell of $\Dn{3}^\sharp$.
    	We have precisely two such $3$\hyp{}functors: one sends $\disk$ to $\disk_l$ and
    	the other to $\disk_r$. Nevertheless, is we consider the $3$-functors in 
    	\[
	    	\Hom_{\nCat{3}}(\cOn{3}\SN (A), B)
	    	\cong \Hom_{\nCat{3}}({\Dn{3}}^\sharp, {\Dn{3}}^\sharp)
    	\]
    	mapping $\disk^\eps_1$ to $\disk^\eps_1$, then we count four of them
    	and they are determined by their behaviour with respect to the $3$-cell $\tau_{\text{d}}$:
    	there are two of them sending $\tau_{\text{d}}$ to the identity of $\disk_l$
    	and to the identity of $\disk_r$ respectively, which are the (mates of the) nerve of
    	the $3$-functors from $\Dn 2$ to ${\Dn{3}}^\sharp$ we considered above; furthermore,
    	there are two $3$-functors, corresponding to the automorphisms of the $3$-category
    	$\Dn{3}^\sharp$, mapping $\tau_{\text{d}}$ to itself and to $\tau_{\text{u}}$ respectively.
    \end{paragr}

	\subsection{The nerve of a normalised oplax 3-functor}\label{subsec:constraints}
    Let $A$ and $B$ be two small $3$-categories
    and consider a morphism $F \colon \SN(A) \to \SN(B)$
    of simplicial sets. In this subsection we shall study
    some of the constraints to which the morphism $F$
    is subject. As explained above, here by \emph{constraint}
    we mean an invertible and not necessarily trivial
    cell of $B$, normally a $3$-cell, which
    is the principal cell of a $3$-simplex
    $F(x)$, where on the other hand the $3$-cell of $A$
    defined by~$x$ is a trivial cell of~$A$.
    The term constraints is due to the fact that, when trying
    to extract a cellular form of oplax $3$-functor from
    such a morphism, these particular $3$-simplices
    act as additional data which do not respect the underlying
    $3$-graphs or as invertible coherences.
    
    \begin{paragr}
    Any object $a$ of $A$, that is $0$-simplex of $\SN(A)$,
    is mapped to an object $F(a)$ of $B$ and any $1$-cell
    $f \colon a \to a'$ of $A$, that is $1$-simplex of $\SN(A)$,
    is mapped to a $1$-cell $F(f) \colon F(a) \to F(a')$
    of $B$.
    
    Encoding the behaviour of higher cells in a morphism between
    the nerve of two $3$\nbd-cat\-e\-gories requires choices and
    leads to a web of coherences which increasingly become hard
    to control. The prototypical example of such a phenomenon
    is given by the way a simplicial morphism between nerves
    of $3$-categories encodes a $2$-cell. There are two different
    way of encoding a $2$-cell $\alpha$ of $A$ as a $2$-simplex.
    The main $2$-cells of $B$ of the images under $F$ of these
    two $2$-simplices are possibly two different $2$-cells; this
    can be read as the fact that simplicial morphisms do not respect
    the underlying $3$-graph in general. Nonetheless, these two
    different $2$-cells of $B$ can be proven to be linked one another
    by an invertible $3$-cell of $B$. This is described
    in detail in the next paragraph.
    \end{paragr}

        \begin{paragr}\label{paragr:encode_2cell}
    Consider a $2$-cell
    \[
     \begin{tikzcd}[column sep=4.5em]
        a
        \ar[r, bend left, "f", ""{below, name=f}]
        \ar[r, bend right, ""{name=g}, "g"']
        \ar[Rightarrow, from=f, to=g, "\alpha"]
        & a'
     \end{tikzcd}
    \]
    of $A$. The simplicial set $\SN(A)$ encodes 
    the cell $\alpha$ in two different $2$-simplices,
    namely
    \[
    \alpha_l :=
     \begin{tikzcd}[column sep=small]
      a
      \ar[rr, "f", ""{below, name=f}]
      \ar[rd, equal, "1_a"']
      &&
        a'
      \\
      &
       a
       \ar[ur, "g"']
      &
      \ar[Rightarrow, from=f, to=2-2, shorten >=3pt, "\alpha"]
     \end{tikzcd}
     \quadet
     \alpha_r :=
     \begin{tikzcd}[column sep=small]
      a
      \ar[rr, "f", ""{below, name=f}]
      \ar[rd, "g"']
      &&
        a'
      \\
      &
       a'
       \ar[ur, equal, "1_{a'}"']
      &
      \ar[Rightarrow, from=f, to=2-2, shorten >=2pt, "\alpha"]
     \end{tikzcd}\ .
    \]
    These two $2$-simplices of $\SN(A)$ are linked together
    by the following two non-degenerate $3$\hyp{}simplices
    \begin{center}
     \begin{tikzpicture}[scale=1.6, font=\footnotesize]
     \squares{%
     	/squares/label/.cd,
     	0=$\bullet$, 1=$\bullet$, 2=$\bullet$, 3=$\bullet$,
     	01={}, 12=$g$, 23={}, 02=$g$, 03=$f$, 13=$g$,
     	012=${=}$, 023=$\alpha$, 123=${=}$, 013=$\alpha$,
     	0123=${=}$,
     	/squares/arrowstyle/.cd,
     	01={equal}, 23={equal},
     	012={phantom, description}, 123={phantom, description},
     	0123={phantom, description},
     	/squares/labelstyle/.cd,
     	012={anchor=center}, 123={anchor=center},
     	0123={anchor=center}
     }
     \end{tikzpicture}
    \end{center}
    and
    \begin{center}
    	\begin{tikzpicture}[scale=1.6, font=\footnotesize]
    	\squares{%
    		/squares/label/.cd,
    		0=$\bullet$, 1=$\bullet$, 2=$\bullet$, 3=$\bullet$,
    		01={}, 12=$g$, 23={}, 02=$f$, 03=$f$, 13=$f$,
    		012=$\alpha$, 023=${=}$, 123=$\alpha$, 013=${=}$,
    		0123=${=}$,
    		/squares/arrowstyle/.cd,
    		01={equal}, 23={equal},
    		023={phantom, description}, 013={phantom, description},
    		0123={phantom, description},
    		/squares/labelstyle/.cd,
    		013={anchor=center}, 023={anchor=center},
    		0123={anchor=center}
    	}
    	\end{tikzpicture}\ .
    \end{center}
    The images under $F$ of these two~$3$-sim\-plices of $\SN(A)$ give
    the following two $3$-simplices of $\SN(B)$:
    \begin{center}
    	\begin{tikzpicture}[scale=1.6, font=\footnotesize]
    	\squares{%
    		/squares/label/.cd,
    		0=$\bullet$, 1=$\bullet$, 2=$\bullet$, 3=$\bullet$,
    		01={}, 12=$F(g)$, 23={}, 02=$F(g)$, 03=$F(f)$, 13=$F(g)$,
    		012=${=}$, 023=$F(\alpha_r)$, 123=${=}$, 013=$F(\alpha_l)$,
    		0123=$\tau_{\text{u}}(\alpha)$,
    		/squares/arrowstyle/.cd,
    		01={equal}, 23={equal},
    		012={phantom, description}, 123={phantom, description},
    		/squares/labelstyle/.cd,
    		012={anchor=center}, 123={anchor=center}
    	}
    	\end{tikzpicture}
    \end{center}
    and
    \begin{center}
    	\begin{tikzpicture}[scale=1.6, font=\footnotesize]
    	\squares{%
    		/squares/label/.cd,
    		0=$\bullet$, 1=$\bullet$, 2=$\bullet$, 3=$\bullet$,
    		01={}, 12=$F(g)$, 23={}, 02=$F(f)$, 03=$F(f)$, 13=$F(f)$,
    		012=$F(\alpha_l)$, 023=${=}$, 123=$F(\alpha_r)$, 013=${=}$,
    		0123=$\tau_{\text{d}}(\alpha)$,
    		/squares/arrowstyle/.cd,
    		01={equal}, 23={equal},
    		023={phantom, description}, 013={phantom, description},
    		/squares/labelstyle/.cd,
    		013={anchor=center}, 023={anchor=center},
    		012={below right}, 123={below left}
    	}
    	\end{tikzpicture}\ .
    \end{center}
    \end{paragr}
    
    \begin{rem}
     If $B$ is a $2$-category, then the $2$-cells $F(\alpha_l)$ and
     $F(\alpha_r)$ coincide.
    \end{rem}

    \begin{paragr}\label{paragr:tau_invertible}
	The $3$-cells $\tau_\text{u}(\alpha)$ and $\tau_\text{d}(\alpha)$
	of $B$ described in the preceding paragraph
	turn out to be connected by two non-degenerate $4$-simplices
	of $\SN(B)$, that we shall call $x_\tau$ and $y_\tau$.
	The first one, displayed in diagram~\eqref{fig:tau1}, witnesses the relation
	\[\tau_\text{d}(\alpha)\comp_2 \tau_\text{u}(\alpha) = 1_{F(\alpha_r)}\,.\]
	
	The second one, displayed in diagram~\eqref{fig:tau2}, witnesses instead the relation \[\tau_\text{u}(\alpha)\comp_2 \tau_\text{d}(\alpha) = 1_{F(\alpha_l)}\,,\] so that in fact $\tau_\text{u}(\alpha)$ and $\tau_\text{d}(\alpha)$
	are two invertible $3$-cells of $B$.
	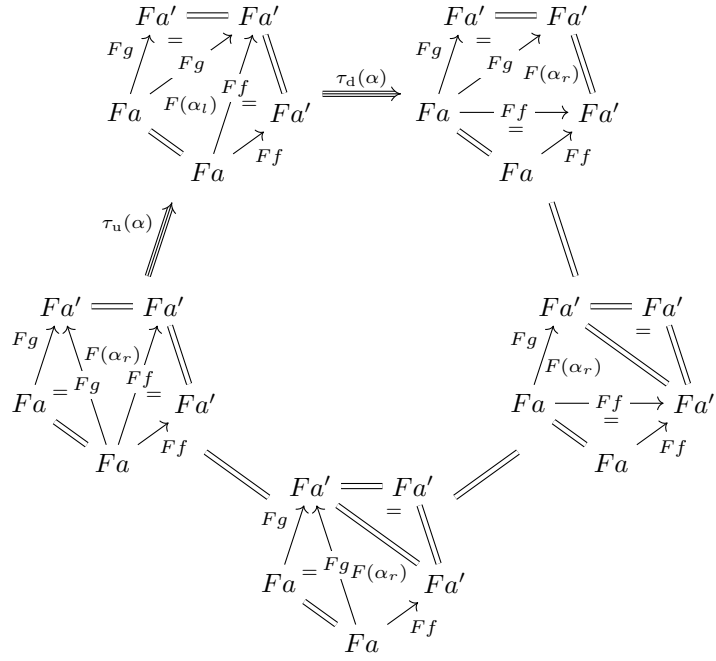
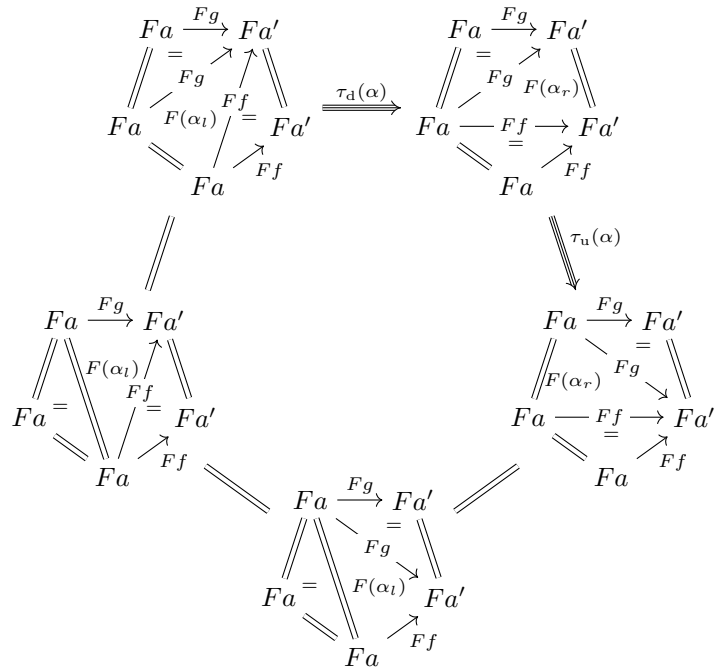
\begin{figure}
		\centering
		\subfloat[][The $4$-simplex $x_\tau$.]{%
			\begin{tikzpicture}[scale=1.15]\label{fig:tau1}
			\pentagon{%
				/pentagon/label/.cd,
				0=$Fa$, 1=$Fa$, 2=$Fa'$, 3=$Fa'$, 4=$Fa'$,
				01={}, 12=$Fg$, 23={}, 34={}, 04=$Ff$,
				02=$Fg$, 03=$Ff$, 13=$Fg$, 14=$Ff$, 24={},
				012=${=}$, 034=${=}$, 023=$F(\alpha_r)$, 123=${=}$, 134=$F(\alpha_r)$,
				014=${=}$, 024=$F(\alpha_r)$, 234=${=}$, 013=$F(\alpha_l)$, 124=$F(\alpha_r)$,
				0123=$\tau_\text{u}(\alpha)$, 0124={}, 0134=$\tau_\text{d}(\alpha)$,
				0234={}, 1234={},
				01234={},
				/pentagon/arrowstyle/.cd,
				01={equal}, 23={equal}, 34={equal}, 24={equal}, 03={pos=0.55},
				012={phantom, description}, 034={phantom, description},
				023={phantom, description}, 123={phantom, description},
				014={phantom, description}, 234={phantom, description},
				134={phantom, description}, 024={phantom, description},
				013={phantom, description}, 124={phantom, description},
				0124={equal}, 0234={equal}, 1234={equal},
				01234={phantom, description},
				/pentagon/labelstyle/.cd,
				24={}, 
				012={anchor=center}, 034={anchor=center},
				023={anchor=center}, 123={anchor=center},
				014={anchor=center}, 234={anchor=center},
				134={anchor=center}, 024={anchor=center},
				013={anchor=center}, 124={anchor=center}
			}
			\end{tikzpicture}
		}\\
		\subfloat[][The $4$-simplex $y_\tau$.]{%
			\begin{tikzpicture}[scale=1.15]\label{fig:tau2}
			\pentagon{%
				/pentagon/label/.cd,
				0=$Fa$, 1=$Fa$, 2=$Fa$, 3=$Fa'$, 4=$Fa'$,
				01={}, 12={}, 23=$Fg$, 34={}, 04=$Ff$,
				02={}, 03=$Ff$, 13=$Fg$, 14=$Ff$, 24=$Fg$,
				012=${=}$, 034=${=}$, 023=$F(\alpha_l)$, 123=${=}$, 134=$F(\alpha_r)$,
				014=${=}$, 024=$F(\alpha_l)$, 234=${=}$, 013=$F(\alpha_l)$, 124=$F(\alpha_r)$,
				0123={}, 0124={}, 0134=$\tau_\text{d}(\alpha)$,
				0234={}, 1234=$\tau_\text{u}(\alpha)$,
				01234={},
				/pentagon/arrowstyle/.cd,
				01={equal}, 12={equal}, 34={equal}, 02={equal}, 03={pos=0.55},
				012={phantom, description}, 034={phantom, description},
				023={phantom, description}, 123={phantom, description},
				014={phantom, description}, 234={phantom, description},
				134={phantom, description}, 024={phantom, description},
				013={phantom, description}, 124={phantom, description},
				0124={equal}, 0234={equal}, 0123={equal},
				01234={phantom, description},
				/pentagon/labelstyle/.cd,
				02={}, 
				012={anchor=center}, 034={anchor=center},
				023={anchor=center}, 123={anchor=center},
				014={anchor=center}, 234={anchor=center},
				134={anchor=center}, 024={anchor=center},
				013={anchor=center}, 124={anchor=center}
			}
			\end{tikzpicture}
		}
		\caption{The $4$-simplices governing $\tau_{\text{u}}$ and $\tau_{\text{d}}$.}
	\end{figure}
   \end{paragr}

   \begin{paragr}\label{paragr:cond-i}
    Let $F \colon A \to B$ be a normalised oplax $3$-functor.
    Consider a $2$-cell $\alpha \colon f \to g$ of $A$ and the two normalised
    oplax $3$-functors
    $L \colon \Deltan{2} \to A$
    and $R \colon \Deltan{2} \to A$
    defined by mapping
    \begin{align*}
    \atom{01} \mapsto 1_{s_0(\alpha)} = a &,& \atom{01} \mapsto g,\\
    \atom{12} \mapsto g &,& \atom{12} \mapsto 1_{t_0(\alpha)} = a',\\
    \atom{02} \mapsto f &,& \atom{02} \mapsto f,\\
    \atom{12}\comp_0 \atom{01} \mapsto g &,& \atom{12}\comp_0 \atom{01} \mapsto g,\\
    L_{\treeV}(\atom{12}, \atom{01}) = \alpha &,&
    R_{\treeV}(\atom{12}, \atom{01} = \alpha,
    \end{align*}
    respectively, that we can depict as
    \[
    \begin{tikzcd}
    \bullet  \ar[r, "g", ""'{name=g}] & \bullet \\
    \bullet \ar[u, equal] \ar[ur, "f"', ""{name=f}]
    \ar[Rightarrow, from=f, to=1-1, shorten >=1pt, "\alpha"']
    \end{tikzcd}
    \quadet
    \begin{tikzcd}
    \bullet
    \ar[r, "g", ""'{name=g}] \ar[rd, "f"', ""{name=f}] &
    \bullet \ar[d,equal] \\
    & \bullet 
    \ar[Rightarrow, from=f, to=1-2, shorten >=1pt, "\alpha"]
    \end{tikzcd}\ .
    \]
    Now, we can view $L$ and $R$ as the $3$-face and $0$-face
    respectively of the normalised oplax $3$-functor
    $T \colon \Deltan{3} \to A$ that we can represent as
    \begin{center}
    \begin{tikzpicture}
    \squares{%
        /squares/label/.cd,
        0=$\bullet$, 1=$\bullet$, 2=$\bullet$, 3=$\bullet$,
        12=$g$, 02=$f$, 03=$f$, 13=$f$,
        012=$\alpha$, 023=${=}$, 123=$\alpha$, 013=${=}$,
        0123=$1_{\alpha}$,
        /squares/arrowstyle/.cd,
        01={equal}, 23={equal},
        023={phantom, description}, 013={phantom, description},
        /squares/labelstyle/.cd,
        023={anchor=center}, 013={anchor=center}
        }
    \end{tikzpicture}
    \end{center}
    The conditions of normalisations impose that the image
    under $F$ of such a diagram of $A$, \ie the image of the
    normalised oplax $3$-functor $FT \colon \Deltan{3} \to B$,
    must be
    \begin{center}
    \begin{tikzpicture}[scale=1.5]
    \squares{%
        /squares/label/.cd,
        0=$\bullet$, 1=$\bullet$, 2=$\bullet$, 3=$\bullet$,
        12=$F_{\treeL}(g)$, 02=$F_{\treeL}(f)$, 03=$F_{\treeL}(f)$,
        13=$F_{\treeL}(f)$,
        012=$F_{\treeLL}(\alpha)$, 023=${=}$, 123=$F_{\treeLL}(\alpha)$, 013=${=}$,
        0123=${\Gamma}$,
        /squares/arrowstyle/.cd,
        01={equal}, 23={equal},
        023={phantom, description}, 013={phantom, description},
        /squares/labelstyle/.cd,
        023={anchor=center}, 013={anchor=center}
        }
    \end{tikzpicture}\ ,
    \end{center}
    where
    \[
    \Gamma = FT_{\treeW}(\atom{23}, \atom{12}, \atom{01})\,.
    \]
    Now, the four $3$-cells of $B$ appearing in the definition of $\Gamma$ are (see~\ref{paragr:def_cellular_to_simplicial})
    \[
    F_{\treeVLeft}(\alpha, 1_a) = 1_{F_{\treeLL}(\alpha)}\,,
    \]
    \[
    F_{\treeW}(1_{a'}, g, 1_a) = 1_{F_{\treeL}(g)}\,,
    \]
    and
    \[
    F_{\treeLLL}(1_\alpha) = 1_{F_{\treeLL}(\alpha)}
    \]
    and
    \[
    F_{\treeVRight}(1_{a'}, \alpha) = 1_{F_{\treeLL}(\alpha)}\,.
    \]
    
    Hence, for any $2$-cell $\alpha$ of $A$, we have that
    the $3$-cells $\tau_{\text{d}}(\alpha)$ and $\tau_{\text{u}}(\alpha)$ of $B$ associated to 
    the morphism of simplicial sets
    $\Nl(F)\colon \SN(A) \to \SN(B)$ are both trivial.
\end{paragr}

    \begin{paragr}\label{paragr:encode_comp_1cells}
     Consider two composable $1$-cells
     \[
      \begin{tikzcd}
       a \ar[r, "f"] &
       a' \ar[r, "g"] &
       a''
      \end{tikzcd}
     \]
     of $A$. The simplicial set $\SN(A)$ encodes the composition of $f$ and $g$
     with the $2$-simplex
     \[
      \begin{tikzcd}[column sep=small]
       a
       \ar[rr, "g\comp_1 f", ""{below, name=gf}]
       \ar[rd, "f"']
       &&
         a''
       \\
       &
        a'
        \ar[ur, "g"'] &
       \ar[phantom, from=gf, to=2-2, shorten >=3pt, "=" description]
      \end{tikzcd}\ .
     \]
     The morphism $F$ maps this $2$-simplex to a $2$-simplex
     \[
      \begin{tikzcd}[column sep=small]
       Fa
       \ar[rr, "F(g\comp_1 f)", ""{below, name=gf}]
       \ar[rd, "Ff"']
       &&
         Fa''
       \\
       &
        a'
        \ar[ur, "Fg"'] &
       \ar[Rightarrow, from=gf, to=2-2, shorten >=3pt]
      \end{tikzcd}\ ,
     \]
     where we call $F_{g, f}$ the $2$-cell of $B$ filling the triangle,
     \ie having $F(g \comp_0 f)$ as source and $Fg \comp_0 Ff$ as target;
     we shall often write $Fgf$ for the $1$-cell $F(g \comp_0 f)$ of $B$.
     
     If the morphism of simplicial sets $F$ is the nerve
     of a normalised oplax $3$-funtor $G \colon A \to B$,
     then by definition $F_{g, f} = G_{\treeV}(g, f)$
     (see~\ref{paragr:def_cellular_to_simplicial}).
    \end{paragr}

    \begin{paragr}\label{paragr:encode_triangle}
     Consider a $2$-cell
     \[
      \begin{tikzcd}[column sep=small, row sep=2pt]
        a
        \ar[rr, bend left=30, "f", ""{below, name=f}]
        \ar[rd, bend right=20, "g"']
        &&
          a''
        \\
        &
         a'
         \ar[ru, bend right=20, "h"']
        &
        \ar[Rightarrow, from=f, to=2-2, "\alpha"]
     \end{tikzcd}
     \]
     of $A$. The simplicial set $\SN(A)$ can encode the cell $\alpha$
     with the $2$-simplices $\alpha_l$ and $\alpha_r$ described above
     in paragraph~\ref{paragr:encode_2cell}, but also with
     the $2$-simplex
     \[
      \bar{\alpha} :=
      \begin{tikzcd}[column sep=small]
       a
       \ar[rr, "f", ""{below, name=f}]
       \ar[rd, "g"']
       &&
         a''
       \\
       &
        a'
        \ar[ur, "h"'] &
       \ar[Rightarrow, from=f, to=2-2, shorten >=3pt, "\alpha"]
      \end{tikzcd}
     \]
     of $\SN(A)$. These three $2$-simplices of $\SN(A)$ are
     tied together by the two $3$-simplices described in paragraph~\ref{paragr:encode_2cell},
     but also by the following two $3$-simplices
     \begin{center}
     	\begin{tikzpicture}[scale=1.6, font=\footnotesize]
     	\squares{%
     		/squares/label/.cd,
     		0=$\bullet$, 1=$\bullet$, 2=$\bullet$, 3=$\bullet$,
     		01={}, 12=$g$, 23=$h$, 02=$g$, 03=$f$, 13=$hg$,
     		012=${=}$, 023=$\alpha$, 123=${=}$, 013=$\alpha$,
     		0123=${=}$,
     		/squares/arrowstyle/.cd,
     		01={equal},
     		012={phantom, description}, 123={phantom, description},
     		0123={phantom, description},
     		/squares/labelstyle/.cd,
     		012={anchor=center}, 123={anchor=center},
     		0123={anchor=center}
     	}
     	\end{tikzpicture}
     \end{center}
     and
     \begin{center}
     	\begin{tikzpicture}[scale=1.6, font=\footnotesize]
     	\squares{%
     		/squares/label/.cd,
     		0=$\bullet$, 1=$\bullet$, 2=$\bullet$, 3=$\bullet$,
     		01=$g$, 12=$h$, 23={}, 02=$hg$, 03=$f$, 13=$h$,
     		012=${=}$, 023=$\alpha$, 123=${=}$, 013=$\alpha$,
     		0123=${=}$,
     		/squares/arrowstyle/.cd,
     		23={equal},
     		012={phantom, description}, 123={phantom, description},
     		0123={phantom, description},
     		/squares/labelstyle/.cd,
     		012={anchor=center}, 123={anchor=center},
     		0123={anchor=center}
     	}
     	\end{tikzpicture}
     \end{center}
     of $\SN(A)$, whose image under $F$ gives the following two $3$-simplices of $\SN(B)$:
     \begin{center}
     	\begin{tikzpicture}[scale=1.6, font=\footnotesize]
     	\squares{%
     		/squares/label/.cd,
     		0=$\bullet$, 1=$\bullet$, 2=$\bullet$, 3=$\bullet$,
     		01={}, 12=$Fg$, 23=$Fh$, 02=$Fg$, 03=$Ff$, 13=$Fhg$,
     		012=${=}$, 023=$F(\bar\alpha)$, 123=$F_{g, f}$, 013=$F(\alpha_l)$,
     		0123=$\gamma_{\text{l}}(\alpha)$,
     		/squares/arrowstyle/.cd,
     		01={equal},
     		012={phantom, description},
     		/squares/labelstyle/.cd,
     		012={anchor=center}
     	}
     	\end{tikzpicture}
     \end{center}
     and
     \begin{center}
     	\begin{tikzpicture}[scale=1.6, font=\footnotesize]
     	\squares{%
     		/squares/label/.cd,
     		0=$\bullet$, 1=$\bullet$, 2=$\bullet$, 3=$\bullet$,
     		01=$Fg$, 12=$Fg$, 23={}, 02=$Fhg$, 03=$Ff$, 13=$Fh$,
     		012=$F_{g, f}$, 023=$F(\alpha_r)$, 123=${=}$, 013=$F(\bar\alpha)$,
     		0123=$\gamma_{\text{r}}(\alpha)$,
     		/squares/arrowstyle/.cd,
     		23={equal},
     		123={phantom, description},
     		/squares/labelstyle/.cd,
     		123={anchor=center}
     	}
     	\end{tikzpicture}\ .
     \end{center}
    \end{paragr}
    
    \begin{rem}
     If $B$ is a $2$-category, then the $2$-cells $F_{g, f}\comp_1 F(\alpha_l)$,
     $F_{g, f} \comp_1 F(\alpha_r)$ and $F(\bar\alpha)$ coincide.
    \end{rem}

    \begin{paragr}\label{paragr:gamma_invertible}
    The $3$-cells $\gamma_\text{l}(\alpha)$ and $\gamma_\text{r}(\alpha)$
    of $B$ described in the preceding paragraph
    turn out to be connected by two non-degenerate $4$-simplices
    of $\SN(B)$, that we shall call $x_\gamma$ and $y_\gamma$.
    The first one, displayed in diagram~\ref{fig:gamma1}, witnesses the relation
    \[
      \gamma_\text{l}(\alpha) \comp_2 \gamma_\text{r}(\alpha) = F_{g, f} \comp_1 \tau_\text{u}(\alpha)\,.
    \]
    The second one, displayed in diagram~\ref{fig:gamma2}, witnesses instead the relation
    \[
      \gamma_\text{r}(\alpha) \comp_2 (F_{g, f} \comp_1 \tau_\text{d}(\alpha)) \comp_2 \gamma_\text{l}(\alpha)
      = 1_{F\bar\alpha}\,.
    \]
    We already know by paragraph~\ref{paragr:tau_invertible} that
    $\tau_\text{u}(\alpha)$ and $\tau_\text{d}(\alpha)$
    are two invertible $3$-cells of $B$, inverses of each others.
    Hence we obtain that $\gamma_\text{l}(\alpha)$ and
    $\gamma_\text{r}(\alpha)$ are invertible $3$-cells of $B$,
    with $F_{g, f}\comp_1 \tau_\text{d}(\alpha) \comp_2 \gamma_\text{l}(\alpha)$
    as inverse of $\gamma_\text{r}(\alpha)$.
	\begin{figure}
	\centering
	\subfloat[][The $4$-simplex $x_\gamma$.]{%
	 \begin{tikzpicture}[scale=1.2]\label{fig:gamma1}
	 \pentagon{%
	 	/pentagon/label/.cd,
	 	0=$Fa$, 1=$Fa$, 2=$Fa'$, 3=$Fa''$, 4=$Fa''$,
	 	01={}, 12=$Fg$, 23=$Fh$, 34={}, 04=$Ff$,
	 	02=$Fg$, 03=$Fhg$, 13=$Fhg$, 14=$Fhg$, 24=$Fh$,
	 	012=${=}$, 034=$F\alpha_r$, 023=$F_{g, f}$, 123=${F_{g,f}}$, 134=${=}$,
	 	014=${F\alpha_l}$, 024=$F\bar\alpha$, 234=${=}$, 013=${=}$, 124=$F_{g,f}$,
	 	0123={}, 0124=$\gamma_{\text{l}}(\alpha)$,
	 	0134={$F_{g, f}\comp_1\tau_\text{u}(\alpha)$},
	 	0234=$\gamma_{\text{r}}(\alpha)$, 1234={},
	 	01234={},
	 	/pentagon/arrowstyle/.cd,
	 	01={equal}, 34={equal}, 03={pos=0.55},
	 	012={phantom, description}, 034={phantom, description},
	 	023={phantom, description}, 123={phantom, description},
	 	014={phantom, description}, 234={phantom, description},
	 	134={phantom, description}, 024={phantom, description},
	 	013={phantom, description}, 124={phantom, description},
	 	0123={equal}, 1234={equal},
	 	01234={phantom, description},
	 	/pentagon/labelstyle/.cd,
	 	012={anchor=center}, 034={anchor=center},
	 	023={anchor=center}, 123={anchor=center},
	 	014={anchor=center}, 234={anchor=center},
	 	134={anchor=center}, 024={anchor=center},
	 	013={anchor=center}, 124={anchor=center}
	 }
	 \end{tikzpicture}
	}\\
	\subfloat[][The $4$-simplex $y_\tau$.]{%
		\begin{tikzpicture}[scale=1.2]\label{fig:gamma2}
		\pentagon{%
			/pentagon/label/.cd,
			0=$Fa$, 1=$Fa$, 2=$Fa'$, 3=$Fa''$, 4=$Fa''$,
            01={}, 12=$Fg$, 23=$Fh$, 34={}, 04=$Ff$,
            02=$Fg$, 03=$Ff$, 13=$Fhg$, 14=$Fhg$, 24=$Fh$,
			012=${=}$, 034=${=}$, 023=$F\bar\alpha$, 123=$F_{g,f}$,
			134=$F\alpha_r$, 014=${=}$, 024=$F\bar\alpha$,
			234=${=}$, 013=$F\alpha_l$, 124=$F\bar\alpha$,
			0123=$\gamma_{\text{l}}(\alpha)$, 0124={},
			0134={$F_{g, f}\comp_1\tau_\text{d}(\alpha)$},
			0234={}, 1234=$\gamma_\text{r}(\alpha)$,
			01234={},
			/pentagon/arrowstyle/.cd,
			01={equal}, 34={equal}, 03={pos=.55},
			012={phantom, description}, 034={phantom, description},
			023={phantom, description}, 123={phantom, description},
			014={phantom, description}, 234={phantom, description},
			134={phantom, description}, 024={phantom, description},
			013={phantom, description}, 124={phantom, description},
			0234={equal}, 0124={equal},
			01234={phantom, description},
			/pentagon/labelstyle/.cd,
			012={anchor=center}, 034={anchor=center},
			023={anchor=center}, 123={anchor=center},
			014={anchor=center}, 234={anchor=center},
			134={anchor=center}, 024={anchor=center},
			013={anchor=center}, 124={anchor=center}
		}
		\end{tikzpicture}
	}
	\caption{The $4$-simplices governing $\gamma_{\text{l}}$ and $\gamma_{\text{r}}$.}
	\end{figure}
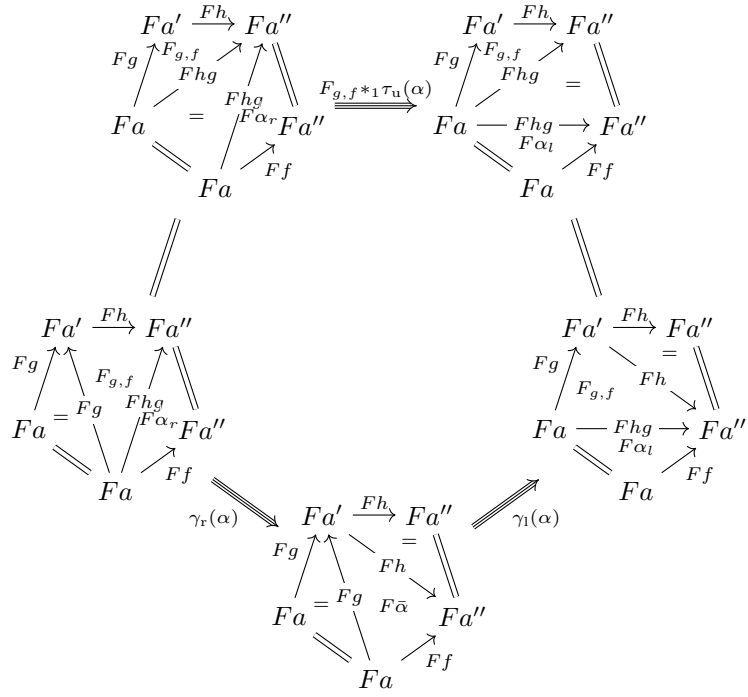
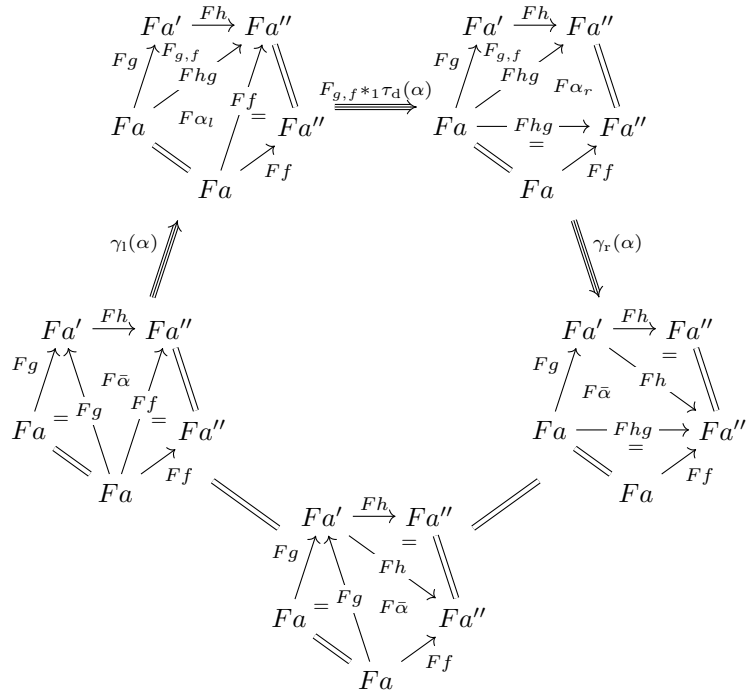
    \end{paragr}

\begin{paragr}\label{paragr:cond-ii}
	Let $F \colon A \to B$ be a normalised oplax $3$-functor.
    Consider the normalised oplax $3$-functor $T \colon \Deltan{3} \to A$
    given by
    \begin{center}
    \begin{tikzpicture}[scale=1.3]
    \squares{%
        /squares/label/.cd,
        01={}, 12={$g$}, 23={$h$}, 03={$f$}, 02={$g$}, 13={$hg$},
        012={$=$}, 023={$\alpha$}, 013={$\alpha$}, 123={$=$},
        0123={$1_{\alpha}$},
        /squares/arrowstyle/.cd,
        01={equal}, 012={phantom, description}, 123={phantom, description},
        /squares/labelstyle/.cd,
        012={anchor=center}, 123={anchor=center}
    }
    \end{tikzpicture}\ .
    \end{center}
    The conditions of normalisations impose that the image under $F$ of $T$ is
    \begin{center}
        \begin{tikzpicture}[scale=1.5]
        \squares{%
            /squares/label/.cd,
            01={}, 12={$F_{\treeL}(g)$}, 23={$F_{\treeL}(h)$}, 03={$F_{\treeL}(f)$},
            02={$F_{\treeL}(g)$}, 13={$F_{\treeL}(hg)$},
            012={$=$}, 023={$F(\bar\alpha)$}, 013={$F_{\treeLL}(\alpha)$}, 123={$F_{\treeV(g, f)}$},
            0123={$\Gamma$},
            /squares/arrowstyle/.cd,
            01={equal}, 012={phantom, description}, 123={phantom, description},
            /squares/labelstyle/.cd,
            012={anchor=center}, 123={anchor=center}
        }
        \end{tikzpicture}\,,
    \end{center}
    Moreover, the four main $3$-cells of $\Gamma$ are by definition (see~\ref{paragr:def_cellular_to_simplicial}):
    \[
    F_{\treeVLeft}(1_{hg}, 1_a) = 1_{F_{\treeLL}(hg)}\,,
    \]
    \[
    F_{\treeW}(h, g, 1_a) = 1_{F_{\treeV}(g, f)}\,,
    \]
    \[
    F_{\treeLLL}(1_{\alpha}) = 1_{F_{\treeLL}(\alpha)}
    \]
    and
    \[
    F_{\treeVRight}(h, 1_g) = 1_{F_{\treeV(g, f)}}\,.
    \]
    Hence the $3$-cell $\Gamma$ is trivial.
    This is equivalent to saying that for any diagram
    \[
    \begin{tikzcd}[column sep=small, row sep=2pt]
    a
    \ar[rr, bend left=30, "f", ""{below, name=f}]
    \ar[rd, bend right=20, "g"']
    &&
    a''
    \\
    &
    a'
    \ar[ru, bend right=20, "h"']
    &
    \ar[Rightarrow, from=f, to=2-2, "\alpha"]
    \end{tikzcd}
    \]
    of $A$, the $3$-cells $\gamma_{\text{l}}(\alpha)$
    and $\gamma_{\text{r}}(\alpha)$ associated
    to the morphism of simplicial set
    $\Nl(F) \colon \SN(A) \to \SN(B)$
    are trivial.
\end{paragr}

    \begin{paragr}\label{paragr:sigma}
     Consider two $1$-composable $2$-cells
     \[
      \begin{tikzcd}[column sep=4.8em]
       a^{\phantom\prime}
       \ar[r, bend left=70, looseness=1.4, "f", ""'{name=f}]
       \ar[r, "g"{description, name=g}]
       \ar[r, bend right=70, looseness=1.4, ""{name=h}, "h"']
       \ar[Rightarrow, from=f, to=g, shorten <=1mm, shorten >= 2mm, "\alpha"]
       \ar[Rightarrow, from=g, to=h, shorten <=2mm, shorten >= 1mm, "\beta"]
       & a'
      \end{tikzcd}
     \]
     of $A$. We have a $3$-simplex $\sigma_{\alpha, \beta}$
     \begin{center}
      \begin{tikzpicture}[scale=1.6]
       \squares{
        /squares/label/.cd,
        0=$\bullet$, 1=$\bullet$, 2=$\bullet$, 3=$\bullet$,
        01={}, 12=$h$, 23={}, 03=$f$, 02=$g$, 13=$g$,
        012=$\beta$, 023=$\alpha$,
        013=$\alpha$, 123=$\beta$,
        0123=${=}$,
        /squares/arrowstyle/.cd,
        01={equal}, 23={equal},
        0123={phantom, description},
        /squares/labelstyle/.cd,
        0123={anchor=center}
       }
      \end{tikzpicture}
     \end{center}
     of $\SN(A)$ whose image under $F$ is given by the following $3$\hyp{}simplex
     $F(\sigma_{\alpha, \beta})$ of $\SN(B)$
     \begin{center}
      \begin{tikzpicture}[scale=1.7]
       \squares{
        /squares/label/.cd,
        0=$\bullet$, 1=$\bullet$, 2=$\bullet$, 3=$\bullet$,
        01={}, 12=$Fh$, 23={}, 03=$Ff$, 02=$Fg$, 13=$Fg$,
        012=$F(\beta_l)$, 023=$F(\alpha_r)$,
        013=$F(\alpha_l)$, 123=$F(\beta_r)$,
        0123=${\sigma(\beta, \alpha)}$,
        /squares/arrowstyle/.cd,
        01={equal}, 23={equal}
       }
      \end{tikzpicture}\ .
     \end{center}
     There is a close relationship between the $3$-cell $\sigma(\beta, \alpha)$
     and the $3$-cells $\tau_{\text{u}}(\alpha)$ and $\tau_{\text{d}}(\beta)$
     of $B$, as displayed by the $4$-simplex $x_\sigma$ in figure~\ref{fig:sigma}.
     In particular, $\sigma(\beta, \alpha)$ is an invertible $3$-cell of $B$.
     Being more precise, we have
     \[
      \sigma(\beta, \alpha) = \tau_{\text{d}}(\beta) \comp_1 \tau_{\text{u}}(\alpha)\,.
     \]
     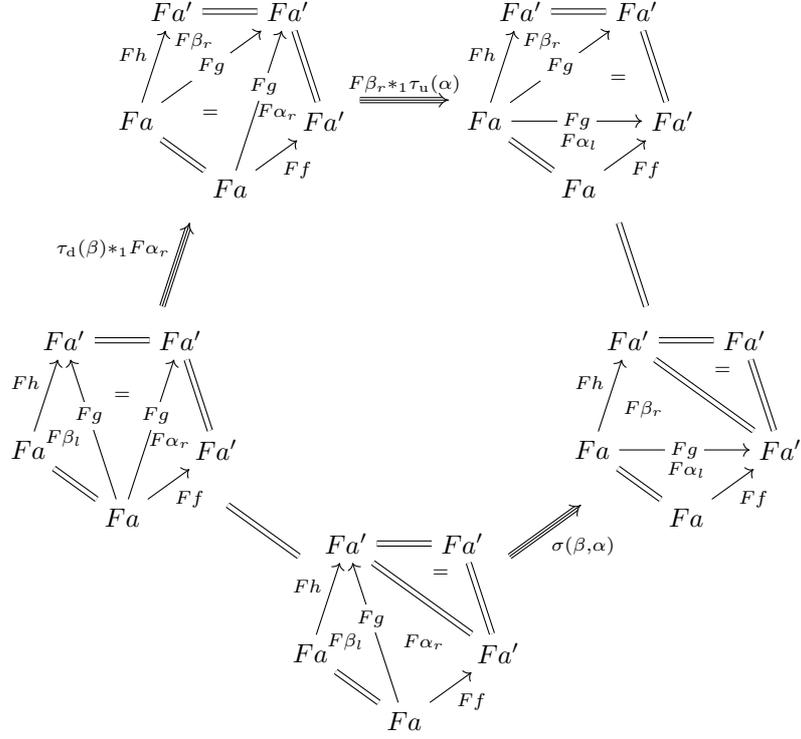
\begin{figure}
      \centering
      \begin{tikzpicture}[scale=1.3]
       \pentagon{
            /pentagon/label/.cd,
            0=$Fa$, 1=$Fa$, 2=$Fa'$, 3=$Fa'$, 4=$Fa'$,
            01={}, 12=$Fh$, 23={}, 34={}, 04=$Ff$,
            02=$Fg$, 03=$Fg$, 13=$Fg$, 14=$Fg$, 24={},
            012=$F\beta_l$, 034=$F\alpha_r$, 023={$=$}, 123=$F\beta_r$,
            134={$=$}, 014=$F\alpha_l$, 024=$F\alpha_r$,
            234={$=$}, 013={$=$}, 124=$F\beta_r$,
            0123={$\tau_{\text{d}}(\beta)\comp_1 F\alpha_r$},
            0124={$\sigma(\beta, \alpha)$},
            0134={$F\beta_r\comp_1\tau_\text{u}(\alpha)$},
            0234={}, 1234={},
            01234={},
            /pentagon/arrowstyle/.cd,
            01={equal}, 23={equal}, 34={equal},
            03={pos=.6}, 02={pos=0.6}, 24={equal},
            012={phantom, description}, 034={phantom, description},
            023={phantom, description}, 123={phantom, description},
            014={phantom, description}, 234={phantom, description},
            134={phantom, description}, 024={phantom, description},
            013={phantom, description}, 124={phantom, description},
            0234={equal}, 1234={equal},
            01234={phantom, description},
            /pentagon/labelstyle/.cd,
            24={},
            012={anchor=center}, 034={anchor=center},
            023={anchor=center}, 123={anchor=center},
            014={anchor=center}, 234={anchor=center},
            134={anchor=center}, 024={anchor=center},
            013={anchor=center}, 124={anchor=center}
       }
      \end{tikzpicture}
    \caption{The $4$-simplex $x_\sigma$}
    \label{fig:sigma}
    \end{figure}
    \end{paragr}

    \begin{rem}\label{rem:treeY}
    	If $F \colon A\to B$ be a normalised oplax $3$-functor,
    	then it follows by paragraph~\ref{paragr:cond-i} and by the
    	relation of the previous paragraph that given
    	any pair $(\beta, \alpha)$ of $1$-composable
    	$2$-cells of $A$, the $3$-cell $\sigma(\beta, \alpha)$
    	of~$B$ associated to the morphism of simplicial sets
    	$\Nl(F) \colon \SN(A) \to \SN(B)$ is \emph{trivial}.
    	This can be taken as a further justification for
    	the choice of listing the datum associated to
    	the tree $\treeY$, representing the vertical composition
    	of $2$-cells, as a coherence and not as a structural cell
    	in the definition of normalised oplax $3$-functor.
    	Indeed, any preferred direction (lax/oplax) would be
    	incompatible with the combinatorics dictated by
    	the simplicial sets; more precisely, it would be
    	irreconcilable with the combinatorics of the orientals
    	and thus with the data encoded by morphisms of simplicial
    	sets between Street nerve of $3$-categories.
    \end{rem}

    \begin{paragr}\label{paragr:encode_horizontal_comp}
     The images of vertical compositions of cells encode a great deal
     of information into the form of coherences, \ie invertible
     cells of $B$. An example of critical importance
     for the following sections is presented in this paragraph.
     Consider two $1$-composable $2$-cells
     \[
      \begin{tikzcd}[row sep=1.35em]
       a^{\phantom\prime}
       \ar[rr, bend left=75, "f", ""'{name=f}]
       \ar[rr, "g"{description, name=g}]
       \ar[rd, bend right, "h"']
       &&
         a''
       \\
       & a'
         \ar[ru, bend right, "i"']
       &
       \ar[Rightarrow, from=f, to=g, shorten <=1mm, shorten >=2mm, "\alpha"]
       \ar[Rightarrow, from=g, to=2-2, shorten <=1mm, pos=0.4, "\beta" near end]
      \end{tikzcd}
     \]
     of $A$, to which we can associate the following two $3$-simplices
     \begin{center}
     	\begin{tikzpicture}[scale=1.5]
     	\squares{%
     		/squares/label/.cd,
     		01={$h$}, 12={$i$}, 23={}, 03={$f$}, 02={$g$}, 13={$i$},
     		012={$\beta$}, 023={$\alpha$}, 013={$\beta\comp_1\alpha$}, 123={$=$},
     		0123={$=$},
     		/squares/arrowstyle/.cd,
     		23={equal}, 123={phantom, description}, 0123={phantom, description},
     		/squares/labelstyle/.cd,
     		013={description, near start},
     		123={anchor=center}, 0123={anchor=center}
     	}
     	\end{tikzpicture}
     \end{center}
     and
     \begin{center}
     	\begin{tikzpicture}[scale=1.5]
     	\squares{%
     		/squares/label/.cd,
     		01={}, 12={$h$}, 23={$i$}, 03={$f$}, 02={$h$}, 13={$i\comp_0 h$},
     		012={$=$}, 023={$\beta\comp_1 \alpha$}, 013={$\alpha$}, 123={$\beta$},
     		0123={$=$},
     		/squares/arrowstyle/.cd,
     		01={equal}, 012={phantom, description}, 0123={phantom, description},
     		/squares/labelstyle/.cd,
     		023={description, near start},
     		012={anchor=center}, 0123={anchor=center}
     	}
     	\end{tikzpicture}
     \end{center}
     of $\SN(A)$. These are mapped under $F$ to the following two
     $3$\hyp{}simplices of $\SN(B)$:
     \begin{center}
     	\begin{tikzpicture}[scale=1.65]
     	\squares{%
     		/squares/label/.cd,
     		01={$Fh$}, 12={$Fi$}, 23={}, 03={$Ff$}, 02={$Fg$}, 13={$Fi$},
     		012={$F(\bar\beta)$}, 023={$F(\alpha_r)$}, 013={$F(\overline{\beta\comp_1\alpha})$}, 123={$=$},
     		0123={$\eps_\text{l}(\beta, \alpha)$},
     		/squares/arrowstyle/.cd,
     		23={equal}, 123={phantom, description},
     		/squares/labelstyle/.cd,
     		013={description, near start = 1pt and 1pt},
     		123={anchor=center}
     	}
     	\end{tikzpicture}
     \end{center}
     and
     \begin{center}
     	\begin{tikzpicture}[scale=1.65]
     	\squares{%
     		/squares/label/.cd,
     		01={}, 12={$Fh$}, 23={$Fi$}, 03={$Ff$}, 02={$Fh$}, 13={$F(i\comp_0 h)$},
     		012={$=$}, 023={$F(\overline{\beta\comp_1 \alpha})$}, 013={$F(\alpha_l)$}, 123={$F(\bar\beta)$},
     		0123={$\eps_\text{r}(\beta, \alpha)$},
     		/squares/arrowstyle/.cd,
     		01={equal}, 012={phantom, description},
     		/squares/labelstyle/.cd,
     		023={description, near start = 1pt and 1pt},
     		012={anchor=center}
     	}
     	\end{tikzpicture}\ .
     \end{center}
    \end{paragr}
    
    \begin{figure}
    	\centering
    	\subfloat[][The $4$-simplex $x_\eps$.]{%
    	 \begin{tikzpicture}[scale=1.25, font=\footnotesize]\label{fig:eps1}
    		\pentagon{%
    		 /pentagon/label/.cd,
    		 0={$Fa$}, 1={$Fa$}, 2={$Fa'$}, 3={$Fa''$}, 4={$Fa''$},
    		 01={}, 12={$Fh$}, 23={$Fi$}, 34={}, 04={$Ff$},
    		 02={$Fh$}, 03={$Fg$}, 13={$Fhi$}, 14={$Fg$}, 24={$Fi$},
    		 012={$=$}, 023={$F\bar\beta$}, 034={$F\alpha_r$},
    		 013={$F\beta_l$}, 123={$F_{i,h}$}, 014={$F\alpha_l$},
    		 134={$F\beta_r$}, 124={$F\bar\beta$},
    		 024={$F\overline{\beta\comp_1\alpha}$}, 234={$=$},
    		 0123={$\gamma_\text{l}(\beta)\comp_1 F\alpha_r$},
    		 0134={$\sigma(\beta, \alpha)\comp_1 F_{i, h}$},
    		 1234={$\gamma_\text{r}(\beta) \comp_1 F\alpha_l$},
    		 0234={$\eps_{\text{l}}(\beta, \alpha)$},
    		 0124={$\eps_{\text{r}}(\beta, \alpha)$},
    		 01234={},
    		 /pentagon/arrowstyle/.cd,
    		 01={equal}, 34={equal}, 02={pos=0.55}, 03={pos=0.6},
    		 012={phantom, description}, 034={phantom, description},
    		 023={phantom, description}, 123={phantom, description},
    		 014={phantom, description}, 234={phantom, description},
    		 134={phantom, description}, 024={phantom, description},
    		 013={phantom, description}, 124={phantom, description},
    		 01234={phantom, description},
    		 /pentagon/labelstyle/.cd,
    		 012={anchor=center}, 034={anchor=center},
    		 023={anchor=center}, 123={anchor=center},
    		 014={anchor=center}, 234={anchor=center},
    		 134={anchor=center}, 024={anchor=center},
    		 013={anchor=center}, 124={anchor=center}
    		}
    	 \end{tikzpicture}
    	} \\
    	\subfloat[][The $4$-simplex $y_\eps$.]{%
    		\begin{tikzpicture}[scale=1.25, font=\footnotesize]\label{fig:eps2}
    		\pentagon{%
    			/pentagon/label/.cd,
    			0={$Fa$}, 1={$Fa$}, 2={$Fa'$}, 3={$Fa''$}, 4={$Fa''$},
    			01={}, 12={$Fh$}, 23={$Fi$}, 34={}, 04={$Ff$},
    			02={$Fh$}, 03={$Ff$}, 13={$Fg$}, 14={$Ff$}, 24={$Fi$},
    			012={$=$}, 023={$F\overline{\beta\comp_1\alpha}$}, 034={$=$},
    			013={$F\alpha_l$}, 123={$F\bar\beta$}, 014={$=$},
    			134={$F\alpha_r$}, 124={$F\overline{\beta\comp_1\alpha}$},
    			024={$F\overline{\beta\comp_1\alpha}$}, 234={$=$},
    			0123={$\eps_{\text{r}}(\beta, \alpha)$},
    			0134={$F\bar\beta \comp_1 \tau_{\text{d}}(\alpha)$},
    			1234={$\eps_{\text{l}}(\beta, \alpha)$},
    			0234={},
    			0124={},
    			01234={},
    			/pentagon/arrowstyle/.cd,
    			01={equal}, 34={equal}, 02={pos=0.55}, 03={pos=0.6},
    			012={phantom, description}, 034={phantom, description},
    			023={phantom, description}, 123={phantom, description},
    			014={phantom, description}, 234={phantom, description},
    			134={phantom, description}, 024={phantom, description},
    			013={phantom, description}, 124={phantom, description},
    			0234={equal}, 0124={equal},
    			01234={phantom, description},
    			/pentagon/labelstyle/.cd,
    			012={anchor=center}, 034={anchor=center},
    			023={anchor=center}, 123={anchor=center},
    			014={anchor=center}, 234={anchor=center},
    			134={anchor=center}, 024={anchor=center},
    			013={anchor=center}, 124={anchor=center}
    		}
    		\end{tikzpicture}
    	}
    	\caption{The $4$-simplices governing $\eps_\text{l}(\beta, \alpha)$
    		and $\eps_\text{l}(\beta, \alpha)$.}
    	\label{fig:eps}
    \end{figure}
    
   \begin{paragr}\label{paragr:eps_invertible}
     Under the assumptions of the preceding paragraph,
     we can construct the two $4$-simplices $x_\eps$ and $y_\eps$
     of $\SN(B)$ displayed in figure~\ref{fig:eps}.
     
     We have the following equalities of $3$-cells of $B$:
     \[
      \begin{split}
       & \gamma_{\text{r}}(\beta)\comp_1 F\alpha_l \comp_2
      F_{i, h}  \comp_1 \sigma(\beta, \alpha) \comp_2
       \gamma_{\text{l}}(\beta)\comp_1 F\alpha_r\\
       =\ & \gamma_{\text{r}}(\beta)\comp_1 F\alpha_l \comp_2 (F_{i, h}  \comp_1
       \tau_{\text{d}}(\beta) \comp_1 \tau_{\text{u}}(\alpha)  )
         \comp_2
       \gamma_{\text{l}}(\beta)\comp_1 F\alpha_r \\
       =\ & (\gamma_{\text{r}}(\beta)\comp_1 F_{i, h} \comp_2 \tau_{\text{d}}(\beta) \comp_2\gamma_{\text{l}}(\beta) )
       \comp_1 \tau_{\text{u}}(\alpha) \\
       =\ & 1_{F\beta_r\comp_1 F\alpha_l} \comp_1 \tau_{\text{u}}(\alpha) = F\bar\beta \comp_1 \tau_{\text{u}}(\alpha)\,,
      \end{split}
     \]
     where the first equality follows by paragraph~\ref{paragr:sigma},
     the second one by the exchange law and the third one by paragraph~\ref{paragr:gamma_invertible}.
     Hence the $4$\hyp{}simplex $x_\eps$ depicted in figure~\ref{fig:eps1} witnesses the relation
     \[
        \eps_{\text{r}}(\beta, \alpha) \comp_2 \eps_{\text{l}}(\beta, \alpha) = F\bar\beta \comp_1 \tau_{\text{u}}(\alpha)\,,
     \]
     that by paragraph~\ref{paragr:tau_invertible} is equivalent to saying that the $3$-cell
     \[
        F\bar\beta \comp_1 \tau_{\text{d}}(\alpha)
        \comp_2
        \eps_{\text{r}}(\beta, \alpha) \comp_2 \eps_{\text{l}}(\beta, \alpha)
     \]
     is precisely the identity of the $2$-cell
     \[
        F_{i,h} \comp_1 F\bar\beta \comp_1 F\alpha_l\,.
     \]
     Moreover, the $4$-simplex $y_\eps$ depicted in figure~\ref{fig:eps2}
     gives us that the $3$-cell
     \[
      \eps_{\text{l}}(\beta, \alpha) \comp_2
      F\beta_r \comp_1 \tau_{\text{d}}(\alpha)
        \comp_2
        \eps_{\text{r}}(\beta, \alpha)
     \]
     of $B$ is an identity cell, too. Therefore both the $3$-cells
     $\eps_{\text{l}}(\beta, \alpha)$ and $\eps_{\text{r}}(\beta, \alpha)$
     are invertible.
    \end{paragr}

    \begin{paragr}\label{paragr:encode_simple_horizontal_comp}
     Consider two $1$-composable $2$-cells $\alpha$ and $\beta$ of $A$
     as in paragraph~\ref{paragr:sigma}
     and the two $3$-simplices
     \begin{center}
      \begin{tikzpicture}[scale=1.5,  font=\footnotesize]
       \squares{%
       	 /squares/label/.cd,
       	 01={}, 12={$h$}, 23={}, 03={$f$}, 02={$g$}, 13={$h$},
       	 012={$\beta$}, 023={$\alpha$}, 013={$\beta\comp_1 \alpha$}, 123={$=$},
       	 0123={$=$},
       	 /squares/arrowstyle/.cd,
       	 01={equal}, 23={equal},
       	 123={phantom, description}, 0123={phantom, description},
       	 /squares/labelstyle/.cd,
       	 013={description, near start},
       	 123={anchor=center}, 0123={anchor=center}
       	}
      \end{tikzpicture}
     \end{center}
    and
     \begin{center}
     	\begin{tikzpicture}[scale=1.5,  font=\footnotesize]
     	\squares{%
     		/squares/label/.cd,
     		01={}, 12={$h$}, 23={}, 03={$f$}, 02={$h$}, 13={$h$},
     		012={$=$}, 023={$\beta\comp_1\alpha$}, 013={$\alpha$}, 123={$\beta$},
     		0123={$=$},
     		/squares/arrowstyle/.cd,
     		01={equal}, 23={equal},
     		012={phantom, description}, 0123={phantom, description},
     		/squares/labelstyle/.cd,
     		023={description, near start},
     		012={anchor=center}, 0123={anchor=center}
     	}
     	\end{tikzpicture}
     \end{center}
     of $\SN(A)$. It follows immediately from the previous
     paragraph that the main $3$-cells of the images by $F$ of the two
     $3$-simplices above is trivial.
     
     There are two others $3$-simplices of $\SN(A)$ that is natural to consider,
     that is
     \begin{center}
     	\begin{tikzpicture}[scale=1.5,  font=\footnotesize]
     	\squares{%
     		/squares/label/.cd,
     		01={}, 12={$h$}, 23={}, 03={$f$}, 02={$g$}, 13={$f$},
     		012={$\beta$}, 023={$\alpha$}, 013={$=$}, 123={$\beta\comp_1 \alpha$},
     		0123={$=$},
     		/squares/arrowstyle/.cd,
     		01={equal}, 23={equal},
     		013={phantom, description}, 0123={phantom, description},
     		/squares/labelstyle/.cd,
     		123={description, near start},
     		013={anchor=center}, 0123={anchor=center}
     	}
     	\end{tikzpicture}
     \end{center}
     and
     \begin{center}
     	\begin{tikzpicture}[scale=1.5,  font=\footnotesize]
     	\squares{%
     		/squares/label/.cd,
     		01={}, 12={$h$}, 23={}, 03={$f$}, 02={$h$}, 13={$f$},
     		012={$\beta\comp_1 \alpha$}, 023={$=$}, 013={$\alpha$}, 123={$\beta$},
     		0123={$=$},
     		/squares/arrowstyle/.cd,
     		01={equal}, 23={equal},
     		023={phantom, description}, 0123={phantom, description},
     		/squares/labelstyle/.cd,
     		012={description, near start},
     		023={anchor=center}, 0123={anchor=center}
     	}
     	\end{tikzpicture}
     \end{center}
     Let us denote by $\omega_{\text{r}}(\beta, \alpha)$ and
     $\omega_{\text{l}}(\beta, \alpha)$ respectively the main
     $3$-cells of $B$ of the images under $F$ of the $3$-simplices of $\SN(A)$ above.
     Both these $3$-cells of $B$ are invertible. The proof for $\omega_{\text{r}}(\beta, \alpha)$
     is given by the $4$-simplex $x_{\omega}$ of $\SN(B)$ depicted
     in figure~\ref{fig:omega_r} and the proof for $\omega_{\text{l}}(\beta, \alpha)$
     is completely similar and we leave it to the reader.
     One can actually check that the four $3$-cells $\eps_{\text{l}}(\beta, \alpha)$,
     $\eps_{\text{r}}(\beta, \alpha)$, $\omega_{\text{l}}(\beta, \alpha)$ and $\omega_{\text{r}}(\beta, \alpha)$ are all tied together by a $5$-simplex of $B$, that we will not need
     and so we are not going to describe.
     
     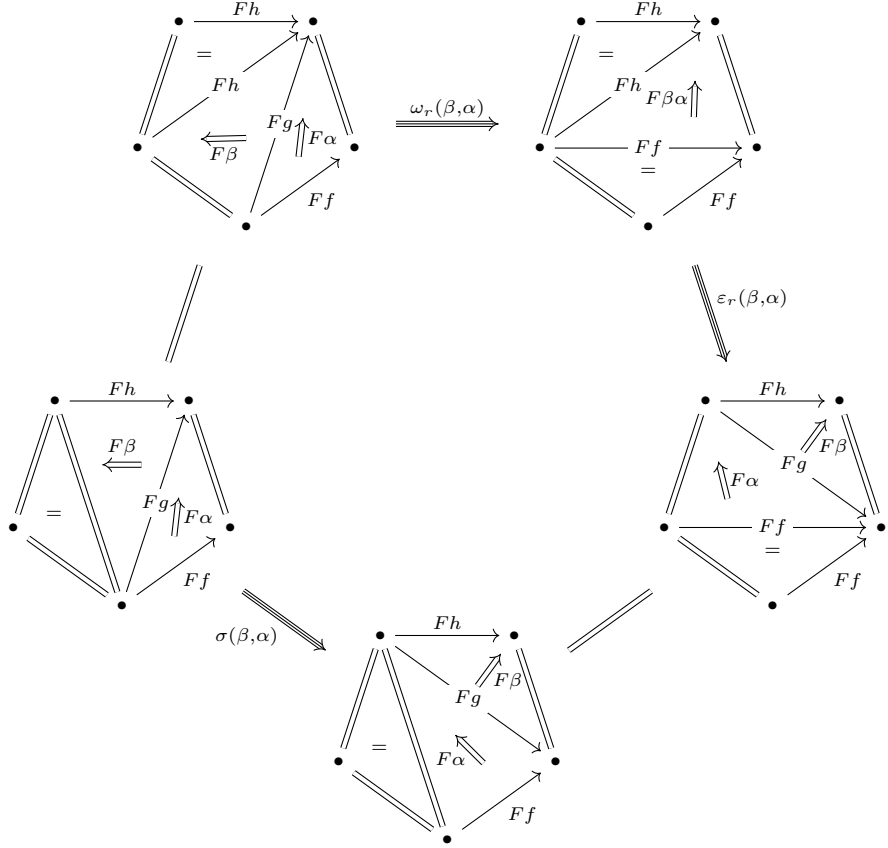
\begin{figure}
    \begin{tikzpicture}[font=\scriptsize, scale=1.5]
    \pentagon{%
    /pentagon/label/.cd,    
    23=$Fh$, 04=$Ff$,
    03=$Fg$, 13=$Fh$, 14=$Ff$, 24=$Fg$,
    012={$=$}, 034=$F\alpha$, 023=$F\beta$, 123={$=$}, 134=$F\beta\alpha$,
    014={$=$}, 024=$F\alpha$, 234=$F\beta$, 013=$F\beta$, 124=$F\alpha$,
    0134=${\omega_r(\beta, \alpha)}$, 0234=${\sigma(\beta, \alpha)}$, 1234=${\eps_r(\beta, \alpha)}$,
    /pentagon/arrowstyle/.cd,
    01={equal}, 12={equal}, 34={equal}, 02={equal}, 0123={equal}, 0124={equal},
    012={phantom, description}, 123={phantom, description}, 014={phantom, description},
    01234={phantom, description},
    /pentagon/labelstyle/.cd,
    012={anchor=center}, 123={anchor=center}, 014={anchor=center},
    01234={anchor=center}
    }
    \end{tikzpicture}
    \caption{The $4$-simplex $x_\omega$, showing that $\omega_{\text{r}}(\beta, \alpha)$
    is invertible.}
    \label{fig:omega_r}
    \end{figure}
    \end{paragr}

\begin{paragr}\label{paragr:cond-iii}
	Let $F \colon A \to B$ be a normalised oplax $3$-functor
	and consider the normalised oplax $3$-functor $T \colon \Deltan{3} \to A$
	given by
	\begin{center}
		\begin{tikzpicture}[scale=1.3]
		\squares{%
			/squares/label/.cd,
			01={$h$}, 12={$i$}, 23={}, 03={$f$}, 02={$g$}, 13={$i$},
			012={$\beta$}, 023={$\alpha$}, 013={$\beta\comp_1\alpha$}, 123={$=$},
			0123={$1_{\beta \comp_1 \alpha}$},
			/squares/arrowstyle/.cd,
			23={equal}, 013={phantom, description}, 123={phantom, description},
			/squares/labelstyle/.cd,
			013={anchor=center}, 123={anchor=center}
		}
		\end{tikzpicture}\ .
	\end{center}
	Following the definition given in paragraph~\ref{paragr:def_cellular_to_simplicial},
	the conditions of normalisations impose that the image under $F$ of $T$ is
	\begin{center}
		\begin{tikzpicture}[scale=1.5]
		\squares{%
			/squares/label/.cd,
			01={$F_{\treeL}(h)$}, 12={$F_{\treeL}(i)$}, 23={}, 03={$F_{\treeL}(f)$},
			02={$F_{\treeL}(g)$}, 13={$F_{\treeL}(i)$},
			012={$F(\bar\beta)$}, 023={$F_{\treeLL}(\alpha)$}, 013={$\phantom{O}F(\overline\beta\comp_1\alpha)$},
			123={},
			0123={$\Gamma$},
			/squares/arrowstyle/.cd,
			23={equal}, 013={phantom, description}, 123={phantom, description},
			/squares/labelstyle/.cd,
			013={anchor=center}, 123={anchor=center}
		}
		\end{tikzpicture}\,,
	\end{center}
	where paragraph~\ref{paragr:cond-ii} and Remark~\ref{rem:treeY} give
	\[F(\bar\alpha) = F_{\treeV(g, f)} \comp_1 F_{\treeLL}(\alpha)\]
	as well as
	\[
	F(\overline\beta\comp_1\alpha) = F_{\treeV(g, f)} \comp_1 F_{\treeLL}(\beta) \comp_1 F_{\treeLL}(\alpha)\,.
	\]
	Moreover, the four main $3$-cells of $\Gamma$ are by definition
	\[
	F_{\treeVLeft}(1_{i}, h) = 1_{F_{\treeV}(i, h)}\,,
	\]
	\[
	F_{\treeW}(1_{a''}, i, h) = 1_{F_{\treeV}(i, h)}\,,
	\]
	\[
	F_{\treeLLL}(1_{\beta\comp_1\alpha}) = 1_{F_{\treeLL}(\beta)\comp_1 F_{\treeLL}(\alpha)}
	\]
	and
	\[
	F_{\treeVRight}(1_{a''}, \beta) = 1_{F_{\treeLL(\beta)}}\,.
	\]
	Hence the $3$-cell $\Gamma$ is trivial, which implies
	that the $3$-cells $\eps_{\text{l}}(\beta, \alpha)$,
	$\eps_{\text{r}}(\beta, \alpha)$, $\omega_{\text{l}}(\beta, \alpha)$ and $\omega_{\text{r}}(\beta, \alpha)$
	associated to the morphism of simplicial sets
	$\Nl(F)\colon \SN(A) \to \SN(B)$ are all trivial.
\end{paragr}

    \begin{paragr}\label{paragr:encode_3cells}
     The last piece of information we want to analyse in this section is
     the behaviour of the morphism $F \colon \SN(A) \to \SN(B)$ with respect
     to the $3$\nbd-cells of $A$. Let
     \[
	   \begin{tikzcd}[column sep=4.5em]
	   \bullet
	   \ar[r, bend left=60, "f", ""'{name=f}]
	   \ar[r, bend right=60, "g"', ""{name=g}]
	   \ar[Rightarrow, from=f, to=g, shift right=0.5em, bend right, shorten <=1mm, shorten >=1mm, "\alpha"', ""{name=al}]
	   \ar[Rightarrow, from=f, to=g, shift left=0.5em, bend left, shorten <=1mm, shorten >=1mm, "\beta", ""'{name=ar}]
	   \arrow[triple, from=al, to=ar, "\Gamma"]{}
	   & \bullet
	   \end{tikzcd}
     \]
     be a $3$-cell of $A$. We have several ways of encoding $\Gamma$ as a $3$-simplex of $\SN(A)$.
     In particular, we have the following $3$-simplices:
     \begin{center}
     	\begin{tikzpicture}[scale=1.5,  font=\footnotesize]
     	\squares{%
     		/squares/label/.cd,
     		01={}, 12={$g$}, 23={}, 03={$f$}, 02={$g$}, 13={$g$},
     		012={$\alpha$}, 023={$=$}, 013={$=$}, 123={$\beta$},
     		0123={$\Gamma$},
     		/squares/arrowstyle/.cd,
     		01={equal}, 23={equal},
     		023={phantom, description}, 013={phantom, description},
     		/squares/labelstyle/.cd,
     		023={anchor=center}, 013={anchor=center}
     	}
     	\end{tikzpicture}
     \end{center}
     \begin{center}
     	\begin{tikzpicture}[scale=1.5,  font=\footnotesize]
     	\squares{%
     		/squares/label/.cd,
     		01={}, 12={$g$}, 23={}, 03={$f$}, 02={$g$}, 13={$g$},
     		012={$\alpha$}, 023={$=$}, 013={$\beta$}, 123={$=$},
     		0123={$\Gamma$},
     		/squares/arrowstyle/.cd,
     		01={equal}, 23={equal},
     		023={phantom, description}, 123={phantom, description},
     		/squares/labelstyle/.cd,
     		023={anchor=center}, 123={anchor=center}
     	}
     	\end{tikzpicture}
     \end{center}
     \begin{center}
     	\begin{tikzpicture}[scale=1.5,  font=\footnotesize]
     	\squares{%
     		/squares/label/.cd,
     		01={}, 12={$g$}, 23={}, 03={$f$}, 02={$g$}, 13={$g$},
     		012={$=$}, 023={$\alpha$}, 013={$=$}, 123={$\beta$},
     		0123={$\Gamma$},
     		/squares/arrowstyle/.cd,
     		01={equal}, 23={equal},
     		012={phantom, description}, 013={phantom, description},
     		/squares/labelstyle/.cd,
     		012={anchor=center}, 013={anchor=center}
     	}
     	\end{tikzpicture}
     \end{center}
     \begin{center}
     	\begin{tikzpicture}[scale=1.5,  font=\footnotesize]
     	\squares{%
     		/squares/label/.cd,
     		01={}, 12={$g$}, 23={}, 03={$f$}, 02={$g$}, 13={$g$},
     		012={$=$}, 023={$\alpha$}, 013={$\beta$}, 123={$=$},
     		0123={$\Gamma$},
     		/squares/arrowstyle/.cd,
     		01={equal}, 23={equal},
     		012={phantom, description}, 123={phantom, description},
     		/squares/labelstyle/.cd,
     		012={anchor=center}, 123={anchor=center}
     	}
     	\end{tikzpicture}
     \end{center}
     
     We claim that all these $3$-simplices of $\SN(A)$ are sent under $F$ to $3$-simplices of $\SN(B)$
     such that their principal $3$-cell is invertible,
     principal cells that we shall call $F\Gamma_i$, $i=1, 2, 3, 4$, respectively.
     The $4$-simplex $z_\Gamma$ of $\SN(B)$ depicted
     in~\eqref{fig:Gamma1}
     shows this claim for the last two $3$-simplices, while the $4$-simplex~$y_\Gamma$
     depicted in figure~\eqref{fig:Gamma2}
     proves the claim for the middle two. We leave to the reader the easy
     assignment of describing a $4$-simplex of $\SN(B)$ showing the claim
     for the first two $3$-simplices above.
    \end{paragr}
    
    \begin{figure}
    	\centering
    	\subfloat[][The $4$-simplex $z_\Gamma$.]{%
    		\begin{tikzpicture}[scale=1.25, font=\footnotesize]\label{fig:Gamma1}
    		\pentagon{%
    			/pentagon/label/.cd,
    			01={}, 12={$Fg$}, 23={}, 34={}, 04={$Ff$},
    			02={$Fg$}, 03={$Ff$}, 13={$Fg$}, 14={$Ff$}, 24={},
    			012={$=$}, 023={$F\alpha_r$}, 034={$\ =$},
    			013={$F\beta_l$}, 123={$=$}, 014={$=$},
    			134={$F\beta_r$}, 124={$F\beta_r$},
    			024={$F\alpha_r$}, 234={$=$},
    			0123={$F\Gamma_4$},
    			0134={$\tau_\text{d}(\beta)$},
    			1234={},
    			0234={},
    			0124={$F\Gamma_3$},
    			01234={},
    			/pentagon/arrowstyle/.cd,
    			01={equal}, 23={equal}, 34={equal}, 24={equal},
    			012={phantom, description}, 034={phantom, description},
    			123={phantom, description},
    			014={phantom, description}, 234={phantom, description},
    			1234={equal}, 0234={equal},
    			01234={phantom, description},
    			/pentagon/labelstyle/.cd,
    			24={}, 024={below = 5pt},
    			012={anchor=center}, 034={anchor=center},
    			123={anchor=center},
    			014={anchor=center}, 234={anchor=center}
    		}
    		\end{tikzpicture}
    	} \\
    	\subfloat[][The $4$-simplex $y_\Gamma$.]{%
    		\begin{tikzpicture}[scale=1.25, font=\footnotesize]\label{fig:Gamma2}
    		\pentagon{%
    			/pentagon/label/.cd,
    			01={}, 12={}, 23={$Fg$}, 34={}, 04={$Ff$},
    			02={}, 03={$Ff$}, 13={$Fg$}, 14={$Ff$}, 24={$Fg$},
    			012={$=$}, 023={$F\alpha_l$}, 034={$=$},
    			013={$F\alpha_l$}, 123={$=$}, 014={$=$},
    			134={$F\alpha_r$}, 124={$F\beta_r$},
    			024={$F\beta_l$}, 234={$=$},
    			0123={},
    			0134={$\tau_\text{l}(\alpha)$},
    			1234={$F\Gamma_2$},
    			0234={$F\Gamma_3$},
    			0124={},
    			01234={},
    			/pentagon/arrowstyle/.cd,
    			01={equal}, 12={equal}, 34={equal}, 02={equal},
    			012={phantom, description}, 034={phantom, description},
				123={phantom, description},
    			014={phantom, description}, 234={phantom, description},
    			0124={equal}, 0123={equal},
    			01234={phantom, description},
    			/pentagon/labelstyle/.cd,
    			02={}, 024={below = 5pt},
    			012={anchor=center}, 034={anchor=center},
    			123={anchor=center},
    			014={anchor=center}, 234={anchor=center},
    		}
    		\end{tikzpicture}
    	}
    	\caption{The $4$-simplices governing the images of $\Gamma$.}
    	\label{fig:Gamma}
    \end{figure}
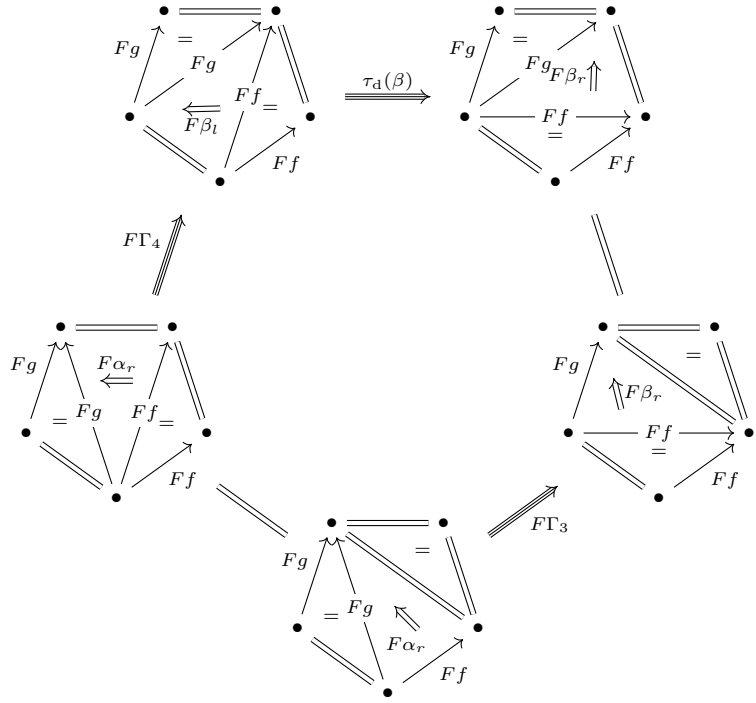
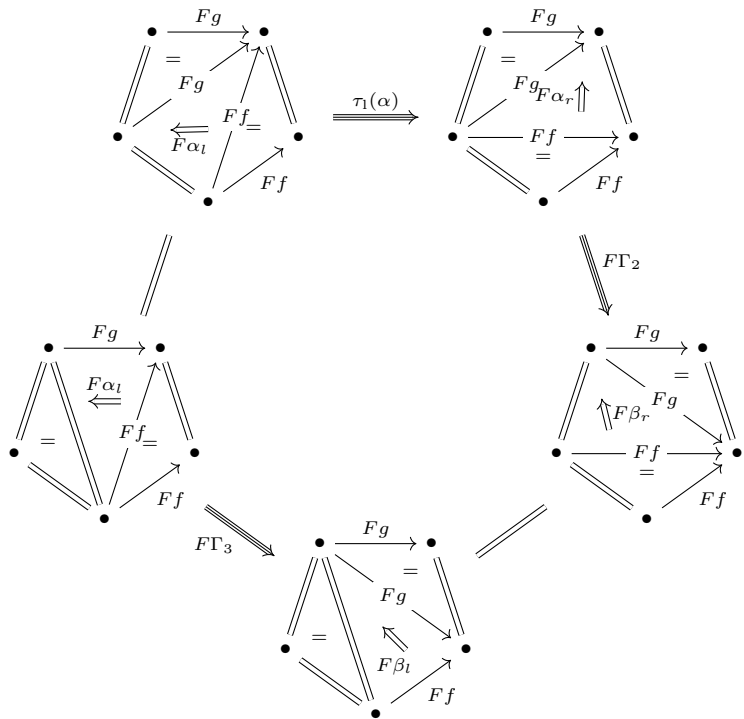

    \begin{rem}
    	Notice that the $3$-cells $F\Gamma_i$, $i=1, 2, 3, 4$,of the preceding paragraph are linked together by
    	whiskering with invertible $2$-cells studied
    	in this subsection, such as $\tau_{\text{d}}$ and
    	$\tau_{\text{u}}$. If $F$ is actually the image
    	of a normalised oplax $3$-functor $G$, then we have
    	shown that these $2$-cells are trivial and therefore
    	in this case the four $3$-cells $\Nl(G)\Gamma_i$
    	are in fact all equal to $G_{\treeLLL}(\Gamma)$.
    \end{rem}

    \section{Simplicial oplax 3-morphisms}
    The previous subsection shows that if we consider
    a normalised oplax $3$-functor $F \colon A \to B$, then its nerve
    $\Nl(F)$ has the property that
    some particular (non-degenerate) $3$-simplices of $\SN(A)$ with trivial
    principal $3$-cell are sent to $3$-simplices of $\SN(3)$
    where the principal $3$-cell is also trivial.
    
    In this section we consider the class of morphisms
    of simplicial sets between Street nerve of $3$-categories
    having precisely this property and we show that
    they form a subcategory of simplicial sets.
    In fact, we will prove that they are canonically equivalent to normalised oplax $3$-functors.
    As a first main step towards this correspondence, in the
    following subsection we shall show how to associate a normalised
    oplax $3$-functor to a simplicial morphism between nerves
    of $3$-categories satisfying these trivialising properties.
    
    Throughout this subsection, we shall make heavy use
    of the notations introduced in the preceding subsection.
    
    \begin{definition}\label{def:simpl_oplax}
    	Let $A$ and $B$ be two small $3$-categories.
    	We say that a morphism $F \colon \SN(A) \to \SN(B)$
    	is a \ndef{simplicial oplax $3$-morphism}
    	\index{simplicial oplax $3$-morphism} if the following conditions are satisfied:
    	\begin{enumerate}
    		\item\label{cond:simpl_oplax-i} for any $2$-cell $\alpha$ of $A$, the $3$-cell
    		$\tau_{\text{d}}(\alpha)$ of $B$
    		is trivial;

    		\item\label{cond:simpl_oplax-ii} for any $2$-cell $\alpha \colon f \to h\comp_0 g$ of $A$,
    		the $3$-cell $\gamma_{\text{l}}(\alpha)$
    		of $B$ is trivial;
    		
            \item\label{cond:simpl_oplax-iii} for any pair of $1$-composable $2$-cells
            $\alpha$ and $\beta$ of $A$ as in~\ref{paragr:encode_horizontal_comp},
    		the $3$-cell $\eps_{\text{l}}(\beta, \alpha)$
    		of $B$ is trivial.
    		
    	\end{enumerate}
    \end{definition}
    
    \begin{rem}
    	It is clear that the definition above can be
    	framed within stratified simplicial sets.
    	However, the author sees little or no advantage
    	in pursuing this point of view,
    	since the $3$-simplices involved are very particular
    	and no lifting property is present.
    \end{rem}
    
    \begin{rem}\label{rem:trivial_cells}
     The relations described in paragraphs~\ref{paragr:tau_invertible}
     and~\ref{paragr:sigma} tell us
     that under condition~\ref{cond:simpl_oplax-i} above also the
     $3$-cells $\tau_{\text{u}}(\gamma)$ and $\sigma(\beta, \alpha)$
     of $B$ are also trivial, for any choice of $2$-cells
     $\alpha$, $\beta$ and $\gamma$ of $A$, such that the first two are $1$-composable.
     
     Assuming conditions~\ref{cond:simpl_oplax-i} and~\ref{cond:simpl_oplax-ii}
     and using what we just observed in the relations of paragraph~\ref{paragr:gamma_invertible}
     we get immediately that the $3$-cell $\gamma_{\text{r}}(\alpha)$ is trivial.
     
     If $F \colon \SN(A) \to \SN(B)$ is a simplicial oplax $3$-morphism,
     putting together all we have said right above
     and the relations analysed in paragraphs~\ref{paragr:eps_invertible}
     and~\ref{paragr:encode_simple_horizontal_comp} gives us that
     the $3$-cells $\eps_{\text{r}}(\beta, \alpha)$, $\omega_{\text{r}}(\beta, \alpha)$
     and $\omega_{\text{l}}(\beta, \alpha)$ are trivial, for any appropriate
     choice of $3$-cells $\alpha$ and $\beta$ of $A$.
     
     In short, all the invertible $3$-cells of $B$ we described in the previous
     subsection are actually trivial whenever $F$ is a simplicial oplax $3$-morphism.
    \end{rem}

    \begin{paragr}\label{paragr:2-cell_B}
     Let $F \colon \SN(A) \to \SN(B)$ be a morphism of simplicial sets
     satisfying condition~\ref{cond:simpl_oplax-i}. It follows from the previous
     remark that for any $2$-cell $\alpha$ of $A$, the $2$-cells $F(\alpha_l)$
     and $F(\alpha_r)$ of $B$ coincide. Whenever this happens we shall then
     simply write $F(\alpha)$, or more often just $F\alpha$ for this $2$-cell
     of $B$.
     Furthermore, the relations observed in paragraph~\ref{paragr:encode_3cells}
     give us that for any $3$-cell $\Gamma \colon\alpha \to \beta$ of $A$,
     the four ways we described in that paragraph to encode
     the image of $\Gamma$ via $F$ are all the same $3$-cell of $B$, that
     we shall then call $F(\Gamma)$ or simply $F\Gamma$.
    \end{paragr}

	\begin{exem}
		For any normalised oplax $3$-functor $F \colon A \to B$,the previous subsection shows that its nerve
		$\Nl(F)$ is a simplicial oplax $3$-morphism.
	\end{exem}

    We now check that simplicial oplax $3$-morphisms are closed under composition.
    Let $F \colon A \to B$ and $G \colon B \to C$ be two simplicial oplax $3$-morphisms.
    
    \begin{paragr}\label{paragr:simpl_oplax_i}
     Let $\alpha \colon f \to g$ be a $2$-cell of $A$.
     By assumption, we have the $3$-simplex
     \begin{center}
      \begin{tikzpicture}[scale=1.5, font=\footnotesize]
       \squares{%
        /squares/label/.cd,
        01={}, 12={$F(g)$}, 23={}, 03={$F(f)$}, 02={$F(f)$}, 13={$F(f)$},
        012={$F(\alpha_l)$}, 023={$=$}, 013={$=$}, 123={$F(\alpha_r)$},
        0123={$=$},
        /squares/arrowstyle/.cd,
        01={equal}, 23={equal},
        023={phantom, description}, 013={phantom, description},
        0123={phantom, description},
        /squares/labelstyle/.cd,
        012={below right= 1pt and 1pt}, 123={below left = 1pt and 1pt},
        023={anchor=center}, 013={anchor=center},
        0123={anchor=center}
       }
      \end{tikzpicture}
     \end{center}
    of $\SN(B)$ . Setting $\beta = F(\alpha_l) = F(\alpha_r)$
    and applying $G$ to the two $3$-simplices above, we get
    the following $3$-simplex of $\SN(C)$:
    \begin{center}
      \begin{tikzpicture}[scale=1.5, font=\footnotesize]
       \squares{%
        /squares/label/.cd,
        01={}, 12={$GFg$}, 23={}, 03={$GFf$}, 02={$GFf$}, 13={$GFf$},
        012={$G(\beta_l)$}, 023={$=$}, 013={$=$}, 123={$G(\beta_r)$},
        0123={$=$},
        /squares/arrowstyle/.cd,
        01={equal}, 23={equal},
        023={phantom, description}, 013={phantom, description},
        0123={phantom, description},
        /squares/labelstyle/.cd,
        012={below right= 1pt and 1pt}, 123={below left = 1pt and 1pt},
        023={anchor=center}, 013={anchor=center},
        0123={anchor=center}
       }
      \end{tikzpicture}\ .
     \end{center}
    Since $\beta_l = F(\alpha_l)$ and $\beta_r = F(\alpha_r)$ by definition,
    we have that $G(\beta_l) = GF(\alpha_l)$ and
    $G(\beta_r) = GF(\alpha_r)$. Thus the morphism $GF \colon \SN(A) \to \SN(C)$
    of simplicial sets satisfies condition~\ref{cond:simpl_oplax-i} of
    the definition of simplicial oplax $3$-morphisms.
    \end{paragr}
    
    \begin{paragr}\label{paragr:simpl_oplax_ii}
     Let $\alpha \colon f \to h\comp_0 g$ be a $2$-cell of $A$.
     By assumption, we have a $3$-simplex
     \begin{center}
      \begin{tikzpicture}[scale=1.5, font=\footnotesize]
       \squares{%
        /squares/label/.cd,
        01={}, 12={$F(g)$}, 23={$F(h)$}, 03={$F(f)$}, 02={$F(g)$}, 13={$F(hg)$},
        012={$=$}, 023={$F(\bar\alpha)$}, 013={$F(\alpha)$}, 123={$F_{g, f}$},
        0123={$\gamma_{\text{l}}(\alpha)$},
        /squares/arrowstyle/.cd,
        01={equal},
        012={phantom, description},
        0123={equal},
        /squares/labelstyle/.cd,
        012={below right= 1pt and 1pt}, 123={below left = 1pt and 1pt},
        012={anchor=center}
       }
      \end{tikzpicture}
     \end{center}
     of $\SN(B)$, where the main $3$-cell is trivial as the morphism $F$
     verifies condition~\ref{cond:simpl_oplax-ii}.
     We have to show that the $3$-cell of the $3$-simplex
     \begin{center}
      \begin{tikzpicture}[scale=1.5, font=\footnotesize]
       \squares{%
        /squares/label/.cd,
        01={}, 12={$GFg$}, 23={$GFh$}, 03={$GFf$}, 02={$GFg$}, 13={$GFhg$},
        012={$=$}, 023={$GF(\bar\alpha)$}, 013={$GF(\alpha)$}, 123={$GF_{g, f}$},
        0123={$\gamma_{\text{l}}(\alpha)$},
        /squares/arrowstyle/.cd,
        01={equal},
        012={phantom, description},
        /squares/labelstyle/.cd,
        012={below right= 1pt and 1pt}, 123={below left = 1pt and 1pt},
        023={above left = -1pt and 1pt}, 013={swap, above right = -1pt and 1pt},
        012={anchor=center}
       }
      \end{tikzpicture}
     \end{center}
    of $\SN(C)$ is trivial. The $2$-cell $\beta = F_{g, f} \comp_1 F\alpha$
    of $B$ has $Ff$ as source and $Fh \comp_0 Fg$ as target.
    So applying the morphism $G \colon \SN(B) \to \SN(C)$ we get
    a $3$-simplex
    \begin{center}
      \begin{tikzpicture}[scale=1.5, font=\footnotesize]
       \squares{%
        /squares/label/.cd,
        01={}, 12={$GFg$}, 23={$GFh$}, 03={$GFf$}, 02={$GFg$}, 13={$GFhg$},
        012={$=$}, 023={$G\bar\beta$}, 013={$G(\beta)$}, 123={$G_{Fg, Ff}$},
        0123={$\gamma_{\text{l}}(\beta)$},
        /squares/arrowstyle/.cd,
        01={equal},
        012={phantom, description},
        0123={equal},
        /squares/labelstyle/.cd,
        123={below left = 1pt and 1pt},
        012={anchor=center}
       }
      \end{tikzpicture}
     \end{center}
    of $\SN(C)$ where the main $3$-cell is trivial by condition~\ref{cond:simpl_oplax-ii}.
    Notice that by definition $G\bar\beta = GF\bar\alpha$ and
    $G\beta = G(F_{h, g} \comp_1 F\alpha)$; this latter $2$-cell of~$C$
    is equal to $G(F_{h, g}) \comp_1 GF\alpha$ by condition~\ref{cond:simpl_oplax-iii}.
    Moreover, condition~\ref{cond:simpl_oplax-ii}
    also entails that the following $3$-simplex
    \begin{center}
      \begin{tikzpicture}[scale=1.5, font=\footnotesize]
       \squares{%
        /squares/label/.cd,
        01={$GFg$}, 12={$GFh$}, 23={}, 03={$GFhg$}, 02={$G(FhFg)$}, 13={$GFh$},
        012={$G_{Fh, Fg}$}, 023={$G(F_{h, g})$}, 013={$G(\overline{F_{h, g}})$},
        123={$=$},
        0123={$\gamma_{\text{r}}(F_{h, g})$},
        /squares/arrowstyle/.cd,
        23={equal},
        123={phantom, description},
        0123={equal},
        /squares/labelstyle/.cd,
        012={below right= 0pt and 1pt}, 123={anchor=center},
        023={above left}, 
        013={swap, above right = 1pt and 1pt}
       }
      \end{tikzpicture}
     \end{center}
    of $\SN(C)$ is trivial, as we have observed in Remark~\ref{rem:trivial_cells}.
    The $4$-simplex of $\SN(C)$ displayed
    in figure~\ref{fig:simpl_oplax_ii} allows to conclude.
    
    \begin{figure}
     \begin{tikzpicture}[font=\footnotesize, scale=1.6]
    \pentagon{%
    /pentagon/label/.cd,
    12=$GFg$, 23=$GFh$, 04=$GFf$,
    012=${=}$, 034=${=}$, 023=$GF\bar\alpha$, 123=${G_{Fh, Fg}}$, 134=${G(F_{h, g})}$,
    014=$GF\alpha$, 024=$GF\bar\alpha$, 234=${=}$, 013=$G\beta$, 124=${(GF)_{h, g}}$,
    0123={\footnotesize{\ref{cond:simpl_oplax-ii}}}, 0124=${\gamma_l(\alpha)}$,
    0134=${\omega_l(F_{h, g}, F\alpha)}$, 1234={\footnotesize{\ref{cond:simpl_oplax-ii}}},
    /pentagon/arrowstyle/.cd,
    01={equal}, 34={equal},
    0234={equal}, 0123={equal}, 0134={equal}, 1234={equal},
    012={phantom, description}, 013={phantom, description},
    014={phantom, description}, 023={phantom, description},
    024={phantom, description}, 034={phantom, description},
    123={phantom, description}, 124={phantom, description},
    134={phantom, description}, 234={phantom, description},
    01234={phantom},
    /pentagon/labelstyle/.cd,
    012={anchor=center}, 013={anchor=center}, 014={anchor=center},
    023={anchor=center}, 024={anchor=center}, 034={anchor=center},
    123={anchor=center}, 124={anchor=center}, 134={anchor=center},
    234={anchor=center}
    }
    \end{tikzpicture}
    \caption{The $3$-cell $\gamma_\text{l}(\alpha)$ is trivial.}
    \label{fig:simpl_oplax_ii}
    \end{figure}
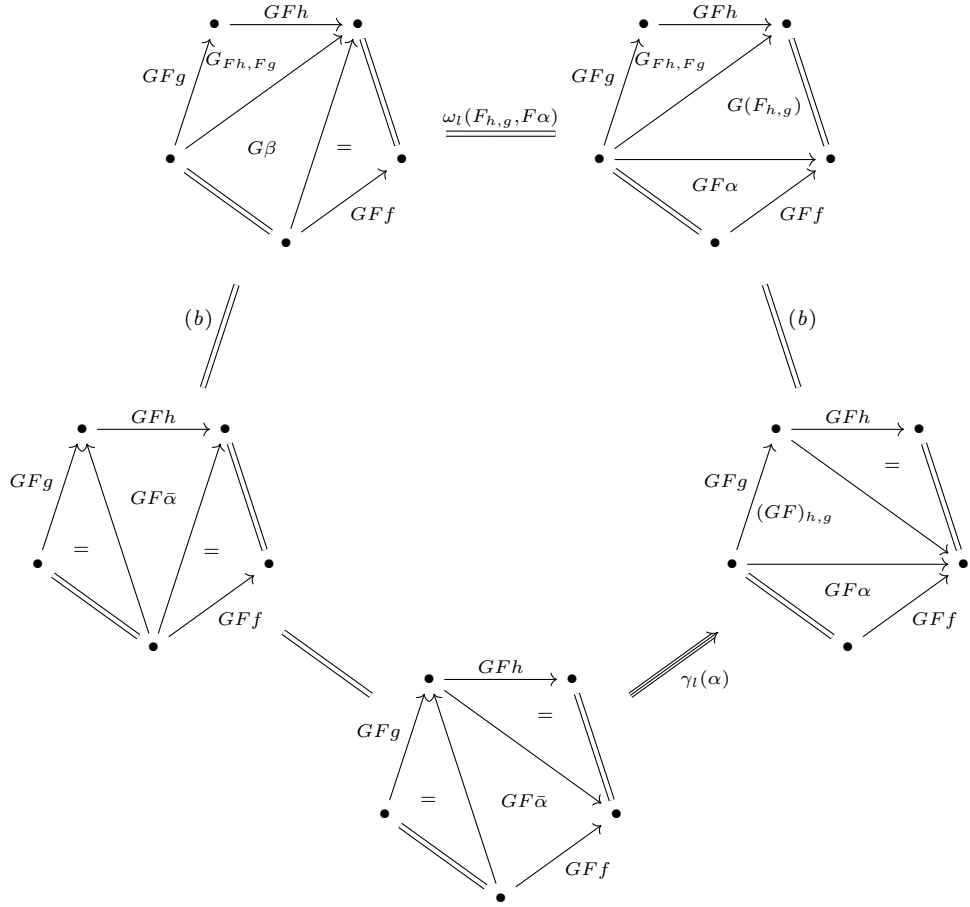

    \end{paragr}
    
    \begin{paragr}\label{paragr:simpl_oplax_iii}
     Consider two $1$-composable $2$-cells
     \[
      \begin{tikzcd}[row sep=1.35em]
       a^{\phantom\prime}
       \ar[rr, bend left=75, "f", ""'{name=f}]
       \ar[rr, "g"{description, name=g}]
       \ar[rd, bend right, "h"']
       &&
         a''
       \\
       & a'
         \ar[ru, bend right, "i"']
       &
       \ar[Rightarrow, from=f, to=g, shorten <=1mm, shorten >=2mm, "\alpha"]
       \ar[Rightarrow, from=g, to=2-2, shorten <=1mm, pos=0.4, "\beta" near end]
      \end{tikzcd}
     \]
     of $A$. The $3$-simplex
     \begin{center}
      \begin{tikzpicture}[scale=1.5, font=\footnotesize]
       \squares{%
        /squares/label/.cd,
        01={$F(h)$}, 12={$F(i)$}, 23={}, 03={$F(f)$}, 02={$F(g)$}, 13={$F(i)$},
        012={$F(\bar\beta)$}, 023={$F(\alpha)$}, 013={$F(\overline{\beta\comp_1\alpha})$},
        123={$=$},
        0123={$\eps_{\text{l}}(\beta, \alpha)$},
        /squares/arrowstyle/.cd,
        23={equal},
        123={phantom, description},
        0123={equal},
        /squares/labelstyle/.cd,
        123={anchor=center},
        013={swap, above right = 2pt and 1pt}
       }
      \end{tikzpicture}
     \end{center}
     of $\SN(B)$ has trivial main $3$-cell $\eps_\text{l}(\beta, \alpha)$
     by condition~\ref{cond:simpl_oplax-iii}.
     Thus $F\overline{\beta \comp_1 \alpha}$ is equal to $F\bar \beta \comp_1 F\alpha$
     (which by the preceding paragraph we also know to be equal to the $3$\nbd-cell
     $F_{h, i} \comp_1 F\beta \comp_1 F\alpha$).
     Now, applying the morphism $G$ with condition~\ref{cond:simpl_oplax-iii} at hand
     we get the $3$-simplex
     \begin{center}
      \begin{tikzpicture}[scale=1.5, font=\footnotesize]
       \squares{%
        /squares/label/.cd,
        01={$GFh$}, 12={$GFi$}, 23={}, 03={$GFf$}, 02={$GFg$}, 13={$GFi$},
        012={$G\overline{F\bar\beta}$}, 023={$GF\alpha$},
        013={$\;G\overline{F\overline{\beta\comp_1\alpha}}$},
        123={$=$},
        0123={$\eps_{\text{l}}(F\beta, F\alpha)$},
        /squares/arrowstyle/.cd,
        23={equal},
        123={phantom, description},
        013={phantom, description},
        0123={equal},
        /squares/labelstyle/.cd,
        123={anchor=center}, 013={anchor=center},
        012={below right}
       }
      \end{tikzpicture}
     \end{center}
     with trivial main $3$-cell $\eps_\text{l}(F\beta, F\alpha)$.
     Noticing that the $2$-simplex $G\overline{F\bar\beta}$ of $\SN(C)$
     is precisely $GF\bar \beta$
     and that the $2$-simplex $G\overline{F\bar\beta\comp_1\alpha}$ of $\SN(C)$
     is equal to $GF\overline{\beta \comp_1 \alpha}$, we conclude
     that the morphism $GF \colon \SN(A) \to \SN(B)$ satisfies condition~\ref{cond:simpl_oplax-iii}.
    \end{paragr}
    
    \begin{thm}\label{thm:simpl_oplax_category}
     The class of simplicial oplax $3$-morphisms is stable under composition.
    \end{thm}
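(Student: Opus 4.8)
The plan is to verify directly that the composite $GF \colon \SN(A) \to \SN(C)$ satisfies each of the three conditions of Definition~\ref{def:simpl_oplax}, treating them one at a time. Throughout, $F \colon \SN(A) \to \SN(B)$ and $G \colon \SN(B) \to \SN(C)$ are the two given simplicial oplax $3$-morphisms, so by Remark~\ref{rem:trivial_cells} and paragraph~\ref{paragr:2-cell_B} all the auxiliary invertible $3$-cells attached to both $F$ and $G$ (the $\tau$, $\gamma$, $\sigma$, $\eps$ and $\omega$ cells) are in fact trivial, and for each relevant $2$-cell the two $2$-simplices encoding it, such as $F(\alpha_l)$ and $F(\alpha_r)$, agree and may simply be written $F\alpha$.

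First I would treat condition~\ref{cond:simpl_oplax-i}. Given a $2$-cell $\alpha \colon f \to g$ of $A$, condition~\ref{cond:simpl_oplax-i} for $F$ produces a single $2$-cell $\beta := F(\alpha_l) = F(\alpha_r)$ of $B$ together with the $3$-simplex of $\SN(B)$ exhibiting $\tau_{\text{d}}(\alpha)$ as trivial. Applying $G$ to this $3$-simplex and invoking condition~\ref{cond:simpl_oplax-i} for $G$ at $\beta$ shows that $\tau_{\text{d}}(\beta)$ is trivial; since $G(\beta_l) = GF(\alpha_l)$ and $G(\beta_r) = GF(\alpha_r)$, this is precisely $\tau_{\text{d}}^{GF}(\alpha)$. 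This is carried out in paragraph~\ref{paragr:simpl_oplax_i}. The analogous argument for condition~\ref{cond:simpl_oplax-iii} uses condition~\ref{cond:simpl_oplax-iii} for $F$ to identify $F\overline{\beta \comp_1 \alpha}$ with $F\bar\beta \comp_1 F\alpha$, then applies $G$ and uses condition~\ref{cond:simpl_oplax-iii} for $G$ together with the identifications $G\overline{F\bar\beta} = GF\bar\beta$ and $G\overline{F\overline{\beta\comp_1\alpha}} = GF\overline{\beta \comp_1 \alpha}$ to conclude that $\eps_{\text{l}}^{GF}(\beta,\alpha)$ is trivial; see paragraph~\ref{paragr:simpl_oplax_iii}.

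The main obstacle is condition~\ref{cond:simpl_oplax-ii}, which cannot be read off from a single application of the corresponding condition for $G$. Here the relevant target $2$-cell of $B$ is $\beta = F_{g, f} \comp_1 F\alpha$, and after applying $G$ one must combine several trivialisations simultaneously: condition~\ref{cond:simpl_oplax-ii} for $G$ (which, via Remark~\ref{rem:trivial_cells}, also yields the triviality of $\gamma_{\text{r}}(F_{h,g})$), condition~\ref{cond:simpl_oplax-iii} for $G$ (to rewrite $G(F_{h,g} \comp_1 F\alpha)$ as $G(F_{h,g}) \comp_1 GF\alpha$), and the triviality of the corresponding $\omega$-cell. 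The plan is to assemble all of these into the single $4$-simplex of $\SN(C)$ displayed in Figure~\ref{fig:simpl_oplax_ii}, whose boundary relation forces $\gamma_{\text{l}}^{GF}(\alpha)$ to be trivial. The delicate point is the bookkeeping of which $2$-simplices are being identified across $F$ and $G$ before the $4$-simplex can be formed.

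Once the three conditions are established, the composite $GF$ is by definition a simplicial oplax $3$-morphism, which completes the proof. In effect the statement is exactly the collected content of paragraphs~\ref{paragr:simpl_oplax_i}--\ref{paragr:simpl_oplax_iii}, and the proof amounts to observing that these three verifications together supply precisely the data required by Definition~\ref{def:simpl_oplax} for $GF$.
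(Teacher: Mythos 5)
Your proposal is correct and follows the paper's own proof exactly: it verifies conditions~\ref{cond:simpl_oplax-i}, \ref{cond:simpl_oplax-ii} and~\ref{cond:simpl_oplax-iii} for the composite $GF$ just as in paragraphs~\ref{paragr:simpl_oplax_i}, \ref{paragr:simpl_oplax_ii} and~\ref{paragr:simpl_oplax_iii}, including the key point that condition~\ref{cond:simpl_oplax-ii} is the delicate one, resolved by combining condition~\ref{cond:simpl_oplax-iii} for $G$, the trivialisations of Remark~\ref{rem:trivial_cells} (in particular $\gamma_{\text{r}}(F_{h,g})$ and the relevant $\omega$-cell), and the $4$-simplex of Figure~\ref{fig:simpl_oplax_ii}. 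Nothing is missing.
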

    
    \begin{proof}
     This follows immediately from paragraphs~\ref{paragr:simpl_oplax_i},
     \ref{paragr:simpl_oplax_ii} and~\ref{paragr:simpl_oplax_iii}.
    \end{proof}
    
    \begin{definition}\label{def:simpl_oplax_category}
     We shall denote by $\dCat$ the subcategory of $\EnsSimp$
     whose objects are the nerves of the
     small $3$-categories and whose morphisms are
     simplicial oplax $3$-functors. This shall be called
     the category of \ndef{small $3$-categories and simplicial
     oplax $3$-morphisms}.
    \end{definition}
    
    \begin{rem}
     The nerve of any $3$-functor $u \colon A \to B$
     is clearly a simplicial oplax $3$-morphism,
     since the Street nerve of a $3$-functor sends
     $3$-simplices of $A$ with trivial principal $3$-cell to
     $3$-simplices of $B$ with trivial principal $3$-cell;
     hence, the nerve induces a faithful functor
     $\nCat{3} \hookrightarrow \dCat$.
    \end{rem}

\subsection{Simplicial to cellular}\label{section:simplicial-to-cellular}
Let $F \colon A \to B$ be a simplicial oplax $3$-morphism.

\begin{paragr}[Data]\label{paragr:simplicial_to_cellular-data}
	We now associate to $F$ the data of a normalised oplax $3$-functor. 

	\begin{description}
	
		\item[%
		\scalebox{0.3}{
			\begin{forest}
				for tree={%
					label/.option=content,
					grow'=north,
					content=,
					circle,
					fill,
					minimum size=3pt,
					inner sep=0pt,
					s sep+=15,
				}
				[ ]
			\end{forest}
		}
		] The map $F_{\treeDot}$ that
		to each object $a$ of $A$, \ie any $0$-simplex of $\SN(A)_0$,
		assigns an object $F_{\treeDot}(a)$ of $B$, \ie a $0$-simplex
		of $\SN(B)$, is simply defined to be $F_0$.
		
		\item[
		\scalebox{0.3}{
			\begin{forest}
				for tree={%
					label/.option=content,
					grow'=north,
					content=,
					circle,
					fill,
					minimum size=3pt,
					inner sep=0pt,
					s sep+=15,
				}
				[ [] ]
			\end{forest}
		}
		] The map $F_{\treeLog}$ that to each $1$-cell $f \colon a \to a'$
		of $A$, \ie any $1$-simplex of $\SN(A)_1$, assigns a $1$-cell $F_{\treeLog}(f) \colon F_{\treeDot}(a) \to F_{\treeDot}(a')$, \ie a $1$-simplex of $\SN(B)$,
		is simply defined to be $F_1 \colon \SN(A)_1 \to \SN(B)_1$.
		
		\item[%
		\scalebox{0.3}{
			\begin{forest}
				for tree={%
					label/.option=content,
					grow'=north,
					content=,
					circle,
					fill,
					minimum size=3pt,
					inner sep=0pt,
					s sep+=15,
				}
				[ [][] ]
			\end{forest}
		}
		] The map $F_{\treeV}$ that 
			to each pair of $0$\hyp{}composable $1$\hyp{}cells
			\[
			 \begin{tikzcd}
			  a \ar[r, "f"] & a' \ar[r, "g"] &  a''
			 \end{tikzcd}
			\]
			of $A$ assigns a $2$-cell $F_{\treeV}(g, f)$
			\[
			 \begin{tikzcd}[column sep=small]
				 F_{\treeDot}(a)
				 \ar[rd, "F_{\treeLog}(f)"']
				 \ar[rr, "F_{\treeLog}(g\comp_0 f)"{name=gf}] &&
				 F_{\treeDot}(a'') \\
				 & F_{\treeDot}(a') \ar[ru, "F_{\treeV}(g)"'] & 
				 \ar[Rightarrow, shorten <=1.5mm, from=gf, to=2-2]
			 \end{tikzcd}
			\]
			of $B$, that is
			\[
			 F_{\treeV}(g, f) \colon F_{\treeLog}(g \comp_0 f) \to
			 F_{\treeLog}(g) \comp_0 F_{\treeLog}(f)
			\]
			is defined as $F_{\treeV}(g, f) := F_{g, f}$
			(see paragraph~\ref{paragr:encode_comp_1cells}).

		\item[%
		\scalebox{0.3}{
			\begin{forest}
				for tree={%
					label/.option=content,
					grow'=north,
					content=,
					circle,
					fill,
					minimum size=3pt,
					inner sep=0pt,
					s sep+=15,
				}
				[ [[]] ]
			\end{forest}
		}
		] The map $F_{\treeLL}$
		that to each $2$-cell $\alpha \colon f \to g$ of $A$ associates
		a $2$-cell
		\[F_{\treeLL}(\alpha) \colon F_{\treeLog}(f) \to F_{\treeLog}(g)\]
		of $B$ is defined to be $F_{\treeLL}(\alpha) = F(\alpha)$,
		with the notation of paragraph~\ref{paragr:2-cell_B}.
		
		\item[%
		\scalebox{0.3}{
			\begin{forest}
				for tree={%
					label/.option=content,
					grow'=north,
					content=,
					circle,
					fill,
					minimum size=3pt,
					inner sep=0pt,
					s sep+=15,
				}
				[ [] [] [] ]
			\end{forest}
		}
		] We define the map $F_{\treeW}$
        by mapping any triple of $0$-composable $1$ cells
		\[
		 \begin{tikzcd}[column sep=small]
		  a \ar[r, "f"] & a' \ar[r, "g"] & a'' \ar[r, "h"] & a'''
		 \end{tikzcd}
		\]
		of $A$, \ie any $3$-simplex of $\SN(A)$ of the form
		\begin{center}
		\begin{tikzpicture}[scale=1.5]
            \squares{%
                /squares/label/.cd,
                0=$a$, 1=$a'$, 2=$a''$, 3=$a'''$,
                01=$f$, 12=$g$, 23=$h$, 02=$gf$, 03=$hgf$, 13=$hg$,
                012=${=}$, 023=${=}$, 123=${=}$, 013=${=}$,
                /squares/arrowstyle/.cd,
                012={phantom, description}, 023={phantom, description},
                123={phantom, description}, 013={phantom, description},
                0123={equal},
                /squares/labelstyle/.cd,
                012={anchor=center}, 023={anchor=center},
                123={anchor=center}, 013={anchor=center}
            }
        \end{tikzpicture}\ ,
        \end{center}
        to the main $3$-cell $F_{\treeW}(h, g, f)$ of 
        the $3$-simplex of $B$
        image of the above $3$-simplex of $A$ by the morphism $F$:
        \begin{center}
		\begin{tikzpicture}[scale=1.5]
            \squares{%
                /squares/label/.cd,
                0=$Fa$, 1=$Fa'$, 2=$Fa''$, 3=$Fa'''$,
                01=$Ff$, 12=$Fg$, 23=$Fh$, 02=$F(gf)$, 03=$F(hgf)$, 13=$F(hg)$,
                012=${F_{g, f}}$, 023=${F_{h, gf}}$, 123=${F_{h, g}}$, 013=${F_{hg, f}}$,
                0123=${F(h, g, f)}$,
                /squares/arrowstyle/.cd,
                012={phantom, description}, 023={phantom, description},
                123={phantom, description}, 013={phantom, description},
                /squares/labelstyle/.cd,
                012={anchor=center}, 023={anchor=center},
                123={anchor=center}, 013={anchor=center}
            }
        \end{tikzpicture}\ ,
        \end{center}
		where $F_{h, gf} = F_{\treeV}(h, g \comp_0 f)$ and $F_{hg, f} = F_{\treeV}(h \comp_0 g, f)$,
		so that the $3$-cell $F_{\treeW}(h, g, f)$ has
		\[
            F_{\treeLog}(h) \comp_0 F_{\treeV}(g, f) \comp_1 F_{\treeV}(h, g \comp_0 f)
		\]
		as source and
		\[
            F_{\treeV}(h, g) \comp_0 F_{\treeLog}(f) \comp_1 F_{\treeV}(h \comp_0 g, f)
		\]
		as target.
		
		\item[%
		\scalebox{0.3}{
			\begin{forest}
				for tree={%
					label/.option=content,
					grow'=north,
					content=,
					circle,
					fill,
					minimum size=3pt,
					inner sep=0pt,
					s sep+=15,
				}
				[ [] [[]] ]
			\end{forest}
		}
		] Consider a whiskering
		\[
		 \begin{tikzcd}[column sep=4.5em]
		  a
		  \ar[r, bend left, "f", ""{below, name=f}]
		  \ar[r, bend right, "f'"', ""{name=fp}]
		  \ar[Rightarrow, from=f, to=fp, "\alpha"]
		  &
		  a' \ar[r,"g"] &
		  a''
		 \end{tikzcd}
		\]
		of $A$ and the following associated $3$-simplex
		\begin{center}
		 \begin{tikzpicture}[scale=1.5]
		  \squares{%
                /squares/label/.cd,
                0=$a$, 1=$a$, 2=$a'$, 3=$a''$,
                12=$f'$, 23=$g$, 02=$f$, 03=$gf$, 13=$gf'$,
                012=$\alpha$, 023=${=}$, 123=${=}$, 013={$g\alpha$},
                /squares/arrowstyle/.cd,
                01={equal},
                023={phantom, description},
                123={phantom, description},
                0123={equal},
                /squares/labelstyle/.cd,
                023={anchor=center},
                123={anchor=center}
		  }
		 \end{tikzpicture}
		\end{center}
        of $\SN(A)$, where we wrote $g\alpha$ for $g \comp_0 \alpha$.
        We define $F_{\treeVRight}(g, \alpha)$
        to be the main $3$-cell of the image under $F$
		of the above $3$-simplex:
		\begin{center}
		 \begin{tikzpicture}[scale=1.5]
		  \squares{%
                /squares/label/.cd,
                0=$Fa$, 1=$Fa$, 2=$Fa'$, 3=$Fa''$,
                12=$Ff'$, 23=$Fg$, 02=$Ff$, 03=$F(gf)$, 13=$F(gf')$,
                012=$F\alpha$, 023=$F_{g, f}$, 123={$F_{g, f'}$},
                013={$Fg\alpha$},
                0123=${F(g, \alpha)}$,
                /squares/arrowstyle/.cd,
                01={equal},
                012={phantom, description}, 023={phantom, description},
                123={phantom, description}, 013={phantom, description},
                /squares/labelstyle/.cd,
                012={anchor=center}, 023={anchor=center},
                123={anchor=center}, 013={anchor=center}
		  }
		 \end{tikzpicture}\ ;
		\end{center}
		so that
		\[
            F_{\treeVRight}(\alpha, g) \colon
            F_{\treeLog}(g) \comp_0 F_{\treeLL}(\alpha) \comp_1 F_{\treeV}(g, f)
            \to
            F_{\treeV}(g, f') \comp_1 F_{\treeLL}(g \comp_0 \alpha)
        \]
        as a $3$-cell of $B$.

		\item[%
		\scalebox{0.3}{
			\begin{forest}
				for tree={%
					label/.option=content,
					grow'=north,
					content=,
					circle,
					fill,
					minimum size=3pt,
					inner sep=0pt,
					s sep+=15,
				}
				[ [[]] [] ]
			\end{forest}
		}
		] Consider a whiskering
		\[
            \begin{tikzcd}[column sep=4.5em]
             a \ar[r, "f"] &
             a'
             \ar[r, bend left, "g", ""{below, name=g}]
		  \ar[r, bend right, "g'"', ""{name=gp}]&
		  a''
		  \ar[Rightarrow, from=g, to=gp, "\beta"]
            \end{tikzcd}
		\]
		of $A$ and the following associated $3$-simplex
		\begin{center}
		 \begin{tikzpicture}[font=\footnotesize, scale=1.5]
		  \squares{%
                /squares/label/.cd,
                0=$a$, 1=$a'$, 2=$a''$, 3=$a''$,
                01=$f$, 12=$g'$, 02=$gf$, 03=$gf$, 13=$g$,
                012={$=$}, 023={$\beta f$}, 123=${\beta}$, 013=${=}$,
                /squares/arrowstyle/.cd,
                23={equal},
                012={phantom, description},
                013={phantom, description},
                0123={equal},
                /squares/labelstyle/.cd,
                012={anchor=center},
                013={anchor=center}
		  }
		 \end{tikzpicture}
		\end{center}
		of $SN(A)$, where we wrote $\beta f$ for $\beta \comp_0 f$.
		We define $F_{\treeVLeft}(\beta, f)$ as the main $3$-cell
		of the image under $F$ of the above $3$-simplex:
		\begin{center}
		 \begin{tikzpicture}[font=\footnotesize, scale=1.6]
		  \squares{%
                /squares/label/.cd,
                0=$Fa$, 1=$Fa'$, 2=$Fa''$, 3=$Fa''$,
                01=$Ff$, 12=$Fg'$, 02=$F(gf)$, 03=$F(gf)$, 13=$Fg$,
                012={$F_{g', f}$}, 023={$F(\beta f)$}, 123=${F\beta}$, 013=${F_{g, f}}$,
                0123=${F(\beta, f)}$,
                /squares/arrowstyle/.cd,
                23={equal}
		  }
		 \end{tikzpicture}\ ;
		\end{center}
		so that
		\[
		 F_{\treeVLeft}(\beta, f) \colon
		 F_{\treeV}(g', f) \comp_1 F_{\treeLL}(\beta \comp_0 f)
		 \to
		 F_{\treeLL}(\beta) \comp_0 F_{\treeLog}(f) \comp_1 F_{\treeV}(g, f)
		\]
		as a $3$-cell of $B$.

		\item[%
		\scalebox{0.3}{
			\begin{forest}
				for tree={%
					label/.option=content,
					grow'=north,
					content=,
					circle,
					fill,
					minimum size=3pt,
					inner sep=0pt,
					s sep+=15,
				}
				[ [[[]]] ]
			\end{forest}
		}] Consider a $3$-cell $\gamma \colon \alpha \to \alpha'$ of $A$,
		and the following associated $3$-simplex
		\begin{center}
		 \begin{tikzpicture}[scale=1.6]
		  \squares{%
		   /squares/label/.cd,
            12=$f'$, 02=$f'$, 03=$f$, 13=$f$,
            012=${=}$, 023=$\alpha$, 123=$\alpha'$, 013=${=}$,
            0123=$\gamma$,
            /squares/arrowstyle/.cd,
            01={equal}, 23={equal},
            012={phantom, description},
            013={phantom, description},
            /squares/labelstyle/.cd,
            012={anchor=center},
            013={anchor=center}
          }
		 \end{tikzpicture}
		\end{center}
		of $\SN(A)$.
		The $3$-cell
		\[F_{\treeLLL}(\gamma) \colon F_{\treeLL}(\alpha) \to F_{\treeLL}(\alpha')\]
		of $B$ is defined to be the main $3$-cell of the image under $F$ of the above
		$3$-simplex:
		\begin{center}
		 \begin{tikzpicture}[scale=1.6]
		  \squares{%
		   /squares/label/.cd,
            12=$Ff'$, 02=$Ff'$, 03=$Ff$, 13=$Ff$,
            012=${=}$, 023=$F\alpha$, 123=$F\alpha'$, 013=${=}$,
            0123=$F\gamma$,
            /squares/arrowstyle/.cd,
            012={phantom, description},
            013={phantom, description},
            /squares/labelstyle/.cd,
            012={anchor=center},
            013={anchor=center}
          }
		 \end{tikzpicture}\ .
		\end{center}
	\end{description}
	\end{paragr}

	These data satisfy the normalisation conditions.

		\begin{paragr}[Normalisation]\label{paragr:simplicial_to_cellular-norm}
		In this paragraph we shall commit the abuse of denoting the main
		cell of a simplex of $\SN(B)$ by the simplex itself.
		The normalisation is an immediate consequence of the degeneracies
		of $F$. 
		\begin{description}
			\item[%
			\scalebox{0.3}{
				\begin{forest}
					for tree={%
						label/.option=content,
						grow'=north,
						content=,
						circle,
						fill,
						minimum size=3pt,
						inner sep=0pt,
						s sep+=15,
					}
					[]
				\end{forest}
			}
			] for any object $a$ of $A$, we have
			\[
			F_{\treeLog}(1_a) = s^0_1(F_{\treeDot} a) = F_{\dgn{0}{1}}(a) =  1_{F_{\treeDot}(a)}\,;
			\]
			
			\item[%
			\scalebox{0.3}{
				\begin{forest}
					for tree={%
						label/.option=content,
						grow'=north,
						content=,
						circle,
						fill,
						minimum size=3pt,
						inner sep=0pt,
						s sep+=15,
					}
					[ [] ]
				\end{forest}
			}
			] for any $1$-cell $f$ of $A$, we have
			\[
			F_{\treeLL}(1_f) = s^0_2(F_{\treeL} f) = F_{\dgn{0}{2}}(f) = 1_{F_{\treeLog}(f)}\,;
			\]
			
			\item[%
			\scalebox{0.3}{
				\begin{forest}
					for tree={%
						label/.option=content,
						grow'=north,
						content=,
						circle,
						fill,
						minimum size=3pt,
						inner sep=0pt,
						s sep+=15,
					}
					[ [][] ]
				\end{forest}
			}
			] for any $2$-cell $f \colon a \to a'$ of $A$, we have
			\[
			F_{\treeV}(1_{a'}, f) = s^1_2(F_{\treeL} f) = 1_{F_{\treeL}(f)} =
			s^0_2(F_{\treeL}(f)) = F_{\treeV}(f, 1_a)\,;
			\]
			
			\item[%
			\scalebox{0.3}{
				\begin{forest}
					for tree={%
						label/.option=content,
						grow'=north,
						content=,
						circle,
						fill,
						minimum size=3pt,
						inner sep=0pt,
						s sep+=15,
					}
					[ [ [] ] ]
				\end{forest}
			}
			] for any $2$-cell $\alpha$ of $A$ we have
			\[
			F_{\treeLLL}(1_\alpha) = s^0_3(\alpha_r) = F_{\dgn{0}{3}}(\alpha_r)
			= 1_{F_{\treeLL}(\alpha)}\,;
			\]
			
			\item[%
			\scalebox{0.3}{
				\begin{forest}
					for tree={%
						label/.option=content,
						grow'=north,
						content=,
						circle,
						fill,
						minimum size=3pt,
						inner sep=0pt,
						s sep+=15,
					}
					[ [][][] ]
				\end{forest}
			}
			] for any pair $a \xto{f} a' \xto{g} a''$ of $1$-cells of $A$, we have
			\begin{align*}
			F_{\treeW}(g, f, 1_a) &= s^0_3(F_{\treeV}(g, f)) = 1_{F_{\treeV}(g, f)}\,,\\
			F_{\treeW}(g, 1_{a'}, f) &= s^1_3(F_{\treeV}(g, f)) = 1_{F_{\treeV}(g, f)}\,,\\
			F_{\treeW}(1_{a''}, g, f) &= s^2_3(F_{\treeV}(g, f)) = 1_{F_{\treeV}(g, f)}\,;
			\end{align*}
			
			\item[%
			\scalebox{0.3}{
				\begin{forest}
					for tree={%
						label/.option=content,
						grow'=north,
						content=,
						circle,
						fill,
						minimum size=3pt,
						inner sep=0pt,
						s sep+=15,
					}
					[ [][[]] ]
				\end{forest}
			}
			] for any pair $a \xto{f} a' \xto{g} a''$ of $1$-cells of $A$, we have
			\[
			F_{\treeVRight}(g, 1_f) = s^0_3(F_{\treeV}(g, f)) = 1_{F_{\treeV}(g, f)}\,,
			\]
			and for any $2$-cell $\alpha \colon f \to f'$ of $A$, we have
			\[
			F_{\treeVRight}(1_a', \alpha) = s^2_3(F_{\treeLL}(\alpha)) = 1_{F_{\treeLL}(\alpha)}\,;
			\]
			
			\item[%
			\scalebox{0.3}{
				\begin{forest}
					for tree={%
						label/.option=content,
						grow'=north,
						content=,
						circle,
						fill,
						minimum size=3pt,
						inner sep=0pt,
						s sep+=15,
					}
					[ [[]][] ]
				\end{forest}
			}
			] this normalisation is dual to the previous one;
		\end{description}
	\end{paragr}
	
	Checking the coherences is much strenuous.
	The next subsection is devoted to establish that
	they hold, thus proving the following result.
	
	\begin{thm}
	 Let $F \colon A \to B$ be a simplicial oplax $3$-morphism.
	 With the data defined right above, $F$ is a normalised oplax $3$-functor
	 from $A$ to $B$.
	\end{thm}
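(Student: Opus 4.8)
The structural cells defined in paragraph~\ref{paragr:simplicial_to_cellular-data} are of two kinds: $F_{\treeV}(g,f)$ and $F_{\treeLL}(\alpha)$ are principal $2$-cells of $2$-simplices, while $F_{\treeW}(h,g,f)$, $F_{\treeVRight}(g,\alpha)$, $F_{\treeVLeft}(\beta,f)$ and $F_{\treeLLL}(\gamma)$ are, by construction, the principal $3$-cells of $F$ applied to explicit $3$-simplices of $\SN(A)$. The engine of the whole proof is the following observation. Since $F$ is a morphism of simplicial sets it commutes with faces, so for any $4$-simplex $x$ of $\SN(A)$ the five $3$-faces of $F(x)$ are the $F$-images of the five $3$-faces of $x$. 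On the other hand, regarding $B$ as an $\infty$-category, a $4$-simplex of $\SN(B)$ is an $\infty$-functor $\On{4}\to B$, and as $B$ carries no non-trivial cells above dimension $3$ the principal $4$-cell $\atom{01234}$ of the fourth oriental is sent to an identity; its source and target, which are the two explicit pasting composites (``the two halves of the pentagon'') of the $3$-cells $\atom{ijkl}$ indexing the $3$-faces, must therefore coincide in $B$. This is exactly the computation already carried out for the morphism $\sup$ in Example~\ref{exem:sup}. Hence every $4$-simplex of $\SN(B)$ witnesses an equality of pasting composites of the principal $3$-cells of its $3$-faces, and the plan is: for each coherence, exhibit a $4$-simplex of $\SN(A)$ whose $3$-faces realise the structural cells occurring in that coherence, read off the resulting orientals relation in $B$, and simplify it using the trivialisations recorded in Remark~\ref{rem:trivial_cells}.

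First I would dispatch the tree $\treeY$ on its own: the triviality of $\sigma(\beta,\alpha)$ (condition~\ref{cond:simpl_oplax-i} and Remark~\ref{rem:trivial_cells}) forces its source and target $2$-cells to coincide, and after the identifications $F(\alpha_l)=F(\alpha_r)=F_{\treeLL}(\alpha)$ of paragraph~\ref{paragr:2-cell_B} this reads $F_{\treeLL}(\beta)\comp_1 F_{\treeLL}(\alpha) = F_{\treeLL}(\beta\comp_1\alpha)$, so that $F_{\treeY}(\beta,\alpha)$ is a genuine identity $3$-cell. For the pentagon $\treeVV$, the $4$-simplex $z$ of $\SN(A)$ attached to four composable $1$-cells $f,g,h,i$ has its five $3$-faces equal to the five triples listed in paragraph~\ref{paragr:pentagon_coherence}, so by the preceding lemma $F(z)$ is a $4$-simplex of $\SN(B)$ whose $\atom{01234}$-relation is precisely the pentagon identity among the $F_{\treeW}$'s. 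For the coherences whose source diagrams carry a single $2$-cell among $1$-cells (those relating $F_{\treeW}$, $F_{\treeVLeft}$, $F_{\treeVRight}$ and $F_{\treeLL}$, including the compatibilities of $F_{\treeVLeft}$ and $F_{\treeVRight}$ with vertical composition) I would build the analogous $4$-simplex of $\SN(A)$; several of its $3$-faces are ``constraint'' simplices whose $F$-images have trivial principal $3$-cell (by the normalisation of paragraph~\ref{paragr:simplicial_to_cellular-norm} together with conditions~\ref{cond:simpl_oplax-i}--\ref{cond:simpl_oplax-iii} and Remark~\ref{rem:trivial_cells}), so the pentagon collapses to the stated two- or three-term identity. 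Finally the coherences governing $F_{\treeLLL}$ (functoriality in $\comp_2$ and compatibility with $\comp_1$ and with $\comp_0$-whiskering) follow most directly of all, since they amount to functoriality of the simplicial map $F$ on the $3$-simplices encoding $3$-cells, once the four encodings of a fixed $3$-cell are identified via paragraphs~\ref{paragr:2-cell_B} and~\ref{paragr:encode_3cells}; and the associativity coherence among the $F_{\treeY}$-data is automatic, those data being identities.

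The genuinely delicate case, exactly as anticipated in the introduction, is the horizontal-composition tree $\treeVLR$ for a pair of $0$-composable $2$-cells $\alpha,\beta$. Here the relevant $4$-simplices are those of type $x_\eps,y_\eps$ of paragraph~\ref{paragr:eps_invertible} together with the $\omega$-simplex $x_\omega$ of paragraph~\ref{paragr:encode_simple_horizontal_comp}, whose faces interleave the structural cells $F_{\treeVRight}$, $F_{\treeVLeft}$ and $F_{\treeW}$ with the auxiliary invertible cells $\eps_{\text{l}},\eps_{\text{r}},\omega_{\text{l}},\omega_{\text{r}},\sigma,\gamma_{\text{l}},\gamma_{\text{r}},\tau_{\text{u}},\tau_{\text{d}}$. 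I expect this to be the main obstacle, and it is purely combinatorial: one must assemble these $4$-simplices into the correct pasting and then use that, for a simplicial oplax $3$-morphism, every auxiliary cell above is trivial (Remark~\ref{rem:trivial_cells}) in order to cancel all constraint terms, so that the orientals relation reduces precisely to the required symmetric identity relating $F_{\treeVRight}(g',\alpha)\comp_1 F_{\treeLL}(\beta\comp_0 f)$ and $F_{\treeVLeft}(\beta,f')\comp_1 F_{\treeLL}(g\comp_0\alpha)$, the cell $F_{\text{ex}}(\beta,\alpha)$ being itself trivial. It is essential at this point that the $\treeY$-datum already be trivial, for otherwise the two whiskered cells would fail to be $2$-composable, as the remark following the definition of normalised oplax $3$-functor explains. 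With $\treeVLR$ settled and the remaining single-cell coherences handled by the same face-reading recipe, $F$ equipped with the data of paragraph~\ref{paragr:simplicial_to_cellular-data} satisfies all the coherences of paragraph~\ref{paragr:lax_3functor_cellular_coherences}, and is therefore a normalised oplax $3$-functor.
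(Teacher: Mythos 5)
Your proposal follows essentially the same route as the paper's own proof: each coherence is encoded in a $4$-simplex of $\SN(A)$ whose image in $\SN(B)$ forces the two pentagon pastings of its principal $3$-cells to be equal (the top cell of $\On{4}$ landing on an identity in the $3$-category $B$), after which the constraint cells are cancelled using conditions~\ref{cond:simpl_oplax-i}--\ref{cond:simpl_oplax-iii} and Remark~\ref{rem:trivial_cells}, with the whiskering coherences being the genuinely laborious cases. One local correction: the $\treeY$ identity $F_{\treeLL}(\beta)\comp_1 F_{\treeLL}(\alpha) = F_{\treeLL}(\beta\comp_1\alpha)$ does not follow from the triviality of $\sigma(\beta,\alpha)$ alone, since that only yields $F(\beta_l)\comp_1 F(\alpha_r) = F(\beta_r)\comp_1 F(\alpha_l)$; it is instead extracted, exactly as in the paper, from the trivial cells $\eps_{\text{l}}(\beta,\alpha)$, $\eps_{\text{r}}(\beta,\alpha)$, $\omega_{\text{l}}(\beta,\alpha)$ and $\omega_{\text{r}}(\beta,\alpha)$ of paragraph~\ref{paragr:encode_simple_horizontal_comp}, which compare the encodings of $\beta\comp_1\alpha$ with those of $\beta$ and $\alpha$.
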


	\subsection{Coherences}
	Let $F \colon A \to B$ be a simplicial oplax $3$-morphism.
	In this subsection we are going to check that the data of~$F_{\treeDot}$,
	$F_{\treeLog}$, $F_{\treeV}$, $F_{\treeLL}$, $F_{\treeW}$ $F_{\treeVLeft}$, $F_{\treeVRight}$
	and~$F_{\treeLLL}$ defined above satisfy the set of coherences for a 
	normalised oplax $3$-functor defined in paragraph~\ref{paragr:lax_3functor_cellular_coherences}.
	We shall do so by showing that every coherence can be encoded in a
	particular $4$-simplex of~$\SN(B)$, image of a $4$\hyp{}simplex of~$\SN(A)$.
	We shall only draw the former, the latter being
	clear. Moreover, we shall omit the various
	whiskerings when denoting the $3$-cells of these $4$-simplices, which are
	nevertheless clear from the picture.
	
	\begin{description}
	 \item[%
		\scalebox{0.3}{
			\begin{forest}
				for tree={%
					label/.option=content,
					grow'=north,
					content=,
					circle,
					fill,
					minimum size=3pt,
					inner sep=0pt,
					s sep+=15,
				}
				[ [[][]] ]
			\end{forest}
		}
		] For any pair of $1$-composable $2$-cells
		\[
		 \begin{tikzcd}[column sep=4.5em]
		  a\phantom{'}
		  \ar[r, bend left=50, looseness=1.2, "f", ""{below, name=f}]
		  \ar[r, "g" description, ""{name=gu}, ""{below, name=gd}]
		  \ar[r, bend right=50, looseness=1.2, "h"', ""{name=h}]
		  \ar[Rightarrow, from=f, to=gu, "\alpha"]
		  \ar[Rightarrow, from=gd, to=h, "\beta"]&
		  a'
		 \end{tikzcd}
		\]
		of $A$, all the images by $F$ of the related $3$-simplices
		have a trivial main $3$-cell of $B$; indeed, this is
		the case for the $3$-cells $\eps_{\text{l}}(\beta, \alpha)$, $\eps_{\text{r}}(\beta, \alpha)$,
		$\omega_{\text{l}}(\beta, \alpha)$ and $\omega_{\text{l}}(\beta, \alpha)$ of $B$.
		Thus we have an \emph{identity} $3$-cell
		\[
		 F_{\treeLL}(\beta) \comp_1 F_{\treeLL}(\alpha)
		 \to 
		 F_{\treeLL}(\beta \comp_1 \alpha)
		\]
		of $B$, establishing the coherence for $\treeY$.
		
	 \item[%
        \scalebox{0.3}{
            \begin{forest}
                for tree={%
                    label/.option=content,
                    grow'=north,
                    content=,
                    circle,
                    fill,
                    minimum size=3pt,
                    inner sep=0pt,
                    s sep+=15,
                }
                [
                    [] [] [] []
                ]
            \end{forest}
        }
     ] Consider a quadruple
     \[
      \begin{tikzcd}
       \bullet \ar[r, "f"] & \bullet \ar[r, "g"] & \bullet \ar[r, "h"] & \bullet \ar[r, "i"] & \bullet
      \end{tikzcd}
     \]
     of $0$-composable $1$-cells of $A$. The $4$-simplex of $\SN(B)$ in figure~\ref{fig:coherence_ifgh},
     \begin{figure}
     \centering
      \begin{tikzpicture}[scale=1.8]
        \pentagon{%
            /pentagon/label/.cd,
            01=$Ff$, 12=$Fg$, 23=$Fh$, 34=$Fi$, 04=$F(ihgf)$,
            02=$F(gf)$, 03=$F(hgf)$, 13=$F(hg)$, 14=$F(ihg)$, 24=$F(ih)$,
            012=${F_{\treeV}(g, f)}$, 034=${F_{i, hgf}}$,
            023=${F_{\treeV}(h, gf)}$, 123=${F_{\treeV}(h, g)}$, 134=${F_{\treeV}(i, hg)}$,
            014=${F_{\treeV}(ihg, f)}$, 024=${F_{\treeV}(ih, gf)}$, 234=${F_{\treeV}(i, h)}$,
            013=${F_{\treeV}(hg, f)}$, 124=${F_{\treeV}(ih, g)}$,
            0123=${F_{\treeW}(h, g, f)}$, 0124=${F_{\treeW}(ih, g, f)}$,
            0134=${F_{\treeW}(i, hg, f)}$, 0234=${F_{\treeW}(i, h, gf)}$,
            1234=${F_{\treeW}(i, h, g)}$,
            /pentagon/arrowstyle/.cd,
            03={pos=0.55}, 02={pos=0.55},
            012={phantom, description}, 013={phantom, description},
            014={phantom, description}, 023={phantom, description},
            024={phantom, description}, 034={phantom, description},
            123={phantom, description}, 124={phantom, description},
            134={phantom, description}, 234={phantom, description},
            01234={phantom, description},
            /pentagon/labelstyle/.cd,
            012={anchor=center}, 013={anchor=center}, 014={anchor=center},
            023={anchor=center}, 024={anchor=center}, 034={anchor=center},
            123={anchor=center}, 124={anchor=center}, 134={anchor=center},
            234={anchor=center}        
        }
      \end{tikzpicture}
      \caption{Establishing the coherence $ihgf$.}
        \label{fig:coherence_ifgh}
     \end{figure}
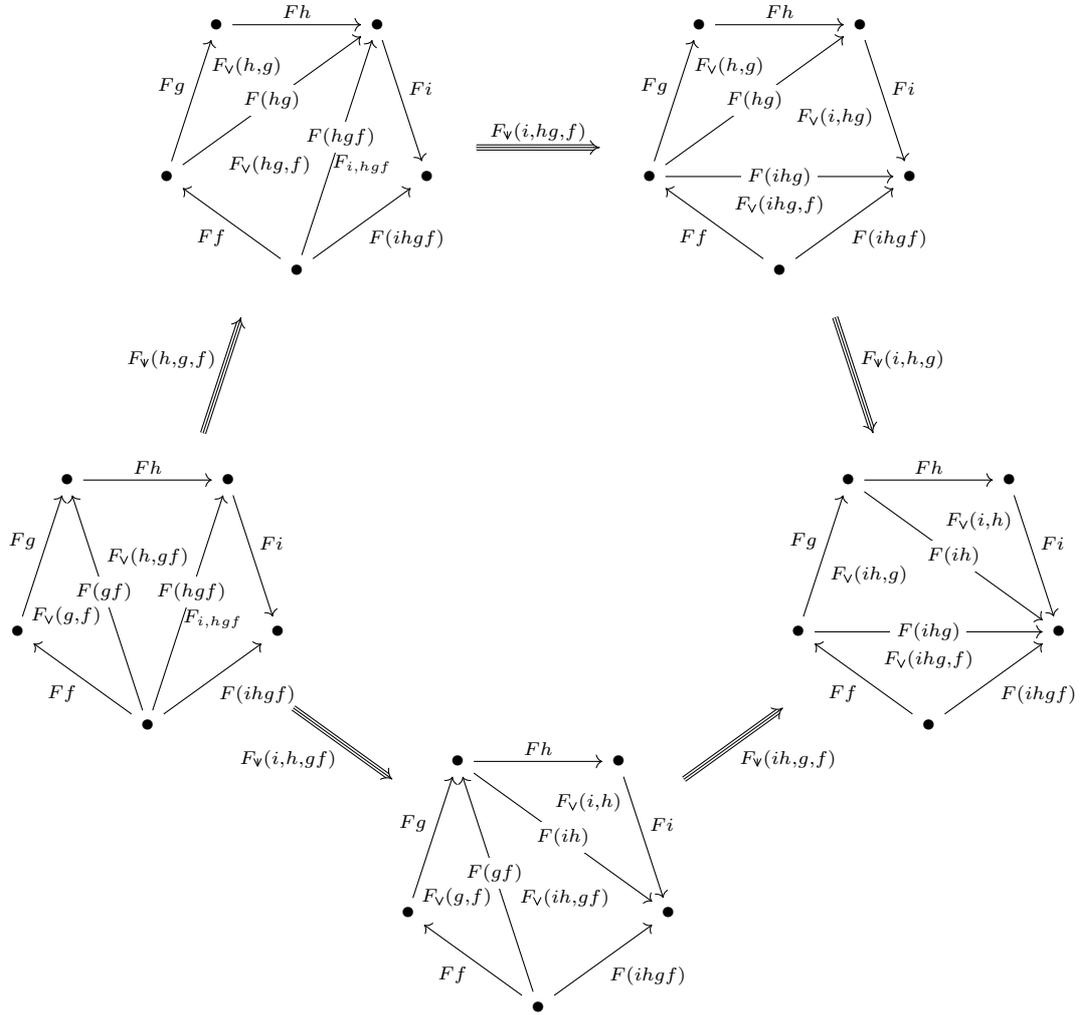
     where $F_{i, hgf} = F_{\treeV}(i, ghf)$ by definition,
     shows that the $3$-cells
     \begin{gather*}
       F_{\treeV}(i, h) \comp_0 F_{\treeLog}(g) \comp_0 F_{\treeLog}(f)
       \comp_1 F_{\treeW}(ih, g, h) \\
       \comp_2\\
       F_{\treeLog}(i) \comp_0 F_{\treeLog}(h) \comp_0 F_{\treeV}(g, f)
       \comp_1 F_{\treeW}(i, h, gf)
     \end{gather*}
     and
     \begin{gather*}
       F_{\treeW}(i, h, g) \comp_0 F_{\treeLog}(f) \comp_1 F_{\treeV}(i\comp_0 h \comp_0 g, f)\\
       \comp_2\\
       F_{\treeLog}(i) \comp_0 F_{\treeV}(h, g) \comp_0 F_{\treeLog}(f) \comp_1
       F_{\treeW}(i, h \comp_0 g, f)\\
       \comp_2 \\
       F_{\treeLog}(i) \comp_0 F_{\treeW}(h, g, f) \comp_1 F_{\treeV}(i, h\comp_0 g \comp_0 f)
     \end{gather*}
     of $B$ are equal. This establishes the coherence
     \scalebox{0.3}{
            \begin{forest}
                for tree={%
                    label/.option=content,
                    grow'=north,
                    content=,
                    circle,
                    fill,
                    minimum size=3pt,
                    inner sep=0pt,
                    s sep+=15,
                }
                [
                    [] [] [] []
                ]
            \end{forest}
        }.
     
     \item[%
        \scalebox{0.3}{
            \begin{forest}
                for tree={%
                    label/.option=content,
                    grow'=north,
                    content=,
                    circle,
                    fill,
                    minimum size=3pt,
                    inner sep=0pt,
                    s sep+=15,
                }
                [
                    [ [] ] [] []
                ]
            \end{forest}
        }
     ] Consider a triple
     \[
      \begin{tikzcd}[column sep=4.5em]
       \bullet \ar[r, "f"] & \bullet \ar[r, "g"] &
       \bullet
       \ar[r, bend left, "h", ""{below, name=h}]
       \ar[r, bend right, "h'"', ""{name=h2}]
       \ar[Rightarrow, from=h, to=h2, "\alpha"]
       & \bullet
      \end{tikzcd}
     \]
     of $0$-composable cells $f$, $g$ and $\alpha$ of $A$.
     The $4$\hyp{}simplex of $\SN(B)$ depicted in figure~\ref{fig:coherence_alpha-g-f}
     ensures that the $3$-cells
     \begin{gather*}
        F_{\treeLL}(\alpha) \comp_0 F_{\treeLog}(g) \comp_0 F_{\treeLog}(f)
        \comp_1 F_{\treeW}(h, g, f)\\
        \comp_2\\
        F_{\treeLog}(h') \comp_0 F_{\treeV}(g, f) \comp_1 F_{\treeVLeft}(\alpha, g \comp_0 f)
     \end{gather*}
     and
     \begin{gather*}
        F_{\treeVLeft}(\alpha, g) \comp_0 F_{\treeLog}(f) \comp_1 F_{\treeV}(h\comp_0 g, f)\\
        \comp_2\\
        F_{\treeV}(h', g) \comp_0 F_{\treeLog}(f) \comp_1 F_{\treeVLeft}(\alpha \comp_0 g, f)\\
        \comp_2\\
        F_{\treeW}(h', g, f) \comp_1 F_{\treeLL}(\alpha \comp_0 g \comp_0 f)
     \end{gather*}
     of $B$ are equal. This establishes the coherence
     \scalebox{0.3}{
            \begin{forest}
                for tree={%
                    label/.option=content,
                    grow'=north,
                    content=,
                    circle,
                    fill,
                    minimum size=3pt,
                    inner sep=0pt,
                    s sep+=15,
                }
                [
                    [ [] ] [] []
                ]
            \end{forest}
        }.
        \begin{figure}
        \centering
         \begin{tikzpicture}[scale=1.8]
            \pentagon{%
                /pentagon/label/.cd,
                01=$Ff$, 12=$Fg$, 23=$Fh'$, 
                04=$F(hgf)$,
                02=$F(gf)$, 03=$F(h'gf)$, 13=$F(h'g)$, 14=$F(h'g)$, 24=$Fh$,
                012=${F_{\treeV}(g, f)}$, 034=${\phantom{O}F_{\treeLL}(\alpha gf)}$,
                023=${F_{\treeV}(h', gf)}$, 123=${F_{\treeV}(h', g)}$, 134=${F_{\treeLL}(\alpha g)}$,
                014=${F_{\treeV}(ihg, f)}$, 024=${F_{\treeV}(ih, gf)}$, 234=${F_{\treeV}(i, h)}$,
                013=${F_{\treeV}(h'g, f)}$, 124=${F_{\treeV}(h, g)}$,
                0123=${F_{\treeW}(h', g, f)}$, 0124=${F_{\treeW}(h, g, f)}$,
                0134=${F_{\treeVLeft}(\alpha g, f)}$, 0234=${F_{\treeVLeft}(\alpha, gf)}$,
                1234=${F_{\treeVLeft}(\alpha, g)}$,
                /pentagon/arrowstyle/.cd,
                03={pos=0.55}, 02={pos=0.55}, 34={equal},
                012={phantom, description}, 013={phantom, description},
                014={phantom, description}, 023={phantom, description},
                024={phantom, description}, 034={phantom, description},
                123={phantom, description}, 124={phantom, description},
                134={phantom, description}, 234={phantom, description},
                01234={phantom, description},
                /pentagon/labelstyle/.cd,
                012={anchor=center}, 013={anchor=center}, 014={anchor=center},
                023={anchor=center}, 024={anchor=center}, 034={anchor=center},
                123={anchor=center}, 124={anchor=center}, 134={anchor=center},
                234={anchor=center}        
            }
         \end{tikzpicture}
         \caption{Establishing the coherence $\alpha g f$.}
         \label{fig:coherence_alpha-g-f}
        \end{figure}

     \item[%
        \scalebox{0.3}{
            \begin{forest}
                for tree={%
                    label/.option=content,
                    grow'=north,
                    content=,
                    circle,
                    fill,
                    minimum size=3pt,
                    inner sep=0pt,
                    s sep+=15,
                }
                [
                    [] [ [] ] []
                ]
            \end{forest}
        }
     ] Consider a triple
     \[
      \begin{tikzcd}[column sep=4.5em]
       \bullet \ar[r, "f"] &
       \bullet
       \ar[r, bend left, "g", ""{below, name=g}]
       \ar[r, bend right, "g'"', ""{name=g2}]
       \ar[Rightarrow, from=g, to=g2, "\alpha"] &
       \bullet \ar[r, "h"] & \bullet
      \end{tikzcd}
     \]
     of $0$-composable cells $f$, $\alpha$ and $h$ of $A$.
     The proof of the coherence for this cellular pasting diagram
     is quite involved and relies on the construction and analysis
     of four $4$-simplices of $\SN(B)$.
     The main such $4$-simplex is depicted in figure~\ref{fig:coherence_h-alpha-f},
     where
     \[
      \beta = F_{\treeV}(g', f) \comp_1 F_{\treeLL}(\alpha \comp_0 f)
      \quadet
      \gamma = F_{\treeV}(h\comp_0 g, f) \comp_1 F_{\treeLL}(h \comp_0 \alpha \comp_0 f)\,,
     \]
     and shows that the $3$-cell
     \begin{gather}\label{eq:coherence_h-alpha-f_up}
        F_{\treeVRight}(h, \alpha) \comp_0 F_{\treeLog}(f)
        \comp_1 F_{\treeV}(h \comp_0 g, f) \notag\\
        \comp_2 \notag\\
        F_{\treeLog}(h) \comp_0 F_{\treeLL}(\alpha) \comp_0 F_{\treeLog}(f)
        \comp_1 F_{\treeW}(h, g, f)\\
        \comp_2\notag\\
        F_{\treeLog}(h) \comp_0 \Omega
        \comp_1 F_{\treeV}(h, g\comp_0 f) \notag
     \end{gather}
     of $B$ is equal the following $3$-cell
     \begin{equation*}
        F_{\treeV}(h, g') \comp_0 F_{\treeLog}(f)
        \comp_1 \Phi\:  \comp_2\:  \Psi
     \end{equation*}
     of $B$. Now, the $4$-simplices depicted
     in figures~\ref{fig:Omega}, \ref{fig:Psi} and~\ref{fig:Phi}
     entail the equalities
     \[
      \Omega = F_{\treeVLeft}(\alpha, f)\,,
     \]
     \[
      \Phi = F_{\treeVLeft}(h\comp_0 \alpha , f)
     \]
     and
     \[
      \Psi = F_{\treeW}(h, g', f) \comp_1 F_{\treeLL}(h\comp_0 \alpha \comp_0 f)\:
      \comp_2 \: F_{\treeLog}(h) \comp_0 F_{\treeV}(g', f)
      \comp_1 F_{\treeVRight}(h, \alpha \comp_0 f)\,.
     \]
     We can then conclude that the $3$-cell~\eqref{eq:coherence_h-alpha-f_up}
     of $B$ is equal to
     \begin{gather*}
        F_{\treeV}(h, g') \comp_0 F_{\treeLog}(f)
        \comp_1 F_{\treeVLeft}(h\comp_0 \alpha, f)\\
        \comp_2\\
        F_{\treeW}(h, g', f) \comp_1 F_{\treeLL}(h\comp_0 \alpha \comp_0 f)\\
        \comp_2\\
        F_{\treeLog}(h) \comp_0 F_{\treeV}(g', f) \comp_1
        F_{\treeVRight}(h, \alpha \comp_0 f)\,,
     \end{gather*}
     thereby establishing the coherence of
     \scalebox{0.3}{
            \begin{forest}
                for tree={%
                    label/.option=content,
                    grow'=north,
                    content=,
                    circle,
                    fill,
                    minimum size=3pt,
                    inner sep=0pt,
                    s sep+=15,
                }
                [
                    [] [ [] ] []
                ]
            \end{forest}
        }.
    \begin{figure}
    \centering
     \begin{tikzpicture}[scale=1.8]
        \pentagon{%
            /pentagon/label/.cd,
            01=$Ff$, 
            23=$Fg'$, 34=$Fh$, 04=$F(hgf)$,
            02=$Ff$, 03=$F(gf)$, 13=$Fg$, 14=$F(gf)$, 24=$F(g'f)$,
            012=${=}$, 034=${\phantom{O}F_{\treeV}(h, gf)}$,
            023=$\beta$, 123=${F_{\treeLL}(\alpha)}$, 134=${F_{\treeV}(h, g)}$,
            014=${F_{\treeV}(hg, f)}$, 024=$\gamma$, 234=${F_{\treeV}(h, g')}$,
            013=${F_{\treeV}(g, f)}$, 124=${F_{\treeLL}(h\alpha)}$,
            0123=$\Omega$,
            0124=${\Phi}$,
            0134=${F_{\treeW}(h, g, f)}$, 0234=${\Psi}$,
            1234=${F_{\treeVRight}(h, \alpha)}$,
            /pentagon/arrowstyle/.cd,
            12={equal}, 03={pos=0.55}, 02={pos=0.55},
            012={phantom, description}, 013={phantom, description},
            014={phantom, description}, 023={phantom, description},
            024={phantom, description}, 034={phantom, description},
            123={phantom, description}, 124={phantom, description},
            134={phantom, description}, 234={phantom, description},
            01234={phantom, description},
            /pentagon/labelstyle/.cd,
            012={anchor=center}, 013={anchor=center}, 014={anchor=center},
            023={anchor=center}, 
            024={anchor=center}, 034={anchor=center},
            123={anchor=center}, 124={anchor=center}, 134={anchor=center},
            234={anchor=center}        
        }
     \end{tikzpicture}
     \caption{Establishing the coherence $h\alpha f$.}
     \label{fig:coherence_h-alpha-f}
    \end{figure}
    
        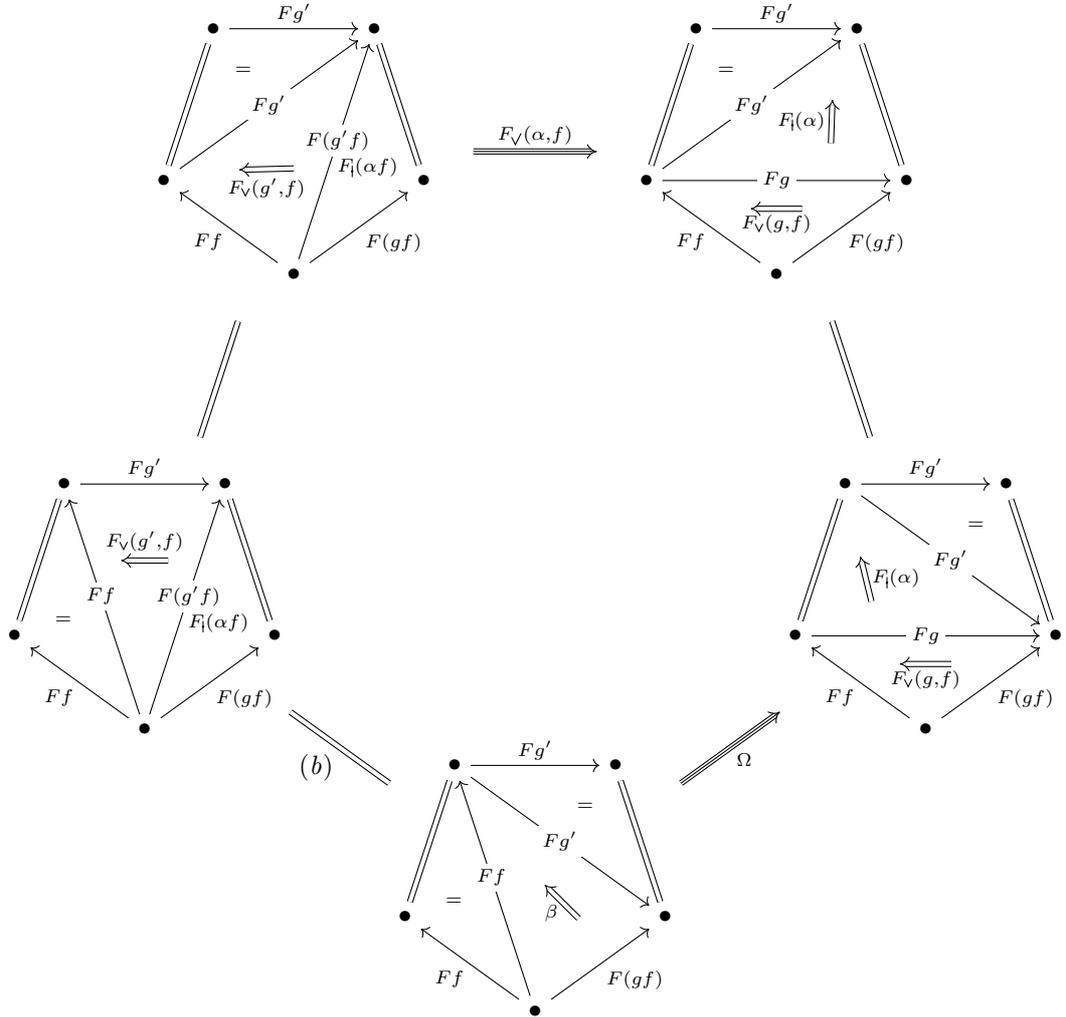
\begin{figure}
        \centering
        	\begin{tikzpicture}[scale=1.8]
        	\pentagon{%
        		/pentagon/label/.cd,
        		01=$Ff$, 12={}, 
        		23=$Fg'$, 34={}, 04=$F(gf)$,
        		02=$Ff$, 03=$F(g'f)$, 13=$Fg'$, 14=$Fg$, 24=$Fg'$,
        		012={$=$}, 034=${\phantom{O}F_{\treeLL}(\alpha f)}$,
        		023={$F_{\treeV}(g', f)$}, 123={$=$}, 134={$F_{\treeLL}(\alpha)$},
        		014={$F_{\treeV}(g, f)$}, 024={$\beta$}, 234={$=$},
        		013={$F_{\treeV}(g', f)$}, 124={$F_{\treeLL}(\alpha)$},
        		0123={},
        		0124=$\Omega$,
        		0134={$F_{\treeVRight}(\alpha, f)$},
        		0234={\ref{cond:simpl_oplax-ii}},
        		1234={},
        		/pentagon/arrowstyle/.cd,
        		12={equal}, 03={pos=0.55}, 02={pos=0.55}, 34={equal},
        		012={phantom, description}, 123={phantom, description},
        		234={phantom, description}, 034={phantom, description},
        		0123={equal}, 0234={equal}, 1234={equal},
        		01234={phantom, description},
        		/pentagon/labelstyle/.cd,
        		012={anchor=center}, 123={anchor=center}, 234={anchor=center} ,
        		034={anchor=center}     
        	}
        	\end{tikzpicture}
        	\caption{The $3$-cell $\Omega$ of $B$.}
        	\label{fig:Omega}
        \end{figure}
    
    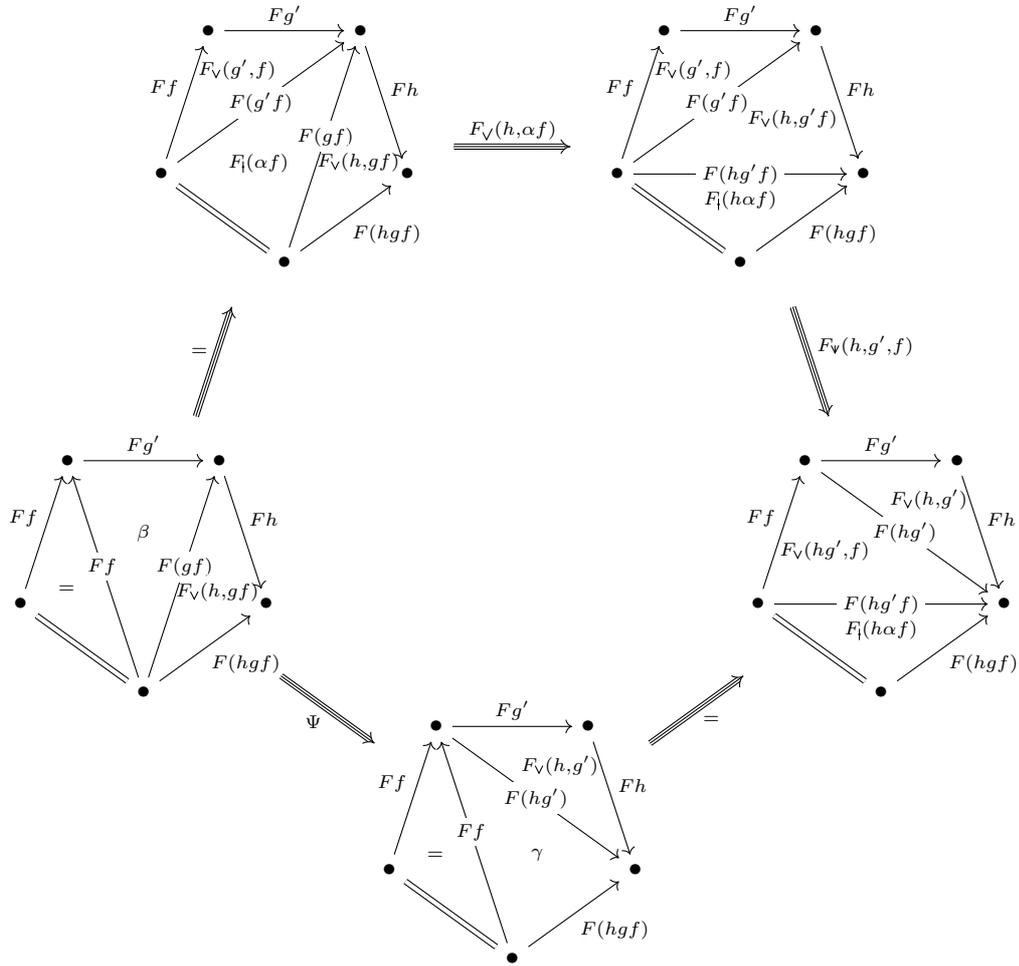
\begin{figure}
     \centering
      \begin{tikzpicture}[scale=1.7]
        \pentagon{%
            /pentagon/label/.cd,
            12=$Ff$, 23=$Fg'$, 34=$Fh$, 04=$F(hgf)$,
            02=$Ff$, 03=$F(gf)$, 13=$F(g'f)$, 14=$F(hg'f)$, 24=$F(hg')$,
            012=${=}$, 034=${\phantom{Oi}F_{\treeV}(h, gf)}$,
            023=$\beta$, 123=${F_{\treeV}(g', f)}$, 134=${F_{\treeV}(h, g'f)}$,
            014=${F_{\treeLL}(h\alpha f)}$, 024=$\gamma$, 234=${F_{\treeV}(h, g')}$,
            013=${F_{\treeLL}(\alpha f)}$, 124=${F_{\treeV}(hg', f)}$,
            0123=${=}$,
            0124=${=}$,
            0134=${F_{\treeVRight}(h, \alpha f)}$, 0234=${\Psi}$,
            1234=${F_{\treeW}(h, g', f)}$,
            /pentagon/arrowstyle/.cd,
            01={equal}, 03={pos=0.55}, 02={pos=0.55},
            012={phantom, description}, 013={phantom, description},
            014={phantom, description}, 023={phantom, description},
            024={phantom, description}, 034={phantom, description},
            123={phantom, description}, 124={phantom, description},
            134={phantom, description}, 234={phantom, description},
            01234={phantom, description},
            /pentagon/labelstyle/.cd,
            012={anchor=center}, 013={anchor=center}, 014={anchor=center},
            023={anchor=center}, 
            024={anchor=center}, 034={anchor=center},
            123={anchor=center}, 124={anchor=center}, 134={anchor=center},
            234={anchor=center}        
        }
      \end{tikzpicture}
      \caption{The $3$-cell $\Psi$ of $B$.}
      \label{fig:Psi}
     \end{figure}
     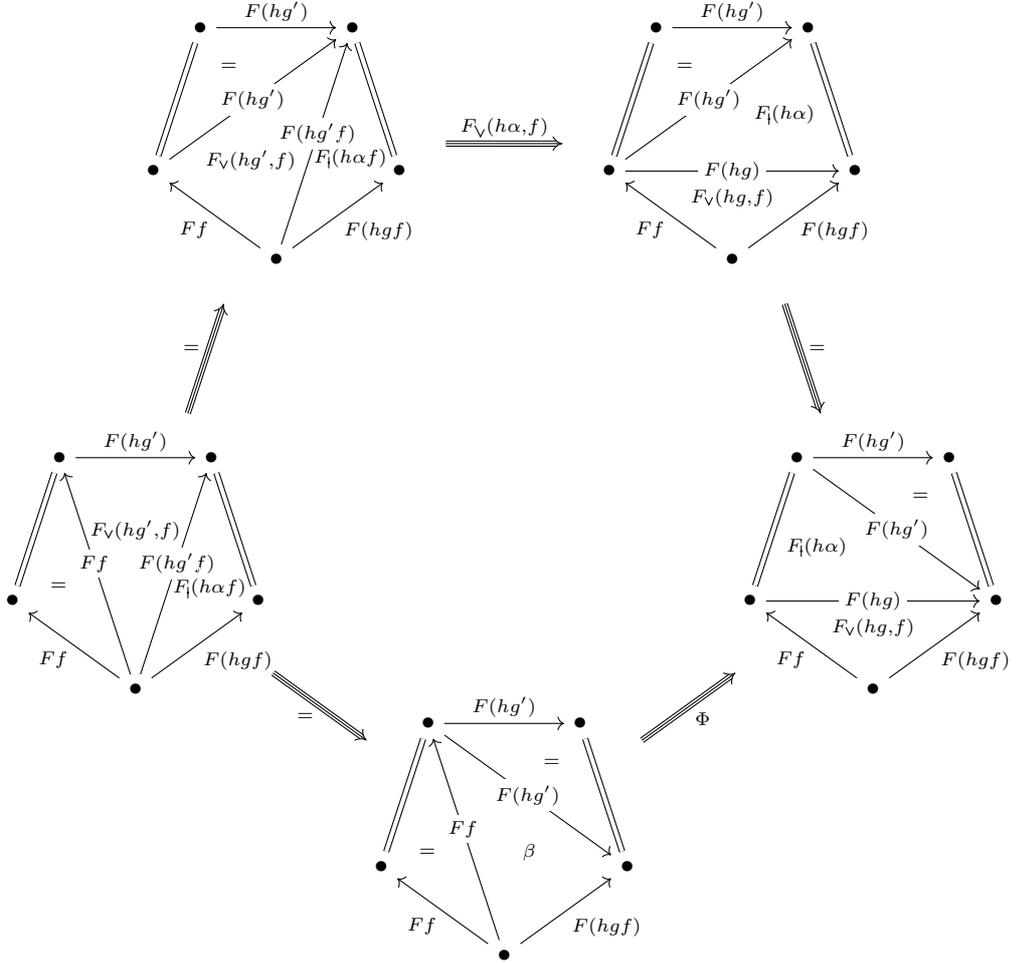
\begin{figure}
     \centering
      \begin{tikzpicture}[scale=1.7]
        \pentagon{%
            /pentagon/label/.cd,
            01=$Ff$, 
            23=$F(hg')$, 
            04=$F(hgf)$,
            02=$Ff$, 03=$F(hg'f)$, 13=$F(hg')$, 14=$F(hg)$, 24=$F(hg')$,
            012=${=}$, 034=${\phantom{Oi}F_{\treeLL}(h\alpha f)}$,
            023=${F_{\treeV}(hg', f)}$, 123=${=}$, 134=${F_{\treeLL}(h\alpha)}$,
            014=${F_{\treeV}(hg, f)}$, 024=$\beta$, 234=${=}$,
            013=${F_{\treeV}(hg', f)}$, 124=${F_{\treeLL}(h\alpha)}$,
            0123=${=}$,
            0124=${\Phi}$,
            0134=${F_{\treeVLeft}(h\alpha, f)}$, 0234=${=}$,
            1234=${=}$,
            /pentagon/arrowstyle/.cd,
            12={equal}, 34={equal},
            03={pos=0.55}, 02={pos=0.55},
            012={phantom, description}, 013={phantom, description},
            014={phantom, description}, 023={phantom, description},
            024={phantom, description}, 034={phantom, description},
            123={phantom, description}, 124={phantom, description},
            134={phantom, description}, 234={phantom, description},
            01234={phantom, description},
            /pentagon/labelstyle/.cd,
            012={anchor=center}, 013={anchor=center}, 014={anchor=center},
            023={anchor=center}, 
            024={anchor=center}, 034={anchor=center},
            123={anchor=center}, 124={anchor=center}, 134={anchor=center},
            234={anchor=center}        
        }
      \end{tikzpicture}
     \caption{The $3$-cell $\Phi$ of $B$.}
     \label{fig:Phi}
    \end{figure}

     \item[%
        \scalebox{0.3}{
            \begin{forest}
                for tree={%
                    label/.option=content,
                    grow'=north,
                    content=,
                    circle,
                    fill,
                    minimum size=3pt,
                    inner sep=0pt,
                    s sep+=15,
                }
                [
                    [] [] [ [] ]
                ]
            \end{forest}
        }
     ] Consider a triple
     \[
      \begin{tikzcd}[column sep=4.5em]
       \bullet
       \ar[r, bend left, "f", ""{below, name=f}]
       \ar[r, bend right, "f'"', ""{name=f2}]
       \ar[Rightarrow, from=f, to=f2, "\alpha"] &
       \bullet \ar[r, "g"] &
       \bullet \ar[r, "h"] & \bullet
      \end{tikzcd}
     \]
     of $0$-composable cells $\alpha$, $g$ and $h$ of $A$.
     The $4$-simplex of $\SN(B)$ depicted in
     figure~\ref{fig:coherence_h-g-alpha}, totally
     symmetric to~\ref{fig:coherence_alpha-g-f},
     shows that the $3$-cells
     \begin{gather*}
        F_{\treeW}(h, g, f') \comp_1 F_{\treeLL}(h \comp_0 g \comp_0 \alpha)\\
        \comp_2\\
        F_{\treeLog}(h) \comp_0 F_{\treeV}(g, f')
        \comp_1 F_{\treeVRight}(h, g \comp_0 \alpha)\\
        \comp_2\\
        F_{\treeLog}(h) \comp_0 F_{\treeVRight}(g, \alpha)
        \comp_1 F_{\treeV}(h, g \comp_0 f)
     \end{gather*}
     and
     \begin{gather*}
        F_{\treeV}(h, g) \comp_0 F_{\treeLog}(f')
        \comp_1 F_{\treeVRight}(h\comp_0 g, \alpha)\\
        \comp_2\\
        F_{\treeLog}(h) \comp_0 F_{\treeLog}(g) \comp_0 F_{\treeLL}(\alpha)
        \comp_1 F_{\treeV}(h, g \comp_0 f)
     \end{gather*}
     of $B$ are equal, therefore establishing the coherence 
     \scalebox{0.3}{
            \begin{forest}
                for tree={%
                    label/.option=content,
                    grow'=north,
                    content=,
                    circle,
                    fill,
                    minimum size=3pt,
                    inner sep=0pt,
                    s sep+=15,
                }
                [
                    [] [] [ [] ]
                ]
            \end{forest}
        }.
    \begin{figure}
     \centering
     \begin{tikzpicture}[scale=1.7]
        \pentagon{%
            /pentagon/label/.cd,
            12=$Ff'$, 23=$Fg$, 34=$Fh$, 
            04=$F(hgf)$,
            02=$Ff$, 03=$F(gf)$, 13=$F(hg')$, 14=$F(hgf')$, 24=$F(hg)$,
            012=${F\alpha}$, 034=${\phantom{Oi}F_{\treeV}(h, gf)}$,
            023=${F_{\treeV}(g, f)}$, 123=${F_{\treeV}(g, f')}$, 134=${F_{\treeV}(h, gf')}$,
            014=${F_{\treeLL}(hg\alpha )}$, 024=${F_{\treeV}(hg, f)}$, 234=${F_{\treeV}(h, g)}$,
            013=${F_{\treeLL}(g\alpha)}$, 124=${F_{\treeV}(hg, f')}$,
            0123=${F_{\treeVRight}(g, \alpha)}$,
            0124=${F_{\treeVRight}(hg, \alpha)}$,
            0134=${F_{\treeVRight}(h, g\alpha)}$,
            0234=${F_{\treeW}(h, g, f)}$,
            1234=${F_{\treeW}(h, g, f')}$,
            /pentagon/arrowstyle/.cd,
            01={equal}, 03={pos=0.55}, 02={pos=0.55},
            012={phantom, description}, 013={phantom, description},
            014={phantom, description}, 023={phantom, description},
            024={phantom, description}, 034={phantom, description},
            123={phantom, description}, 124={phantom, description},
            134={phantom, description}, 234={phantom, description},
            01234={phantom, description},
            /pentagon/labelstyle/.cd,
            012={anchor=center}, 013={anchor=center}, 014={anchor=center},
            023={anchor=center}, 
            024={anchor=center}, 034={anchor=center},
            123={anchor=center}, 124={anchor=center}, 134={anchor=center},
            234={anchor=center}        
        }
     \end{tikzpicture}
     \caption{Establishing the coherence $hg\alpha$.}
     \label{fig:coherence_h-g-alpha}
    \end{figure}

    \begin{figure}[t]
      \centering
      \begin{tikzpicture}[scale=1.7]
        \pentagon{%
            /pentagon/label/.cd,
            23=$Ff''$, 34=$Fg$, 
            04=$F(gf)$,
            03=$Ff$, 13=$Ff'$, 14=$F(gf')$, 24=$F(gf'')$,
            012=${=}$, 023=${F_{\treeLL}(\beta\comp_1 \alpha)}$, 
            034=${\phantom{Oi}F_{\treeV}(g, f)}$,
            013=${F_{\treeLL}(\alpha)}$, 123=${F_{\treeLL}(\beta)}$,
            134=${F_{\treeV}(g, f')}$, 014=${F_{\treeLL}(g\alpha )}$,
            024=${F_{\treeLL}(g(\beta\comp_1\alpha))}$, 234=${F_{\treeV}(g, f'')}$,
            124=${F_{\treeLL}(g\beta)}$,
            0123=${F_{\treeY}(\beta, \alpha)}$,
            0134=${F_{\treeVRight}(g, \alpha)}$,
            1234=${F_{\treeVRight}(g, \beta)}$,
            0234=${F_{\treeVRight}(g, \beta \circ \alpha)}$,
            0124=${F_{\treeY}(g\beta, g\alpha)}$,
            /pentagon/arrowstyle/.cd,
            01={equal}, 12={equal}, 03={pos=0.55}, 02={pos=0.55},
            012={phantom, description}, 013={phantom, description},
            014={phantom, description}, 023={phantom, description},
            024={phantom, description}, 034={phantom, description},
            123={phantom, description}, 124={phantom, description},
            134={phantom, description}, 234={phantom, description},
            01234={phantom, description},
            /pentagon/labelstyle/.cd,
            012={anchor=center}, 013={anchor=center}, 014={anchor=center},
            023={anchor=center}, 
            024={anchor=center}, 034={anchor=center},
            123={anchor=center}, 124={anchor=center}, 134={anchor=center},
            234={anchor=center}        
        }
      \end{tikzpicture}
      \caption{Establishing the coherence $g\beta\alpha$.}
      \label{fig:coherence_g-beta-alpha}
     \end{figure}
    \item[%
        \scalebox{0.3}{
            \begin{forest}
                for tree={%
                    label/.option=content,
                    grow=north,
                    content=,
                    circle,
                    fill,
                    minimum size=3pt,
                    inner sep=0pt,
                    s sep+=15,
                }
                [
                    [
                        [] []
                    ]
                    []
                ]
            \end{forest}
        }
     ] Consider a triple
     \[
      \begin{tikzcd}[column sep=4.5em]
       \bullet
       \ar[r, bend left=55, looseness=1.3, "f", ""{below, name=f1}]
       \ar[r, "f'"{description}, ""{name=f2u}, ""{below, name=f2d}]
       \ar[r, bend right=50, looseness=1.3, "f''"', ""{name=f3}]
       \ar[Rightarrow, from=f1, to=f2u, "\alpha"]
       \ar[Rightarrow, from=f2d, to=f3, "\beta"]
       &
       \bullet \ar[r, "g"] & \bullet
      \end{tikzcd}
     \]
     of cells $\alpha$, $\beta$ and $g$ of $A$ as in the drawing.
     The $4$-simplex of $\SN(B)$ depicted in figure~\ref{fig:coherence_g-beta-alpha}
     shows that the $3$-cells
     \begin{gather*}
        F_{\treeVRight}(g, \beta) \comp_1 F_{\treeLL}(g \comp_0 \alpha)\ 
        \comp_2\\
        F_{\treeLog}(g) \comp_0 F_{\treeLL}(\beta)
        \comp_1 F_{\treeVRight}(g, \alpha)\ 
        \comp_2\\
        F_{\treeLog}(g) \comp_0 F_{\treeY}(\beta, \alpha)
        \comp_1 F_{\treeV}(g, f)
     \end{gather*}
     and
     \[
        F_{\treeV}(g, f'') \comp_1 F_{\treeY}( g\comp_0 \beta, g \comp_0 \alpha)\ 
        \comp_2\ 
        F_{\treeVRight}(g, \beta \comp_1 \alpha)
     \]
     of $B$ are equal. Since $F_{\treeY}(\beta, \alpha)$
     and $F_{\treeY}( g\comp_0 \beta, g \comp_0 \alpha)$ are trivial
     by condition~\ref{cond:simpl_oplax-iii},
     the $4$-simplex actually exhibits the equality
     of the $3$-cells
     \[
      F_{\treeVRight}(g, \beta) \comp_1 F_{\treeLL}(g \comp_0 \alpha)\ 
        \comp_2\ 
        F_{\treeLog}(g) \comp_0 F_{\treeLL}(\beta)
        \comp_1 F_{\treeVRight}(g, \alpha)
     \]
     and
     \[
      F_{\treeV}(g, f'') \comp_1 F_{\treeY}(\beta, \alpha)
     \]
     of $B$.

     \item[%
        \scalebox{0.3}{
            \begin{forest}
                for tree={%
                    label/.option=content,
                    grow'=north,
                    content=,
                    circle,
                    fill,
                    minimum size=3pt,
                    inner sep=0pt,
                    s sep+=15,
                }
                [
                    [
                        [] []
                    ]
                    []
                ]
            \end{forest}
        }
     ] Consider a triple
     \[
      \begin{tikzcd}[column sep=4.5em]
        \bullet \ar[r, "f"] &
        \bullet
        \ar[r, bend left=55, looseness=1.3, "g", ""{below, name=g1}]
       \ar[r, "f'"{description}, ""{name=g2u}, ""{below, name=g2d}]
       \ar[r, bend right=50, looseness=1.3, "g''"', ""{name=g3}]
       \ar[Rightarrow, from=g1, to=g2u, "\alpha"]
       \ar[Rightarrow, from=g2d, to=g3, "\beta"]
       &
       \bullet
      \end{tikzcd}
     \]
     of cells $\alpha$, $\beta$ and $g$ of $A$ as in the drawing.
     The $4$-simplex of $\SN(B)$ displayed in figure~\ref{fig:coherence_beta-alpha-f},
     completely dual to the $4$-simplex~\ref{fig:coherence_g-beta-alpha},
     shows that the $3$-cells
     \begin{gather*}
        \bigl(F_{\treeY}(\beta, \alpha) \comp_0 F_{\treeLog}(f)
        \comp_1 F_{\treeV}(g, f)\bigr) \comp_2\\
        \bigl(F_{\treeLL}(\beta) \comp_0 F_{\treeLog}(f)
        \comp_1 F_{\treeVLeft}(\alpha, f) \bigr)\comp_2\\ 
        \bigl(F_{\treeVLeft}(\beta, f) \comp_1 F_{\treeLL}(\alpha \comp_0 f) \bigr)
     \end{gather*}
     and
     \[
        F_{\treeVLeft}(\beta \comp_1 \alpha, f)\ 
        \comp_2\ 
        F_{\treeV}(g'', f) \comp_1 F_{\treeY}(\beta \comp_0 f, \alpha \comp_0 f)
     \]
     of $B$ are equal. Since the $3$-cells
     $F_{\treeY}(\beta, \alpha)$ and $F_{\treeY}(\beta \comp_0 f, \alpha \comp_0 f)$
     are trivial by condition~\ref{cond:simpl_oplax-iii},
     the $4$-simplex is actually imposing the equality of the $3$-cells
     \[
        F_{\treeLL}(\beta) \comp_0 F_{\treeLog}(f)
        \comp_1 F_{\treeVLeft}(\alpha, f)\ 
        \comp_2\ 
        F_{\treeVLeft}(\beta, f) \comp_1 F_{\treeLL}(\alpha \comp_0 f)
     \]
     and
     \[
        F_{\treeVLeft}(\beta \comp_1 \alpha, f)
     \]
     of $B$.
     \begin{figure}
      \centering
      \begin{tikzpicture}[scale=1.7]
        \pentagon{%
            /pentagon/label/.cd,
            01=$Ff$, 12=$Fg''$,
            04=$F(gf)$,
            02=$F(gf'')$, 03=$F(g'f)$,
            13=$Fg'$, 14=$Fg$, 
            012=${F_{\treeV}(g'', f)}$, 023=${F_{\treeLL}(\beta f)}$, 
            034=${\phantom{Oi}F_{\treeLL}(\alpha f)}$,
            013=${F_{\treeV}(g', f)}$, 123=${F_{\treeLL}(\beta)}$,
            134=${F_{\treeLL}(\alpha)}$, 014=${F_{\treeV}(g, f)}$,
            024=${F_{\treeLL}((\beta\comp_1\alpha)f)}$, 234=${=}$,
            124=${F_{\treeLL}(\beta\circ \alpha)}$,
            0123=${F_{\treeVLeft}(\beta, f)}$,
            0134=${F_{\treeVLeft}(\alpha, f)}$,
            1234=${F_{\treeY}(\beta, \alpha)}$,
            0234=${F_{\treeY}(\beta f, \alpha f)}$,
            0124=${F_{\treeVLeft}(\beta\comp_1 \alpha, f)}$,
            /pentagon/arrowstyle/.cd,
            23={equal}, 34={equal},
            03={pos=0.55}, 02={pos=0.55},
            012={phantom, description}, 013={phantom, description},
            014={phantom, description}, 023={phantom, description},
            024={phantom, description}, 034={phantom, description},
            123={phantom, description}, 124={phantom, description},
            134={phantom, description}, 234={phantom, description},
            01234={phantom, description},
            /pentagon/labelstyle/.cd,
            012={anchor=center}, 013={anchor=center}, 014={anchor=center},
            023={anchor=center}, 
            024={anchor=center}, 034={anchor=center},
            123={anchor=center}, 124={anchor=center}, 134={anchor=center},
            234={anchor=center}        
        }
      \end{tikzpicture}
      \caption{Establishing the coherence $\beta\alpha f$.}
      \label{fig:coherence_beta-alpha-f}
     \end{figure}

     \item[%
        \scalebox{0.3}{
            \begin{forest}
                for tree={%
                    label/.option=content,
                    grow'=north,
                    content=,
                    circle,
                    fill,
                    minimum size=3pt,
                    inner sep=0pt,
                    s sep+=15,
                }
                [
                    [ [] ]
                    [ [] ]
                ]
            \end{forest}
        }
     ]
         \begin{figure}
     \centering
     \begin{tikzpicture}[scale=1.7]
        \pentagon{%
            /pentagon/label/.cd,
            12=$Ff'$,
            23=$Fg'$, 
            04=$F(gf)$,
            02=$Ff$, 03=$F(g'f)$,
            13=$F(g'f')$, 14=$F(gf')$, 
            012=${F_{\treeLL}(\alpha)}$, 023=${F_{\treeV}(g', f)}$, 
            034=${\phantom{Oi}F_{\treeLL}(\beta f)}$,
            013=${F_{\treeLL}(g'\alpha)}$, 123=${\phantom{o}F_{\treeV}(g', f')}$,
            134=${F_{\treeLL}(\beta f')}$, 014=${F_{\treeLL}(g\alpha)}$,
            024=${F_{\treeV}(g, f)}$, 234=${F_{\treeLL}(\beta)}$,
            124=${F_{\treeV}(g, f')}$,
            0123=${F_{\treeVRight}(g', \alpha)}$,
            0134=${F_{\text{ex}}(\beta, \alpha)}$,
            1234=${F_{\treeVLeft}(\beta, f')}$,
            0234=${F_{\treeVLeft}(\beta, f)}$,
            0124=${F_{\treeVRight}(g, \alpha)}$,
            /pentagon/arrowstyle/.cd,
            01={equal}, 34={equal},
            03={pos=0.55}, 02={pos=0.55},
            012={phantom, description}, 013={phantom, description},
            014={phantom, description}, 023={phantom, description},
            024={phantom, description}, 034={phantom, description},
            123={phantom, description}, 124={phantom, description},
            134={phantom, description}, 234={phantom, description},
            0134={equal},
            01234={phantom, description},
            /pentagon/labelstyle/.cd,
            012={anchor=center}, 013={anchor=center}, 014={anchor=center},
            023={anchor=center}, 
            024={anchor=center}, 034={anchor=center},
            123={anchor=center}, 124={anchor=center}, 134={anchor=center},
            234={anchor=center}        
        }
     \end{tikzpicture}
     \caption{Establishing the coherence $\beta \alpha$.}
     \label{fig:coherence_beta-alpha}
    \end{figure}
     Consider a pair
     \[
      \begin{tikzcd}[column sep=4.5em]
        \bullet
        \ar[r, bend left, "f", ""{below, name=f1}]
        \ar[r, bend right, "f'"', ""{name=f2}]
        \ar[Rightarrow, from=f1, to=f2, "\alpha"]
        &
        \bullet
        \ar[r, bend left, "g", ""{below, name=g1}]
        \ar[r, bend right, "g'"', ""{name=g2}]
        \ar[Rightarrow, from=g1, to=g2, "\beta"]
        & \bullet
      \end{tikzcd}
     \]
     of $0$-composable $2$-cells $\alpha$ and $\beta$ of $A$.
     The $4$-simplex of $\SN(B)$ depicted in figure~\ref{fig:coherence_beta-alpha}
     shows that the $3$-cells
     \[
        F_{\treeVLeft}(\beta, f') \comp_1 F_{\treeLL}(g\comp_0 \alpha)\ 
        \comp_2\ 
        F_{\treeV}(g', f') \comp_1 F_{\text{ex}}(\beta, \alpha)\ 
        \comp_2\ 
        F_{\treeVRight}(g', \alpha) \comp_1 F_{\treeLL}(\beta \comp_0 f)
     \]
     and
     \[
        F_{\treeLL}(\beta) \comp_0 F_{\treeLog}(f')
        \comp_1 F_{\treeVRight}(g, \alpha)\ 
        \comp_2\ 
        F_{\treeLog}(g') \comp_0 F_{\treeLL}(\alpha)
        \comp_1 F_{\treeVRight}(\beta, f)
     \]
     of $B$ are equal. Here we denote by
     $F_{\text{ex}}(\beta, \alpha)$ the identity $3$-cell
     going from $F_{\treeLL}(g' \comp_0 \alpha) \comp_1 F_{\treeLL}(\beta \comp_0 f)$
     to $F_{\treeLL}(\beta \comp_0 f') \comp_1 F_{\treeY}(g \comp_0 \alpha)$
     that we get from the composition of the following pair of trivial $3$-cells
     \[
      \begin{tikzcd}[column sep=-6em]
       \null &
       F_{\treeLL}(g' \comp_0 \alpha \comp_1  \beta \comp_0 f)
       \arrow[triple, swap, "{F_{\treeY}(g'\comp_0 \alpha, \beta \comp_0 f)}"]{ldd}
       = F_{\treeLL}(\beta \comp_0 f' \comp_1 g \comp_0 \alpha)
       \arrow[triple, "{F_{\treeY}(\beta \comp_0 f', g \comp_0 \alpha)}"]{rdd}
       & \null \\ \\
       F_{\treeLL}(g' \comp_0 \alpha) \comp_1 F_{\treeLL}(\beta \comp_0 f)
       & \null &
       F_{\treeLL}(\beta \comp_0 f') \comp_1 F_{\treeY}(g \comp_0 \alpha)
      \end{tikzcd}
     \]
     of $B$, where the equality in the upper row is just the exchange law.

     Since $F_{\text{ex}}(\beta, \alpha)$ is a trivial
     $3$-cell, the $4$-simplex is actually imposing the equality between
     the $3$-cells
     \[
        F_{\treeVLeft}(\beta, f') \comp_1 F_{\treeLL}(g\comp_0 \alpha)\ 
        \comp_2\ 
        F_{\treeVRight}(g', \alpha) \comp_1 F_{\treeLL}(\beta \comp_0 f)
     \]
     and
     \[
      F_{\treeLL}(\beta) \comp_0 F_{\treeLog}(f')
        \comp_1 F_{\treeVRight}(g, \alpha)\ 
        \comp_2\ 
        F_{\treeLog}(g') \comp_0 F_{\treeLL}(\alpha)
        \comp_1 F_{\treeVRight}(\beta, f)
     \]
     of $B$, thereby establishing the coherence
     \scalebox{0.3}{
            \begin{forest}
                for tree={%
                    label/.option=content,
                    grow'=north,
                    content=,
                    circle,
                    fill,
                    minimum size=3pt,
                    inner sep=0pt,
                    s sep+=15,
                }
                [
                    [ [] ]
                    [ [] ]
                ]
            \end{forest}
        }.

     \item[%
        \scalebox{0.3}{
            \begin{forest}
                for tree={%
                    label/.option=content,
                    grow'=north,
                    content=,
                    circle,
                    fill,
                    minimum size=3pt,
                    inner sep=0pt,
                    s sep+=15,
                }
                [
                    [
                        [] [] []
                    ]
                ]
            \end{forest}
        }
     ]
     Consider a triple
     \[
      \begin{tikzcd}[column sep=4.7em]
       \bullet
       \ar[r, bend left=80, looseness=1.6, ""{below, name=1}]
       \ar[r, bend left, ""{name=2u}, ""{below, name=2d}]
       \ar[r, bend right, ""{name=3u}, ""{below, name=3d}]
       \ar[r, bend right=80, looseness=1.6, ""{name=4}]
       \ar[Rightarrow, from=1, to=2u, "\alpha"]
       \ar[Rightarrow, from=2d, to=3u, "\beta"]
       \ar[Rightarrow, from=3d, to=4, "\gamma"] &
       \bullet
      \end{tikzcd}
     \]
     of $1$-composable $2$-cells $\alpha$, $\beta$ and $\gamma$
     of $A$. The simplicial oplax $3$-morphism $F$ trivially
     satisfies the coherence associated to this tree, which is
     the trivial equality between the following identity $3$-cell
     \[
      F_{\treeY}(\gamma\comp_1 \beta, \alpha) \comp_2
      F_{\treeY}(\gamma, \beta) \comp_1 F_{\treeLL}(\alpha)
     \]
     and
     \[
      F_{\treeY}(\gamma, \beta\comp_1 \alpha) \comp_2
      F_{\treeLL}(\gamma) \comp_1 F_{\treeY}(\beta, \alpha)
     \]
     of $B$. This coherence is encoded in the $4$-simplex
     of $\SN(B)$ depicted in figure~\ref{fig:coherence_gamma-beta-alpha}.
     \begin{figure}
      \centering
      \begin{tikzpicture}[scale=1.7]
        \pentagon{%
            /pentagon/label/.cd,
            01=$Fi$,
            04=$Ff$,
            02=$Fh$, 03=$Fg$,
            012=${F_{\treeLL}(\gamma)}$, 023=${F_{\treeLL}(\beta)}$, 
            034=${\phantom{Oi}F_{\treeLL}(\alpha)}$,
            013=${F_{\treeLL}(\gamma\comp_1\beta)}$, 123=${=}$,
            134=${=}$, 014=${F_{\treeLL}(\gamma\comp_1 \beta\comp_1\alpha)}$,
            024=${F_{\treeLL}(\beta\comp_1\alpha)}$, 234=${=}$,
            124=${=}$,
            0123=${F_{\treeY}(\gamma, \beta)}$,
            0134=${F_{\treeY}(\gamma\comp_1\beta, \alpha)}$,
            0234=${F_{\treeY}(\beta, \alpha)}$,
            0124=${F_{\treeY}(\gamma, \beta\comp_1\alpha)}$,
            /pentagon/arrowstyle/.cd,
            12={equal}, 23={equal}, 34={equal}, 
            03={pos=0.55}, 02={pos=0.55},
            012={phantom, description}, 013={phantom, description},
            014={phantom, description}, 023={phantom, description},
            024={phantom, description}, 034={phantom, description},
            123={phantom, description}, 124={phantom, description},
            134={phantom, description}, 234={phantom, description},
            0123={equal}, 0134={equal}, 1234={equal},
            0234={equal}, 0124={equal}, 
            01234={phantom, description},
            /pentagon/labelstyle/.cd,
            012={anchor=center}, 013={anchor=center}, 014={anchor=center},
            023={anchor=center}, 
            024={anchor=center}, 034={anchor=center},
            123={anchor=center}, 124={anchor=center}, 134={anchor=center},
            234={anchor=center}        
        }
      \end{tikzpicture}
      \caption{Representing the trivial coherence $\gamma\beta\alpha$}
      \label{fig:coherence_gamma-beta-alpha}
     \end{figure}
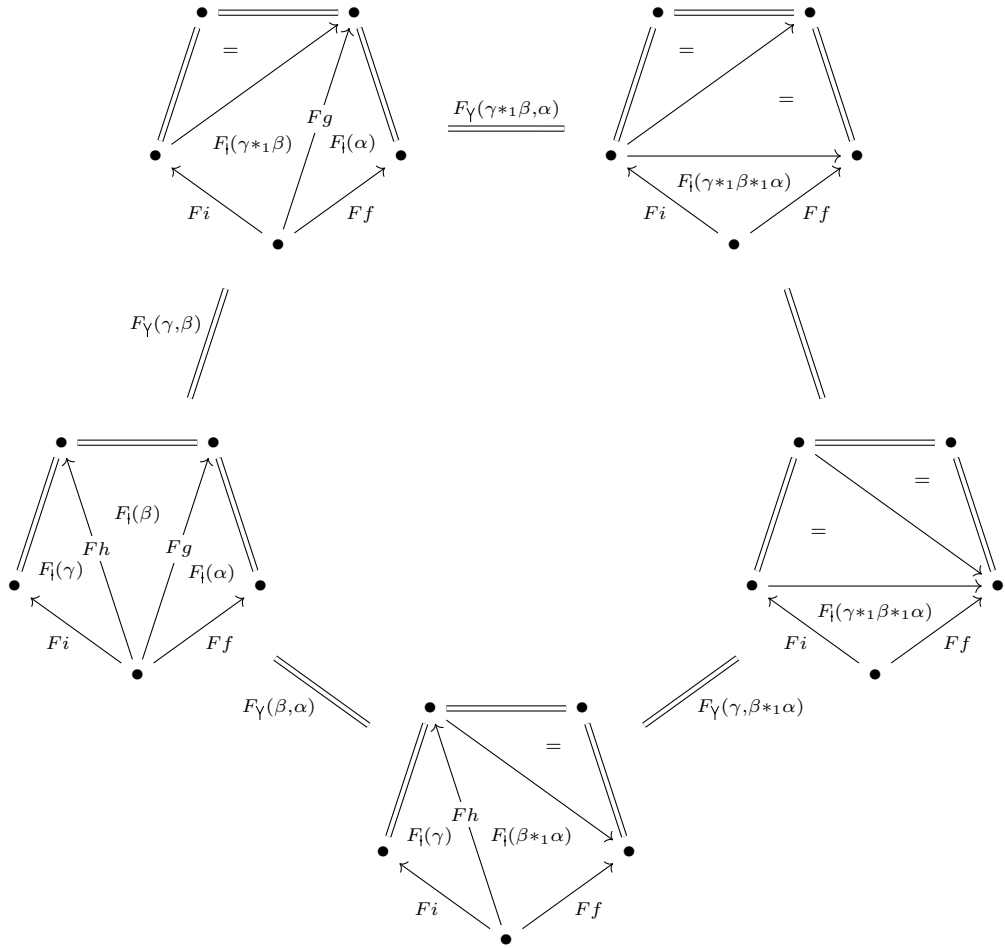

     \item[%
     \scalebox{0.3}{
     	\begin{forest}
     		for tree={%
     			label/.option=content,
     			grow=north,
     			content=,
     			circle,
     			fill,
     			minimum size=3pt,
     			inner sep=0pt,
     			s sep+=15,
     		}
     		[
     		[
     		[] [ [] ]
     		]
     		]
     	\end{forest}
     }
     ] Consider a pair
     \[
      \begin{tikzcd}[column sep=5em]
	      \bullet
	      \ar[r, bend left=60, looseness=1.2, ""{below, name=1}]
	      \ar[r, ""{name=2u}, ""{below, name=2d}]
	      \ar[r, bend right=60, looseness=1.2, ""{name=3}]
	      \ar[Rightarrow, from=1, to=2u, "\alpha"]
	      \ar[Rightarrow, from=2d, to=3, shift right=2.6ex, ""{name=beta1}]
	      \ar[Rightarrow, from=2d, to=3, shift left=2.6ex, ""'{name=beta2}]
	      \arrow[triple, from=beta1, to=beta2, "\gamma"]{}
	      &
	      \bullet
      \end{tikzcd}
     \]
     of $1$-composable cells $\alpha$ and $\gamma$ of $A$.
     The $4$-simplex of $\SN(B)$ depicted in figure~\ref{fig:coherence_gamma-alpha}
     shows that the $3$-cells
     \[
	     F_{\treeLLL}(\gamma) \comp_1 F_{\treeLL}(\alpha)
     \]
     and
     \[
	     F_{\treeLLL}(\gamma \comp_1 \alpha)
     \]
     of $B$ are equal, which establishes the coherence for this tree.
     \begin{figure}
      \centering
      \begin{tikzpicture}[scale=1.7]
        \pentagon{%
            /pentagon/label/.cd,
            12=$Fh$,
            04=$Ff$,
            02=$Fg$, 03=$Fh$,
            13=$Fh$, 14=$Ff$, 
            012=${=}$, 023=${F_{\treeLL}(\beta)}$, 
            034=${\phantom{Oi}F_{\treeLL}(\alpha)}$,
            013=${F_{\treeLL}(\beta')}$, 123=${=}$,
            134=${F_{\treeLL}(\beta'\comp_1 \alpha)}$, 014=${=}$,
            024=${F_{\treeLL}(\beta\comp_1\alpha)}$, 234=${=}$,
            124=${F_{\treeLL}(\beta'\comp_1 \alpha)}$,
            0123=${F_{\treeLLL}(\gamma)}$,
            0134=${F_{\treeY}(\beta', \alpha)}$,
            0234=${F_{\treeY}(\beta, \alpha)}$,
            0124=${F_{\treeLLL}(\gamma\comp_1\alpha)}$,
            /pentagon/arrowstyle/.cd,
            01={equal}, 23={equal}, 34={equal}, 
            03={pos=0.55}, 02={pos=0.55},
            24={equal},
            012={phantom, description}, 013={phantom, description},
            014={phantom, description}, 023={phantom, description},
            024={phantom, description}, 034={phantom, description},
            123={phantom, description}, 124={phantom, description},
            134={phantom, description}, 234={phantom, description},
            0134={equal}, 1234={equal},
            0234={equal},
            01234={phantom, description},
            /pentagon/labelstyle/.cd,
            012={anchor=center}, 013={anchor=center}, 014={anchor=center},
            023={anchor=center}, 
            024={anchor=center}, 034={anchor=center},
            123={anchor=center}, 124={anchor=center}, 134={anchor=center},
            234={anchor=center}        
        }
      \end{tikzpicture}
      \caption{Establishing the coherence $\gamma \alpha$.}
      \label{fig:coherence_gamma-alpha}
     \end{figure}

     \item[%
     \scalebox{0.3}{
     	\begin{forest}
     		for tree={%
     			label/.option=content,
     			grow=north,
     			content=,
     			circle,
     			fill,
     			minimum size=3pt,
     			inner sep=0pt,
     			s sep+=15,
     		}
     		[
	     		[
		     		[ [] ] []
	     		]
     		]
     	\end{forest}
     }
     ] For any pair
     \[
     \begin{tikzcd}[column sep=5em]
     \bullet
     \ar[r, bend left=60, looseness=1.2, ""{below, name=1}]
     \ar[r, ""{name=2u}, ""{below, name=2d}]
     \ar[r, bend right=60, looseness=1.2, ""{name=3}]
     \ar[Rightarrow, from=2d, to=3, "\beta"]
     \ar[Rightarrow, from=1, to=2u, shift right=2.6ex, ""{name=beta1}]
     \ar[Rightarrow, from=1, to=2u, shift left=2.6ex, ""'{name=beta2}]
     \arrow[triple, from=beta1, to=beta2, "\gamma"]{}
     &
     \bullet
     \end{tikzcd}
     \]
     of $1$-composable cells $\gamma$ and $\beta$ of $A$,
     there is a $4$-simplex of $\SN(B)$ dual to
     the one depicted in~\ref{fig:coherence_gamma-alpha}
     showing the equality between the $3$-cells
     \[
     F_{\treeLL}(\beta) \comp_1 F_{\treeLLL}(\gamma)
     \]
     and
     \[
     F_{\treeLLL}(\beta \comp_1 \gamma)
     \]
     of $B$ and thus establishing the coherence for the tree
     \scalebox{0.3}{
     	\begin{forest}
     		for tree={%
     			label/.option=content,
     			grow=north,
     			content=,
     			circle,
     			fill,
     			minimum size=3pt,
     			inner sep=0pt,
     			s sep+=15,
     		}
     		[
	     		[
		     		[ [] ] []
	     		]
     		]
     	\end{forest}
     }.

     \begin{figure}[ht]
      \centering
      \begin{tikzpicture}[scale=1.7]
        \pentagon{%
            /pentagon/label/.cd,
            12=$Fg$,
            04=$Ff$,
            02=$Ff$, 03=$Fg$,
            13=$Fg$, 14=$Ff$, 
            012=${=}$, 023=${F_{\treeLL}(\alpha)}$, 
            034=${=}$,
            013=${F_{\treeLL}(\beta)}$, 123=${=}$,
            134=${F_{\treeLL}(\gamma)}$, 014=${=}$,
            024=${F_{\treeLL}(\gamma)}$, 234=${=}$,
            124=${F_{\treeLL}(\gamma)}$,
            0123=${F_{\treeLLL}(\Gamma)}$,
            0134=${F_{\treeLLL}(\Gamma')}$,
            0234=${F_{\treeLLL}(\Gamma'\comp_2\Gamma)}$,
            /pentagon/arrowstyle/.cd,
            01={equal}, 23={equal}, 34={equal}, 
            03={pos=0.55}, 02={pos=0.55},
            24={equal},
            012={phantom, description}, 013={phantom, description},
            014={phantom, description}, 023={phantom, description},
            024={phantom, description}, 034={phantom, description},
            123={phantom, description}, 124={phantom, description},
            134={phantom, description}, 234={phantom, description},
            1234={equal},
            0124={equal},
            01234={phantom, description},
            /pentagon/labelstyle/.cd,
            012={anchor=center}, 013={anchor=center}, 014={anchor=center},
            023={anchor=center}, 
            024={anchor=center}, 034={anchor=center},
            123={anchor=center}, 124={anchor=center}, 134={anchor=center},
            234={anchor=center}        
        }
      \end{tikzpicture}
      \caption{Establishing the coherence $\Gamma'\Gamma$.}
      \label{fig:coherence_Gamma-Gamma}
     \end{figure}

     \item[%
     \scalebox{0.3}{
     	\begin{forest}
     		for tree={%
     			label/.option=content,
     			grow=north,
     			content=,
     			circle,
     			fill,
     			minimum size=3pt,
     			inner sep=0pt,
     			s sep+=15,
     		}
     		[
	     		[
		     		[
			     		[] []
		     		]
	     		]
     		]
     	\end{forest}
     }
     ] Consider a pair
     \[
	     \begin{tikzcd}[column sep=7em]
		     \bullet
		     \ar[r, bend left=60, looseness=1.2, "f", "\phantom{bullet}"'{name=1}]
		     \ar[r, bend right=60, looseness=1.2, "g"', "\phantom{bullet}"{name=3}]
		     \ar[Rightarrow, from=1, to=3, shift right=4ex, bend right, ""{name=beta1}]
		     \ar[Rightarrow, from=1, to=3, ""'{name=beta2d}, ""{name=beta2u}]
		     \ar[Rightarrow, from=1, to=3, shift left=4ex, bend left, ""'{name=beta3}]
		     \arrow[triple, from=beta1, to=beta2d, "\Gamma"]{}
		     \arrow[triple, from=beta2u, to=beta3, "\Gamma'"]{}
		     &
		     \bullet
	     \end{tikzcd}
     \]
     of $2$-composable $3$-cells $\Gamma\colon \alpha \to \beta$ and $\Gamma'\colon \beta \to \delta$ of $A$.
     The $4$-simplex of $\SN(B)$ displayed in figure~\ref{fig:coherence_Gamma-Gamma}
     shows that we have the equality
     \[
	     F_{\treeLLL}(\delta \comp_2 \gamma) = F_{\treeLLL}(\delta) \comp_2 F_{\treeLLL}(\gamma)
     \]
     between these two $3$-cells of $B$.

     \begin{figure}
      \centering
      \begin{tikzpicture}[scale=1.7]
        \pentagon{%
            /pentagon/label/.cd,
            01=$Ff$,
            12=$Fg'$,
            04=$F(gf)$,
            02=$F(g'f)$, 03=$F(gf)$,
            13=$Fg$, 14=$Fg$, 
            012=${F_{\treeV}(g', f)}$, 023=${F_{\treeLL}(\alpha f)}$, 
            034=${\phantom{O}=}$,
            013=${F_{\treeV}(g, f)}$, 123=${F_{\treeLL}(\alpha)}$,
            134=${=}$, 014=${F_{\treeV}(g, f)}$,
            024=${F_{\treeLL}(\beta f)}$, 234=${=}$,
            124=${F_{\treeLL}(\beta)}$,
            0123=${F_{\treeVLeft}(\alpha, f)}$,
            1234=${F_{\treeLLL}(\Gamma)}$,
            0234=${F_{\treeLLL}(\Gamma'\comp_0 f)}$,
            0124=${F_{\treeVLeft}(\beta, f)}$,
            /pentagon/arrowstyle/.cd,
            23={equal}, 34={equal}, 
            03={pos=0.55}, 02={pos=0.55},
            24={equal},
            012={phantom, description}, 013={phantom, description},
            014={phantom, description}, 023={phantom, description},
            024={phantom, description}, 034={phantom, description},
            123={phantom, description}, 124={phantom, description},
            134={phantom, description}, 234={phantom, description},
            0123={equal}, 0134={equal},
            0124={equal},
            01234={phantom, description},
            /pentagon/labelstyle/.cd,
            012={anchor=center}, 013={anchor=center}, 014={anchor=center},
            023={anchor=center}, 
            024={anchor=center}, 034={anchor=center},
            123={anchor=center}, 124={anchor=center}, 134={anchor=center},
            234={anchor=center}        
        }
       \end{tikzpicture}
      \caption{Establishing the coherence $\Gamma f$.}
      \label{fig:coherence_Gamma-f}
     \end{figure}

         \begin{figure}
         	\centering
         	\begin{tikzpicture}[scale=1.7]
         	\pentagon{%
         		/pentagon/label/.cd,
         		01={},
         		12={$Fg'$},
         		04=$Fg$,
         		02={$Fg'$}, 03=$Fg$,
         		13=$Fg'$, 14=$Fg'$, 
         		012={$=$}, 023={$F_{\treeLL}(\alpha)$}, 034={$=$},
         		123={$=$}, 013={$F_{\treeLL}(\beta)$},
         		014={$F_{\treeLL}(\beta)$}, 134={$=$},
         		124={$=$}, 234={$=$},
         		024={$F_{\treeLL}(\beta)$},
         		0123={$F_{\treeLLL}(\Gamma)$},
         		0124={}, 0134={}, 0234={$\Delta$}, 1234={},
         		01234={},
         		/pentagon/arrowstyle/.cd,
         		01={equal}, 23={equal}, 34={equal}, 
         		03={pos=0.55}, 02={pos=0.55},
         		24={equal},
         		012={phantom, description}, 034={phantom, description},
         		123={phantom, description}, 124={phantom, description},
         		134={phantom, description}, 234={phantom, description},
         		1234={equal}, 0134={equal},
         		0124={equal},
         		01234={phantom, description},
         		/pentagon/labelstyle/.cd,
         		012={anchor=center}, 034={anchor=center},
         		123={anchor=center}, 124={anchor=center}, 134={anchor=center},
         		234={anchor=center}        
         	}
         	\end{tikzpicture}
         	\caption{the $3$-cell $\Delta$ of $B$.}
         	\label{fig:Delta}
         \end{figure}
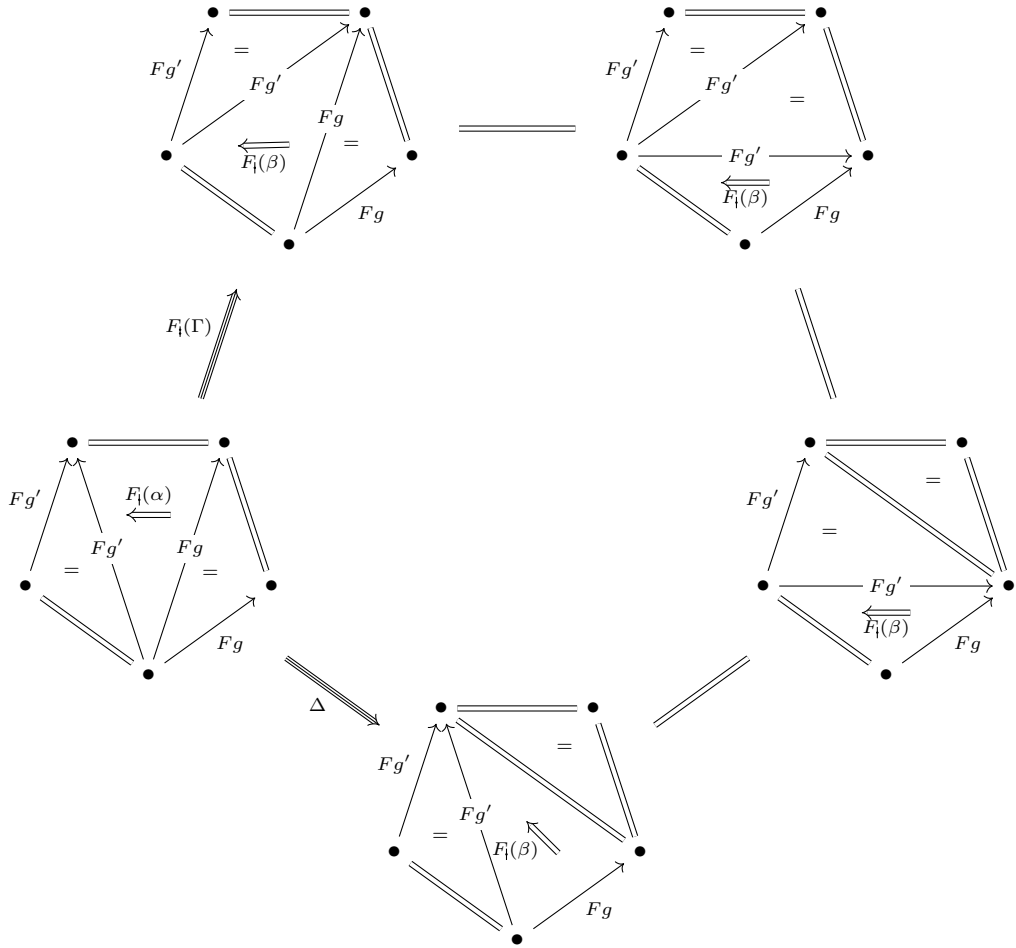
     
          \item[%
     \scalebox{0.3}{
     	\begin{forest}
     		for tree={%
     			label/.option=content,
     			grow=north,
     			content=,
     			circle,
     			fill,
     			minimum size=3pt,
     			inner sep=0pt,
     			s sep+=15,
     		}
     		[
	     		[]
     			[ [ [] ] ]
     		]
     	\end{forest}
     }
     ] Consider a pair
     \[
     \begin{tikzcd}[column sep=5em]
     \bullet
     \ar[r, "f"]
     &
     \bullet
     \ar[r, bend left=60, looseness=1.2, "g", "\phantom{bullet}"'{ name=1}]
     \ar[r, bend right=60, looseness=1.2, "g'"', "\phantom{bullet}"{name=3}]
     \ar[Rightarrow, from=1, to=3, shift right=2ex, bend right, ""{name=beta1}]
     \ar[Rightarrow, from=1, to=3, shift left=2ex, bend left, ""'{name=beta3}]
     \arrow[triple, from=beta1, to=beta3, "\Gamma"]{}
     &
     \bullet
     \end{tikzcd}
     \]
     of $0$-composable cells $f$ and $\Gamma \colon \alpha \to \beta$ of $A$.
     The $4$-simplex of $\SN(B)$ depicted in figure~\ref{fig:coherence_Gamma-f}
     shows that the equality
     \[
     F_{\treeVLeft}(\beta, f) \comp_1 F_{\treeLLL}(\Gamma \comp_0 f) = \Delta \comp_0 F_{\treeLog}(f) \comp_1 F_{\treeVLeft}(\alpha, f)\,.
     \]
     The $3$-cell $\Delta$ is in fact equal to $F_{\treeLLL}(\Gamma)$, as the $4$-simplex of~$\SN(B)$
     depicted in figure~\ref{fig:Delta} shows. Hence, the coherence for this tree is verified.

     \item[%
     \scalebox{0.3}{
     	\begin{forest}
     		for tree={%
     			label/.option=content,
     			grow=north,
     			content=,
     			circle,
     			fill,
     			minimum size=3pt,
     			inner sep=0pt,
     			s sep+=15,
     		}
     		[
	     		[
		     		[ [] ]
	     		]
	     		[]
     		]
     	\end{forest}
     }
     ] Consider a pair
     \[
     \begin{tikzcd}[column sep=5em]
     \bullet
     \ar[r, bend left=60, looseness=1.2, "f", "\phantom{bullet}"'{name=1}]
     \ar[r, bend right=60, looseness=1.2, "f'"', "\phantom{bullet}"{name=3}]
     \ar[Rightarrow, from=1, to=3, shift right=2ex, bend right, ""{name=beta1}]
     \ar[Rightarrow, from=1, to=3, shift left=2ex, bend left, ""'{name=beta3}]
     \arrow[triple, from=beta1, to=beta3, "\Gamma"]{}
     &
     \bullet
     \ar[r, "g"]
     &
     \bullet
     \end{tikzcd}
     \]
     of $0$-composable cells $\Gamma\colon \alpha \to \beta$ and $g$ of $A$.
     There is a $4$-simplex of $\SN(B)$ dual to the one depicted in figure~\ref{fig:coherence_Gamma-f}
     showing that the following equality
     \[
	     F_{\treeVRight}(g, \beta) \comp_1 F_{\treeLLL}(g \comp_0 \Gamma) = F_{\treeLog}(g) \comp_0 F_{\treeLLL}(\Gamma) \comp_1 F_{\treeVRight}(g, \alpha)
     \]
     of $3$-cells of $B$ holds and thus establishing the coherence
     \scalebox{0.3}{
     	\begin{forest}
     		for tree={%
     			label/.option=content,
     			grow=north,
     			content=,
     			circle,
     			fill,
     			minimum size=3pt,
     			inner sep=0pt,
     			s sep+=15,
     		}
     		[
	     		[
		     		[ [] ]
	     		]
	     		[]
     		]
     	\end{forest}
     }.

	\end{description}

\section{Correspondence}
 
The results of the preceding sections give us an application
that associates to a simplicial oplax $3$-morphism $F$ a normalised oplax $3$-functor,
that we shall denote by $\cCl(F)$
as well as an application in the opposite direction assigning to each
normalised oplax $3$-functor $G$ a simplicial oplax $3$-morphism $\Nl(G)$.
These two applications are actually inverses
of one another, so that we have a precise bijective correspondence
between oplax $3$-functors and simplicial oplax $3$-morphisms.
The aim of this section is to check this statement.

\begin{paragr}
 It results immediately from the definitions and from conditions~\ref{cond:simpl_oplax-i},
 \ref{cond:simpl_oplax-ii} and~\ref{cond:simpl_oplax-iii} that given any
 normalised oplax $3$-functor $F \colon A \to B$, we have an equality
 $F = \cCl\Nl(F)$. For instance, for the tree
 \scalebox{0.3}{
 	\begin{forest}
 		for tree={%
 			label/.option=content,
 			grow'=north,
 			content=,
 			circle,
 			fill,
 			minimum size=3pt,
 			inner sep=0pt,
 			s sep+=15,
 		}
 		[ 
 		 [][][]
 		]
 	\end{forest}
 }
 we have that to any triple $(h, g, f)$ of composable $1$-cells of $A$,
 the simplicial oplax $3$-morphism $\Nl(F) \colon \SN(A) \to \SN(B)$ associates
 the $3$-simplex
 \begin{center}
  \begin{tikzpicture}[scale=1.4, font=\footnotesize]
  \squares{%
  	/squares/label/.cd,
  	0=$\bullet$, 1=$\bullet$, 2=$\bullet$, 3=$\bullet$,
  	01=${F_{\treeL}(f)}$, 12=${F_{\treeL}(g)}$, 23=${F_{\treeL}(h)}$,
  	02=${F_{\treeL}(gf)}$, 03=${F_{\treeL}(hgf)}$, 13=${F_{\treeL}(hg)}$,
  	012=${F_{\treeV}(g, f)}$, 023=${F_{\treeV}(h, gf)\phantom{o}}$, 123=${F_{\treeV}(h, g)}$, 013=${\phantom{o}F_{\treeV}(hg, f)}$,
  	0123=${F_{\treeW}(h, g, f)}$,
  	/squares/arrowstyle/.cd,
  	012={phantom, description}, 023={phantom, description}, 123={phantom, description},
  	013={phantom, description},
  	/squares/labelstyle/.cd,
  	012={anchor=center}, 023={anchor=center}, 123={anchor=center},
  	013={anchor=center}
  }
  \end{tikzpicture} 
 \end{center}
 of $\SN(A)$; by definition, we set $\cCl\Nl(F)_{\treeW}(h, g, f)$ to be
 the main $3$-cell of this $3$-simplex, \ie $F_{\treeW}(h, g, f)$.
\end{paragr}

\begin{paragr}
 Let $F \colon \SN(A) \to \SN(B)$ be a simplicial oplax
 $3$-morphism. We have seen in subsection~\ref{section:simplicial-to-cellular}
 that there is a canonically associated
 normalised oplax $3$-functor $\cCl(F) \colon A \to B$.
 Moreover, in subsection~\ref{section:cellular-to-simplicial}
 we have shown that given any $n$-simplex of $\SN(A)$ in the form of
 a normalised oplax $3$-functor $x \colon \Deltan{n} \to A$,
 for $n \ge 0$,
 we get an $n$-simplex $\Nl\cCl F(x)$ of $\SN(B)$ given by the
 composition $F\circ x$. We want to check that $F(x) = \Nl\cCl F(x)$.
 Since $\SN(B)$ is $4$-coskeletal (see~\cite[Theorem 5.2]{Street}), it is
 enough to check that $F(x) = \Nl\cCl F(x)$ for all $n$-simplices $x$
 of $\SN(A)$ with $0 \le n \le 4$.
 
 The result is trivially verified for $0$-simplices and $1$-simplices.
 Consider a $2$\nbd-sim\-plex~$x$
 \[
  \begin{tikzcd}[column sep = small]
   & \bullet \ar[dr, "g"]& \\
   \bullet \ar[ur, "f"] \ar[rr, "gf"', ""{name=gf}]
   && \bullet
   \ar[Rightarrow, from=gf, to=1-2, shorten <=1mm, shorten >=1mm, "\alpha"]
  \end{tikzcd}
 \]
 of $\SN(A)$.
 The $1$-skeletons of $F(x)$ and $\Nl\cCl F(x)$ coincide and
 the $2$-cell of $B$ filling the $2$-simplex $\Nl\cCl F(x)$
 is defined by $\cCl F_{\treeV}(g, f) \comp_1 \cCl F_{\treeLL}(\alpha)$.
 Condition~\ref{cond:simpl_oplax-ii} states precisely
 that this is the $2$-cell of $B$ filling the
 $2$-simplex $F(x)$.
 
 \begin{figure}
 \centering
  \begin{tikzpicture}[scale=1.3]
   \squares{%
     /squares/label/.cd,
     0=$\bullet$, 1=$\bullet$, 2=$\bullet$, 3=$\bullet$,
     01=$f$, 12=$g$, 23=$h$, 02=$i$, 03=$j$, 13=$k$,
     012=$\beta$, 023=$\alpha$, 123=$\delta$, 013=$\gamma$,
     0123=$\Gamma$
     }
  \end{tikzpicture}
  \caption{The $3$-simplex $x$ of $\SN(A)$.}
  \label{fig:Gamma-corr}
 \end{figure}
 Consider a $3$-simplex $x$ of $\SN(A)$ as depicted in figure~\ref{fig:Gamma-corr}.
 By the definition given in paragraph~\ref{paragr:def_cellular_to_simplicial},
 the main $3$-cell of the $3$-simplex $\Nl\cCl F(x)$ of $\SN(B)$
 is defined as
 \begin{gather}\label{eq:3cell-3simplex}
	\cCl F_{\treeV}(h, g) \comp_0 \cCl F(f) \comp_1
	\cCl F_{\treeVLeft}(\delta, f) \comp_1
	\cCl F_{\treeLL}(\gamma) \notag\\
	\comp_2\notag\\
	\cCl F_{\treeW}(h, g, f) \comp_1
	\cCl F_{\treeLLL}(\Gamma) \\
	\comp_2 \notag\\
	\cCl F(h) \comp_0 F_{\treeV}(g, f) \comp_1
	\cCl F_{\treeVRight}(h, \beta) \comp_1
	\cCl F_{\treeLL}(\alpha)\,. \notag
 \end{gather}
 We already know that the $2$-skeleton of $F(x)$ and $\Nl\cCl F(x)$ coincide,
 so we are left with showing that the main $3$-cell $\Psi$ of the $3$-simplex
 $F(x)$ of $\SN(B)$ corresponds to the above $3$-cell.
 This demands a careful analysis of some $4$-simplices
 encoding valuable information for an explicit description
 of $F(x)$, which will be carried over in the next paragraph.
 We end this paragraph by remarking that once the equivalence of translation
 of $3$-simplices is verified, then the correspondence for $4$-simplices
 follows easily, since it is completely characterised by the $2$-composition
 of $3$-cells in $B$.
\end{paragr}

\begin{paragr}\label{paragr:image_3-simplex}
 \begin{figure}
 \centering
 \begin{tikzpicture}[scale=1.3, font=\footnotesize]
  \pentagon{%
    /pentagon/label/.cd,
    0=$\bullet$, 1=$\bullet$, 2=$\bullet$, 3=$\bullet$, 4=$\bullet$,
    01=$f$, 12=$g$, 23=$h$, 34={}, 04=$i$,
    02=$j$, 03=$hj$, 13=$hg$, 14=$k$, 24=$h$,
    012=$\beta$, 034=$\alpha$, 023={}, 123={}, 134=$\delta$,
    014=$\gamma$, 024=$\alpha$, 234={}, 013=$h\beta$, 124=$\delta$,
    0123=$(\bigstar)$, 0134=$\Gamma$, 1234={\ref{cond:simpl_oplax-ii}},
    0234={\ref{cond:simpl_oplax-ii}}, 0124=$\Gamma$, 
    01234={},
    /pentagon/arrowstyle/.cd,
    01234={phantom, description}, 34={equal},
    1234={equal}, 0234={equal}
    }
\end{tikzpicture}
\caption{The $4$-simplex $y$ of $\SN(A)$}
\label{fig:Gamma1-corr}
\end{figure}

\begin{figure}
 \centering
 \subfloat[][The $3$-arrow $(\bigstar)$.]
 {%
  \begin{tikzpicture}[scale=1.2, font=\footnotesize]\label{fig:Star}
   \pentagon{%
    /pentagon/label/.cd,
    0=$\bullet$, 1=$\bullet$, 2=$\bullet$, 3=$\bullet$, 4=$\bullet$,
    01={}, 12=$f$, 23=$g$, 34=$h$, 04=$hj$,
    02=$f$, 03=$j$, 13=$gf$, 14=$hj$, 24=$hg$,
    012={$=$}, 034={}, 023=$\beta$, 123={}, 134=$h\beta$,
    014={$=$}, 024=$h\beta$, 234={}, 013=$\beta$, 124=$h\beta$,
    0123={}, 0134={$\TreeVRight$}, 1234={$(\spadesuit)$},
    0234={$(\bigstar)$}, 0124={}, 
    01234={},
    /pentagon/arrowstyle/.cd,
    01234={phantom, description}, 01={equal},
    012={phantom, description}, 014={phantom, description},
    0123={equal}, 0124={equal},
    /pentagon/labelstyle/.cd,
    012={anchor=center}, 014={anchor=center}
   }
  \end{tikzpicture}
 }
 \\
 \subfloat[][The $3$-arrow $(\spadesuit)$.]
 {%
  \begin{tikzpicture}[scale=1.2, font=\footnotesize]\label{fig:Spadesuit}
   \pentagon{%
    /pentagon/label/.cd,
    0=$\bullet$, 1=$\bullet$, 2=$\bullet$, 3=$\bullet$, 4=$\bullet$,
    01=$f$, 12=$g$, 23=$h$, 34={}, 04=$hj$,
    02=$gf$, 13=$hg$, 14=$hg$, 24=$h$,
    012={}, 034=$h\beta$, 023={}, 123={}, 134={$=$},
    014=$h\beta$, 024=$h\beta$, 234={$=$}, 013={}, 124={},
    0123=$\TreeW$, 0134={\ref{cond:simpl_oplax-ii}}, 1234={},
    0234={\ref{cond:simpl_oplax-ii}}, 0124=$(\spadesuit)$, 
    01234={},
    /pentagon/arrowstyle/.cd,
    01234={phantom, description}, 34={equal},
    134={phantom, description}, 234={phantom, description}, 1234={equal},
    0134={equal}, 0234={equal},
    /pentagon/labelstyle/.cd,
    134={anchor=center}, 234={anchor=center}
   }
  \end{tikzpicture}
 }
 \caption{The $3$-cells $(\bigstar)$ and $(\spadesuit)$.}
\end{figure}
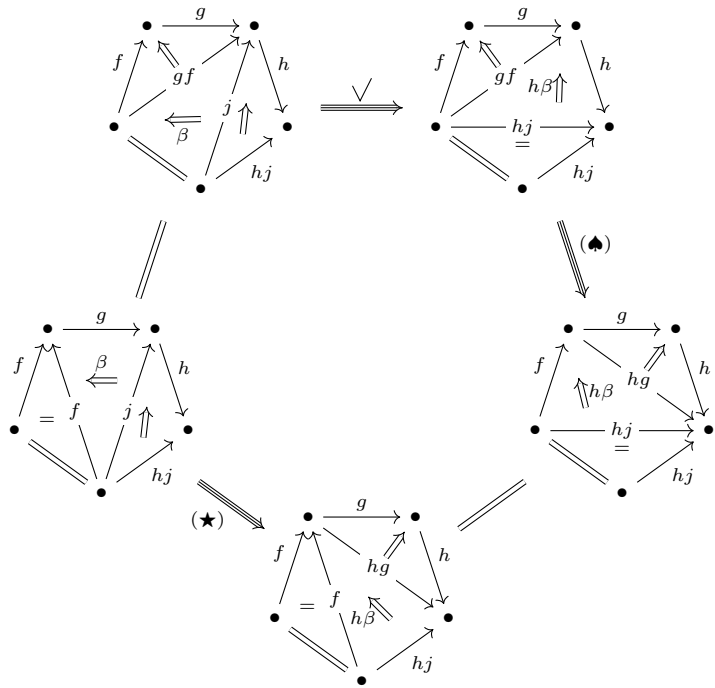
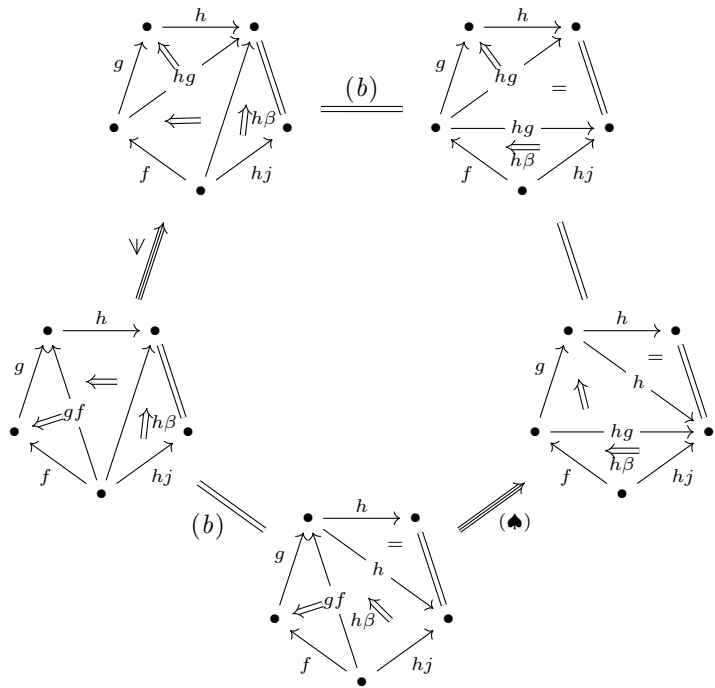

\begin{figure}
 \centering
 \begin{tikzpicture}[scale=1.3, font=\footnotesize]
   \pentagon{%
    /pentagon/label/.cd,
    0=$\bullet$, 1=$\bullet$, 2=$\bullet$, 3=$\bullet$, 4=$\bullet$,
    01={}, 12=$f$, 23=$hg$, 34={}, 04=$i$,
    02=$f$, 03=$hj$, 13=$hgf$, 14=$kf$, 24=$k$,
    012={}, 034=$\alpha$, 023=$h\beta$, 123={}, 134=$\delta f$,
    014=$\gamma$, 024=$\gamma$, 234=$\delta$, 013=$h\beta$, 124={},
    0123={\ref{cond:simpl_oplax-ii}}, 0134=$\Gamma$, 1234=$\TreeVLeft$,
    0234=$\Gamma$, 0124={\ref{cond:simpl_oplax-ii}}, 
    01234={},
    /pentagon/arrowstyle/.cd,
    01234={phantom, description}, 01={equal}, 34={equal},
    012={phantom, description},
    0123={equal}, 0124={equal},
    /pentagon/labelstyle/.cd,
    012={anchor=center}
   }
  \end{tikzpicture}
  \caption{The $4$-simplex $z$ of $\SN(A)$.}
  \label{fig:Gamma3}
\end{figure}

\begin{figure}
 \centering
 \begin{tikzpicture}[scale=1.3, font=\footnotesize]
   \pentagon{%
    /pentagon/label/.cd,
    0=$\bullet$, 1=$\bullet$, 2=$\bullet$, 3=$\bullet$, 4=$\bullet$,
    01={}, 12=$hgf$, 34={}, 04=$i$,
    02=$hj$, 03=$i$, 13=$hgf$, 14=$hgf$,
    012=$h\beta$, 023=$\alpha$, 
    014=$\delta + \gamma$, 024=$\alpha$, 013=$\beta+\alpha$,
    034={$=$}, 123={$=$}, 134={$=$},
    124={$=$}, 234={$=$},
    0123={\ref{cond:simpl_oplax-iii}}, 0134=$\Gamma$,
    0124=$\Gamma$, 0234={}, 1234={},
    01234={},
    /pentagon/arrowstyle/.cd,
    01234={phantom, description},
    01={equal}, 23={equal}, 34={equal}, 24={equal},
    034={phantom, description}, 123={phantom, description}, 134={phantom, description},
    124={phantom, description}, 234={phantom, description},
    0123={equal},
    /pentagon/labelstyle/.cd,
     034={anchor=center}, 123={anchor=center}, 134={anchor=center},
     124={anchor=center}, 234={anchor=center}
   }
  \end{tikzpicture}
  \caption{The $4$-simplex $z'$ of $\SN(A)$.}
  \label{fig:Gamma4}
\end{figure}

Consider a $3$-simplex $x$ of $\SN(A)$ as depicted in figure~\ref{fig:Gamma-corr}
and call $\Psi$ the main $3$-cell of the $3$-simplex $F(x)$ of $\SN(B)$.
The $4$-simplex of $\SN(A)$ depicted in figure~\ref{fig:Gamma1-corr} shows
that $\Psi$ is given by the $2$-composition
of the image of the $3$-cell denoted by $(\bigstar)$, whiskered
with $\cCl F(\alpha)$, followed by the image of the main $3$-cell of the
following $3$-simplex
\begin{center}
 \begin{tikzpicture}[scale=1.3]
   \squares{%
     /squares/label/.cd,
     0=$\bullet$, 1=$\bullet$, 2=$\bullet$, 3=$\bullet$,
     01=$f$, 12=$hg$, 02=$hj$, 03=$i$, 13=$k$, 23={},
     012=$h\beta$, 023=$\alpha$, 123=$\delta$, 013=$\gamma$,
     0123=$\Gamma$,
     /squares/arrowstyle/.cd,
     23={equal}
     }
 \end{tikzpicture}
\end{center}
of $\SN(A)$, which is a $3$-cell of $\SN(B)$ that we shall call $\Phi$,
whiskered by $\cCl F_{\treeV}(h, g)$.
The $4$-simplices depicted in figure~\ref{fig:Star}
show that the image under $F$ of the $3$-cell $(\bigstar)$
is given by
\[
 \cCl F_{\treeLL}(\spadesuit) \comp_2 \cCl F_{\treeV}(g, f) \comp_1 \cCl F_{\TreeVRight}(h, \beta)
\]
and in turn the $4$-simplex of $\SN(A)$ depicted in figure~\ref{fig:Spadesuit}
entails that
\[
 \cCl F_{\treeLL}(\spadesuit) = \cCl F_{\treeW}(h, g, f) \comp_1 \cCl F_{\treeLL}(h\beta)\,.
\]
Therefore we get that $\Psi$ is given by the composition
\begin{gather*}
    \cCl F_{\treeV}(h, g) \comp_0 \cCl F_{\treeL}(f) \comp_1 \Phi \\
	\comp_2\\
	\cCl F_{\treeW}(h, g, f) \comp_1
	\cCl F_{\treeLL}(h\comp_0 \beta) \comp_1
	\cCl F_{\treeLL}(\alpha) \\
	\comp_2\\
	\cCl F(h) \comp_0 F_{\treeV}(g, f) \comp_1
	\cCl F_{\treeVRight}(h, \beta) \comp_1
	\cCl F_{\treeLL}(\alpha)\,.
\end{gather*}
Finally, the $4$-simplex~$z$ of~$\SN(A)$ depicted in figure~\ref{fig:Gamma3},
joint with the $4$-simplex~$z'$ of $\SN(A)$ depicted in figure~\ref{fig:Gamma4},
where we have denoted by
$\beta + \alpha$ and $\delta + \gamma$ the evident whiskered compositions,
and another one totally similar but dual
show that the $3$-cell $\Phi$
of $B$ is in fact
\begin{gather*}
 \cCl F_{\treeV}(h, g) \comp_0 \cCl F_{\treeL}(f) \comp_1
 \cCl F_{\treeVLeft}(\delta, f) \comp_1 \cCl F_{\treeLL}(\gamma)
 \\ \comp_2 \\
 \cCl F_{\treeV}(h, g) \comp_0 \cCl F_{\treeL}(f) \comp_1
 \cCl F_{\treeV}(h\comp_0 g, f) \comp_1 \cCl F_{\treeLLL}(\Gamma)\,.
\end{gather*}
Using the interchange law we immediately deduce from the above
that the $3$-cell $\Psi$ of $B$ is precisely the $3$-cell
detailed in~\eqref{eq:3cell-3simplex}.
\end{paragr}

\begin{paragr}
Given two oplax $3$-functors $F \colon A \to B$
and $G \colon B \to C$, we now check that the ``obvious'' candidate for the
composite oplax $3$-functor $G\circ F$ corresponds via the bijections
established above to the composite of the simplicial oplax $3$-morphisms
associated to $F$ and $G$.
That is, we shall show that $G \circ F = \cCl(\Nl G \circ \Nl F)$.
Hence, we will deduce that oplax $3$-functors admit a composition
operation and that the category of $3$-categories and oplax $3$-functors
and $3$-categories and simplicial oplax $3$-morphisms are isomorphic.

We already know from section~\ref{section:cellular-to-simplicial}
that:
\begin{description}
		\item[\scalebox{0.3}{
			\begin{forest}
				for tree={%
					label/.option=content,
					grow'=north,
					content=,
					circle,
					fill,
					minimum size=3pt,
					inner sep=0pt,
					s sep+=15,
				}
				[ ]
			\end{forest}
		}] for any object $a$ of $A$,
		we have $GF_{\treeDot}(a) = G_{\treeDot}\bigl(F_{\treeDot}(a)\bigr)$;
		
		\item[\scalebox{0.3}{
			\begin{forest}
				for tree={%
					label/.option=content,
					grow'=north,
					content=,
					circle,
					fill,
					minimum size=3pt,
					inner sep=0pt,
					s sep+=15,
				}
				[ [] ]
			\end{forest}
		}] for any $1$-cell $f \colon a \to a'$
		of $A$, we have $GF_{\treeL}(f) = G_{\treeL}\bigl(F_{\treeL}(f)\bigr)$;
		
		\item[\scalebox{0.3}{
			\begin{forest}
				for tree={%
					label/.option=content,
					grow'=north,
					content=,
					circle,
					fill,
					minimum size=3pt,
					inner sep=0pt,
					s sep+=15,
				}
				[ 
				[][]
				]
			\end{forest}
		}] for any pair of $0$\hyp{}composable $1$\hyp{}cells
			\[
			 \begin{tikzcd}
			  a \ar[r, "f"] & a' \ar[r, "g"] &  a''
			 \end{tikzcd}
			\]
			of $A$, we have
			$GF_{\treeV}(g, f) = G_{\treeV}\bigl(F_{\treeL}(g), F_{\treeL}(f)\bigr)
			 \comp_1 G_{\treeLL}\bigl(F_{\treeV}(g, f)\bigr)$;

		\item[\scalebox{0.3}{
			\begin{forest}
				for tree={%
					label/.option=content,
					grow'=north,
					content=,
					circle,
					fill,
					minimum size=3pt,
					inner sep=0pt,
					s sep+=15,
				}
				[ 
				[[]]
				]
			\end{forest}
		}] for any $2$-cell $\alpha \colon f \to g$ of $A$,
		we have $GF_{\treeLL}(\alpha) = G_{\treeLL}\bigl(F_{\treeLL}(\alpha)\bigr)$;
		
		\item[\scalebox{0.3}{
			\begin{forest}
				for tree={%
					label/.option=content,
					grow'=north,
					content=,
					circle,
					fill,
					minimum size=3pt,
					inner sep=0pt,
					s sep+=15,
				}
				[ 
				[][][]
				]
			\end{forest}
		}] for any triple of $0$-composable $1$-cells
		\[
		 \begin{tikzcd}[column sep=small]
		  a \ar[r, "f"] & a' \ar[r, "g"] & a'' \ar[r, "h"] & a'''
		 \end{tikzcd}
		\]
		of $A$, we have that $GF_{\treeW}(h, g, f)$ is the $3$-cell
		\begin{gather*}
			G_{\treeV}(F_{\treeLog}h, F_{\treeLog}g) \comp_0 GF_{\treeLog}(F_{\treeLog}f) \comp_1
			G_{\treeVLeft}(F_{\treeV}(h, g), F_{\treeLog}f)\\
			\comp_2\\
			G_{\treeW}(F_{\treeLog}h, F_{\treeLog}g, F_{\treeLog}f) \comp_1
			G_{\treeLLL}(F_{\treeW}(h, g, f)) \\
			\comp_2\\
			GF_{\treeLog}(h) \comp_0 G_{\treeV}(F_{\treeLog}g, F_{\treeLog}f) \comp_1
			G_{\treeVRight}(F_{\treeLog}h, F_{\treeV}(g, f)) \comp_1
			G_{\treeLL}(F_{\treeV}(h, gf))\,.
		\end{gather*}
    \end{description}
    
    As for the remaining trees, we have:
    \begin{description}
		\item[\scalebox{0.3}{
			\begin{forest}
				for tree={%
					label/.option=content,
					grow'=north,
					content=,
					circle,
					fill,
					minimum size=3pt,
					inner sep=0pt,
					s sep+=15,
				}
				[ 
				[][[]]
				]
			\end{forest}
		}] for any whiskering
		\[
		 \begin{tikzcd}[column sep=4.5em]
		  \bullet
		  \ar[r, bend left, "f", ""{below, name=f}]
		  \ar[r, bend right, "f'"', ""{name=fp}]
		  \ar[Rightarrow, from=f, to=fp, "\alpha"]
		  &
		  \bullet \ar[r,"g"] &
		  \bullet
		 \end{tikzcd}
		\]
		of $A$, we define $GF_{\treeVRight}(g, \alpha)$ to be the $3$-cell
		\begin{gather*}
		G_{\treeV}\big(F_{\treeL}(g), F_{\treeL}(f')\bigr) \comp_1
		G_{\treeLLL}\bigl(F_{\treeVRight}(g, \alpha)\bigr) \\
		\comp_2 \\
		G_{\treeVRight}\bigl(F_{\treeL}(g), F_{\treeLL}(\alpha)\bigr) \comp_1
		G_{\treeLL}\bigl(F_{\treeV}(g, f)\bigr)
		\end{gather*}
		of $C$. 

		\item[\scalebox{0.3}{
			\begin{forest}
				for tree={%
					label/.option=content,
					grow'=north,
					content=,
					circle,
					fill,
					minimum size=3pt,
					inner sep=0pt,
					s sep+=15,
				}
				[ 
				[[]][]
				]
			\end{forest}
		}] for any whiskering
		\[
            \begin{tikzcd}[column sep=4.5em]
             \bullet \ar[r, "f"] &
             \bullet
             \ar[r, bend left, "g", ""{below, name=g}]
		  \ar[r, bend right, "g'"', ""{name=gp}]&
		   \bullet
		  \ar[Rightarrow, from=g, to=gp, "\beta"]
            \end{tikzcd}
		\]
		of $A$, the $3$-cell $GF_{\treeVLeft}(\beta, f)$ is defined to be
		\begin{gather*}
         G_{\treeVLeft}\bigl(F_{\treeLL}(\beta), F_{\treeL}(f)\bigr) \comp_1
         G_{\treeLL}\bigl(F_{\treeV}(g, f)\bigr) \\
         \comp_2 \\
         G_{\treeV}\big(F_{\treeL}(g'), F_{\treeL}(f)\bigr) \comp_1
         G_{\treeLLL}\bigl(F_{\treeVLeft}(\beta, f)\bigr)\,.
		\end{gather*}

		\item[\scalebox{0.3}{
			\begin{forest}
				for tree={%
					label/.option=content,
					grow'=north,
					content=,
					circle,
					fill,
					minimum size=3pt,
					inner sep=0pt,
					s sep+=15,
				}
				[ 
				[[[]]]
				]
			\end{forest}
		}] for any $3$-cell $\gamma \colon \alpha \to \alpha'$ of $A$,
		we set
		\[
		 GF_{\treeLLL}(\gamma) = G_{\treeLLL}\bigl(F_{\treeLLL}(\gamma)\bigr)\,.
		\]
	\end{description}
\end{paragr}

\begin{paragr}
	We have to show that the definition we have given
	for the composition for the trees $\treeVLeft$, $\treeVRight$
	and $\treeLLL$ agree with the data encoded by the composition
	of the associated simplicial morphisms.
	
	\begin{description}
		\item[\scalebox{0.3}{
			\begin{forest}
				for tree={%
					label/.option=content,
					grow'=north,
					content=,
					circle,
					fill,
					minimum size=3pt,
					inner sep=0pt,
					s sep+=15,
				}
				[ 
				[][[]]
				]
			\end{forest}
		}] given a whiskering
		\[
		\begin{tikzcd}[column sep=4.5em]
		\bullet
		\ar[r, bend left, "f", ""{below, name=f}]
		\ar[r, bend right, "f'"', ""{name=fp}]
		\ar[Rightarrow, from=f, to=fp, "\alpha"]
		&
		\bullet \ar[r,"g"] &
		\bullet
		\end{tikzcd}
		\]
		of $A$, consider the $3$-simplex~$x$
	\begin{center}
		\begin{tikzpicture}[scale=1.3, font=\footnotesize]
		\squares{%
			/squares/label/.cd,
			0=$\bullet$, 1=$\bullet$, 2=$\bullet$, 3=$\bullet$,
			01={}, 12=$f'$, 23=$g$, 
			02=$f$, 03=$gf$, 13=$gf'$,
			012=$\alpha$, 023={}, 123={}, 013=$g\comp_0 \alpha$,
			0123=$1_{g\comp_0 \alpha}$,
			/squares/arrowstyle/.cd,
			01={equal}, 023={equal}, 123={equal}
		}
		\end{tikzpicture}
	\end{center}
	of $\SN(A)$. This is sent by $\Nl(G)\Nl(F)$ to the
	$3$-simplex
	\begin{center}
		\begin{tikzpicture}[scale=1.8, font=\footnotesize]
		\squares{%
			/squares/label/.cd,
			0=$\bullet$, 1=$\bullet$, 2=$\bullet$, 3=$\bullet$,
			01={}, 12=${GF_{\treeL}(f')}$, 23=${GF_{\treeL}(g)}$, 
			02=${GF_{\treeL}(f)}$, 03=${GF_{\treeL}(gf)}$, 13=${GF_{\treeL}(gf')}$,
			012=${GF_{\treeLL}(\alpha)}$, 023=${GF_{\treeV}(g, f)\phantom{O}}$,
			123=${GF_{\treeV}(g, f')}$, 013=${\phantom{OO}GF_{\treeLL}(g\comp_0 \alpha)}$,
			0123=${\Nl(G)\bigl(F_{\treeVRight}(g, \alpha)\bigr)}$,
			/squares/arrowstyle/.cd,
			01={equal}, 012={phantom, description},
			023={phantom, description}, 123={phantom, description},
			013={phantom, description},
			/squares/labelstyle/.cd,
			012={anchor=center},
			023={anchor=center}, 123={anchor=center},
			013={anchor=center}
		}
		\end{tikzpicture}
	\end{center}
	of $\SN(C)$, where we denoted by $\Nl(G)\bigl(F_{\treeVRight}(g, \alpha)\bigr)$
	the image under $Nl(G)$ of the main $3$-cell $F_{\treeVRight}(g, \alpha)$
	of the $3$-simplex $\Nl(F)(x)$.
	By paragraph~\ref{paragr:image_3-simplex}, we find that
	the $3$-cell $\SN(G)\bigl(F_{\treeVRight}(g, \alpha)\bigr)$
	of $C$ is precisely
	\begin{gather*}
	G_{\treeV}\big(F_{\treeL}(g), F_{\treeL}(f')\bigr) \comp_1
	G_{\treeLLL}\bigl(F_{\treeVRight}(g, \alpha)\bigr) \\
	\comp_2 \\
	G_{\treeVRight}\bigl(F_{\treeL}(g), F_{\treeLL}(\alpha)\bigr) \comp_1
	G_{\treeLL}\bigl(F_{\treeV}(g, f)\bigr)\,;
	\end{gather*}

	\item[\scalebox{0.3}{
		\begin{forest}
			for tree={%
				label/.option=content,
				grow'=north,
				content=,
				circle,
				fill,
				minimum size=3pt,
				inner sep=0pt,
				s sep+=15,
			}
			[ 
			[[]][]
			]
		\end{forest}
	}] given a whiskering
	\[
	\begin{tikzcd}[column sep=4.5em]
	\bullet \ar[r, "f"] &
	\bullet
	\ar[r, bend left, "g", ""{below, name=g}]
	\ar[r, bend right, "g'"', ""{name=gp}]&
	\bullet
	\ar[Rightarrow, from=g, to=gp, "\beta"]
	\end{tikzcd}
	\]
	of $A$, an argument dual with respect to the previous point
	gives us that this is indeed $\SN(G)\bigl(F_{\treeVLeft}(\beta, f)\bigr)$.
	
	\item[\scalebox{0.3}{
		\begin{forest}
			for tree={%
				label/.option=content,
				grow'=north,
				content=,
				circle,
				fill,
				minimum size=3pt,
				inner sep=0pt,
				s sep+=15,
			}
			[ 
			[[[]]]
			]
		\end{forest}
	}] they trivially agree by definition.
\end{description}
\end{paragr}

Summing up the results of this chapter we get the following theorem.

\begin{thm}\label{thm:iso_oplax}
	The class of small $3$-categories and normalised oplax $3$-functors
	are organised in a category $\widetilde{\nCat{3}}$, which
	is isomorphic via the functor $\Nl$ to the category $\nCat{3}_{\cDelta}$
	of $3$-categories and simplicial oplax $3$-morphisms.
\end{thm}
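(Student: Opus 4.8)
The plan is to exhibit $\Nl$ and $\cCl$ as mutually inverse bijections on hom-sets and then transport the already-available category structure of $\dCat$ across them. First I would collect the four facts assembled in the preceding sections. The theorem concluding Subsection~\ref{section:simplicial-to-cellular}, whose coherence verifications occupy the following subsection, shows that $\cCl$ carries a simplicial oplax $3$-morphism $\SN(A)\to\SN(B)$ to a genuine normalised oplax $3$-functor $A\to B$; dually, Theorem~\ref{thm:cellular-simplicial} produces from a normalised oplax $3$-functor $F$ the simplicial map $\Nl(F)$, and paragraphs~\ref{paragr:cond-i},~\ref{paragr:cond-ii} and~\ref{paragr:cond-iii} verify that it satisfies conditions~\ref{cond:simpl_oplax-i}--\ref{cond:simpl_oplax-iii}, \ie that $\Nl(F)$ is a simplicial oplax $3$-morphism. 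The opening paragraph of the present section records $\cCl\Nl(F)=F$ for every normalised oplax $3$-functor $F$, while the explicit computation of paragraph~\ref{paragr:image_3-simplex}, combined with the $4$-coskeletality of $\SN(B)$, yields $\Nl\cCl(F)=F$ for every simplicial oplax $3$-morphism $F$. Hence, for all small $3$-categories $A$ and $B$, the maps $\Nl$ and $\cCl$ are mutually inverse bijections between the normalised oplax $3$-functors $A\to B$ and the simplicial oplax $3$-morphisms $\SN(A)\to\SN(B)$.

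Next I would set up the target category and transport its structure. The category $\dCat$ is legitimate by Theorem~\ref{thm:simpl_oplax_category}, giving closure under composition, together with the evident fact that each $\id{\SN(A)}$ is a simplicial oplax $3$-morphism. I then define the composite of two normalised oplax $3$-functors $F\colon A\to B$ and $G\colon B\to C$ to be $\cCl(\Nl(G)\circ\Nl(F))$, where $\circ$ is composition in $\dCat$. Because $\cCl$ always takes values among normalised oplax $3$-functors, this is well defined \emph{for every source $A$}; this is the crucial payoff of routing composition through $\dCat$, since it sidesteps the direct verification of all fourteen coherences alluded to in the introduction. Associativity and unitality are inherited from $\dCat$ through the bijections, the unit on $A$ being $\cCl(\id{\SN(A)})=\cCl\Nl(\id{A})=\id{A}$, the identity $3$-functor on $A$ regarded as a normalised oplax $3$-functor. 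This organises small $3$-categories and normalised oplax $3$-functors into a category $\widetilde{\nCat{3}}$.

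I would then record that $\Nl$ is functorial and that the transported composition agrees with the concrete one. Functoriality is immediate: $\Nl(G\circ F)=\Nl\cCl(\Nl(G)\circ\Nl(F))=\Nl(G)\circ\Nl(F)$ by the relation $\Nl\cCl=\mathrm{id}$, and $\Nl(\id{A})=\id{\SN(A)}$. The final two paragraphs of the section then identify $\cCl(\Nl(G)\circ\Nl(F))$ with the tree-by-tree candidate: one checks that the naive cellular data for $G\circ F$ on every tree of dimension at most~$3$ coincide with the data that $\cCl$ reads off from $\Nl(G)\circ\Nl(F)$, the whiskering trees $\treeVLeft$ and $\treeVRight$ being handled precisely by the $3$-simplex analysis of paragraph~\ref{paragr:image_3-simplex} and the tree $\treeLLL$ holding by inspection. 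This simultaneously exhibits the transported composite in explicit cellular form.

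Finally I would conclude. On objects $\Nl$ is $A\mapsto\SN(A)$, which is a bijection onto $\Ob(\dCat)$: surjectivity holds by the very definition of $\dCat$, and injectivity because the Street nerve is injective on objects (a strict $3$-category being recoverable from its nerve), as reflected in the embedding $\nCat{3}\hookrightarrow\dCat$ of the remark following Definition~\ref{def:simpl_oplax_category}. On morphisms it is a bijection on every hom-set by the first step. A functor bijective on objects and on each hom-set is an isomorphism of categories, giving $\Nl\colon\widetilde{\nCat{3}}\to\dCat$ as claimed. The one genuinely substantial ingredient behind this otherwise formal assembly is the relation $\Nl\cCl(F)=F$, which rests on the description in paragraph~\ref{paragr:image_3-simplex} of the principal $3$-cell of $\Nl\cCl(F)(x)$ for a $3$-simplex $x$ via the auxiliary $4$-simplices $y$, $z$ and $z'$; everything else is bookkeeping.
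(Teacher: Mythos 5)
Your proposal is correct and follows essentially the same route as the paper: establish $\cCl\Nl=\mathrm{id}$ directly from the definitions, prove $\Nl\cCl=\mathrm{id}$ via the $4$-coskeletality of the Street nerve together with the $3$-simplex analysis of paragraph~\ref{paragr:image_3-simplex}, invoke Theorem~\ref{thm:simpl_oplax_category} for the category structure on $\dCat$, and identify the composite $\cCl(\Nl(G)\circ\Nl(F))$ tree by tree with the concrete cellular candidate (the trees $\treeVRight$, $\treeVLeft$ and $\treeLLL$ being the ones requiring the extra check). The only presentational difference is that you \emph{define} composition by transport and then recognise it as the concrete candidate, whereas the paper starts from the concrete candidate and shows it agrees with the transported one; the logical content is identical.
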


\section{Strictification}\label{sec:tilde}

In this section we are going to explicitly describe the \oo-category
$\Sc(A)$, where $A$ is a $1$-category without split-monos and split-epis.

\begin{paragr}
	We say that a $1$-category $A$ is \ndef{split-free}
	\index{split-free category} if it does
	not have any split-monos or split-epis.
\end{paragr}

\begin{exem}
	Any poset is a split-free category. Moreover, for any category $A$,
	the category $c\,\Sd N(A)$ is split-free.
\end{exem}

\emph{We fix a side-free category $A$.}

\begin{paragr}
	We now define a reflexive $\infty$-graph $\tilde{A}$
	associated to the $1$-category $A$.
	The objects of $\tilde A$ are precisely the objects of $A$.
	For any pair of objects $(a, a')$ of $A$, we then define
	a reflexive $\infty$-graph $\tA(a, a')$ whose objects, \ie
	the $1$-cells of $\tA$ having $a$ as source and $a'$ as target,
	are given by the set of non-degenerate simplices $x \colon \Deltan{n} \to A$
	of $N_1(A)$ such that $x_0 = a$ and $x_n = a'$, for $n \ge 0$;
	that is to say, the objects of $\tA(a, a')$ are the tuples
	$(f_1, \dots, f_n)$ of composable non-trivial arrows of $A$
	such that $s(f_1) = a$ and $t(f_n) = a'$, with $n \ge 0$.
	The $0$-tuple, where necessarily $a = a'$,
	corresponds to the non-degenerate simplex
	$\Deltan{0} \to A$ pointing at $a$, and it is by definition the
	identity $1$-cell of the object $a$ of $\tA$.
	
	Consider two objects $x$ and $y$ of $\tA(a, a')$, \ie two tuples
	$x = (f_1, \dots, f_m)$ and $y = (g_1, \dots, g_n)$ as described above.
	We define $\tA(x, y)$ as follows:
	\begin{description}
		\item[$m=0$] if $x \colon \Deltan{0} \to A$ is a $0$-simplex of $A$, we set
		$\tA(x, y)$ to be the final \emph{\oo-category} $\On{0}$;
		
		\item[$m=1$] we define
		\[
		\tA\bigl( (f), (g_1, \dots, g_p)\bigr)\,,
		\]
		where $s(f) = s(g_1)$ and $t(f) = t(g_p)$, to be the \emph{\oo-category}
		\[
		\tA\bigl( (f), (g_1, \dots, g_p)\bigr) =
		\On{\omega}\bigl(\atom{0, p},\atom{0, 1} + \atom{1, 2} + \dots + \atom{p-1, p}\bigr)\,.
		\]
		\item[$m>1$] otherwise, we set $\tA(x, y)$ to be the \emph{\oo-category}
		\[
		\coprod \tA\bigl((f_1), (g_1, \dots, g_{\phi(1)})\bigr) \times
		\dots \times \tA\bigl((f_m), (g_{\phi(m-1)+1}, \dots, g_n)\bigr)\,,
		\]
		where the sum runs over all the arrows $\phi \colon \Deltan{m} \to \Deltan{n}$
		of $\cDelta$ which are:
		\begin{enumerate}
			\item\label{item:cells-tilde-i} strictly increasing;
			\item we have $\phi(0) = 0$ and $\phi(m) = n$;
			\item\label{item:cells-tilde-ii} such that, for all $1 < i \leq m$,
			we have
			\[
			g_{\phi(i)} \comp_0 \dots \comp_0 g_{\phi(i-1)+1} = f_i
			\]
			in the category $A$.
		\end{enumerate}
		These conditions ensure that $s(f_i) = s(g_{\phi(i-1)+1})$
		and $t(f_i) = t(g_{\phi(i)})$, for all $1 \le i \le m$, so that
		in  particular we have $x_i = y_{\phi(i)}$ for every $0 \le i \le m$;
		notice that the condition imposing that $\phi$ is an active morphism,
		\ie $\phi(0) = 0$ and $\phi(m) = n$ is actually implied by the others.
		We shall sometimes write the above sum as
		\[
		 \coprod_{\phi} \tA_{\phi}(x, y)\,.
		\]
		Using the canonical isomorphism of \oo-categories described in Proposition~A.4
		of~\cite{AraMaltsiCondE}, we shall often identify the \oo-category $\tA_{\phi}(x, y)$
		with
		\[
		\On{\omega}\bigl(\atom{0, \phi(1)}+ \dots + \atom{\phi(m-1), \phi(m)},
		\atom{0, 1} + \dots + \atom{n-1, n}\bigr)\,.
		\]
	\end{description}
	
	Note that if the index of the sum above is empty, that is there is no
	arrow $\phi \colon \Deltan{m} \to \Deltan{n}$ satisfying conditions~\ref{item:cells-tilde-i} and~\ref{item:cells-tilde-ii}, then
	$\tA(x, y)$ is set to be the empty \oo-category. This
	happens in particular every time $m > n$.
	Observe also that condition~\ref{item:cells-tilde-ii} entails
	that if there is a cell between $(f_1, \dots, f_m)$ and
	$(g_1, \dots, g_n)$, then necessarily
	\[
	f_m \comp_0 \dots \comp_0 f_1 = g_n \comp_0 \dots \comp_0 g_1\,.
	\]

	For any $1$-cell $x = (f_1, \dots, f_n)$ of $\tA$, the identity of $x$
	is given by the only trivial $2$-cell of the \oo-category
	\[
	\tA(f_1, f_1) \times \dots \times \tA(f_n, f_n)\,.
	\]
	Indeed, observe that for any arrow $f$ of $A$, the only morphism satisfying~\ref{item:cells-tilde-i}
	and~\ref{item:cells-tilde-ii} is $phi = 1_{\Deltan{n}}$, so that
	$\tA(f, f)$ is isomorphic to $\On{\omega}(\atom{0, 1}, \atom{0, 1})$,
	which is the terminal \oo-category.
\end{paragr}

\begin{rem}
	Without the hypothesis on the category $A$, that is in the general situation
	in which we have split-monos and split-epis, the definition
	of the hom-\oo-category $\tA\bigl((f), (g_1, \dots, g_n)\bigr)$ is more complicated.
	This is due to the fact that, although the simplex $(g_1, \dots, g_n)$ is non-degenerate,
	there could be two consecutive arrows, say $g_i$ and $g_{i+1}$ which compose to the identity.
	When introducing the operations on the \oo-graph $\tA$, this becomes a serious issue.
\end{rem}

\begin{paragr}
	In this paragraph we want to endow the reflexive \oo-graph
	$\tA(a, a')$ with the structure of an \oo-category,
	for any pair $(a, a')$ of objects of $\tA$.
	In order to do so, for any $x = (f_1, \dots, f_\ell)$,
	$y = (g_1, \dots, g_m)$ and $z = (h_1, \dots, h_n)$ of
	$\tA(a, a')$, we want to define an \oo-functor
	\[
	\tA(y, z) \times \tA(x, y) \to \tA(x, z)\,.
	\]
	Without any loss of generality, we can suppose $\ell \le m \le n$
	(see the preceding paragraph) and consider the case $\ell >0$,
	since the other cases are trivial.
	Let us fix two morphisms $\phi \colon \Deltan{\ell} \to \Deltan{m}$
	and $\psi \colon \Deltan{m} \to \Deltan{n}$
	satisfying conditions~\ref{item:cells-tilde-i}
	and~\ref{item:cells-tilde-ii} of the previous paragraph.
	We set
	\[
	 \Phi(i) = \phi(i)- \phi(i-1)
	 \quadet
	 \Psi(j) = \psi(j) - \psi(j-1)
	\]
	for any $1 \le i \le \ell$ and $1 \le j \le m$.
	We have to give an \oo-functor which has
	\begin{gather}\label{eq:mapping-oo-category}
	\tA\bigl((g_1), (h_1, \dots, h_{\psi(1)}\bigr) \times \dots \times
	\tA\bigl((g_m), (h_{\psi(m-1)+1}, \dots, h_n)\bigr) \notag
	\\ \times \phantom{OOOO}\\
	\tA\bigl((f_1), (g_1, \dots, g_{\phi(1)}\bigr) \times \dots \times
	\tA\bigl((f_\ell), (g_{\phi(\ell-1)+1}, \dots, g_m)\bigr) \notag
	\end{gather}
	as source, which by definition is the \oo-category
	\begin{gather*}
	\prod_{i=1}^{m} \On{\omega}\bigl(\atom{0, \Psi(i)}, \atom{0, 1} + \dots + \atom{\Psi(i)-1, \Psi(i)}\bigr)
	\\ \times \\
	\prod_{i=1}^{\ell} \On{\omega}\bigl(\atom{0, \Phi(i)}, \atom{0, 1} + \dots + \atom{\Phi(i)-1, \Phi(i)}\bigr)\,.
	\end{gather*}
	Notice that for every $1 \le p \le \ell$ we have
	\[
	f_p = g_{\phi(p)} \comp_0 g_{\phi(p) -1} \comp_0 \dots
	\comp_0 g_{\phi(p-1)+1}\,,
	\]
	and for every $1 \le q \le m$ we have
	\[
	g_q = h_{\psi(q)} \comp_0 h_{\psi(q) -1} \comp_0 \dots
	\comp_0 h_{\psi(q-1)+1}\,,
	\]
	so that in fact 
	\begin{equation*}
	\begin{split}
	f_p = & \phantom{\comp_0}\ h_{\psi(\phi(p))} \comp_0 h_{\psi(\phi(p)) -1} \comp_0 \dots
	\comp_0 h_{\psi(\phi(p)-1)+1} \\
	&\comp_0 h_{\psi(\phi(p)-1)} \comp_0 h_{\psi(\phi(p)-1)-1} \comp_0 \dots
	\comp_0 h_{\psi(\phi(p)-2)+1} \\
	&\comp_0 \dots \\
	&\comp_0 h_{\psi(\phi(p-1)+1)} \comp_0 h_{\psi(\phi(p-1)+1)-1} \comp_0 \dots
	\comp_0 h_{\psi(\phi(p-1))+1}\,,
	\end{split}
	\end{equation*}
	for every $1 \le p \le \ell$.
	Now, for every $1 \le i \le m$, we have that the \oo-category
	\[
	\On{\omega}\bigl(\atom{0, \Psi(i)}, \atom{0, 1} + \dots + \atom{\Psi(i)-1, \Psi(i)}\bigr)
	\]
	is canonically isomorphic by Corollary~\ref{coro:suboriental} to the \oo-category
	\[
	\On{\omega}\bigl(\atom{\psi(i-1), \psi(i)}, \atom{\psi(i-1), \psi(i-1)+ 1} + \dots + \atom{\psi(i) -1, \psi(i)}\bigr)\,.
	\]
	In order to simplify the notations, let us set
	\[
	b_i = \atom{\psi(i-1), \psi(i)} \quad , \quad
	c_i = \sum_{k= 0}^{\Psi(i)-1} \atom{\psi(i-1) + k, \psi(i-1) + k + 1}\,,
	\]
	for $1 \le i \le m$, and also
	\[
	b = b_1 + b_2 + \dots + b_m
	\quadet
	c = c_1 + c_2 + \dots + c_m\,.
	\]
	There is a canonical \oo-functor
	\[
	\prod_{i=1}^m \On{\omega}(b_i, c_i) \to \On{\omega}(b, c)
	\]
	given by ``horizontal composition'' $\comp_0$, \ie mapping
	a tuple $(x_1, \dots, x_m)$ of $p$-cells to the $p$-cell
	$x_1 \comp_0 x_2 \comp_0 \dots \comp_0 x_m$ of $\On{\omega}(b, c)$.
	Proposition~\ref{prop:2-cells_orientals} actually shows that
	this \oo-functor is an isomorphism of \oo-categories.
	The same argument entails that the \oo-category
	\[
	\prod_{i=1}^{\ell} \On{\omega}\bigl(\atom{0, \phi(i)}, \atom{0, 1} + \dots + \atom{\phi(i)-1, \phi(i)}\bigr)
	\]
	is canonically isomorphic to $\On{\omega}(a', b')$
	via the ``horizontal composition''~$\comp_0$, where we have set
	\[
	a' = \sum_{i=1}^{\ell} \atom{\phi(i-1), \phi(i)}
	\quadet
	b' = \sum_{i=1}^{m} \atom{i-1, i}\,.
	\]
	Applying the increasing morphism $\psi$ and setting
	\[
	 a = \sum_{i=1}^{\ell} \atom{\psi\phi(i-1), \psi\phi(i)}
	\]
	we get, again by Corollary~\ref{coro:suboriental}, a canonical isomorphism of \oo-categories $\On{\omega}(a', b') \cong \On{\omega}(a, b)$.
	The \oo-cat\-e\-gory in~\eqref{eq:mapping-oo-category} is thus
	canonically isomorphic to the \oo-cat\-e\-go\-ry
	\[
	\On{\omega}(b, c) \times \On{\omega}(a, b)\,.
	\]
	On the other hand, the target \oo-category of the \oo-functor
	we are set to construct is 
	\[
	\tA(x, z) = \prod_{i=1}^\ell \tA\bigl((f_i), (h_{\psi\phi(i-1)+1}, \dots, h_{\psi\phi(i)} \bigr)\,,
	\]
	which is by definition
	\[
	\prod_{i=1}^\ell \On{\omega}\bigl(\atom{0, \psi\phi(i)}, \atom{0, 1} + \dots + \atom{\psi\phi(i)-1, \psi\phi(i)}\bigr).
	\]
	The same argument used above gives us that this \oo-category
	is canonically isomorphic to the \oo-category
	\[
	\On{\omega}(a, c)\,.
	\]
	We then define the \oo-functor
	\[
	\On{\omega}(b, c) \times \On{\omega}(a, b) \to \On{\omega}(a, c)
	\]
	to be the ``vertical composition'' $\comp_1$, \ie a pair
	of $p$-cells $(x, y)$ of the source is mapped to the $p$-cell
	$x \comp_1 y$ of $\On{\omega}(a, c)$.
	
	Alternatively, for any $1 \le i \le \ell$ we can consider the \oo-category
	\[
	 \tA\bigl((f_i), (g_{\phi(i-1)+1}, \dots, g_{\phi(i)-1}, g_{\phi(i)})\bigr)\,,
	\]
	which is defined as
	\[
	 \On{\omega}\bigl(\atom{0, \Phi(i)}, \atom{0, 1} + \dots + \atom{\Phi(i)-1, \Phi(i)}\bigr)\,.
	\]
	The latter is canonical isomorphic by Corollary~\ref{coro:suboriental} to
	\[
	 \On{\omega}\bigr(\atom{\phi(i-1), \phi(i)}, \atom{\phi(i-1), \phi(i-1)+1} + \dots + \atom{\phi(i)-1, \phi(i)}\bigr)\,,
	\]
	which in turn is isomorphic to the \oo-category
	\[
	 \On{\omega}\bigl(\atom{\psi\phi(i-1), \psi\phi(i)},
	 \atom{\psi(\phi(i-1)), \psi(\phi(i-1)+1)} + \dots + \atom{\psi(\phi(i)-1), \psi(\phi(i))}\bigr)\,.
	\]
	If we set $a_i = \atom{\psi\phi(i-1), \psi\phi(i)}$ for every $1 \le i \le \ell$,
	then we can write the above \oo-category as
	\[
	 \On{\omega}(a_i, b_{\phi(i-1)+1} + b_{\phi(i-1)+2}\dots + b_{\phi(i)})\,.
	\]
	Similarly, for any $1 \le i \le m$ we have seen above that the \oo-category
	\[
	 \tA\bigl((g_i), (h_{\psi(i-1)}, \dots,h_{\psi(i)})\bigr)
	\]
	is canonically isomorphic to the \oo-category
	\[
	 \On{\omega}\bigl(\atom{\psi(i-1), \psi(i)}, \atom{\psi(i-1), \psi(i-1)+ 1} + \dots + \atom{\psi(i) -1, \psi(i)}\bigr)\,,
	\]
	which we can denote by $\On{\omega}(b_i, c_i)$. Therefore, for a fixed
	$1 \le p \le \ell$ we have that the \oo-category
	 \begin{gather*}
	 \prod_{k=\phi(p-1)+1}^{\phi(p)} \tA\bigl((g_k), (h_{\psi(k-1)+1}, \dots, h_{\psi(k)})\bigr)
	 \\ \times \\
	 \tA\bigl((f_p), (g_{\phi(p-1)+1}, \dots, g_{\phi(p)})\bigr)
	 \end{gather*}
	is canonically isomorphic to the \oo-category
	\[
	 \prod_{k=\phi(p-1)+1}^{\phi(p)} \On{\omega}(b_k, c_k)
	 \ \times\ \On{\omega}(a_p, b_{\phi(p-1)+1} + \dots + b_{\phi(p)})\,.
	\]
	Using Proposition~\ref{prop:2-cells_orientals}, we obtain
	\[
	 \prod_{k=\phi(p-1)+1}^{\phi(p)} \On{\omega}(b_k, c_k)
	  \cong \On{\omega}(b_{\phi(p-1)+1} + \dots + b_{\phi(p)}, c_{\phi(p-1)+1} + \dots + c_{\phi(p)})
	\]
	and hence the former \oo-category is canonically isomorphic to
	\begin{gather*}
	 \On{\omega}(b_{\phi(p-1)+1} + \dots + b_{\phi(p)}, c_{\phi(p-1)+1} + \dots + c_{\phi(p)})
	 \\ \times \\
	 \On{\omega}(a_p, b_{\phi(p-1)+1} + \dots + b_{\phi(p)})\,.
	\end{gather*}
	We set
	\[
	 B_p = b_{\phi(p-1)+1} + \dots + b_{\phi(p)} \quadet
	 C_p  = c_{\phi(p-1)+1} + \dots + c_{\phi(p)}
	\]
	Applying the ``vertical composition'' $\comp_1$ to this product of \oo-categories
	we get an \oo-functor
	\[
	 \On{\omega}(B_p, C_p) \times \On{\omega}(a_p, B_p) \to \On{\omega}(a_p, C_p)\,.
	\]
	Finally, the ``horizontal composition'' $\comp_0$ provides us with an \oo-functor
	\[
	 \prod_{p=1}^\ell \On{\omega}(a_p, C_p) \to \On{\omega}(a, c)\,,
	\]
	since $a = a_1 + \dots + a_\ell$ and $c = C_1 + \dots + C_\ell$.
	
	These two approaches are equivalent by virtue of the exchange law between
	$\comp_0$ and~$\comp_1$.
	
	This endows the reflexive \oo-graph $\tA(a, a')$ with the structure of
	an \oo-category.
\end{paragr}

\begin{paragr}\label{paragr:tA_oo-category}
	In this paragraph we put an \oo-category structure on
	the reflexive \oo-graph~$\tA$. In order to do this,
	we shall define, for any objects $a$, $a'$ and $a''$
	of $\tA$, an \oo-functor
	\[
	\tA(a', a'') \times \tA(a, a') \to \tA(a, a'')\,.
	\]
	
	As \oo-categories are categories enriched in \oo-categories,
	an \oo-functor $F$ between two \oo-categories $C$ and $D$
	can be given by a map $F_0 \colon C_0 \to D_0$ on objects and
	a family of \oo-functors $C(c, c') \to D(Fc, Fc')$, indexed
	by the pairs of objects $(c, c')$ of $C$, satisfying the axioms
	described in paragraph~\ref{def_enriched}.
	
	In light of the above, we have to provide a map
	\[
	\tA(a', a'')_0 \times \tA(a, a')_0 \to \tA(a, a'')_0\,,
	\]
	that we define by sending a pair $(y, x)$ with
	$x \colon \Deltan{m} \to A$ and $y \colon \Deltan{n} \to A$
	to the concatenation simplex
	\[
	y \cdot x \colon \Deltan{m+n} \to A \quad , \quad
	\atom{i, i+1} \mapsto
	\begin{cases}
	x_{\{i, i+1\}}\,, & \text{if $i < m$,}\\
	y_{\{i-m, i+1-m\}}\,, & \text{if $i \ge m$.}
	\end{cases}
	\]
	Furthermore, for any choice of objects $(y, x)$ and $(t, z)$
	of $\tA(a', a'')_0 \times \tA(a, a')_0$, we have to provide
	an \oo-functor
	\[
	\tA(y, t) \times \tA(x, z) \to \tA(y\cdot x, t \cdot z)\,.
	\]
	Notice that if either $\tA(x, z)$ or $\tA(y, t)$ are empty, then the same holds
	for $\tA(y\cdot x, t \cdot z)$.
	If $x$ (resp.~$y$) is a $0$-simplex, then so is $z$ (resp.~$t$) and
	the \oo-functor above is simply the identity on $\tA(y, t)$ (the identity on $\tA(x, z)$).
	We can therefore suppose that~$\tA(x, z)$ and $\tA(y, t)$ are non-empty
	and that $x$ and $y$ are not trivial.
	
	Following the reasoning of the previous paragraph,
	we know that there are integers
	\[
	0 = i_0 < i_1 < \dots < i_m \quadet 0 = j_0 < j_1 < \dots < j_n
	\]
	such that, if we set
	\begin{align*}
	a = \sum_{k=0}^{m-1} \atom{i_k, i_{k+1}}\,, &&
	c = \sum_{p=0}^{i_m -1} \atom{p, p+1}\,, \\
	b = \sum_{k=0}^{n-1} \atom{j_k, j_{k+1}}\,, &&
	d = \sum_{p=0}^{j_n -1} \atom{p, p+1}\,,
	\end{align*}
	then we have canonical isomorphisms
	\[
	\tA(x, z) \cong \On{\omega}(a, c) \quadet \tA(y, t) \cong \On{\omega}(b, d)
	\]
	of \oo-categories. Moreover, setting
	\[
	b' = \sum_{k=0}^{n-1} \atom{i_m + j_k, i_m + j_{k+1}}
	\quadet
	d' = \sum_{p=0}^{j_n -1} \atom{i_m + p, i_m + p+1}\,,
	\]
	we have by Corollary~\ref{coro:suboriental} a canonical isomorphism
	\[
	\On{\omega}(b, d) \cong \On{\omega}(b', d')
	\]
	and by the same argument we can build a further canonical isomorphism
	\[
	\tA(y\cdot x, t \cdot z) \cong \On{\omega}(a + b', c + d')
	\]
	of \oo-categories. We are thus left to provide an \oo-functor
	\[
	\On{\omega}(b', d') \times \On{\omega}(a, c) \to \On{\omega}(a + b', c + d')\,,
	\]
	which we set to be the ``horizontal composition'' by $\comp_0$.
	Notice that by Proposition~\ref{prop:2-cells_orientals}, this \oo-functor
	is in fact an isomorphism.
	
	The identity axioms are trivial from the definition and the associativity
	follows immediately from the associativity of the ``horizontal composition''~$\comp_0$
	as an operation of the \oo-category~$\On{\omega}$.
\end{paragr}

\begin{lemme}
	Let $a$ and $a'$ be two objects of $A$ and consider two elements
	\[x = (f_1, \dots, f_m) \quadtext{and}\quad y = (g_1, \dots, g_n)\]
	of~$\tA(a, a')$.
	Then there is a zig-zag of $2$-cells linking $x$ to $y$ if and only if
	\[
	f_m \comp_0 \dots \comp_0 f_1 = g_n \comp_0 \dots \comp_0 g_1
	\]
	in $A$.
\end{lemme}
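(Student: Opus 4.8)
The plan is to treat the two implications separately: the \emph{only if} direction is essentially immediate from an observation already recorded in the text, while the \emph{if} direction reduces to a short span construction that crucially exploits the split-free hypothesis.

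For the \emph{only if} direction, recall the remark made right after condition~\ref{item:cells-tilde-ii}: whenever there is a cell between two tuples of $\tA(a, a')$, their total composites in $A$ coincide. A zig-zag of $2$-cells between $x$ and $y$ is a finite chain $x = w_0, w_1, \dots, w_k = y$ of objects of $\tA(a, a')$ in which each consecutive pair $w_i, w_{i+1}$ is joined by a $2$-cell, that is, in which $\tA(w_i, w_{i+1})$ or $\tA(w_{i+1}, w_i)$ is non-empty. Applying the observation at each step shows that the total composite of $w_i$ equals that of $w_{i+1}$, and by transitivity of equality in $A$ we obtain $f_m \comp_0 \dots \comp_0 f_1 = g_n \comp_0 \dots \comp_0 g_1$.

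For the \emph{if} direction, set $f = f_m \comp_0 \dots \comp_0 f_1 = g_n \comp_0 \dots \comp_0 g_1$. First I would dispose of the degenerate case: if $f$ is an identity and $x$ is non-empty, then its first arrow $f_1$ is a split mono, with retraction $f_m \comp_0 \dots \comp_0 f_2$ (the identity of its target when $m=1$), contradicting split-freeness since $f_1$ is non-trivial; hence $m = 0$, and symmetrically $n = 0$, so that $x = y = ()$ and the empty zig-zag suffices. Otherwise $f$ is a non-trivial arrow of $A$, so the singleton tuple $(f)$ is a genuine object of $\tA(a, a')$. By the very definition of the hom-\oo-categories,
\[
 \tA\bigl((f), x\bigr) = \On{\omega}\bigl(\atom{0, m}, \atom{0, 1} + \dots + \atom{m-1, m}\bigr),
\]
and likewise for $\tA\bigl((f), y\bigr)$ with $m$ replaced by $n$; these are the hom-\oo-categories of the orientals between the long edge and the full path, and they are non-empty, containing at least the principal comparison $2$-cell and reducing to the terminal \oo-category when the length is $1$. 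Choosing an object in each produces $2$-cells $(f) \Rightarrow x$ and $(f) \Rightarrow y$ of $\tA$, whence the span $x \Leftarrow (f) \Rightarrow y$ is the desired zig-zag.

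The main obstacle is precisely the control of the degenerate situation handled above: the split-free hypothesis is exactly what guarantees that the common composite $f$ does not collapse to an identity when the tuples are non-empty, so that $(f)$ is a legitimate $1$-cell available as the apex of the span. Everything else is bookkeeping together with the non-emptiness of the relevant oriental hom-\oo-categories, which is a standard property of the orientals recalled in the appendix.
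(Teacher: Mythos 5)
Your proof is correct and takes essentially the same route as the paper's: the \emph{only if} direction chains the composite-preservation observation along the zig-zag, and the \emph{if} direction exhibits the length-two zig-zag $x \Leftarrow (f) \Rightarrow (g_1, \dots, g_n)$ via the non-emptiness of the oriental hom-\oo-categories $\On{\omega}\bigl(\atom{0, m}, \atom{0, 1} + \dots + \atom{m-1, m}\bigr)$. Your explicit handling of the degenerate case is in fact slightly more careful than the paper's one-line dismissal: by showing via split-freeness that a non-empty tuple cannot compose to an identity, you justify that the common composite $f$ is non-trivial and hence that $(f)$ is a legitimate apex for the span, a point the paper leaves implicit.
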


\begin{proof}
	This is trivially true if $x$, and then also $y$, is a trivial cell of $A$.
	So let us suppose $m>0$ and $n>0$.
	
	On the one hand, condition~\ref{item:cells-tilde-ii} immediately implies that
	two $1$-cells $x$ and $y$ as above are connected by a zig-zag of $2$-cells only if
	\[
	f_m \comp_0 \dots \comp_0 f_1 = g_n \comp_0 \dots \comp_0 g_1\,.
	\]
	On the other hand, let
	\[
	h = f_m \comp_0 \dots \comp_0 f_1 = g_n \comp_0 \dots \comp_0 g_1
	\]
	and consider the $1$-cell $z = (h)$ of $\tA$.
	It results immediately from the	structure of the oriental $\On{\omega}$ that the \oo-categories
	\[
	\tA\bigl((h), x\bigr) = \On{\omega}\bigl(\atom{0, m}, \atom{0, 1} + \dots + \atom{m-1, m}\bigr)
	\]
	and
	\[
	\tA\bigl((h), y\bigr) = \On{\omega}\bigl(\atom{0, n}, \atom{0, 1} + \dots + \atom{n-1, n}\bigr)
	\]
	are non-empty; hence $x$ and $y$ are connected by a zig-zag of length two.
\end{proof}

\begin{coro}
	We have $\ti{1}(\tA) \cong A$.
\end{coro}

\begin{proof}
	We have a canonical \oo-functor $\eps_A \colon \tA \to A$
	which is the identity on objects and that maps a $1$-cell
	$x = (f_1, \dots, f_n)$ to $f_n \comp_0 \dots \comp_0 f_1$
	if $n>0$ and a $0$-simplex $a \colon \Deltan{0} \to A$ to the
	identity of $a$ in $A$. The identity is clearly preserved,
	the functoriality follows by the definition of $0$-composition
	of $1$-cells of $\tA$ by concatenation and moreover the assignment
	is well-defined by the previous lemma. We are left with showing that
	for any pair of objects $(a, a')$ of $A$, the map
	\[
	\ti{0}(A)(a, a') \to A(a, a')
	\]
	is a bijection. It is clearly surjective, since for any morphism
	$f \colon a \to a'$ of $A$ we have $\eps_A\bigl((f)) = f$
	(and similarly if $f$ is an identity cell of $A$). It results from
	the previous lemma that this map is also injective, hence
	completing the proof of the corollary.
\end{proof}

\begin{paragr}
	We now turn to constructing a normalised oplax $3$-functor
	\[\eta_A \colon A \to \ti{3}(\tA)\,.\]
	\begin{description}
		\item[$\TreeDot\ \ $] The map $(\eta_A)_{\treeDot}$ is defined to be
		the identity map on objects.
		
		\item[$\TreeLog\ \ \:$] The map $(\eta_A)_{\treeLog}$ assigns to any non-trivial morphism $f \colon a \to a'$
		of $A$ the $1$-cell $(f) \colon \Deltan{1} \to A$ of $\tA$ and to any identity $1_a$ of $A$
		the \emph{trivial} $1$-cell $a \colon \Deltan{0} \to A$ of~$\tA$.
		
		\item[$\TreeV\ $] The map $(\eta_A)_{\treeV}$ assigns to any
		pair of composable morphisms
		\[
		\begin{tikzcd}
		a \ar[r, "f"] & a' \ar[r, "g"] &  a''
		\end{tikzcd}
		\]
		of $A$ the unique $2$-cell $(\eta_A)_{\treeV}(g, f)$ of $\tA$ with source $(g \comp_0 f)$ and target $(f, g)$,
		\ie the unique element $\atom{0, 1, 2}$ of the set
		\[
		 \tA\bigl((g\comp_0 f), (g, f)\bigr) = \On{\omega}\bigl(\atom{0, 2}, \atom{0, 1} + \atom{1, 2})\,.
		\]
		
		\item[$\TreeW\ $] The map $(\eta_A)_{\treeW}$,
		assigns to any triple of composable morphisms
		\[
		\begin{tikzcd}[column sep=small]
		a \ar[r, "f"] & a' \ar[r, "g"] & a'' \ar[r, "h"] & a'''
		\end{tikzcd}
		\]
		of $A$ the unique $3$-cell $(\eta_A)_{\treeW}(h, g, f)$ of $\tA$
		with $1$-source $(h\comp_0 g \comp_0 f)$ and $1$-target $(h, g, f)$,
		\ie the unique arrow $\atom{0, 1, 2, 3}$ of the $1$-category
		\[
		\tA\bigl((h\comp_0 g \comp_0 f), (f, g, h)\bigr)\On{\omega}\bigl(\atom{0, 3}, \atom{0, 1} + \atom{1, 2} + \atom{2, 3}\bigr)\,.
		\]
	\end{description}
	
	Notice that by definition we have that $1_{(\eta_A)_{\treeDot}(a)}$, that is the
	$1$-cell $a \colon \Deltan{0} \to A$, is precisely $(\eta_A)_{\treeL}(1_a)$;
	the other conditions of normalisation are trivial. We are left with checking the
	coherence for the tree $\treeVV$.
	
	Consider four composable morphisms of~$A$
	\[
	\begin{tikzcd}[column sep=small]
	\bullet \ar[r, "f"] &
	\bullet \ar[r, "g"] &
	\bullet \ar[r, "h"] &
	\bullet \ar[r, "i"] &
	\bullet
	\end{tikzcd}\ .
	\]
	We have to show that the $3$-cells
	\begin{gather*}
	(\eta_A)_{\treeW}(i, h, g) \comp_0 (\eta_A)_{\treeLog}(f) \comp_1 (\eta_A)_{\treeV}(i\comp_0 h \comp_0 g, f)\\
	\comp_2\\
	(\eta_A)_{\treeLog}(i) \comp_0 (\eta_A)_{\treeV}(h, g) \comp_0 (\eta_A)_{\treeLog}(f) \comp_1
	(\eta_A)_{\treeW}(i, h \comp_0 g, f)\\
	\comp_2 \\
	(\eta_A)_{\treeLog}(i) \comp_0 (\eta_A)_{\treeW}(h, g, f) \comp_1 (\eta_A)_{\treeV}(i, h\comp_0 g \comp_0 f)
	\end{gather*}
	and
	\begin{gather*}
	(\eta_A)_{\treeV}(i, h) \comp_0 (\eta_A)_{\treeLog}(g) \comp_0 (\eta_A)_{\treeLog}(f)
	\comp_1 (\eta_A)_{\treeW}(ih, g, f) \\
	\comp_2\\
	(\eta_A)_{\treeLog}(i) \comp_0 (\eta_A)_{\treeLog}(h) \comp_0 (\eta_A)_{\treeV}(g, f)
	\comp_1 (\eta_A)_{\treeW}(i, h, gf)
	\end{gather*}
	of $\ti{3}(A)$ are equal, which is equivalent to exhibiting a zig-zag of $4$-cells connecting them.
	In fact, they are precisely the target and the source of the unique $2$-cell
	of the $2$-category
	\[\tA\bigl((i\comp_0 h \comp_0 g \comp_0 f), (f, g, h, i)\bigr)\,,\]
	\ie the cell $\atom{0, 1, 2, 3, 4}$ of the $2$-category
	\[
	\On{\omega}\bigl(\atom{0, 4}, \atom{0, 1} + \atom{1, 2} + \atom{2, 3} + \atom{3, 4})\,.
	\]
\end{paragr}

\begin{paragr}
	The construction of the preceding paragraph is in fact the cellular version
	of the  truncation of a	simplicial morphism $\SN(A) \to \SN(\tA)$, which we shall still
	denote by $\eta_A$. We shall dedicate the
	rest of the chapter to define such a map and moreover show
	that it is the unit map of the adjoint pair $(c_\infty, \SN)$ applied to the simplicial set $\SN(A)$, so
	that in particular $\tA \cong \Sc\SN(A)$.
\end{paragr}

\begin{paragr}
	For any object $\atom{i}$ of $\On{m}$, with $0\le i \le m$, we set $\tilde y(\atom{i}) = y(i)$.
	For any $0 < i \le m$, we denote by $f_i$ the arrow $y_{\{i-1, i\}}$ of $A$ and
	consider the $1$-cell 
	\[
	a = \atom{i_0, i_1} + \atom{i_1, i_2} + \dots + \atom{i_{k-1}, i_k}
	\] of $\On{m}$,
	with $0 \le i_0 < i_1 < \dots < i_k \le m$,
	that we can see as a strictly increasing morphism
	$a \colon \Deltan{k} \to \Deltan{m}$.
	To this $1$-cell,
	it is canonically associated the $k$-simplex $z \colon \Deltan{k} \to A$
	of $\SN(A)$ defined by
	\[
	z_{\{p, p+1\}} = f_{i_{p+1}} \comp_0 \dots \comp_0 f_{i_p + 1}\,,
	\]
	that is to say $z = y a$.
	This is a non-degenerate $k$-simplex of $A$ and thus defines
	a $1$-cell of $\tA$. Hence, we set $\tilde y(a) = z = ya$.
\end{paragr}

\begin{paragr}
	Consider two $1$-cells $a$ and $b$ of $\On{m}$
	that we can write as
	two non-degenerate, that is strictly increasing, simplices
	\[
	a \colon \Deltan{p} \to \Deltan{m} \quadet b \colon \Deltan{q} \to \Deltan{m}\,,
	\]
	and suppose they are such that $a(0) = b(0)$ and $a(p) = b(q)$.
	More explicitly, the $1$-cells $a$ and $b$ of $\On{m}$
	correspond respectively to the $1$-cells
	\[
	a = \atom{a_0, a_1} + \dots + \atom{a_{p-1}, a_p}
	\]
	and
	\[
	b = \atom{b_0, b_1} + \dots + \atom{b_{q-1}, b_{q})}
	\]
	such that $a_0 = b_0$ and $a_p = b_{q}$, where we have set
	$a_i = a(i)$, for $0 \le i \le p$ and $b_j = b(j)$ for $0\le j \le q$.
	It results from	Lemma~10.4 of~\cite{AraMaltsiCondE} that there is a $2$-cell
	from $a$ to $b$ if and only if there exists a strictly increasing morphism
	$\phi \colon \Deltan{p} \to \Deltan{q}$ of $\cDelta$ such that
	$a = b \phi$. Notice that if such a morphism $\phi$ exists,
	than it is unique, as $b$ is a monomorphism.
	We suppose that this is the case and we define an \oo-functor
	\[
	\tilde x_{a, b} \colon \On{m}(a, b) \longrightarrow \tA\bigl(\tilde y(a), \tilde y(b)\bigr)\,.
	\]
	The source of this \oo-functor is the \oo-category
	\[
	\On{m}(a, b) = \On{m}\bigl(\atom{a_0, a_1} + \dots + \atom{a_{p-1}, a_p}, \atom{b_0, b_1} + \dots + \atom{b_{q-1}, b_{q})}\bigr)\,,
	\]
	that by virtue of Proposition~\ref{prop:2-cells_orientals} is
	canonically isomorphic to
	\[
	\prod_{i= 1}^{p} \On{m}\bigl(\atom{a_{i-1}, a_{i}},
	\atom{b_{\phi(i-1)+1}, b_{\phi(i-1)+2}} + \dots + \atom{b_{\phi(i) -1}, b_{\phi(i)}}\bigr)\,,
	\]
	while the target \oo-category
	$\tA\bigl(\tilde y(a), \tilde y(b)\bigr) = \tA(ya, yb)$
	is a sum of \oo-categories $\tA_\psi(ya, yb)$
	indexed on strictly increasing
	morphisms $\psi \colon \Deltan{p} \to \Deltan{q}$ of $\cDelta$ such that
	$(ya)_{\{i-1, i\}} = (yb)_{\{\phi(i-1), \dots , \phi(i)\}}$, that is to say verifying
	$ya = yb\psi$.
	Any strictly increasing morphism $\phi \colon \Deltan{p} \to \Deltan{q}$
	such that $a = b \phi$ trivially verifies $ya = yb\phi$ and we observed above
	that there is at most one such morphism. Therefore,
	if such a morphism $\phi$ exists, than it appears as index in the sum of
	\oo-categories defining $\tA(ya, yb)$ and we have
	\[
	\tA_\phi(ya, yb) \cong
	\On{\omega}\bigl(\atom{0, \phi(1)} + \dots + \atom{\phi(p-1), \phi(p)},
	\atom{0, 1} + \dots + \atom{q-1, q}\bigr)\,.
	\]
	Now, this \oo-category is equal to the \oo-category
	\[
	\On{q}\bigl(\atom{0, \phi(1)}, + \dots + \atom{\phi(p-1), \phi(p)},
	\atom{0, 1} + \dots + \atom{q-1, q}\bigr)
	\]
	and the injective morphism $b \colon \Deltan{q} \to \Deltan{m}$
	induces by Corollary~\ref{coro:suboriental} a canonical isomorphism between the latter \oo-category and
	$\On{m}(a, b)$. We set the \oo-functor
	\[
	 \tilde y_{a, b} \colon \On{m}(a, b) \to \tA(ya, yb)
	\]
	to be the composition $\On{m}(a, b) \to \tA_\phi(ya, yb)$ of the isomorphisms
	we have just described
	followed by the embedding $\tA_\phi(ya, yb) \to \tA(ya, yb)$.
		
	We have to check that for any triple $(a, b, c)$ of composable $1$-cells
	of $\On{m}$ we have a commutative diagram
	\begin{equation}\label{dia:functoriality_tilde}
	\begin{tikzcd}
	\On{m}(b, c) \times \On{m}(a, b) \ar[r, "\comp_1"] \ar[d, "\tilde y_{b, c} \times \tilde y_{a, b}"'] &
	\On{m}(a, c) \ar[d, "\tilde y_{a, c}"] \\
	\tA\bigl(\tilde y(b), \tilde y(c)\bigr) \times \tA\bigl(\tilde y(a), \tilde y(b)\bigr)
	\ar[r, "\comp_1"] &
	\tA\bigl(\tilde y(a), \tilde y(c)\bigr)
	\end{tikzcd}
	\end{equation}
	of \oo-categories. Suppose that we have $a \colon \Deltan{p} \to \Deltan{m}$,
	$b \colon \Deltan{q} \to \Deltan{m}$ and $c \colon \Deltan{r} \to \Deltan{m}$,
	with $1 \le p \le q \le r \le m$, and that
	$\phi \colon \Deltan{p} \to \Deltan{q}$ and $\psi \colon \Deltan{q} \to \Deltan{r}$
	are the unique morphisms of $\cDelta$ such that $a = \phi b$ and $b = \psi c$.
	Thus we get
	\[
	\tA_\phi\bigl(\tilde y(a), \tilde y(b)\bigr) =
	\On{\omega}\bigl(\atom{0, \phi(1)} + \dots + \atom{\phi(p-1), \phi(p)},
	\atom{0, 1} + \dots + \atom{q-1, q}\bigr)
	\]
	and
	\[
	\tA_\psi\bigl(\tilde y(b), \tilde y(c)\bigr) =
	\On{\omega}\bigl(\atom{0, \psi(1)} + \dots + \atom{\psi(q-1), \psi(q)},
	\atom{0, 1} + \dots + \atom{r-1, r}\bigr)
	\]
	and
	\[
	\tA_{\psi\phi}\bigl(\tilde y(a), \tilde y(c)\bigr) =
	\On{\omega}\bigl(\atom{0, \psi\phi(1)} + \dots + \atom{\psi\phi(p-1), \psi\phi(p)},
	\atom{0, 1} + \dots + \atom{r-1, r}\bigr)\,.
	\]
	We set
	\[
	a' = \sum_{i=1}^p\atom{\psi\phi(i-1), \psi\phi(i)}\quadet
	b' =  \sum_{i=1}^q \atom{\psi(i-1), \psi(i)}\,.
	\]
	Remember that the \oo-functor
	$tA_\psi\bigl(\tilde y(b), \tilde y(c)\bigr) \times \tA_\phi\bigl(\tilde x(a), \tilde x(b)\bigr) \to	\tA_{\psi\phi}\bigl(\tilde y(a), \tilde y(c)\bigr)$
	is defined by making use
	of the canonical isomorphism between $\tA_\phi\bigl(\tilde y(a), \tilde y(b)\bigr)$
	and the \oo-category $\On{\omega}(a', b')$. We thus have canonical isomorphisms
	\[
	 \tA_\phi\bigl(\tilde y(a), \tilde y(b)\bigr) \cong \On{r}(a', b')\quad , \quad
	 \tA_\psi\bigl(\tilde y(b), \tilde y(c)\bigr) \cong \On{r}(b', \atom{0, 1} + \dots + \atom{r-1, r})
	\]
	and
	\[
	 \tA_{\psi\phi}\bigl(\tilde y(a), \tilde y(c)\bigr) \cong \On{r}(a', \atom{0, 1} + \dots + \atom{r-1, r})\,.
	\]
	Moreover, the morphism $c \colon \Deltan{r} \to \Deltan{m}$ induces by Corollary~\ref{coro:suboriental} canonical
	isomorphisms
	\[
	 \On{r}(a', b') \cong \On{m}(a, b) \quad , \quad
	 \On{r}(b', \atom{0, 1} + \dots + \atom{r-1, r}) \cong \On{m}(b, c)
	\]
	and
	\[
	 \On{r}(a', \atom{0, 1} + \dots + \atom{r-1, r}) \cong \On{m}(a, c)\,.
	\]
	Under this isomorphisms, we claim that the square
	\[
	 \begin{tikzcd}
	  \On{r}(b', c') \times \On{r}(a', b') \ar[r, "\comp_1"] \ar[d] &
	  \On{r}(a', c') \ar[d] \\
	  \On{m}(b, c) \times \On{m}(a, b) \ar[r, "\comp_1"] & \On{m}(a, c)
	 \end{tikzcd}
	\]
	of \oo-categories is commutative, where we have set $c' = \atom{0, 1} + \dots + \atom{r-1, r}$.
	Indeed, consider two $k$-cells $\alpha$ in $\On{r}(a', b')$ and $\beta$ in $\On{r}(b', c')$ and
	suppose that they are $1$-composable, $k>0$. We can express $\alpha$ and $\beta$ as homogeneous
	elements of $\cC(\Deltan{r})_{k+1}$ and thus as sums of atoms, say
	\[
	 \alpha = \sum_{i=0}^s \alpha_i
	 \quadet
	 \beta = \sum_{i=0}^t \beta_i\,.
	\]
	The operation $\comp_1$ at this level is simply the sum $\alpha + \beta$
	and the morphism $\cC(\Deltan{r})_{k+1} \to \cC{\Deltan{m}}_{k+1}$ induced
	by $c \colon \Deltan{r} \to \Deltan{m}$ sends an atom $\atom{j_0, \dots, j_{k+1}}$
	of the source to the atom $\atom{c(j_0), \dots, c(j_{k+1})}$ of the target.
	The morphism $\cC(c) \colon \cC(\Deltan{r}) \to \cC{\Deltan{m}}$
	respects sums, since it is a morphism of augmented directed complexes
	and therefore the square above commutes.
	Observe that (up to canonical isomorphisms of the factors in the line below)
	picking the inverses to the vertical isomorphisms of \oo-categories of the
	commutative square above gives the following commutative square
	\[
	 \begin{tikzcd}
	 \On{m}(b, c) \times \On{m}(a, b) \ar[r, "\comp_1"] \ar[d, "\tilde y_{b, c} \times \tilde y_{a, b}"'] &
	 \On{m}(a, c) \ar[d, "\tilde y_{a, c}"] \\
	 \tA_\psi\bigl(\tilde y(b), \tilde y(c)\bigr) \times \tA_\phi\bigl(\tilde y(a), \tilde y(b)\bigr)
	 \ar[r, "\comp_1"] &
	 \tA_{\psi\phi}\bigl(\tilde y(a), \tilde y(c)\bigr)
	 \end{tikzcd}
	\]
	of \oo-categories. Since the square of embeddings
	\[
	 \begin{tikzcd}
	  \tA_\psi\bigl(\tilde y(b), \tilde y(c)\bigr) \times \tA_\phi\bigl(\tilde y(a), \tilde y(b)\bigr)
	  \ar[r, "\comp_1"] \ar[d]&
	  \tA_{\psi\phi}\bigl(\tilde y(a), \tilde y(c)\bigr) \ar[d]\\
	  \tA_\psi\bigl(\tilde y(b), \tilde y(c)\bigr) \times \tA_\phi\bigl(\tilde y(a), \tilde y(b)\bigr)
	  \ar[r, "\comp_1"] &
	  \tA_{\psi\phi}\bigl(\tilde y(a), \tilde y(c)\bigr)
	 \end{tikzcd}
	\]
	is obviously commutative, we obtain the commutativity of the square depicted
	in~\eqref{dia:functoriality_tilde}. Hence, we have checked that the assignment
	\[\tilde y \colon \On{m}(\atom{i}, \atom{j}) \to \tA(y(i), y(j))\]
	defines an \oo-functor
	for any object $\atom{i}$ and $\atom{j}$ of $\On{m}$.
	
	In order to conclude that $\tilde y \colon \On{m} \to \tA$ is an \oo-functor,
	it remains to show that for any $0 \le i < j < k \le m$ the square
	\[
	 \begin{tikzcd}
	 \On{m}(\atom{j}, \atom{k}) \times \On{m}(\atom{i}, \atom{j}) \ar[r, "\comp_0"] \ar[d, "\tilde y_{j, k} \times \tilde y_{i, j}"'] &
	 \On{m}(\atom{i}, \atom{k}) \ar[d, "\tilde y_{i, k}"] \\
	 \tA\bigl(y(j), y(k)\bigr) \times \tA\bigl(y(i), y(j)\bigr)
	 \ar[r, "\comp_0"] &
	 \tA\bigl(y(i), y(k)\bigr)
	 \end{tikzcd}
	\]
	of \oo-categories is commutative. The proof uses the same strategy adopted
	in paragraph~\ref{paragr:tA_oo-category}. On the objects, that is for any choice
	of composable $1$-cells $a$ and $b$ of $\On{m}$, with $s(a) = i$, $t(a) = s(b) = j$
	and $t(b) = k$, the commutativity of the above diagram is equivalent to the
	equality $yb \cdot ya = y(b\cdot a)$, which is clearly verified.
	Moreover, for any $(a, b)$ and $(c, d)$ in $\On{m}(\atom{i}, \atom{j}) \times \On{m}(\atom{j}, \atom{k})$
	such that the image of $a$ is contained in the image of $c$ and the image of $b$
	is contained in the image of $d$ (the other cases being trivial),
	one easily checks the commutativity of the square
	\[
	 \begin{tikzcd}
	 \On{m}(b, d) \times \On{m}(a, c) \ar[r, "\comp_0"] \ar[d, "\tilde y_{b, c} \times \tilde y_{a, c}"'] &
	 \On{m}(b\cdot a, d \cdot c) \ar[d, "\tilde y_{b\cdot a, d \cdot c}"] \\
	 \tA\bigl(yb, yd\bigr) \times \tA\bigl(ya, yc\bigr)
	 \ar[r, "\comp_0"] &
	 \tA\bigl(y(b\cdot a), y(d\cdot c)\bigr)
	 \end{tikzcd}
	\]
	by reducing to the atoms, as we did for the square~\eqref{dia:functoriality_tilde}.
\end{paragr}

\begin{paragr}
	In this paragraph we show that the assignment sending a functor $x \colon \Deltan{n} \to A$
	to the \oo-functor $\tilde x \colon \On{n} \to \tA$ defines a morphism of simplicial sets
	$\SN(A) \to \SN(\tA)$.
	Let $f \colon \Deltan{p} \to \Deltan{q}$ be a morphism in $\cDelta$ and $x \colon \Deltan{q} \to A$
	a functor. We have to show that the equality $\tilde x \On{f} = \widetilde{xf}$ holds true.
	Consider the Eilenberg--Zilber decompositions $(\pi, x')$ of $x$ and $(\rho, y)$ of $xf$,
	where $x' \colon \Deltan{q'} \to A$ and $y \colon \Deltan{p'} \to A$.
	We can depict the situation with the following diagram
	\[
	 \begin{tikzcd}
	  \Deltan{p} \ar[rr, "f"] \ar[d, "\rho"'] && \Deltan{q} \ar[d, "\pi"] \\
	  \Deltan{p'} \ar[dr, "y"'] && \Deltan{q'} \ar[dl, "x'"] \\
	   & A & 
	 \end{tikzcd}\ .
	\]
	The morphism $\pi f$ of $\cDelta$ admits a decomposition
	of a degeneracy $e \colon \Deltan{p} \to \Deltan{\ell}$ followed by a face
	$g \colon \Deltan{\ell} \to \Deltan{q'}$. Now, the composition $x'g \colon \Deltan{\ell} \to A$
	is a non-degenerate element of $\SN(A)$ and so we must have $\Deltan{\ell} = \Deltan{p'}$,
	$e = \rho$ and $x'g = y$, by the uniqueness of the Eilenberg--Zilber decomposition.
	We have to show that $\tilde y = \widetilde{x'}\On{g}$, so that the following triangle
	\[
	 \begin{tikzcd}
	  \On{p'} \ar[rr, "\On{g}"] \ar[rd, "\tilde y"'] &&
	  \On{q'} \ar[ld, "\widetilde{x'}"] \\
	  & \tA &
	 \end{tikzcd}
	\]
	of \oo-functors is commutative. It trivially commutes at the level of objects.
	For any injective map $a \colon \Deltan{n} \to \Deltan{p'}$, we clearly have
	$ya = x' g a$, so that by definition $\tilde y(a) = \widetilde{x'}\On{g}(a)$
	and therefore the triangle is commutative on $1$-cells.
	Let $a$ and $b$ be two parallel $1$-cells of $\On{p'}$,
	say $a \colon \Deltan{m} \to \Deltan{p'}$ and $b \colon \Deltan{n} \to \Deltan{p'}$.
	Observe that $\On{p'}(a, b)$ is empty if and only if $\On{q'}(ma, mb)$
	is empty if and only if $\tA(\tilde y(a), \tilde y(b))$ is so.
	Otherwise, there exist a unique monomorphism
	$\phi \colon \Deltan{m} \to \Deltan{n}$ of $\cDelta$, an integer $r \ge 0$,
	an unique injective morphism $h \colon \Deltan{r} \to \Deltan{p'}$ of $\cDelta$
	and $1$-cells $a'$ and $b'$
	of $\On{r}$ such that $b \phi = a$, $ha' = a$, $hb' = b$ and
	\[
	 b' = \atom{0, 1} + \atom{1, 2} + \dots + \atom{r-1, r}\,. 
	\]
    By definition,
	$\tA(\tilde y(a), \tilde y(b)) = \On{r}(a', b')$ and we
	have a commutative square of isomorphism
	\[
	 \begin{tikzcd}
	  \On{p'}(a, b) \ar[rr, "(\On{g})_{a, b}"] &&
	  \On{q'}(ma, mb) \\
	  & \On{r}(a', b') \ar[ul, "(\On{h})_{a', b'}"] \ar[ur, "(\On{hg})_{a', b'}"'] &
	 \end{tikzcd}
	\]
	of \oo-categories by Corollary~\ref{coro:suboriental} and this immediately implies that the triangle
	\[
	 \begin{tikzcd}
	  \On{p'}(a, b) \ar[rr, "\On{g}"] \ar[rd, "\tilde y_{a, b}"'] &&
	  \On{q'}(ma, mb) \ar[ld, "\widetilde{x'}_{ma, mb}"] \\
	  & \tA_\phi(\tilde y(a), \tilde y(b)) &
	 \end{tikzcd}
	\]
	of \oo-categories commutes, as $\tilde y_{a, b}$ is defined as
	the inverse of $(\On{h})_{a', b'}$ and $\widetilde{x'}_{ma, mb}$
	as the inverse of $(\On{hg})_{a', b'}$. This concludes the proof,
	showing that the assignment $\eta_A \colon \SN(A) \to \SN(\tA)$ is indeed
	a morphism of simplicial sets.
\end{paragr}

\begin{paragr}
    We now want to show that the morphism $\eta_A \colon \SN(A) \to \SN(\tA)$
    is the counit of the adjoint pair $(\Sc, \SN)$ for the object
    $\SN(A)$. This is equivalent to say that the precomposition by
    $\eta_A$ induces a bijection
    \[
     \Hom_{\ooCat}(\tA, B) \cong \Hom_{\EnsSimp}(\SN(A), \SN(B))
    \]
    of sets for any \oo-category $B$.  In turn, this bijection means
    that for any morphism of simplicial sets $F \colon \SN(A) \to \SN(B)$
    there exists a unique \oo-functor $\bar{F} \colon \tA \to B$ such that
    the triangle
    \begin{equation}\label{dia:lifting_unit}
     \begin{tikzcd}
      \SN(A) \ar[r, "F"] \ar[d, "\eta_A"] & \SN(B) \\
      \SN(\tA) \ar[ru, "\SN(\bar{F})"']
     \end{tikzcd}
    \end{equation}
    is commutative. This would show in particular that $\tA \cong \Sc\SN(A)$.
    We shall first prove the uniqueness and then the existence of such
    an \oo-functor $\bar{F}$.
\end{paragr}

\begin{paragr}[Uniqueness]
    Suppose a functor $G \colon \tA \to B$ such that
    $F = \SN(G) \eta_A$ exists. Object-wise, the
    functor $G$ must coincide with $F_0$.
    A $1$-cell of $\tA$ is a tuple $a = (f_1, \dots, f_n)$
    of non-trivial composable arrows of $A$. Let $f$ be the composite
    $f_n \comp_0 \dots \comp_0 f_1$ of the arrows which are components
    of $a$; we can view $f$ as a non-degenerate $1$-simplex
    $f \colon \Deltan{1} \to \SN(A)$. Then $G(a)$ must be equal
    to $F(f)$. For observe that $\SN(\eps_A) \eta_A$ is the
    identity on $\SN(A)$ and $\eps_A(a) = f$.
    
    Let $a$ and $b$ be two $1$-cells of $\tA$, say
    $a \colon \Deltan{m} \to A$ and $b \colon \Deltan{n} \to A$.
    We can suppose that there is an injective morphism
    $\phi \colon \Deltan{m} \to \Deltan{n}$ of $\cDelta$ such that
    $b \phi = a$, otherwise $\tA(a, b)$ is empty; we fix such a morphism $\phi$.
    By definition,
    \[
     \tA_\phi(a, b) = \On{n}\bigl(\atom{\phi(0), \phi(1)} + \dots + \atom{\phi(m-1), \phi(m)},
     \atom{0, 1} + \dots + \atom{n-1, n} \bigr)
    \]
    and we have an \oo-functor
    \[
     F(b) \colon \On{n} \to B\,.
    \]
    Hence, the \oo-functor $\tA_\phi(a, b) \to B(Ga, Gb)$ is the
    composition of the following \oo-functors
    \[
     \begin{tikzcd}
      \tA_\phi(a, b) \ar[r, "\cong"] &
      \On{n}(a', b') \ar[r, "G(b)"] &
      B(Ga, Gb)
     \end{tikzcd}\ ,
    \]
    where we have set
    \[
     a' = \atom{\phi(0), \phi(1)} + \dots + \atom{\phi(m-1), \phi(m)}
     \quadet
     b' = \atom{0, 1} + \dots + \atom{n-1, n}\,.
    \]
    Varying $\phi$ this gives a unique \oo-functor $\tA(a, b) \to B(Ga, Gb)$,
    thus proving the uniqueness of $G$.
\end{paragr}

\begin{paragr}[Existence]
    The previous paragraph already shows how the functor
    $\bar F \colon \tA \to B$ must be define, if it exists.
    It remains to check that this assignment is indeed an 
    \oo-functor.
    
    Let $x$ and $y$ be objects of $\tA$ and
    $a \colon \Deltan{\ell} \to A$, $b \colon \Deltan{m} \to A$
    and $c \colon \Deltan{n} \to A$ be $1$-cells of $\tA(x, y)$.
    Without loss of generality, we can suppose that there are
    injective morphisms $\phi \colon \Deltan{\ell} \to \Deltan{m}$
    and $\psi \colon \Deltan{m} \to \Deltan{n}$ such that
    $c\psi = b$ and $b \phi = a$. We set
    \begin{align*}
     a' &= \atom{\psi\phi(0), \psi\phi(1)} + \dots + \atom{\psi\phi(\ell-1), \psi\phi(\ell)}\,,
     \\
     b' &= \atom{\psi(0), \psi(1)} + \dots + \atom{\psi(m-1), \psi(m)}\,, \\
     c' &= \atom{0, 1} + \dots + \atom{n-1, n}\,.
    \end{align*}
    We have a diagram
    \[
     \begin{tikzcd}
      \tA_\psi(b, c) \times \tA_\phi(a, b) \ar[r, "\comp_1"] \ar[d, "\cong"'] &
      \tA_{\psi\phi}(a, c) \ar[d, "\cong"] \\
      \On{n}(b', c') \times \On{n}(a', b') \ar[d, "F(c)"'] \ar[r, "\comp_1"] &
      \On{n}(a', c') \ar[d, "F(c)"] \\
      B(\bar{F}b, \bar{F}c) \times B(\bar{F}a, \bar{F}b) \ar[r, "\comp_1"] & B(\bar{F}a, \bar{F}c)
     \end{tikzcd}\ ,
    \]
    where the upper square commutes by definition and the
    lower square commutes by the \oo-functoriality of $F(c) \colon \On{n} \to B$.
    Making the morphisms $\phi$ and $\psi$ varying among the index defining
    the sum $\tA(a, b)$ and $\tA(a, b)$, we get a commutative square
    \[
     \begin{tikzcd}
      \tA(b, c) \times \tA(a, b) \ar[r, "\comp_1"]
      \ar[d, "\bar{F}_{b, c}\times \bar{F}_{a, b}"'] &
      \tA_{\psi\phi}(a, c) \ar[d, "\bar{F}_{a, c}"] \\
      B(\bar{F}b, \bar{F}c) \times B(\bar{F}a, \bar{F}b) \ar[r, "\comp_1"] & B(\bar{F}a, \bar{F}c)
     \end{tikzcd}
    \]
    of \oo-categories.
    
    Let $x$, $y$ and $z$ be three objects of $\tA$. We have to show
	that the square of \oo-categories
	\[
	 \begin{tikzcd}
	  \tA(y, z) \times \tA(x, y) \ar[r, "\comp_0"]
      \ar[d, "\bar{F}_{y, z}\times \bar{F}_{x, y}"'] &
      \tA_{\psi\phi}(x, z) \ar[d, "\bar{F}_{x, z}"] \\
      B(\bar{F}y, \bar{F}z) \times B(\bar{F}x, \bar{F}y) \ar[r, "\comp_0"] &
      B(\bar{F}x, \bar{F}z)
	 \end{tikzcd}
	\]
	is commutative. As for the objects, that is the $1$-cells of $\tA$ and $B$,
	it is clear: indeed, for any $a \colon x \to y$ and $b \colon y \to z$
	of $\tA$, we have on the one hand that $b \comp_0 a$ is just the
	concatenation of the simplices $a$ and $b$ of $A$, while on the
	other hand $\bar{F}(c)$	applied to a $1$-cell $c = (f_1, \dots, f_n)$
	of $\tA$ gives image under $F$ of the composition
	$F(f_n \comp_0 \dots \comp_0 f_1)$ of its components.
	Therefore we have to check that, for any choice
	$(b, a)$ and $(d, c)$ of elements of $\tA(y, z)\times \tA(x, y)$,
	the square
    \[
     \begin{tikzcd}
      \tA(b, d) \times \tA(a, c) \ar[r, "\comp_0"]
      \ar[d, "\bar{F}_{b, d}\times \bar{F}_{a, c}"'] &
      \tA_{\psi\phi}(b\cdot a, d \cdot c) \ar[d, "\bar{F}_{b\cdot a, d\cdot c}"] \\
      B(\bar{F}b, \bar{F}d) \times B(\bar{F}a, \bar{F}c) \ar[r, "\comp_0"] &
      B(\bar{F}(b\cdot a), \bar{F}(d\cdot c))
     \end{tikzcd}
    \]
    of \oo-categories is commutative. This is completely analogous to
    the case of the ``vertical composition'' $\comp_1$ that we showed
    above: we reduce to components $\tA_\psi(b, d)$ and $\tA_\phi(a, c)$,
    for which there is a diagram
    \[
     \begin{tikzcd}
      \tA_\psi(b, d) \times \tA_\phi(a, c) \ar[r, "\comp_0"] \ar[d, "\cong"'] &
      \tA_{\psi\cdot \phi}(a, c) \ar[d, "\cong"] \\
      \On{n}(b', d') \times \On{n}(a', c') \ar[d, "F(d\cdot c)"'] \ar[r, "\comp_0"] &
      \On{n}(b'\cdot a', d'\cdot c') \ar[d, "F(d\cdot c)"] \\
      B(\bar{F}b, \bar{F}d) \times B(\bar{F}a, \bar{F}c) \ar[r, "\comp_0"] &
      B(\bar{F}(b\cdot a), \bar{F}(d\cdot c))
     \end{tikzcd}
    \]
    of \oo-categories in which the upper square commutes by definition
    and the lower square by \oo-functoriality of
    $F(d\cdot c) \colon \On{n} \to B$, and finally we conclude by
    varying among all the morphisms $\phi$ and $\psi$ indexing
    the coproducts $\tA(a, c)$ and $\tA(b, d)$.
    This achieves the proof of the existence of the \oo-functor
    $\bar{F} \colon \tA \to B$ and so this establishes
    the lifting problem depicted in~\eqref{dia:lifting_unit}.
    Equivalently, the precomposition by $\eta_A \colon \SN(A) \to \SN(\tA)$
    gives a bijection
    \[
     \Hom_{\ooCat}(\tA, B) \cong \Hom_{\EnsSimp}(\SN(A), \SN(B))\,,
    \]
    from which we deduce the isomorphism $\tA = \Sc\SN(A)$.
\end{paragr}

\begin{thm}
	Let $A$ be a split-free category. Then the \oo-category $\tA$
	defined in paragraph~\ref{paragr:tA_oo-category} is isomorphic
	to the \oo-category $c_\infty\SN(A)$.
\end{thm}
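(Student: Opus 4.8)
The plan is to deduce the statement formally from the universal property established over the course of this section. First I would recall that, by definition of the adjoint pair $(\Sc, \SN)$ (see the Notations), the left adjoint $\Sc = c_\infty$ is characterised by the natural bijection
\[
 \Hom_{\ooCat}(\Sc\SN(A), B) \cong \Hom_{\EnsSimp}(\SN(A), \SN(B))\,,
\]
so that $\Sc\SN(A)$, equipped with the unit of the adjunction, corepresents the functor $B \mapsto \Hom_{\EnsSimp}(\SN(A), \SN(B))$ on $\ooCat$; such a corepresenting object is unique up to a unique isomorphism.

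The substance of the proof has already been carried out in the preceding paragraphs. Indeed, I would invoke the \oo-category $\tA$ of paragraph~\ref{paragr:tA_oo-category} together with the morphism of simplicial sets $\eta_A \colon \SN(A) \to \SN(\tA)$, and the uniqueness and existence arguments for the lift $\bar F \colon \tA \to B$, which together show that precomposition with $\eta_A$ induces a bijection
\[
 \Hom_{\ooCat}(\tA, B) \cong \Hom_{\EnsSimp}(\SN(A), \SN(B))
\]
for every \oo-category $B$. Thus $\tA$, with $\eta_A$ in the role of the unit, corepresents the very same functor. By the uniqueness of corepresenting objects, there is a canonical isomorphism $\tA \cong \Sc\SN(A)$ identifying $\eta_A$ with the unit of the adjunction; since $\Sc = c_\infty$ this is exactly the assertion of the theorem.

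All the genuine difficulty lies upstream, not in this final deduction. The main obstacle, already resolved above, is to check that the assignments of paragraph~\ref{paragr:tA_oo-category} endow $\tA$ with a bona fide \oo-category structure and that the candidate lift $\bar F$ is a genuine \oo-functor. This is handled by reducing each hom-\oo-category $\tA_\phi(x, y)$ to an oriental $\On{\omega}(a, c)$ and transporting the compositions $\comp_0$ and $\comp_1$ across the canonical isomorphisms furnished by Corollary~\ref{coro:suboriental} and Proposition~\ref{prop:2-cells_orientals}. The split-free hypothesis on $A$ is precisely what makes these identifications valid: as noted in the remark after the definition of $\tA$, without it two consecutive non-degenerate arrows could compose to an identity, which would break the oriental description of the hom-\oo-categories and hence the entire reduction.
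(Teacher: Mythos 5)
Your proposal is correct and follows essentially the same route as the paper: the theorem is deduced exactly as you describe, by observing that the uniqueness and existence arguments for the lift $\bar F$ show that precomposition with $\eta_A$ gives a bijection $\Hom_{\ooCat}(\tA, B) \cong \Hom_{\EnsSimp}(\SN(A), \SN(B))$ for every \oo-category $B$, identifying $\eta_A$ with the unit of the adjunction $(\Sc, \SN)$ and hence $\tA \cong \Sc\SN(A)$ by uniqueness of corepresenting objects. Your remark on where the split-free hypothesis enters matches the paper's own remark as well.
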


\begin{exem}
	Let $C$ be a $3$-category and consider the normalised oplax
	$3$-functor $\sup \colon i_{\cDelta}(N_3(C)) \to C$ defined
	in example~\ref{exem:sup}. For any $1$-category $A$, the
	category $c\Sd N(A)$ is split-free and 
	moreover it is shown in Theorem~32 of~\cite{delHoyo} that the
	canonical morphism $c\Sd N(A) \to A$ is a Thomason equivalence.
	Hence, we get a diagram
	\[
	 c\,\Sd N\bigl(i_{\cDelta}(N_3(C))\bigr) \to i_{\cDelta}(N_3(C)) \to C
	\]
	whose composition if still a normalised oplax $3$-functor by
	Theorem~\ref{thm:iso_oplax}. Now, the category
	$C' = c\,\Sd N\bigl(i_{\cDelta}(N_3(C))\bigr)$ is
	split-free and therefore we get a span
	\[
	 C' \leftarrow \ti{3}\widetilde{C'} \to C
	\]
	of $3$-functors.
	We conjecture that both the $3$-functors
	above are Thomason equivalences. Since
	we observed in Example~\ref{exem:sup_we}
	that the morphism $\Nl(\sup)$ of simplicial sets
	is a simplicial weak equivalence and we observed above
	that the functor $C' \to i_{\cDelta}(N_3(C))$ is a Thomason equivalence,
	by a 2-out-of-3 argument
	one of these $3$-functor is a Thomason equivalence if and only
	if the other is so.
	
	This is a partial generalisation to $3$-categories of the
	approach used by Chiche in~\cite{chiche_homotopy} to show
	that the minimal fundamental localiser of $\nCat{2}$ is given
	by the class of Thomason equivalences, thus showing that
	$2$-categories intrinsically model homotopy types.
	In order to generalise this result to higher category, one would need
	to prove that both the $3$-functors of the above span are aspherical,
	\ie they satisfy the $3$\nbd-cat\-egorical generalisation of Quillen's Theorem~A.
	The author does not even know if this is true for $2$-categories.
	In fact, Chiche avoids this problem introducing a notion of asphericity for oplax $2$-functors,
	that seems out of reach for higher dimension.
	We can nonetheless say something interesting
	about the homotopy theory of normalised $3$-functors,
	as pointed out in the following remark.
\end{exem}

\begin{rem}
	The nerve functor $\Nl \colon \lnCat \to \EnsSimp$
	allows us to define a class of weak equivalences on
	$\lnCat$, that we call \emph{Thomason equivalences}.
	More precisely, a normalised oplax $3$-functor is
	a Thomason equivalence if and only if its image via
	$\Nl$ is a weak homotopy equivalence. Since
	the triangle
	\[
	 \begin{tikzcd}[column sep=small]
	 	& \EnsSimp & \\
	 	\Cat \ar[ur, "N"] \ar[rr, hookrightarrow]
	 	&& \lnCat \ar[ul, "\Nl"']
	 \end{tikzcd}
	\]
	commutes, a classical result of Illusie--Quillen
	tells us that the composite functor
	\[
	 \begin{tikzcd}
	  \EnsSimp \ar[r, "i_\cDelta"] &
	  \lnCat \ar[r, "\Nl"] & \EnsSimp
	 \end{tikzcd}
	\]
	is weakly homotopy equivalent to the identity on
	simplicial sets.
	Moreover, Example~\ref{exem:sup_we} gives us that
	a normalised oplax $3$-functor $u \colon A \to B$
	is a Thomason equivalence if and only if the
	functor
	$i_\cDelta(\Nl(u)) \colon i_\cDelta(\Nl(A)) \to i_\cDelta(\Nl(B))$ is so. Hence, the composite
	functor
	\[
	\begin{tikzcd}
	\lnCat \ar[r, "\Nl"] & \EnsSimp
		\ar[r, "i_\cDelta"] & \lnCat
	\end{tikzcd}
	\]
	is homotopic to the identity functor on $\lnCat$.
	We conclude that the nerve functor $\Nl \colon \lnCat \to \EnsSimp$ induces an equivalence at the level of
	the underlying homotopy categories.
\end{rem}

\appendix
\section{Strict higher categories}
\label{app:higher_cats}

\begin{paragr}\label{def_enriched}
	Let \V~be a category. A \emph{\Vn-graph} $X$ is the data of a set $X_0$
	of \emph{objects} and, for any $x, y$ in $X_0$, an object~$X(x, y)$ of~\V.
	A \emph{morphism of \Vn-graphs} $f \colon X \to Y$ is given by a function
	$f_0 \colon X_0 \to Y_0$ between the objects as well as morphisms
	\[
	f_{x, y} \colon X(x, y) \to Y(fx, fy)
	\]
	of \V~for any $x, y$ in $X_0$.
	We denote by \Vn-$\pref{\G_1}$ the category of \Vn-graphs.
	
	Let $I$ be an object of \V.
	A \emph{reflexive (\V, $I$)\nbd-graph} $X$, or simply \emph{reflexive \V-graph}
	if the object $I$ is clear from the context, is a \V-graph endowed with a
	morphism $k_x \colon I \to \Hom_\mathcal{V}(x, x)$ for any element $x$ in $X_0$.
	A \emph{morphism of reflexive \Vn-graphs} $f \colon X \to Y$ is a morphism
	of \Vn-graphs such that $f_{x, x}\, k_x = k_{fx}$ for any $x$ in $X_0$.
	We denote by \Vn-$\pref{\Gr_1}$ the category of reflexive \Vn-graphs.
	
	Let $(\mathcal V, \otimes, I)$ be a monoidal category.
	A $(\mathcal V, \otimes, I)$\nbd-category $A$, or simply \Vn-category
	if the monoidal structure is clear, is a reflexive $(\mathcal V, I)$\nbd-graph
	endowed with morphisms
	\[
	\Hom_A(b, c) \otimes \Hom_A(a, b) \to \Hom_A(a, c)
	\]
	of \V, for any objects $a, b$ and $c$ in $A_0$, satisfying the associativity
	axioms
	\[
	\begin{tikzcd}
	\Hom_A(c, d) \otimes \Hom_A(b, c) \otimes \Hom_A(a, b)
	\ar[r] \ar[d] &
	\Hom_A(c, d) \otimes \Hom_A(a, c) \ar[d] \\
	\Hom_A(b, d) \otimes \Hom_A(a, b) \ar[r] &
	\Hom_A(a, d)
	\end{tikzcd}
	\]
	for any $a$, $b$, $c$ and $d$ in $A_0$, and the identity axioms
	\[
	\begin{tikzcd}[column sep=6em]
	I \otimes \Hom_A(a, b)
	\ar[r, "{k_b \otimes \Hom_A(a, b)}"] &
	\Hom_A(b, b) \otimes \Hom_A(a, b) \ar[d] \\
	\Hom(a, b) \ar[u, "\cong"] \ar[r, equal]&
	\Hom(a, b)
	\end{tikzcd}
	\]
	and 
	\[
	\begin{tikzcd}[column sep=6em]
	\Hom_A(a, b) \otimes I
	\ar[r, "{\Hom_A(a, b) \otimes k_a}"] &
	\Hom_A(a, b) \otimes \Hom_A(a, a) \ar[d] \\
	\Hom(a, b) \ar[u, "\cong"] \ar[r, equal]&
	\Hom(a, b)
	\end{tikzcd}
	\]
	for any $a$ and $b$ in $A_0$.
	A \Vn-category is also widely
	known as \Vn-enriched category or category enriched in \V.
	A \emph{morphism of \Vn-categories} $\phi \colon A \to B$, also called \Vn-enriched functor,
	is a morphism of the underlying reflexive \Vn-graphs which moreover
	commutes with compositions morphisms.
	We denote by \Vn-$\Cat$ the category of \Vn-categories.
	It is easy to see that if the category \V{} has finite products,
	then the category \Vn-$\Cat$ also has finite products.
\end{paragr}

\begin{paragr}
	For $n>1$, the category $\nCat{(n+1)}$ of $(n+1)$-categories
	can be inductively defined as the category
	of reflexive $\nCat{n}$-graphs. This provides a
	canonical functor $\nCat{n} \to \nCat{(n+1)}$
	which admits a left adjoint $\tau^n$.
	The category $\ooCat$ can be defined as the limit of the tower
	\[
	\begin{tikzcd}
	 \dots \ar[r] & \nCat{(n+1)} \ar[r, "\tau^{n}"] &
	 \nCat{n} \ar[r, "\tau^{n-1}"] &
	 \nCat{(n-1)} \ar[r, "\tau^{n-2}"] & \dots
	 \end{tikzcd}
	\]
\end{paragr}

\section{Steiner theory}

In this section we present the theory of
augmented directed complexes, introduced by Steiner in~\cite{Steiner1}.
We follow closely the exposition given by Ara and Maltsiniotis
in~\cite{AraMaltsiCondE} and~\cite{Joint}.

\begin{paragr}\label{paragr:conv_comp}
	Unless explicitly stated, in this section we shall always
	write ``chain complex'' to mean ``chain complex of abelian groups
	in non-negative degrees with homological indexing''.
	We remind that a \ndef{homogeneous element}\index{homogeneous element} of a chain complex~$K$
	is an element of a group $K_n$ for some $n \ge 0$.
	If $x$ is an homogeneous element of~$K$, we shall call
	the \ndef{degree}\index{homogeneous element!degree} of~$x$ the unique $n\ge 0$ for which
	$k$ belongs to $K_n$ and we shall denote it by~$|x|$.
\end{paragr}

\begin{paragr}
	An \ndef{augmented directed complex}\index{augmented directed complex} is a triple $(K, K^\ast, e)$
	where
	\[
	K =
	\begin{tikzcd}\cdots
	\ar[r, "d_{n+1}"] & K_n \ar[r, "d_n"] & K_{n-1}
	\ar[r, "d_{n-1}"] & \cdots \ar[r, "d_2"] & K_1 \ar[r, "d_1"] & K_0
	\end{tikzcd}
	\]
	is a chain complex, $e \colon K_0 \to \Z$ is an augmentation
	(so that we have $e\, d_1 = 0$) and $K^\ast = (K^\ast_i)_{i \ge 0}$
	is a graded set such that for any~$i \ge 0$ the set~$K^\ast_i$
	is a submonoid of the abelian group~$K_i$.
	We shall
	call \ndef{positivity submonoids}\index{augmented directed complex!positivity submonoid} of $K$ the submonoids~$K^\ast_i$,
	with $i \ge 0$.
	
	We will often denote an augmented directed complex simply
	by its underlying chain complex, especially if the augmented
	and positivity structures are clear.
\end{paragr}

\begin{rem}
	We warn the reader that we do not ask any compatibility
	of the submonoids of positivity with respect to the differentials.
\end{rem}

\begin{paragr}\label{paragr:preorder_monoid}
	Let $(K, K^\ast, e)$ be an augmented directed complex.
	For any $i \ge 0$, the submonoid $K^\ast_i$ induces a
	preorder relation $\le$ on~$K_i$,
	compatible with the abelian group structure, defined by
	\[
	x \le y \quaddefssi y - x \in K^\ast_i\ .
	\]
	In particular, we have
	\[
	K^\ast_i = \{x \in K_i : x \ge 0\}\,.
	\]
	More precisely, for an augmented complex $(K, e)$ the additional
	structure given by the collection of submonoids $K^\ast$
	is equivalent to endow, for all $i \ge 0$,
	each abelian group~$K_i$ with a
	preorder relation compatible with the abelian group structure.
\end{paragr}

\begin{paragr}
	A \ndef{morphism of augmented directed complexes} $f$ from
	$(K, K^\ast, e)$ to $(K', K'^\ast, e')$ is a morphism
	of augmented chain complexes which moreover respects
	the submonoids of positivity. That is, $f \colon K \to K'$
	is a morphism of chain complexes such that $e'\, f_0 = e$
	and for any $i \ge 0$ the image $f(K^\ast_i)$ of the submonoid
	$K^\ast_i$ under $f$ is contained in~$K^{\prime\ast}_i$.
	This latter condition can be stated equivalently by saying
	that the morphism $f_i \colon K_i \to K'_i$ preserves the preorder $\le$ on~$K_i$
	for all $i \ge 0$.
	We shall denote by~$\Cda$ the category of augmented directed complexes.
\end{paragr}

\begin{paragr}\label{paragr:def_basis}
	A \emph{basis}\index{augmented directed complex!with basis} of an augmented directed complex $K$ is a graded set
	$B = (B_i)_{i\geq 0}$ such that, for any $i\geq 0$, the set $B_i$
	is both a basis for the $\Z$-module $K_i$ and a set of generators
	for the submonoid $K^\ast_i$ of $K_i$.
	We shall often identify a basis $B = (B_i)_{i \ge 0}$ to the set
	$\coprod_{i \ge 0} B_i$
	
	If an augmented directed complex has a basis, for any $i\geq 0$ the preorder relation
	of positivity on $K_i$ defined in paragraph~\ref{paragr:preorder_monoid}
	is a partial order relation and the elements $B_i$
	of the basis are the minimal elements of the poset $(K_i^\ast\setminus \{0\}, \le)$;
	in particular, if a basis of $K$ exists then it is unique.
	When an augmented directed complex
	has a basis, we shall say
	that the complex is \emph{with basis}.
\end{paragr}


\begin{paragr}\label{paragr:def_support}
	Let $K$ be an augmented directed complex with basis $B$.
	For any $i\geq 0$, a $i$\hyp{}homogeneous element~$x$ can be written
	uniquely as a linear combination of elements of $B_i$
	\[
	x = \sum_{b \in B_i} x_b\, b
	\]
	with integral coefficients. The \emph{support}\index{support} of $x$, denoted by
	$\supp(x)$,
	is the (finite) set of elements of the basis appearing in this
	linear combination with non-zero coefficient. We can
	write the $i$\hyp{}homogeneous elements uniquely as the difference of two
	positive $i$\hyp{}homogeneous elements with disjoint supports $x = x_+ - x_-$,
	where
	\[
	x_+ = \sum_{\substack{b \in B_i \\ x_b > 0}} x_b\,b
	\qquad\text{and}\qquad
	x_- = -\sum_{\substack{b \in B_i \\ x_b < 0}} x_b\,b\,.
	\]
\end{paragr}

\begin{paragr}\label{paragr:def_atome}
	Let $K$ be an augmented directed complex with basis $B = (B_i)_{i\ge 0}$.
	For $i \ge 0$ and $x$ in $K_i$, we define a matrix
	\[
	\atom{x}=\tabll{\atom{x}}{i},
	\]
	where the elements $\atom{x}^\varepsilon_k$ are inductively defined by:
	\begin{itemize}
		\item $\atom{x}^0_i = x = \atom{x}^1_i$\,;
		\smallskip
		
		\item $\atom{x}^0_{k - 1} = d(\atom{x}^0_k)_-$ and $\atom{x}^1_{k - 1} = d(\atom{x}^1_k)_+$\,, \,for $0 < k \leq i$\,.
	\end{itemize}
	We say that the basis $B$ of $K$ is \ndef{unital}
	\index{augmented directed complex!unital basis}
	if, for any $i\geq 0$ and any $x$ in $B_i$, we have the equality
	$e(\atom{x}^0_0) = 1 = e(\atom{x}^1_0)$.
	
	We shall say that an augmented directed complex $K$ is \ndef{with unital basis}
	is it is with basis and its unique basis is unital.
\end{paragr}

\begin{paragr}\label{paragr:def_Steiner}
	Let $K$ be an augmented directed complex with basis $B$.
	For $i\geq 0$, we denote by $\leq_i$ the smallest
	preorder relation on $B = \coprod_i B_i$ satisfying
	\[
	x \leq_i y \quad\text{if}\quad |x|>i, |y|>i\text{ and }\supp{(\atom{x}^1_i)}\cap \supp{(\atom{y}^0_i)} \neq \vide.
	\]
	We say that the basis $B$ is \emph{loop-free}
	\index{augmented directed complex!loop-free basis} if, for any $i \ge 0$,
	the preorder relation $\leq_i$ is a partial order relation.
	
	We shall call \emph{Steiner complex}\index{Steiner complex}
	an augmented directed complex $K$ with unital
	and loop-free basis $B$. 
\end{paragr}

\begin{paragr}\label{paragr:def_le_N}
	Let $K$ be an augmented directed complex with basis $B = \coprod_{i \ge 0} B_i$.
	We shall denote by~$\leN$ the smallest preorder relation on $B$
	satisfying
	\[
	x \leN y \quad\text{if}\quad x \in \supp(d(y)_-) \text{ or }
	y \in \supp(d(x)_+),
	\]
	where we fixed by convention $d(b) = 0$ if $b$ belongs to $B_0$.
	We shall say that a basis $B$ is \emph{strongly loop-free}
	\index{augmented directed complex!strongly loop-free basis}
	if the preorder relation $\leN$ is actually a partial
	order relation.
	
	We shall call an augmented directed complex $K$ 
	a \emph{strong Steiner complex}\index{strong Steiner complex} if it is  with basis and
	its unique basis is unital
	and strongly loop-free. 
\end{paragr}

\begin{prop}[Steiner]
	Let $K$ be an augmented directed complex with basis $B$.
	If the basis $B$ is strongly loop-free, then it is loop-free.
\end{prop}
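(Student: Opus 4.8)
The plan is to show that, for every $i \ge 0$, the preorder $\leq_i$ is contained in $\leN$, and then to deduce antisymmetry of $\leq_i$ from that of $\leN$. Since $\leq_i$ is by definition the smallest preorder on $B$ containing the generating relation
\[
R_i = \{(x, y) : |x| > i,\ |y| > i,\ \supp(\atom{x}^1_i) \cap \supp(\atom{y}^0_i) \neq \vide\},
\]
and $\leN$ is already a preorder, it suffices to check $R_i \subseteq \leN$; the inclusion $\leq_i \subseteq \leN$ then follows because the reflexive–transitive closure of $R_i$ is the smallest preorder containing $R_i$, hence is contained in any preorder containing it.

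The heart of the argument is a pair of dual claims about the bracket construction of paragraph~\ref{paragr:def_atome}. First I would record that each $\atom{x}^1_k$, for $i \le k \le |x|$, is a positive element of $K_k$: this holds for $k = |x|$ because $x \in B_{|x|}$ is a basis element and so lies in the positivity submonoid, and it propagates downward because $\atom{x}^1_{k-1} = d(\atom{x}^1_k)_+$ is by construction a positive part. The key claim is then: for every $b \in \supp(\atom{x}^1_i)$ one has $x \leN b$. I would prove this by downward induction on $k$ from $|x|$ to $i$, establishing that every $b \in \supp(\atom{x}^1_k)$ satisfies $x \leN b$. The base $k = |x|$ is reflexivity. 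For the step, write $\atom{x}^1_k = \sum_{b'} c_{b'} b'$ with all $c_{b'} > 0$; since $\atom{x}^1_{k-1} = d(\atom{x}^1_k)_+$, any $b \in \supp(\atom{x}^1_{k-1})$ occurs with strictly positive coefficient in $d(\atom{x}^1_k) = \sum_{b'} c_{b'}\, d(b')$, so at least one summand contributes positively, i.e.\ there is $b' \in \supp(\atom{x}^1_k)$ with $b \in \supp(d(b')_+)$. The generating clause of $\leN$ gives $b' \leN b$, the induction hypothesis gives $x \leN b'$, and transitivity yields $x \leN b$. The dual claim, that $b \in \supp(\atom{y}^0_i)$ implies $b \leN y$, is proved identically, now using $\atom{y}^0_{k-1} = d(\atom{y}^0_k)_-$ and the clause $b \in \supp(d(b')_-) \Rightarrow b \leN b'$.

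With these two claims, $R_i \subseteq \leN$ is immediate: if $(x, y) \in R_i$, choose $b \in \supp(\atom{x}^1_i) \cap \supp(\atom{y}^0_i)$; then $x \leN b$ and $b \leN y$, so $x \leN y$. Hence $\leq_i \subseteq \leN$. Finally, if $B$ is strongly loop-free then $\leN$ is antisymmetric, and any subrelation of an antisymmetric relation is antisymmetric: $x \leq_i y$ and $y \leq_i x$ force $x \leN y$ and $y \leN x$, whence $x = y$. Thus each $\leq_i$ is a partial order, so $B$ is loop-free.

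The step I expect to demand the most care is the inductive extraction of a single contributing basis element: one must argue that when the positive combination $\sum_{b'} c_{b'}\, d(b')$ has a given basis element $b$ in the support of its positive part, some individual $b'$ with $c_{b'} > 0$ already satisfies $b \in \supp(d(b')_+)$ — that is, cancellation among the various $d(b')$ cannot manufacture the sign. This is the elementary fact that a strictly positive total coefficient forces a strictly positive contribution from at least one summand, but it is the single place where positivity of the $\atom{x}^1_k$ and the sign bookkeeping genuinely interact, so it is worth isolating and stating cleanly rather than glossing over.
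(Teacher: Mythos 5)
Your proof is correct. The paper offers no argument of its own here --- it simply cites Proposition~3.7 of Steiner's paper --- and your proof is essentially the one found there: a descending induction on $k$ showing that $b \in \supp(\atom{x}^1_k)$ implies $x \leN b$ (and dually that $b \in \supp(\atom{y}^0_k)$ implies $b \leN y$), so that every generating pair of $\leq_i$ already lies in the preorder $\leN$ and antisymmetry passes to subrelations; the step you rightly isolate --- extracting a single $b'$ with $b \in \supp(d(b')_+)$ from the positive combination $\sum_{b'} c_{b'}\, d(b')$, which is legitimate precisely because all $c_{b'} > 0$ (the brackets $\atom{x}^1_k$ being positive parts, hence nonnegative combinations of basis elements) --- is handled correctly.
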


\begin{proof}
	See Proposition~3.7 of~\cite{Steiner1}.
\end{proof}

\begin{paragr}\label{def:nu}
	We define a functor
	\[
	\nu \colon \Cda \to \ooCat
	\]
	as follows.
	
	Let $K$ be an augmented directed complex.
	For $i \ge 0$, the $i$\hyp{}cells of $\nu(K)$
	are the matrices
	\[
	\tabld{x}{i}
	\]
	such that
	\begin{enumerate}
		\item $x^\epsilon_k$ belongs to $K^\ast_k$ for $\epsilon = 0, 1$
		and $0 \le k \le i$ ;
		\item $d(x^\epsilon_k) = x^1_{k-1} - x^0_{k-1}$ for $\epsilon = 0, 1$
		and $0 < k \le i$ ;
		\item $e(x^\epsilon_0) = 1$ for $\epsilon = 0, 1$ ;
		\item $x_i^0 = x_i^1$.
	\end{enumerate}
	Let us describe the \oo-categorical structure.
	Let
	\[
	x = \tabld{x}{i}
	\]
	be an $i$\hyp{}cell of $\nu(K)$ for $i \ge 0$. If $i > 0$ we
	define the source and the target of $x$ to be respectively
	\[
	s(x) =
	\begin{pmatrix}
	x^0_0 &\dots &x^0_{i-2}
	&x^0_{i-1}\cr\noalign{\vskip 3pt}
	x^1_0 &\dots &x^1_{i-2}
	&x^0_{i-1}
	\end{pmatrix}
	\quadet
	t(x) =
	\begin{pmatrix}
	x^0_0 &\dots &x^0_{i-2}
	&x^1_{i-1}\cr\noalign{\vskip 3pt}
	x^1_0 &\dots &x^1_{i-2}
	&x^1_{i-1}
	\end{pmatrix}\, .
	\]
	The identity of $x$ is given by the matrix
	\[
	1_x = 
	\begin{pmatrix}
	x^0_0 &\dots &x^0_{i-1}
	&x^0_{i} & 0\cr\noalign{\vskip 3pt}
	x^1_0 &\dots &x^1_{i-1}
	&x^1_{i} & 0
	\end{pmatrix}\, .
	\]
	Finally if
	\[
	y = \tabld{y}{i}
	\]
	is another $i$\hyp{}cell which is $j$\hyp{}composable with $x$,
	with $i > j \ge 0$, then we set
	\[
	x \ast_j y =
	\begin{pmatrix}
	y^0_0 &\dots & y^0_j & x^0_{j+1} + y^0_{j+1} & \dots & x^0_{i} + y^0_i
	\cr\noalign{\vskip 3pt}
	x^1_0 &\dots & x^1_j & x^1_{j+1} + y^1_{j+1} & \dots & x^1_{i} + y^1_i
	\end{pmatrix}\, .
	\]
	One checks that this indeed defines an \oo-category.

	If $x$ is an $i$\hyp{}cell of $\nu(K)$, $i \ge 0$, then we shall denote
	by $x^\eps_k$ the component of the matrix defining $x$,
	for $0 \le k \le i$ and $\eps = 0, 1$. We shall simply name by $x_i$
	the element $x^0_i = x^1_i$ and for $k >i$ and $\eps = 0, 1$ we set $x^\eps_k = 0$.
	
	Let $f \colon K \to K'$ be a morphism of augmented directed complexes.
	The collection of functions
	\[
	\tabld{x}{i} \mapsto
	\begin{pmatrix}
	f(x^0_0) &\dots &f(x^0_{i-1})
	&f(x^0_{i})\cr\noalign{\vskip 3pt}
	f(x^1_0) &\dots &f(x^1_{i-1})
	&f(x^1_{i})
	\end{pmatrix}
	\]
	defines an \oo-functor $\nu(f) \colon \nu(K) \to \nu(K')$.
\end{paragr}

\begin{rem}
	Steiner shows in~\cite{Steiner1} that the functor $\nu$ admits a left adjoint $\lambda$, that we will not define since we will not need it.
	In particular, the composition $\nN = \SN \circ \nu \colon \Cda \to \EnsSimp$ defines a nerve functor for augmented directed complexes
	and this has a left adjoint given by $\cC = \lambda \circ c_\infty \colon \EnsSimp \to \ooCat$.
\end{rem}

\begin{paragr}\label{paragr:def_atom_II}
	Let $K$ be an augmented directed complex with basis $B$.
	For any element $x$ of $K_i$, one easily checks
	that the matrix
	\[
	\atom{x} = \tabll{\atom{x}}{i}\,,
	\]
	as defined in paragraph~\ref{paragr:def_atome}, is an $i$\hyp{}cell of $\nu(K)$ if and only if
	the element $x$ belongs to $K^\ast_i$ and we have
	the equalities $e(\atom{x}^0_0) = 1 = e(\atom{x}^1_0)$.
	Setting $\atom{x}_i^\eps = x^\eps_i$ for $k \le i$ and $\atom{x}^\eps_k = 0$
	for all $k > i$, $\eps= 0, 1$, these notations
	are compatible with those of paragraph~\ref{def:nu}
	whenever~$\atom{x}$ is an $i$\hyp{}cell of $\nu(K)$.
	
	If the basis $B$ of $K$ is unital, then for any element $x$ of the basis
	the matrix defined by $\atom{x}$
	is a cell of $\nu(K)$.
	In this case, we call the cell $\atom{x}$ of $\nu(X)$
	the \ndef{atom}\index{atom} associated to $x$ .
\end{paragr}

\begin{thm}\label{thm:Steiner_adj}
	The functors
	\[
	\lambda \colon \ooCat \to \Cda
	\quadet
	\nu \colon \Cda \to \ooCat
	\]
	define a pair of adjoint functors.
\end{thm}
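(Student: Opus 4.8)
The plan is to first make $\lambda$ explicit and then exhibit the natural bijection $\Hom_{\Cda}(\lambda A, K) \cong \Hom_{\ooCat}(A, \nu K)$ directly, rather than through a unit or a counit. Recall that $\lambda(A)_i$ is the abelian group generated by symbols $[x]$, one for each $i$-cell $x$ of $A$, subject to the relations $[x \ast_j y] = [x] + [y]$ for $j$-composable $i$-cells; its positivity submonoid $\lambda(A)^\ast_i$ is generated by the $[x]$, the differential is $d[x] = [t(x)] - [s(x)]$, and the augmentation sends each generator coming from an object to $1$. Note for later use that $[1_x] = 0$, since $[1_x] = [1_x \ast_k 1_x] = 2[1_x]$.

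First I would construct $\Phi \colon \Hom_{\ooCat}(A, \nu K) \to \Hom_{\Cda}(\lambda A, K)$. Given an $\infty$-functor $f \colon A \to \nu K$, each $i$-cell $x$ of $A$ maps to an $i$-cell of $\nu K$, that is a matrix whose top entry $f(x)_i$ lies in $K^\ast_i$; set $\Phi(f)[x] = f(x)_i$. The composition rule of $\nu K$ gives $(u \ast_j v)_i = u_i + v_i$ for $i$-cells $u, v$ with $i > j$, so $\Phi(f)$ respects the relations defining $\lambda(A)_i$ and extends to a monoid-preserving homomorphism $\lambda(A)_i \to K_i$. Since $f$ commutes with source and target, the defining identities of $\nu K$ yield $d(f(x)_i) = f(t(x))_{i-1} - f(s(x))_{i-1}$, so $\Phi(f)$ is a chain map, and $e(f(x)_0) = 1$ shows it respects the augmentation. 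Conversely I would build $\Psi$ in the other direction: given $g \colon \lambda A \to K$, send an $i$-cell $x$ to the matrix with entries $\Psi(g)(x)^0_k = g[s_k(x)]$ and $\Psi(g)(x)^1_k = g[t_k(x)]$ for $0 \le k \le i$ (with $s_i(x) = t_i(x) = x$). The globular identities $s(s_k(x)) = s_{k-1}(x)$ and $t(s_k(x)) = t_{k-1}(x)$, combined with $g$ being a chain map, give exactly the differential condition $d(\Psi(g)(x)^\eps_k) = \Psi(g)(x)^1_{k-1} - \Psi(g)(x)^0_{k-1}$; the positivity and augmentation conditions are immediate, so $\Psi(g)(x)$ is a genuine $i$-cell of $\nu K$.

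The heart of the argument, and the step I expect to be the main obstacle, is checking that $\Psi(g)$ is an $\infty$-functor, in particular that $\Psi(g)(x \ast_j y) = \Psi(g)(x) \ast_j \Psi(g)(y)$. This reduces to the bookkeeping identities $s_k(x \ast_j y) = s_k(x)$ for $k \le j$ and $s_k(x \ast_j y) = s_k(x) \ast_j s_k(y)$ for $k > j$ (and dually for targets); after applying $g$ and using $[u \ast_j v] = [u] + [v]$, these match precisely the addition pattern in the composition formula of $\nu K$. Preservation of identities follows from $[1_x] = 0$ together with the zero column appearing in $1_{f(x)}$.

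Finally I would verify that $\Phi$ and $\Psi$ are mutually inverse: $\Phi(\Psi(g))[x]$ is the top entry $g[x]$, while $\Psi(\Phi(f))(x)^\eps_k$ recovers $f(x)^\eps_k$ because $f$ commutes with iterated sources and targets and the matrix of $f(x)$ is determined by these; naturality in both $A$ and $K$ is then a routine diagram chase. This establishes the bijection and hence the adjunction $\lambda \dashv \nu$. As this is exactly Steiner's theorem, one may alternatively simply invoke~\cite{Steiner1}.
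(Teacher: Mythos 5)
Your proposal is correct, but it does something the paper does not: the paper's entire proof of this theorem is the citation ``This is Theorem 2.11 of \cite{Steiner1}'', whereas you reconstruct Steiner's argument explicitly. Your reconstruction is sound: the map $\Phi(f)[x] = f(x)_i$ is well defined on $\lambda(A)_i$ precisely because the relations $[x \ast_j y] = [x] + [y]$ are matched by the additivity of top entries in the composition formula of $\nu K$, and the chain-map and augmentation checks go through as you say (with the small caveat that the augmentation condition only concerns degree $0$, i.e.\ objects of $A$); conversely $\Psi(g)(x)^\eps_k = g[s_k(x)], g[t_k(x)]$ satisfies the four conditions defining cells of $\nu K$ exactly as you verify, with the differential condition coming from $d[u] = [t(u)] - [s(u)]$ applied to $u = s_k(x)$ and $t_k(x)$, and preservation of identities from $[1_x] = 0$. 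Your identification of the functoriality of $\Psi(g)$ as the crux is also accurate --- this is where Steiner's proof does its real work. One harmless slip worth flagging: under the paper's convention for $\nu$ (paragraph \ref{def:nu}, where $x \ast_j y$ requires $t_j(y) = s_j(x)$ and the source entries of the composite come from $y$), one has $s_k(x \ast_j y) = s_k(y)$ for $k \le j$, not $s_k(x)$ as you wrote; you are implicitly using the opposite (diagrammatic) composition order, consistently, so the verification is unaffected, but it should be aligned with the paper's convention if spliced in. What your route buys is a self-contained proof where the paper only defers to the literature; what the citation buys is brevity and, implicitly, the surrounding machinery of \cite{Steiner1} (e.g.\ Theorem \ref{thm:equivalence_Steiner}) developed in a single consistent framework.
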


\begin{proof}
	This is Theorem 2.11 of \cite{Steiner1}.
\end{proof}

\begin{thm}[Steiner]\label{thm:equivalence_Steiner}
	For any Steiner complex $K$,
	the counit morphism
	\[
	\lambda(\nu(K)) \to K
	\]
	is an isomorphism.
	In particular, the restriction of the functor
	$\nu \colon \Cda\to \ooCat$
	to the category of Steiner complexes is fully faithful
\end{thm}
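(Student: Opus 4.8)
The plan is to prove that the counit $\epsilon_K \colon \lambda(\nu(K)) \to K$ of the adjunction $\lambda \dashv \nu$ furnished by Theorem~\ref{thm:Steiner_adj} is an isomorphism whenever $K$ is a Steiner complex, and then to deduce full faithfulness formally. First I would recall the concrete description of this counit coming from Steiner's construction of $\lambda$: the abelian group $\lambda(\nu(K))_i$ is generated by symbols $[x]$, one for each $i$-cell $x$ of $\nu(K)$, subject to the relations $[x \comp_j y] = [x] + [y]$, and $\epsilon_K$ sends $[x]$ to its top component $x_i \in K^\ast_i$. One checks at once that this is a well-defined morphism of augmented directed complexes: the identity $(x \comp_j y)_i = x_i + y_i$ for $j < i$ takes care of the defining relations of $\lambda(\nu(K))_i$, while compatibility with differentials and augmentation follows from the cell axioms $d(x_k^\eps) = x_{k-1}^1 - x_{k-1}^0$ and $e(x_0^\eps) = 1$ built into the definition of $\nu(K)$.

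The heart of the argument is carried out degreewise. Fixing $i \ge 0$, I would exhibit a two-sided inverse of $\epsilon_{K,i}$. Since $K$ is a Steiner complex its basis $B$ is unital, so by paragraph~\ref{paragr:def_atom_II} each $b \in B_i$ determines an atom $\atom{b}$, which is an $i$-cell of $\nu(K)$, and clearly $\epsilon_K[\atom{b}] = \atom{b}_i = b$. Let $s_i \colon K_i \to \lambda(\nu(K))_i$ be the linear map defined on the basis $B_i$ by $s_i(b) = [\atom{b}]$. Then $\epsilon_{K,i} \circ s_i = \mathrm{id}_{K_i}$ is immediate. For the reverse composite it suffices to establish, for every $i$-cell $x$ of $\nu(K)$ with $x_i = \sum_b (x_i)_b\, b$, the identity $[x] = \sum_{b \in \supp(x_i)} (x_i)_b\, [\atom{b}]$ in $\lambda(\nu(K))_i$; granting it, $s_i \circ \epsilon_{K,i} = \mathrm{id}$ as well, so $\epsilon_{K,i}$ is an isomorphism of abelian groups. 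Since atoms are positive and the $(x_i)_b$ are nonnegative for a positive cell, this isomorphism restricts to an isomorphism of the positivity submonoid onto $K^\ast_i$, whence $\epsilon_K$ is an isomorphism in $\Cda$.

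The hard part will be proving this last identity, which is precisely where loop-freeness is used. It rests on Steiner's atomic decomposition: a cell $x$ of $\nu(K)$ can be written as an iterated $\comp$-composite of the atoms $\atom{b}$, with $b$ ranging over $\supp(x_i)$, together with identities of lower-dimensional atoms, the composition being organised by the loop-free partial orders $\le_k$ of paragraph~\ref{paragr:def_Steiner} in such a way that the result is well-defined and independent of the chosen linear extension. Applying $\lambda$ then turns each $\comp_j$ into a sum of the generators $[\,\cdot\,]$, while every identity cell is sent to $0$ (since $[1_y] = [1_y \comp_i 1_y] = 2[1_y]$), leaving exactly $\sum_{b \in \supp(x_i)} (x_i)_b\, [\atom{b}]$. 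I expect this atomic decomposition, which is the genuine combinatorial content of Steiner's theory and the only place the unital loop-free hypotheses truly intervene, to be the main obstacle; once it is available the remainder is bookkeeping.

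Finally I would obtain full faithfulness of the restriction of $\nu$ to Steiner complexes purely formally. For Steiner complexes $K$ and $L$, the function induced by $\nu$ on hom-sets factors as
$\Hom_{\Cda}(K, L) \to \Hom_{\ooCat}(\nu(K), \nu(L)) \cong \Hom_{\Cda}(\lambda\nu(K), L)$,
where the isomorphism is the adjunction bijection and the whole composite is precomposition with $\epsilon_K$. As $\epsilon_K$ is an isomorphism, precomposition with it is a bijection, so the functor $\nu$ restricted to Steiner complexes is fully faithful.
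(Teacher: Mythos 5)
Your overall architecture is correct and, for calibration, it is exactly the route of Steiner's original argument: the paper itself offers no proof of Theorem~\ref{thm:equivalence_Steiner} — like Theorem~\ref{thm:Steiner_adj} it is imported from~\cite{Steiner1} — so the relevant comparison is with Steiner's proof, which your reduction tracks faithfully. The concrete description of the counit as $[x] \mapsto x_i$, its well-definedness via the additivity of top components under the composition formula of~\ref{def:nu}, the degreewise section $b \mapsto [\atom{b}]$ (available precisely because unitality of the basis makes the tables $\atom{b}$ of~\ref{paragr:def_atom_II} genuine cells), the observation that $\lambda$ annihilates identity cells, the restriction to positivity submonoids, and the purely formal deduction of full faithfulness from invertibility of $\epsilon_K$ via naturality ($\epsilon_L \comp \lambda\nu(f) = f \comp \epsilon_K$) are all correct, and the last step correctly needs only $\epsilon_K$, not $\epsilon_L$, to be invertible.

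The genuine gap is the one you name yourself and then defer: the atomic decomposition — every $i$-cell $x$ of $\nu(K)$ is an iterated composite of (iterated identities of) atoms, with the $i$-dimensional atoms occurring being exactly the $\atom{b}$ for $b \in \supp(x_i)$, each with multiplicity $(x_i)_b$ — is asserted, not proven, and it is the entire mathematical content of the theorem; everything else in your write-up is indeed bookkeeping. It is not a routine verification: one must show that a cell whose table is strictly larger than an atom splits nontrivially as $y \comp_j z$ for some $j < i$, which means constructing two new tables that again satisfy all four cell axioms of~\ref{def:nu}, and loop-freeness of the orders $\le_k$ of~\ref{paragr:def_Steiner} is what lets one choose a $\le_j$-extremal basis element along which to perform the split so that the induction on the size of the table terminates; if some $\le_j$ had a cycle, no consistent choice of splitting order would exist. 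Without that induction your text is a correct reduction of Theorem~\ref{thm:equivalence_Steiner} to Steiner's decomposition result, not a proof of it. One simplification you may reclaim: you do not need the decomposition to be independent of the chosen linear extension — any single decomposition already yields $[x] = \sum_{b} (x_i)_b\,[\atom{b}]$ after applying $\lambda$, since $\lambda$ converts every composite into a sum; uniqueness or canonicity of decompositions is only required for the stronger statement that $\nu(K)$ is \emph{freely} generated by its atoms, which the counit isomorphism does not need.
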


\begin{paragr}\label{paragr:def_Steiner_category}
	We shall call \ndef{Steiner \oo-category}\index{Steiner infty-category@Steiner \oo-category}
	(resp.~\ndef{strong Steiner \oo-category}\index{strong Steiner infty-category@strong Steiner \oo-category})
	an \oo-category
	in the essential image of the restriction of
	the functor $\nu \colon \Cda \to \ooCat$ to the 
	full subcategory of Steiner complexes
	(resp.~strong Steiner complexes).
	The preceding theorem states that the functor $\nu$
	induces an equivalence of categories between the
	category of Steiner complexes and that of
	Steiner \oo-categories
	(resp.~between the
	category of strong Steiner complexes and that of
	strong Steiner \oo-cat\-e\-go\-ries).
\end{paragr}

\section{Joyal's {$\Theta$} category}
\label{app:theta}
	
	\begin{paragr}\label{paragr:def_disks}
		For any $i \ge 0$, we shall denote by $\Fl_i$
		the set of $i$-cells of an \oo-category and by
		$\Dn i$ the \oo-category corepresenting 
		the functor $\Fl_i \colon \ooCat \to \Ens$ mapping an \oo-category $A$ to the set
		of its $i$\hyp{}cells.
		In fact, this \oo-category is an $i$\hyp{}category having
		a single non-trivial $i$\hyp{}cell that we shall call its \ndef{principal cell}.
		For any $0 \le k \le i$ the $i$\hyp{}category $\Dn i$ has exactly two
		non-trivial $k$\hyp{}cells which are the $k$\hyp{}dimensional
		iterated source and target of its principal cell.
		This is how the graphs of $\Dn i$ (without identities) for $i=0, 1, 2$
		look like:
		\[
			\Dn 0 =
			\begin{tikzcd}
				\bullet
			\end{tikzcd}
			\ ,\ D_1 =
			\begin{tikzcd}
				\bullet \arrow[r] & \bullet
			\end{tikzcd}
			\ ,\ D_2 = 
			\begin{tikzcd}
				\bullet \arrow[r, bend left=50, ""{name=U, below}] \arrow[r, bend right=50, ""{name=D}]
				& \bullet
				\arrow[Rightarrow,from=U,to=D]
			\end{tikzcd}
			\ ,\ D_3 = 
			\begin{tikzcd}
				\bullet \arrow[r, bend left=50, ""'{name=U}] \arrow[r, bend right=50, ""{name=D}]
				& \bullet
				\arrow[Rightarrow,from=U,to=D, shift right=1ex, bend right=30, ""{name=L}]
				\arrow[Rightarrow, from=U, to=D, shift left=1ex, bend left=30, ""{name=R, left}]
				\arrow[triple, from=L, to=R]{}
			\end{tikzcd}\ .
		\]
		
		For $i>0$, the natural transformations source and target $\Fl_i \to \Fl_{i-1}$
		induces \oo-functors $\Ths{i}, \Tht{i} \colon \Dn{i-1} \to \Dn i$.
		Explicitly the \oo-functor $\sigma_i$ (resp.~$\tau_i$) sends the principal cell of $\Dn{i-1}$
		to the source (resp.~the target) of the principal cell of $\Dn i$.
		
		For $0 \le j < i$ we shall denote by $\Ths[i]{j}, \Tht[i]{j} \colon \Dn{j} \to \Dn{i}$
		the \oo-functors corepresented by the natural transformations $\Gls[i]{j}$ and $\Glt[i]{j}$
		respectively, \ie the \oo-functors
		\[
			\Ths[i]{j} = \Ths{i}\dots\Ths{j+2}\Ths{j+1}
			\quadet
			\Tht[i]{j} = \Tht{i}\dots\Tht{j+2}\Tht{j+1}\,.
		\]
	\end{paragr}
	
	\begin{paragr}\label{paragr:def_tree}
		Let $\ell >0$ and $i_1, \dots, i_\ell$, $j_1, \dots, j_{\ell-1}$ be a collection of positive integers
		satisfying the inequalities
		\[
			i_k > j_k < i_{k+1}\ ,\quad \text{for }0< k <\ell\,.
		\]
		We shall often organise these integers in a matrix, called \ndef{matrix of dimensions}, of the following form
		\[
		 \begin{pmatrix}
		 i_1 && i_2 && \dots && i_{\ell-1} && i_\ell\cr\noalign{\vskip 3pt}
		  & j_1 && j_2 && \dots && j_{\ell-1} &
		 \end{pmatrix}
		\]
		and associate to it the diagram
		\[
			\begin{tikzcd}[column sep=1em]
			\Dn{i_1} &  & \Dn{i_2} &  & \Dn{i_3} &  & \dots &  & \Dn{i_{\ell-1}} &  & \Dn{i_\ell} \\
			& \Dn{j_1} \arrow[lu, "{\Ths[i_1]{j_1}}"] \arrow[ru, "{\Tht[i_2]{j_1}}"'] &  & \Dn{j_2} \arrow[lu, "{\Ths[i_2]{j_2}}"] \arrow[ru, "{\Tht[i_3]{j_2}}"'] &  & \dots &  & \dots &  &
			\Dn{i_{\ell-1}} \arrow[lu, "{\Ths[i_{\ell-1}]{j_{\ell-1}}}"] \arrow[ru, "{\Tht[i_\ell]{j_{\ell-1}}}"'] & 
			\end{tikzcd}
		\]
		in $\ooCat$. We shall call \ndef{globular sum} the colimit of such a diagram and we shall simply denote it by
		\[
			\Dn{i_1} \amalg_{\Dn{j_1}} \Dn{i_2} \amalg_{\Dn{j_2}}\dots \amalg_{\Dn{j_{\ell-1}}} \Dn{i_\ell}\,.
		\]
		We shall call \ndef{globular pasting scheme} any \oo-category that we get this way.
	\end{paragr}

	\begin{paragr}\label{paragr:dimension_globular_scheme}
		Consider a matrix of dimensions
		\[
			\begin{pmatrix}
			i_1 && i_2 && \dots && i_{\ell-1} && i_\ell\cr\noalign{\vskip 3pt}
			& j_1 && j_2 && \dots && j_{\ell-1} &
			\end{pmatrix}\ .
		\]
		The \ndef{dimension} of the globular pasting scheme $T$
		\[ 
			\Dn{i_1} \amalg_{\Dn{j_1}} \Dn{i_2} \amalg_{\Dn{j_2}}\dots \amalg_{\Dn{j_{\ell-1}}} \Dn{i_\ell}
		\]
		is given by the number
		\[
			\sum_{1\le k \le \ell} i_k - \sum_{0 < k < \ell} j_k =
			i_1 - j_1 + i_2 - j_2 + \dots + i_{\ell-1} - j_{\ell -1} + i_\ell\,.
		\]
		The \ndef{height} of the globular pasting scheme $T$
		is defined as the number
        \[
         \text{ht}(T) = \max_{1\le k \le \ell}(i_k)\,.
        \]

	\end{paragr}

	\begin{paragr}\label{paragr:def_Theta}
        Joyal's $\Theta$ category is the full subcategory
        of $\ooCat$ spanned by globular pasting schemes.
        We shall denote by $\Theta_+$ the full subcategory
        of $\ooCat$ obtained by adding the empty \oo-category to $\Theta$.
	\end{paragr}
	
    \begin{paragr}\label{paragr:def_Theta_n}
        The height defined in the previous paragraph defines
        a canonical grading on the objects of $\Theta$.
        For any integer $n \ge 0$, we denote by $\Theta_n$
        the full subcategory of $\Theta$ spanned by the objects
        of height at most $n$. We observe that $\Theta_0$ is
        the category whose only object is $\Dn{0}$ and whose
        only morphism is the identity and that $\Theta_1$
        is canonically isomorphic to the category $\cDelta$ of simplices;
        we thus get a canonical embedding $\cDelta \hookto \Theta$.
	\end{paragr}

	\begin{rem}
        The category $\Theta$ was first introduced by Joyal
        in~\cite{JoyalDisks}, with a definition more geometric in spirit. Berger~\cite{BergerNerve} and Makkai--Zawadowski~\cite{MakkaiZawadowskiDuality}
        later independently showed that the two definitions
        are actually equivalent. Another equivalent definition
        is due to Oury~\cite{Oury}.
	\end{rem}

	\begin{paragr}
        Another convenient and graphical description of the category $\Theta$ can be given
        in terms of planar rooted trees. Let $\mathcal T$ be the category of presheaves
        in finite linearly ordered set on the poset of non-negative integers, \ie
        an element $X$ of $\mathcal T$ is a sequence of finite linearly ordered sets $(X_n)_{n\ge 0}$
        equipped with order-preserving maps $X_n \to X_{n-1}$ for all $n>0$.
        A \ndef{planar rooted tree}, or simply \ndef{tree}, is an object $T$ of $\mathcal T$ such that $T_0$ is a singleton
        and for which $T_i$ is eventually empty for $i$ big enough. The greatest $i$ for which
        $T_i$ is non-empty will be called the \ndef{height} of the tree.
        
        Let us now sketch the correspondence between objects of $\Theta$ and trees.
        Instead of giving a formal framework, we are going to present some
        examples upon which one can easily build the intuition behind this
        bijection. For any $i \ge 0$, we associate to the object $\Dn i$ the linear tree
        $T$ of height $i$, that is, for which $T_k$ is a singleton for all $0 \le k \le i$,
        and we depict it as
        \begin{center}
         \begin{forest}
            for tree={%
                label/.option=content,
                grow'=north,
                content=,
                circle,
                fill,
                minimum size=3pt,
                inner sep=0pt,
            }
            [   [
                    [, edge=dotted
                        []
                    ]
                ]
            ]
         \end{forest}
        \end{center}
        So for instance we have
        \begin{center}
          $\Dn 0 = \bullet$
          \quad , \quad
          $\Dn 1$ =
          \begin{forest}
            for tree={%
                label/.option=content,
                grow'=north,
                content=,
                circle,
                fill,
                minimum size=3pt,
                inner sep=0pt,
            }
            [   
                [ ]
            ]
         \end{forest}
         \quad , \quad
         $\Dn 2$ = 
         \begin{forest}
            for tree={%
                label/.option=content,
                grow'=north,
                content=,
                circle,
                fill,
                minimum size=3pt,
                inner sep=0pt,
            }
            [  
                [
                    [ ]
                ]
            ]
         \end{forest}
         \quad , \quad
         $\Dn 3$ = \begin{forest}
            for tree={%
                label/.option=content,
                grow'=north,
                content=,
                circle,
                fill,
                minimum size=3pt,
                inner sep=0pt,
            }
            [   [
                    [
                        []
                    ]
                ]
            ]
         \end{forest}
        \end{center}
        The height $i$ of the tree determines the dimension of the principal cell.
        
        The element of $\cDelta$ corresponds precisely to the trees of height at most 1,
        for instance
        \begin{center}
            $\Deltan{0} = \bullet$
            \quad , \quad
            $\Deltan 1 =$
            \begin{forest}
            for tree={%
                label/.option=content,
                grow'=north,
                content=,
                circle,
                fill,
                minimum size=3pt,
                inner sep=0pt,
            }
            [   
                [ ]
            ]
         \end{forest}
         \quad , \quad
            $\Deltan 2 =$
        \begin{forest}
            for tree={%
                label/.option=content,
                grow'=north,
                content=,
                circle,
                fill,
                minimum size=3pt,
                inner sep=0pt,
                s sep+=15,
            }
            [   
                [ ] [ ]
            ]
         \end{forest}
         \quad , \quad
            $\Deltan 3 =$
            \begin{forest}
            for tree={%
                label/.option=content,
                grow'=north,
                content=,
                circle,
                fill,
                minimum size=3pt,
                inner sep=0pt,
                s sep+=15,
            }
            [   
                [ ] [ ] [ ]
            ]
         \end{forest}\, .
        \end{center}
        
        More generally, given an object $S$ of $\Theta$ with matrix of dimensions
        \[
         \begin{pmatrix}
		 i_1 && i_2 && \dots && i_{\ell-1} && i_\ell\cr\noalign{\vskip 3pt}
		  & j_1 && j_2 && \dots && j_{\ell-1} &
		 \end{pmatrix}\,,
        \]
        the corresponding tree is drawn inductively as follows. The terms
        $i_k$ and $i_{k+1}$ correspond to the height of two linear trees that
        are glued together, \ie share the same root and then fork from height $j_k$.
        We proceed from right to left, so that the description agrees with the
        usual way of writing the composition of cells.
        As a first example, consider the following tables of dimensions
        \[
         \begin{pmatrix}
		 2 && 2 \cr\noalign{\vskip 3pt}
		  & 0 &
		 \end{pmatrix}
		 \quad , \quad
		 \begin{pmatrix}
		 2 && 2 \cr\noalign{\vskip 3pt}
		  & 1 &
		 \end{pmatrix}
		 \quad , \quad
		 \begin{pmatrix}
		 2 && 2 && 2 \cr\noalign{\vskip 3pt}
		  & 0 && 1 &
		 \end{pmatrix}
		 \, ,
        \]
        and the associated objects of $\Theta$

        \[
         D_2 \amalg_{D_0} D_2 
		 \quad , \quad
		 D_2 \amalg_{D_1} D_2
		 \quad , \quad
         D_2 \amalg_{D_0} D_2 \amalg_{D_1} D_2\, ,
        \]
        that is the globular pasting schemes
        \[
            \begin{tikzcd}
            \bullet \arrow[r, leftarrow, bend left, ""{name=U1, below}] \arrow[r, bend right, ""{name=D1}]
            &  \bullet \arrow[r, leftarrow, bend left, ""{name=U2, below}] \arrow[r, bend right, ""{name=D2}]
            & \bullet
            \ar[Rightarrow, from=U1, to=D1] \ar[Rightarrow, from=U2, to=D2]
            \end{tikzcd}
        \quad , \quad
            \begin{tikzcd}
               \bullet
               \arrow[r, leftarrow, bend left=70, ""'{name=U}]
               \arrow[r, leftarrow, bend right=70, ""{name=D}]
               \arrow[r, leftarrow, ""{name=M1}, ""'{name=M2}] & \bullet
               \ar[Rightarrow, from=U, to=M1] \ar[Rightarrow, from=M2, to=D]
            \end{tikzcd}
        \quadet
            \begin{tikzcd}
                \bullet
                \arrow[r, leftarrow, bend left, ""'{name=U1}]
                \arrow[r, leftarrow, bend right, ""{name=D1}] & 
                \bullet
                \arrow[r, leftarrow, bend left=70, ""'{name=U2}]
                \arrow[r, leftarrow, bend right=70, ""{name=D2}]
                \arrow[r, leftarrow, ""{name=M1}, ""'{name=M2}] & \bullet
                \ar[Rightarrow, from=U1, to=D1]
                \ar[Rightarrow, from=U2, to=M1]
                \ar[Rightarrow, from=M2, to=D2]
            \end{tikzcd}\,.
        \]
        These objects correspond respectively to trees
        \begin{center}
            \begin{forest}
            for tree={%
                label/.option=content,
                grow'=north,
                content=,
                circle,
                fill,
                minimum size=3pt,
                inner sep=0pt,
                s sep+=15,
            }
            [   
                [ [ ] ]
                [ [ ] ]
            ]
         \end{forest}
         \qquad , \qquad
         \begin{forest}
            for tree={%
                label/.option=content,
                grow'=north,
                content=,
                circle,
                fill,
                minimum size=3pt,
                inner sep=0pt,
                s sep+=15,
            }
            [   
                [ 
                    [ ] [ ]
                ]
            ]
         \end{forest}
         \qquad\text{and}\qquad
         \begin{forest}
            for tree={%
                label/.option=content,
                grow'=north,
                content=,
                circle,
                fill,
                minimum size=3pt,
                inner sep=0pt,
                s sep+=15,
            }
            [   
                [ [ ] ]
                [
                    [ ] [ ]
                ]
            ]
         \end{forest}
        \end{center}
        
        As another more involved example, consider the matrix of dimensions
        \[
         \begin{pmatrix}
		 2 && 2 && 2 && 2 && 2 && 2 && 3 && 2 && 2 \cr\noalign{\vskip 3pt}
		  & 1 && 1 && 1 && 0 && 1 && 0 && 0 && 1 &
		 \end{pmatrix}\,,
        \]
        and the corresponding globular pasting scheme
        \[
		 \begin{tikzcd}
		  \bullet
		  \ar[r, rightarrow, bend left=85, looseness=2.2, ""{name=D11, below}]
		  \ar[r, leftarrow, bend left=70, looseness=1, ""{name=D12i}, ""{name=D12ii, below}]
		  \ar[r, leftarrow, ""{name=D13i}, ""{name=D13ii, below}]
		  \ar[r, leftarrow, bend right=70, looseness=1, ""{name=D14i}, ""{name=D14ii, below}]
		  \ar[r, leftarrow, bend right=85, looseness=2.2, ""{name=D15}]
		  \ar[Rightarrow, from=D11, to=D12i]
		  \ar[Rightarrow, from=D12ii, to=D13i]
		  \ar[Rightarrow, from=D13ii, to=D14i]
		  \ar[Rightarrow, from=D14ii, to=D15]
		  &
          \bullet
		  \ar[r, leftarrow, bend left=70, ""{name=D21, below}]
		  \ar[r, leftarrow, ""{name=D22i}, ""{name=D22ii, below}]
		  \ar[r, leftarrow, bend right=70, ""{name=D23}]
		  \ar[Rightarrow, from=D21, to=D22i]
		  \ar[Rightarrow, from=D22ii, to=D23]
		  &
		  \bullet
		  \ar[r, leftarrow, bend left=70, ""{name=D31, below}]
		  \ar[r, leftarrow, bend right=70, ""{name=D32}]
          \arrow[Rightarrow,from=D31,to=D32, shift right=1ex, bend right=30, ""{name=L}]
          \arrow[Rightarrow, from=D31, to=D32, shift left=1ex, bend left=30, ""{name=R, left}]
		  \arrow[triple, from=R, to=L]{}
          &
          \bullet
          \ar[r, leftarrow, bend left=70, ""{name=D41, below}]
		  \ar[r, leftarrow, ""{name=D42i}, ""{name=D42ii, below}]
		  \ar[r, leftarrow, bend right=70, ""{name=D43}]
		  \ar[Rightarrow, from=D41, to=D42i]
		  \ar[Rightarrow, from=D42ii, to=D43]
		 & \bullet
		 \end{tikzcd}\ .
        \]
        The associated tree is given by
        \[
	        \begin{forest}
	        for tree={%
	        	label/.option=content,
	        	grow'=north,
	        	content=,
	        	circle,
	        	fill,
	        	minimum size=3pt,
	        	inner sep=0pt,
	        	s sep+=15,
	        }
	        [   
		        [
			        [ ] [ ] [ ] [ ]
		        ]
		        [
			        [ ] [ ]
		        ]
		        [
			        [ [ ] ]
		        ]
		        [
			        [ ] [ ]
		        ]
	        ]
	        \end{forest}\, .
        \]
	\end{paragr}

\section{Orientals}
In this short section we present a visual intuition
of the first few orientals. Then for every poset $E$
we give a description	of the \oo-category $c_\infty N(E)$,
that we shall simply denote by $\On{E}$ and call the
\ndef{oriental of $E$}, and we deduce some of its basic properties,
following closely~\cite[§6]{AraMaltsiCondE}.

\begin{paragr}\label{paragr:atoms_orientals}
	Fix an integer $n\ge 0$. For any $0\le i \le n$
	the $i$-chains of the strong Steiner complex $\cC (\Deltan n)$
	are the elements of the free abelian groups
	generated by the elements of the set $\nd{(\Deltan n)}{i}$,
	the set of non-degenerate $i$-simplices of the representable
	simplicial set $\Deltan{n}$;
	that is, the generators of $\cC (\Deltan n)$ are the $i$-tuples $(j_0, j_1, \dots, j_i)$
	of non-negative integers such that $0 \le j_\ell < j_{\ell +1} \le n$
	for all $\ell =0, \dots, i-1$. For any such element
	of the basis of $\cC (\Deltan n)_i$, we shall write
	$\atom{j_0j_1\dots j_i}$ for the corresponding atom, instead
	of the more pedantic $\atom{(j_0, j_1, \dots, j_i)}$
	(see paragraph~\ref{paragr:def_atome}).
\end{paragr}

\begin{paragr}\label{paragr:orientals}
	The \oo-categories $\On 0$ and $\Dn 0$ are isomorphic.
	They are both terminal objects for the category $\ooCat$
	of small \oo-categories and they corepresent the functor
	mapping any \oo-category $A$ to the set $\Ob A$ of its objects.
	
	The \oo-categories $\On 1$ and $\Dn 1$ are isomorphic, too.
	They are both generated as \oo-graphs by $\bullet \longrightarrow \bullet$
	and they corepresent the functor mapping any \oo-category
	$A$ to the set $\Fl_1(A)$ of its $1$-cells.
	
	The \oo-category $\On 2$ is a free \oo-category, generated
	by the \oo-graph
	\begin{center}
		\begin{tikzpicture}
		\foreach \i in {0, 1, 2}{
			\tikzmath{\a = 210 + (120 * \i);}
			\node (n\i) at (\a:2) {$\atom{\i}$};
		}
		
		\draw [->] (n0) -- node [below] {$\atom{01}$} (n1);
		\draw [->] (n1) -- node [right] {$\atom{12}$} (n2);
		\draw [->] (n0) -- node [left] {$\atom{02}$} (n2);
		
		\draw [double, double equal sign distance, -implies] (150:0.3) -- node [above] {$\atom{012}$} (-10:0.3);
		\end{tikzpicture},
	\end{center}
	so that the $2$-cell $\atom{012}$ has $\atom{02}$ as source and $\atom{12}\comp_0 \atom{01}$
	as target. With the notations as above and as in paragraph~\ref{paragr:def_atome},
	we have
	\[
	\atom{i} =
	\begin{pmatrix}
	(i) \\ (i)
	\end{pmatrix}\ ,
	\quad\text{for }i=0, 1, 2
	\]
	for the objects,
	\[
	\atom{ij} =
	\begin{pmatrix}(i) & (i, j) \\ (j) & (i, j)\end{pmatrix}\ ,
	\quad\text{for }i, j= 0, 1, 2\text{ and }i <j
	\]
	for the $1$-cells and
	\[
	\atom{012} =%
	\begin{pmatrix}
	(0) & (0, 2) & (0, 1, 2) \\
	(2) & (0, 1) + (1, 2) & (0, 1, 2)
	\end{pmatrix}
	\]
	for the $2$-cell.
	
	The \oo-category $\On 3$ is a free \oo-category generated by the \oo-graph
	\begin{center}
		\begin{tikzpicture}[scale=2]
		\square{%
			/square/label/.cd,
			0=$\atom{0}$, 1=$\atom{1}$, 2=$\atom{2}$, 3=$\atom{3}$,
			01=$\atom{01}$, 12=$\atom{12}$, 23=$\atom{23}$,
			02=$\atom{02}$, 03=$\atom{03}$, 13=$\atom{13}$,
			012=$\atom{012}$, 023=$\atom{023}$, 123=$\atom{123}$, 013=$\atom{013}$,
			0123=$\atom{0123}$
		}
		\end{tikzpicture}
	\end{center}
	so that the $3$-cell $\atom{0123}$ has the $2$-cell 
	\[\bigl(\atom{23}\comp_0 \atom{012}\bigr) \comp_1 \atom{023}\]
	as source and the $2$-cell
	\[\bigl(\atom{123}\comp_0\atom{01}\bigr) \comp_1 \atom{013}\]
	as target. Indeed we have
	\[
	\atom{0123} =
	\begin{pmatrix}
	(0) & (01) + (12) + (23) & (012) + (023) & (0123) \\
	(3) & (03) & (123) + (013) & (0123)
	\end{pmatrix}\ .
	\]
	
	The \oo-category $\On 4$ is freely generated by the diagram
	displayed in figure~\ref{fig:4-simplex}, where we omitted the
	brackets $\atom{\,\cdot\,}$ for reasons of space.
	
\end{paragr}

\begin{figure}
	\centering
	\begin{tikzpicture}[scale=1.45]
	\pentagon{%
		/pentagon/label/.cd,
		0=$0$, 1=$1$, 2=$2$, 3=$3$, 4=$4$,
		01=$01$, 12=$12$, 23=$23$, 34=$34$, 04=$04$,
		02=$02$, 03=$03$, 13=$13$, 14=$14$, 24=$24$,
		012=$012\phantom{i}$, 034=$034$, 023=$023$, 123=$123$, 134=$134$,
		014=$014$, 024=$024$, 234=$234$, 013=$013$, 124=$124$,
		0123=$0123$, 0124=$0124$, 0134=$0134$, 0234=$0234$, 1234=$1234$,
		01234=$01234$,
	}
	\end{tikzpicture}
	\caption{The \oo-category $\On 4$.}
	\label{fig:4-simplex}
\end{figure}

\begin{paragr}
	Let $E$ be a poset. For any $p \ge 0$
	the non-degenerated $p$-simplices of $\SN(E)$ are strictly
	increasing maps $\Deltan{p} \to E$, \ie the set $\SN(E)_p$ of
	non-degenerate $p$-simplices of the nerve of $E$ consists of
	$(p+1)$\hyp{}tuples
	\[(x_0, x_1,\dots, x_p)\]
	of elements of $E$ such that $x_i < x_{i+1}$ for all $i=0, 1, \dots, p-1$.
	The abelian group $(\cC \SN(E))_p$ (resp. abelian monoid $(\cC \SN(E))^*_p$) 
	is freely generated by the set $\nd{\SN(E)}{p}$.
	The differential is defined by
	\[
	d(i_0, i_1,\dots, i_p) = \sum_{k=0}^p (-1)^k (i_0, i_1, \dots, \widehat{i_k}, \dots, i_p)\,,\quad p>0\,,
	\]
	where $(i_0, \dots, \widehat{i_k}, \dots, i_p) = (i_0, \dots, i_{k-1}, i_{k+1}, \dots, i_p)$,
	and the augmentation by $e(i_0) = 1$.
\end{paragr}

\begin{thm}
	The augmented directed complex $\cC \SN(E)$ is a strong Steiner complex.
\end{thm}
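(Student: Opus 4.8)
The plan is to verify, directly from the explicit description of $\cC\SN(E)$ recalled just above the statement, the three defining properties of a strong Steiner complex (paragraph~\ref{paragr:def_le_N}): that it is with basis, that this basis is unital, and that it is strongly loop-free. Since $E$ is a poset, I shall freely identify a nondegenerate $p$-simplex of $\SN(E)$ with a chain $\sigma = (x_0 < x_1 < \dots < x_p)$ of $E$. The crucial consequence of $E$ being a poset is that the $p+1$ faces $\partial_k\sigma$ of such a $\sigma$ are pairwise distinct, so that in $d(\sigma) = \sum_k (-1)^k \partial_k\sigma$ no cancellation occurs and the positive and negative parts are exactly $d(\sigma)_+ = \sum_{k\ \text{even}}\partial_k\sigma$ and $d(\sigma)_- = \sum_{k\ \text{odd}}\partial_k\sigma$, with disjoint supports.

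Having a basis is immediate: by the construction of $\cC = \lambda\circ c_\infty$, for each $p$ the group $(\cC\SN(E))_p$ is free on $\nd{\SN(E)}{p}$ and the positivity submonoid $(\cC\SN(E))^*_p$ is the free commutative monoid on the same set, so $B = (\nd{\SN(E)}{p})_{p\ge 0}$ is a basis in the sense of paragraph~\ref{paragr:def_basis}. For unitality I would compute the iterated source and target towers $\atom{\sigma}^0_\bullet$ and $\atom{\sigma}^1_\bullet$ of $\sigma$. Using Corollary~\ref{coro:suboriental} to identify the subcomplex of $\cC\SN(E)$ spanned by the faces of $\sigma$ with the oriental complex $\cC\Deltan{p}$ — an identification of augmented directed complexes, hence compatible with $d$, with the decomposition into $\pm$-parts, and with $e$ — reduces the computation to the standard oriental, where one checks by a short downward induction on $k$ that $\atom{\sigma}^0_0 = (x_0)$ and $\atom{\sigma}^1_0 = (x_p)$. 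In particular $e(\atom{\sigma}^0_0) = e((x_0)) = 1 = e((x_p)) = e(\atom{\sigma}^1_0)$, so the basis is unital.

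The main work, and the main obstacle, is strong loop-freeness: one must show that the preorder $\leN$ of paragraph~\ref{paragr:def_le_N} is antisymmetric. First I would unwind its generating relations: for a chain $\tau$ and a codimension-one face $\sigma = \partial_k\tau$ one has $\sigma \leN \tau$ when $k$ is odd and $\tau \leN \sigma$ when $k$ is even, and these are all the generating relations since $d(\cdot)_\pm$ involve only codimension-one faces. The key observation is that the least vertex is monotone: along every generating relation $a \leN b$ one has $\min a \le \min b$. Indeed, if $a = \partial_k b$ with $k$ odd then $k \ge 1$, the vertex in position $0$ survives, and $\min a = \min b$; if instead $b = \partial_k a$ with $k$ even, then either $k \ge 2$ and again $\min a = \min b$, or $k = 0$, in which case $b$ is obtained from $a$ by deleting its minimum and $\min a < \min b$. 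Consequently, around any cycle in $\leN$ the least vertex is constant, say equal to $m$, so every chain in the cycle has $m$ as its least element.

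It remains to rule out nontrivial cycles all of whose chains have least element $m$, for which I would use the contraction $C \mapsto C' := C \smallsetminus \{m\}$. A direct check of the two cases above shows that this contraction \emph{reverses} every min-$m$ generating relation: a relation $a \leN b$ between min-$m$ chains becomes a relation $b' \leN a'$ between chains of $E$ with one fewer vertex, the parity of the deleted position dropping by one once $m$ is removed from position $0$. Since the single-vertex chain $(m)$ is $\leN$-minimal among min-$m$ chains, it cannot lie on a nontrivial cycle, so every chain of such a cycle has at least two vertices and the contraction is defined and injective on it; a nontrivial min-$m$ cycle therefore produces a nontrivial cycle (traversed backwards) among chains with strictly smaller maximal vertex-count. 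Choosing a nontrivial cycle whose chains have the least possible maximal vertex-count yields a contradiction, since when that count equals $1$ there are no generating relations at all. Hence $\leN$ is antisymmetric, $B$ is strongly loop-free, and $\cC\SN(E)$ is a strong Steiner complex. I expect the delicate points to be bookkeeping the parity shift in the contraction step and the edge case of the vertex $(m)$, rather than any conceptual difficulty.
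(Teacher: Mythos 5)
Your proposal is mostly sound, but the unitality step contains a genuine gap. The phrase ``one checks by a short downward induction on $k$ that $\atom{\sigma}^0_0 = (x_0)$ and $\atom{\sigma}^1_0 = (x_p)$'' hides the actual difficulty: the recursion of paragraph~\ref{paragr:def_atome} computes $\atom{\sigma}^0_{k-1} = d(\atom{\sigma}^0_k)_-$ where $\atom{\sigma}^0_k$ is a \emph{sum} of faces, and although no cancellation occurs in $d$ of a single chain (your poset observation, which is correct), cancellations do occur across the summands. Already for $\sigma = (x_0 < x_1 < x_2 < x_3)$ one has $\atom{\sigma}^0_2 = (x_0,x_1,x_2) + (x_0,x_2,x_3)$ and
\[
 d\bigl((x_0,x_1,x_2) + (x_0,x_2,x_3)\bigr) = (x_0,x_1) + (x_1,x_2) + (x_2,x_3) - (x_0,x_3)\,,
\]
the two copies of $(x_0,x_2)$ cancelling, so $\atom{\sigma}^0_1 = (x_0,x_3)$ is \emph{not} the sum of the odd faces of the summands. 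An induction whose hypothesis concerns only the bottom of the tower therefore cannot be run: to compute $d(\atom{\sigma}^0_k)_-$ you must know exactly which faces occur in $\atom{\sigma}^{\eps}_k$ at every level, and that set is governed by a nontrivial combinatorial rule --- Street's explicit description of the sources and targets of the oriental atoms, whose verification is precisely the content of Steiner's treatment of the simplexes in~\cite{Steiner1} (or can be extracted from the join theory of~\cite{Joint}). Until you supply that strengthened hypothesis, unitality is asserted, not proved. A second, fixable misstep: you invoke Corollary~\ref{coro:suboriental} to identify the subcomplex spanned by the faces of $\sigma$ with $\cC(\Deltan{p})$, but that corollary concerns the hom-$\infty$-categories of the orientals $\On{E} = \nu\cC\SN(E)$, which the paper constructs only \emph{after}, and on the strength of, the present theorem; using it here is circular. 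All you need is the elementary remark that a strictly increasing map $\Deltan{p} \to E$ induces a morphism of augmented directed complexes carrying basis elements injectively to basis elements --- state that instead.

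Everything else checks out, and differs genuinely from the paper, whose entire proof is a citation of Theorem~8.6 of~\cite{AraMaltsiCondE} via the identification of $\SN(E)$ with $k^*(E, \xi E)$. Your strong loop-freeness argument is correct as written and is the valuable part: the generating relations of $\leN$ are exactly $\partial_k\tau \leN \tau$ for $k$ odd and $\tau \leN \partial_k\tau$ for $k$ even; the least vertex is nondecreasing along these, strictly so precisely for the even relation with $k = 0$; hence along a cycle the minimum $m$ is constant, no step deletes it, the chain $(m)$ admits no incoming relation from a min-$m$ chain, and deleting $m$ shifts all face indices down by one, flipping parities, reversing the cycle, and dropping the maximal vertex count by exactly one --- and your degree bookkeeping correctly keeps every contracted chain nonempty, so the descent is airtight. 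With unitality repaired (for instance by citing Steiner's computation for $\Deltan{p}$, since your reduction to the standard simplex is otherwise sound), your proposal yields an elementary, self-contained proof where the paper delegates everything to the literature.
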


\begin{proof}
	This follows immediately from Theorem~8.6 of~\cite{AraMaltsiCondE}.
	Indeed, using the notation of~\loccit, for any poset $E$ the simplicial set
	$\SN(E)$ is canonically isomorphic to $k^*(E, \xi E)$
	(cf.~paragraph~8.4 of~\cite{AraMaltsiCondE}).
\end{proof}

\begin{paragr}\label{paragr:oriental_poset}
	The previous theorem shows
	that the \oo-category $\nu \cC \SN(E)$ is a strong Steiner category
	and it is more precisely freely generated by the atoms
	\[
	\atom{x_0 x_1 \dots x_p}\ , \quad p\ge 0\text{ and } x_0 < x_1 < \dots < x_p\,.
	\]
	This \oo-category associated to the poset $E$ shall be called the \emph{oriental of $E$}
	and shall be denoted by $\On{E}$.
\end{paragr}

\begin{lemme}
	The functor $\Or \colon \Ord \to \ooCat$ associating to any
	poset $E$ its oriental \oo-category $\On{E}$ preserves monomorphisms.
\end{lemme}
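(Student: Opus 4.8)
We must show that if $u \colon E \to E'$ is an injective morphism of posets, then the induced $\infty$-functor $\On{u} \colon \On{E} \to \On{E'}$ is a monomorphism in $\ooCat$.

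The plan is to exploit the Steiner-theoretic description of the orientals established right above. Since $\On{E} = \nu\,\cC\,\SN(E)$ and $\On{E'} = \nu\,\cC\,\SN(E')$ are strong Steiner categories, and since $\nu$ is fully faithful on Steiner complexes by Theorem~\ref{thm:equivalence_Steiner}, it suffices to understand $\On{u}$ at the level of augmented directed complexes. First I would observe that an injection $u \colon E \to E'$ of posets induces an injection on nerves $\SN(u) \colon \SN(E) \to \SN(E')$: a non-degenerate $p$-simplex of $\SN(E)$ is a strictly increasing tuple $(x_0, \dots, x_p)$, and since $u$ is injective and order-preserving, $(u(x_0), \dots, u(x_p))$ is again strictly increasing, so $\SN(u)$ carries distinct non-degenerate simplices to distinct non-degenerate simplices. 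Hence $\cC\,\SN(u) \colon \cC\,\SN(E) \to \cC\,\SN(E')$ sends the basis element $\atom{x_0 \dots x_p}$ to the basis element $\atom{u(x_0) \dots u(x_p)}$, injectively on each graded basis.

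The key step is then to pass from injectivity at the level of augmented directed complexes to monomorphy at the level of $\infty$-categories. Here I would use the explicit matrix description of $\nu$ from paragraph~\ref{def:nu}: a $p$-cell of $\On{E} = \nu\,\cC\,\SN(E)$ is a matrix
\[
\tabld{x}{p}
\]
with entries in the positivity submonoids $(\cC\,\SN(E))^*_k$, and $\On{u}$ acts entrywise via $\cC\,\SN(u)$. Since $\cC\,\SN(u)$ is injective on each $\Z$-module $(\cC\,\SN(E))_k$ (being injective on basis elements, which are linearly independent), the induced map on matrices is injective, so $\On{u}$ is injective on $p$-cells for every $p$. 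A morphism of $\infty$-categories that is injective on cells in every dimension is a monomorphism in $\ooCat$, which yields the claim.

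The main obstacle I anticipate is making precise that $\cC\,\SN(u)$ is genuinely injective as a map of graded abelian groups, not merely on basis elements: one must confirm that $\cC\,\SN(u)$ maps the basis $\nd{\SN(E)}{p}$ injectively into the \emph{basis} $\nd{\SN(E')}{p}$ of the target (rather than merely into the module), so that the image of a nonzero integral combination is again a nonzero combination of distinct basis elements. This follows cleanly from the injectivity of $\SN(u)$ on non-degenerate simplices noted above, together with the fact that a basis map which is injective on generators extends to an injective linear map between free modules. The remaining verification—that entrywise injectivity of the matrix assignment gives injectivity of $\On{u}$ on cells, and that cellwise injectivity in all dimensions characterises monomorphisms in $\ooCat$—is routine given the enriched-category description of $\infty$-categories in Appendix~\ref{app:higher_cats}, so I would not belabour it.
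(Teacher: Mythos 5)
Your proof is correct, but it is not the route the paper takes: the paper disposes of this lemma with a one-line citation to Proposition~9.6 of~\cite{AraMaltsiCondE}, whereas you unfold a self-contained argument from the Steiner machinery set up in the appendices. Your chain of reductions is sound. Monomorphisms in $\Ord$ are the injective order-preserving maps; such a map sends strictly increasing tuples to strictly increasing tuples, injectively, so $\SN(u)$ carries non-degenerate simplices to non-degenerate simplices without collisions. The one point worth making fully explicit is the one you flag yourself: for a general simplicial map, the induced morphism on the complexes $\cC$ sends a non-degenerate simplex to $0$ when its image is degenerate, so the preservation of non-degeneracy is precisely what guarantees that $\cC\,\SN(u)$ maps the basis of $\cC\,\SN(E)$ into the basis of $\cC\,\SN(E')$, whence injectivity on each free abelian group $(\cC\,\SN(E))_k$. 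Since the paper's own definition of the oriental of a poset is $\On{E} = \nu\,\cC\,\SN(E)$ and $\nu$ acts entrywise on the matrix description of cells, injectivity in each degree of the complex yields injectivity of $\On{u}$ on cells in every dimension; and because $\Fl_i$ is corepresented by $\Dn{i}$, cellwise injectivity in all dimensions is equivalent to being a monomorphism in $\ooCat$ (evaluate a pair of equalised $\infty$-functors on each cell), so the final step is indeed routine. What your route buys is independence from the external reference, using only the facts already recorded in the appendix; what the citation buys is brevity and the greater generality of the statement in that paper, where the functor $\Or$ is treated in the setting of ordered simplicial complexes rather than posets only.
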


\begin{proof}
	This is Proposition 9.6 of \cite{AraMaltsiCondE}.
\end{proof}

We report here an important result of~\cite{AraMaltsiCondE} giving
a very explicit description of the ``horizontal composition'' of cells
of the orientals.

\begin{prop}\label{prop:2-cells_orientals} 
	Let $n \ge 1$, $m\ge 1$ and $i_0,i_1\dots,i_m$ be integers
	such that
	\[0=i_0<i_1<\cdots<i_{m-1}<i_m=n\,.\]
	Then, the \oo-functor
	\[\textstyle\prod\limits^m_{k=1}\Hom_{\On{n}}(a_k,b_k)\longrightarrow\Hom_{\On{n}}(a,b)\ ,\]
	where
	\[
	a_k=
	\begin{pmatrix}
	(i_{k-1})
	&(i_{k-1},i_k)
	\cr
	\noalign{\vskip 3pt}
	(i_k)
	&(i_{k-1},i_k)
	\end{pmatrix}\,,\quad
	b_k=
	\begin{pmatrix}
	(i_{k-1})
	&\textstyle\sum\limits_{i_{k-1}<l\leq i_k}(l-1,l)
	\cr
	\noalign{\vskip 3pt}
	(i_k)
	&\sum\limits_{i_{k-1}<l\leq i_k}(l-1,l)
	\end{pmatrix}\,,\ \ 1\leq k\leq m\ ,
	\]
	\[\kern -75pt
	a=
	\begin{pmatrix}
	(0)
	&\textstyle\sum\limits^m_{k=1}\kern -3pt(i_{k-1},i_k)
	\cr
	\noalign{\vskip 3pt}
	(n)
	&\sum\limits^m_{k=1}\kern -3pt(i_{k-1},i_k)
	\end{pmatrix}\,,\quad
	b=
	\begin{pmatrix}
	(0)
	&\sum\limits^n_{l=1}(l-1,l)
	\cr
	\noalign{\vskip 3pt}
	(n)
	&\sum\limits^n_{l=1}(l-1,l)
	\end{pmatrix}\,,
	\]
	defined by the ``horizontal composition'' $\comp_0$ of $\On{n}$
	\[(x_1,x_2,\dots,x_m)\longmapsto x_1\comp_0x_2\comp_{0}\cdots\comp_{0}x_m\]
	is an isomorphism of \oo-categories.
\end{prop}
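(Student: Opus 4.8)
The plan is to prove the statement by working entirely inside Steiner's matrix description of the oriental $\On{n}=\nu\cC\SN(\Deltan{n})$ and showing that the horizontal-composition functor is bijective on cells in every dimension; since an $\infty$-functor that is bijective on all cells is automatically an isomorphism (its inverse on cells preserves the operations because the functor does), this suffices. Rather than passing through Steiner's equivalence (Theorem~\ref{thm:equivalence_Steiner}), which would force me to identify the categorical product in $\Cda$, I would work directly on cells. Recall from paragraph~\ref{def:nu} that a $p$-cell of $\On{n}$ is a matrix whose entries $x^\eps_j$ lie in the positivity submonoids of $\cC\SN(\Deltan{n})$, and that $\comp_0$ is given by addition of all columns of index $\ge 1$ (the index-$0$ column recording the outer $0$-source $\atom{0}$ and $0$-target $\atom{n}$). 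Thus the functor of the statement sends a tuple $(x_1,\dots,x_m)$, where $x_k$ is a cell of $\Hom_{\On{n}}(a_k,b_k)$ viewed as a cell of $\On{n}$ from $\atom{i_{k-1}}$ to $\atom{i_k}$, to the cell whose entries in each degree $\ge 1$ are $\sum_k (x_k)^\eps_j$. Everything then reduces to a statement about the positivity monoids of $\cC\SN(\Deltan{n})$, once the fixed low-dimensional columns are accounted for.

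Next I would make the two hom-$\infty$-categories explicit. A $j$-cell of $\Hom_{\On{n}}(a,b)$ is a $(j+2)$-cell of $\On{n}$ whose $1$-source and $1$-target are fixed to the $1$-dimensional data of $a$ and $b$, namely $x^0_1=\sum_k(i_{k-1},i_k)$ and $x^1_1=\sum_{l=1}^{n}(l-1,l)$, the remaining columns $x^\eps_j$ with $j\ge 2$ being free subject to the boundary, positivity and augmentation conditions; likewise a $j$-cell of $\Hom_{\On{n}}(a_k,b_k)$ has $x^0_1=(i_{k-1},i_k)$ and $x^1_1=\sum_{i_{k-1}<l\le i_k}(l-1,l)$. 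As the functor is entrywise addition, the statement is equivalent to the assertion that the higher entries of every cell of $\Hom_{\On{n}}(a,b)$ decompose \emph{uniquely} as a sum $\sum_k$ of higher entries coming from cells of the $\Hom_{\On{n}}(a_k,b_k)$.

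The heart of the matter, and the step I expect to be the main obstacle, is the following \emph{localization lemma}: any atom $\atom{j_0\cdots j_q}$ with $q\ge 2$ occurring in the support of some entry $x^\eps_j$ of a cell $x$ of $\Hom_{\On{n}}(a,b)$ has all of its indices contained in a single interval $[i_{k-1},i_k]$. I would establish this using that $\cC\SN(\Deltan{n})$ is a strong Steiner complex: the fixed $1$-source $x^0_1=\sum_k(i_{k-1},i_k)$ pins down the minimal atoms against which the higher atoms must be compatible, and the strong-loop-free order $\leN$ propagates this localization up all dimensions through the recursive source/target formulae $\atom{x}^\eps_\bullet$. Equivalently, this says that $x$ factors through the sub-\oo-category $\coprod_k \On{[i_{k-1},i_k]}\hookrightarrow\On{n}$ cut out by the interval subposets, using that $\Or$ preserves monomorphisms together with the sub-oriental isomorphisms of Corollary~\ref{coro:suboriental}.

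Granting localization, the inverse is built by restriction of supports: given a cell $x$ of $\Hom_{\On{n}}(a,b)$, let $x_k$ keep only the atoms whose indices lie in $[i_{k-1},i_k]$, together with the fixed low columns $(i_{k-1})/(i_k)$, $(i_{k-1},i_k)$ and $\sum_{i_{k-1}<l\le i_k}(l-1,l)$. Because the differential of an interval-localized atom remains supported in the same interval, each $x_k$ satisfies the cell axioms and defines a cell of $\Hom_{\On{n}}(a_k,b_k)$, and by construction $x=x_1\comp_0\cdots\comp_0 x_m$. Disjointness of the interval supports yields uniqueness, hence injectivity, while surjectivity is the construction itself; so the functor is a bijection on cells in every dimension and therefore an isomorphism of $\infty$-categories. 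The only genuinely delicate verifications are the localization lemma and the interval-by-interval splitting of the boundary and augmentation conditions; both are combinatorial facts about $\cC\SN(\Deltan{n})$ that I would settle by induction on dimension from the strong Steiner structure, appealing to Proposition~A.4 and the sub-oriental machinery of~\cite{AraMaltsiCondE}.
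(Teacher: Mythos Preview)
The paper does not prove this proposition: its ``proof'' is the single sentence ``This is Proposition~A.4 of~\cite{AraMaltsiCondE}.'' So there is nothing to compare your argument against here, and what you have written is in fact a sketch of how one would actually establish the result rather than an alternative to the paper's reasoning.

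Your overall strategy is sound. Working in the matrix model of $\nu\cC\SN(\Deltan{n})$, identifying $\comp_0$ with columnwise addition in degrees~$\ge 1$, and reducing the statement to a unique-decomposition property of positive chains is exactly the right shape of argument. The ``localization lemma''---that every basis element appearing in a higher entry of a cell of $\Hom_{\On{n}}(a,b)$ has all its indices in a single block $[i_{k-1},i_k]$---is indeed the crux, and once it is in hand the inverse-by-restriction construction you describe goes through cleanly, since the differential of a block-localized atom stays in the same block and the blocks overlap only at the endpoints~$i_k$.

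There is, however, a genuine circularity at the end of your sketch. You close by saying you would ``settle [these facts] by induction on dimension from the strong Steiner structure, appealing to Proposition~A.4 and the sub-oriental machinery of~\cite{AraMaltsiCondE}.'' But Proposition~A.4 of~\cite{AraMaltsiCondE} \emph{is} the statement you are proving, and Corollary~\ref{coro:suboriental} in this paper is derived (via the intermediate corollary) from the ``preceding two propositions'', one of which is precisely the present one. So neither can be invoked here. The localization lemma must be established directly from the combinatorics of $\cC\SN(\Deltan{n})$---for instance, by taking a non-localized atom $\sigma=(j_0,\dots,j_q)$ of minimal degree in some $x^\eps_j$, choosing $k$ with $j_0<i_k<j_q$, and tracking the face of $\sigma$ containing the edge across $i_k$ through the identity $d(x^\eps_j)=x^1_{j-1}-x^0_{j-1}$ down to the fixed $1$-columns to reach a contradiction. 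Your allusion to the order~$\leN$ points in this direction but stops short of an argument.

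A smaller point: the phrase ``factors through the sub-\oo-category $\coprod_k \On{[i_{k-1},i_k]}\hookrightarrow\On{n}$'' is imprecise---the relevant object is not a disjoint union but the iterated amalgam along the shared endpoints $\atom{i_k}$, and showing that the inclusion of this amalgam into $\On{n}$ is fully faithful on the homs in question is precisely the content of the localization lemma, not a tool you can use to deduce it.
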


\begin{proof}
	This is Proposition~A.4 of~\cite{AraMaltsiCondE}.
\end{proof}

In section~\ref{sec:tilde} we shall need few more properties
of the hom-\oo-categories of the oriental $\On{E}$ of a poset~$E$
that are proven in~\cite{AraMaltsiCondE}.

\begin{prop}
	Let $E$ be a poset and $s \colon \Deltan{n} \to E$ a non-degenerate $n$-simplex
	of $N(E)$, with $n>0$. Consider the $1$-cell $S$ of $\On{E}$ defined by
	\[
	S = \sum_{i=0}^{n-1} \atom{s_i, s_{i+1}}\,.
	\]
	Then the \oo-functor $\On{s} \colon \On{n} \to \On{E}$ induces an isomorphism
	\[
	\Homi_{\On{n}} (\atom{0, n}, \atom{0, 1} + \dots + \atom{n-1, n}) \to \Homi_{\On{E}}(\atom{s_0, s_n}, S)
	\]
	of \oo-categories.
\end{prop}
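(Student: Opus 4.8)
The plan is to realise $\On{s}$ as the image under Steiner's functor $\nu$ of a monomorphism of strong Steiner complexes coming from a \emph{sub-basis}, deduce injectivity from the Lemma that $\Or$ preserves monomorphisms, and then obtain surjectivity on cells by a support-confinement argument based on the strong loop-free order $\leN$. First I would record the bookkeeping: both $\On{n} = \nu\cC\SN(\Deltan{n})$ and $\On{E} = \nu\cC\SN(E)$ are strong Steiner \oo-categories freely generated by their atoms, and $\On{s} = \nu\bigl(\cC\SN(s)\bigr)$, where $\cC\SN(s)$ sends an atom $\atom{j_0\,j_1\cdots j_p}$ to $\atom{s_{j_0}\,s_{j_1}\cdots s_{j_p}}$ and acts entrywise on the matrices describing cells. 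In particular $\On{s}(\atom{0,n}) = \atom{s_0, s_n}$, and since $\On{s}$ is an \oo-functor it preserves the composition $\comp_0$, so $\On{s}(\atom{0,1}+\dots+\atom{n-1,n}) = \atom{s_0, s_1}+\dots+\atom{s_{n-1}, s_n} = S$. As $\On{s}$ also preserves $1$-sources and $1$-targets, it restricts to an \oo-functor
\[
 F \colon \Homi_{\On{n}}\bigl(\atom{0,n},\ \atom{0,1}+\dots+\atom{n-1,n}\bigr)
 \longrightarrow \Homi_{\On{E}}(\atom{s_0,s_n}, S)\,,
\]
and the proposition amounts to proving that $F$ is an isomorphism.

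Injectivity is immediate: the simplex $s$ is non-degenerate, hence strictly increasing and in particular an injective map of posets, so by the Lemma asserting that $\Or$ preserves monomorphisms the \oo-functor $\On{s}$ is a monomorphism in $\ooCat$ and is therefore injective on $k$-cells for every $k$; thus $F$ is injective on cells. For surjectivity — the crux — I would introduce the sub-augmented directed complex $D \subseteq \cC\SN(E)$ whose basis consists of the atoms $\atom{x_0\cdots x_p}$ all of whose vertices lie in $\{s_0,\dots,s_n\}$. This $D$ is closed under the differential, is a strong Steiner complex with this sub-basis, and is exactly the image of $\cC\SN(s)$; consequently $\nu(D) = \On{s}(\On{n})$, and a cell of $\On{E}$ lies in $\nu(D)$ precisely when every entry of its defining matrix is supported on the basis of $D$. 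So it suffices to show that any cell $z$ of $\Homi_{\On{E}}(\atom{s_0,s_n}, S)$, with entries $z^{\eps}_k$, satisfies $\supp(z^{\eps}_k)\subseteq D$ for all $k$. From the matrix description one reads off $z^0_0 = (s_0)$, $z^1_0 = (s_n)$, $z^0_1 = \atom{s_0,s_n}$ and $z^1_1 = S$, which settles $k\le 1$.

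The inductive step ($k\ge 2$), using $d(z^{\eps}_k) = z^1_{k-1} - z^0_{k-1}$, is where the main difficulty lies. Assume some atom $b_0 = \atom{x_0\cdots x_k}$ in $\supp(z^{\eps}_k)$ contains a ``bad'' vertex outside $\{s_0,\dots,s_n\}$, and choose $b_0$ to be $\leN$-maximal among all such bad atoms (possible since the support is finite and $\leN$ is a partial order by strong loop-freeness). Because $k\ge 2$ there are at least two even indices in $\{0,\dots,k\}$, so one can delete an even-index vertex other than the bad one, producing a bad face $f \in \supp(d(b_0)_+)$. Since $d(z^{\eps}_k)$ is supported on $D$ by the inductive hypothesis, the coefficient of $f$ in $d(z^{\eps}_k)$ must vanish; as all coefficients of $z^{\eps}_k$ are non-negative and that of $b_0$ is positive, there must exist another atom $b'\neq b_0$ with positive coefficient and $f \in \supp(d(b')_-)$. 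The atom $b'$ is then bad too (it has the bad face $f$), and the defining clauses of $\leN$ give $b_0 \leN f \leN b'$, hence $b_0 \leN b'$ with $b'$ bad and $b'\neq b_0$, contradicting the maximality of $b_0$. Therefore no bad vertex occurs, $\supp(z^{\eps}_k)\subseteq D$, and $z\in\nu(D)$. Finally, the isomorphism $\cC\SN(\Deltan{n})\cong D$ together with the full faithfulness of $\nu$ on Steiner complexes yields a unique cell $w$ of $\On{n}$ with $\On{s}(w)=z$; applying injectivity of $\On{s}$ to the identities $\On{s}(\,\text{$1$-source of }w\,)=\atom{s_0,s_n}=\On{s}(\atom{0,n})$ and $\On{s}(\,\text{$1$-target of }w\,)=S=\On{s}(\atom{0,1}+\dots+\atom{n-1,n})$ shows $w$ lies in the source hom-\oo-category and $F(w)=z$. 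Thus $F$ is bijective on cells and hence an isomorphism of \oo-categories. The only genuinely non-formal ingredient is the support-confinement step ruling out, via strong loop-freeness, spurious vertices that might otherwise cancel in the boundary; everything else reduces to Steiner's equivalence and the already-established Lemma.
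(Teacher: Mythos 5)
Your argument is correct, but it takes a genuinely different route from the paper: the paper gives no combinatorial argument at all, it simply observes that the statement is a particular case of Proposition 1.5 of~\cite{AraMaltsiCondE}, after identifying $\On{n}$ with the \oo-category denoted $\Or(S)$ there. What you have produced is, in effect, a self-contained proof of that imported result in the case at hand, using only the appendix material: the factorisation $\On{s} = \nu(\cC\SN(s))$ with $\cC\SN(s)$ carrying basis to basis (valid because $s$ is strictly increasing, so non-degenerate simplices go to non-degenerate simplices), injectivity from the Lemma that $\Or$ preserves monomorphisms (note that the step ``mono in $\ooCat$ implies injective on $k$-cells'' rests on the corepresentability of $\Fl_k$ by $\Dn{k}$; alternatively, injectivity is immediate at the chain level since $\cC\SN(s)$ is injective on bases and the matrix entries determine the cell), and the support-confinement induction. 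That confinement step is the real content, and it is sound: the entries $z^\eps_k$ are \emph{positive} chains, so the coefficient $+1$ that a $\leN$-maximal bad atom $b_0$ contributes to a bad even-index face $f$ must be cancelled by some $b'$ in the support with $f\in\supp(d(b')_-)$; the two generating clauses of $\leN$ then give $b_0\leN f\leN b'$ with $b'\neq b_0$ bad, contradicting maximality because $\leN$ is antisymmetric --- exactly the strong loop-freeness of $\cC\SN(E)$ recorded in the appendix. Your even index $j\neq m$ exists precisely because $k\ge 2$, and the base of the induction ($k\le 1$) is pinned down by the hom conditions, so the induction closes. Two cosmetic points: in the matrix entries one should write the chains $(s_0, s_n)$ and $\sum_i (s_i, s_{i+1})$ rather than the cells $\atom{s_0, s_n}$ and $S$; and the identification of the image of $\nu(D)\to\On{E}$ with the cells all of whose entries are supported on the basis of $D$ silently uses that $D$ is stable under the differential and that its positivity monoids are the traces of those of $\cC\SN(E)$ --- both true, but worth a line. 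As for what each approach buys: the paper's citation is one line and defers the combinatorics to a reference where the statement is proved in the greater generality of ordered simplicial complexes, whereas your proof makes the proposition self-contained within the paper and exhibits explicitly the Steiner-theoretic mechanism (essentially the same one underlying the cited proposition).
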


\begin{proof}
	This is a particular case of Proposition 1.5 of \cite{AraMaltsiCondE},
	since it is clear that the \oo-categories $\On{n}$ and what they denote
	by $\Or(S)$ are canonically isomorphic.
\end{proof}

\begin{coro}
	Let $E$ be a poset and $s \colon \Deltan{n} \to E$ a non-degenerate $n$-simplex
	of $N(E)$, with $n>0$. Consider the $1$-cell $S$ of $\On{E}$ defined by
	\[
	S = \sum_{i=0}^{n-1} \atom{s_i, s_{i+1}}\,.
	\]
	Then for any $1$-cell
	\[
	f = \sum_{i=0}^{m-1} \atom{j_i, j_{i+1}}
	\]
	with $i_0 = 0$ and $i_m = n$ we have that
	the \oo-functor $\iota_s = \On{s} \colon \On{n} \to \On{E}$ induces an isomorphism
	\[
	\Homi_{\On{n}} (f, \atom{0, 1} + \dots + \atom{n-1, n}) \to \Homi_{\On{E}}(\iota_s(f), S)
	\]
	of \oo-categories.
\end{coro}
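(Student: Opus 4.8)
The plan is to deduce the subdivided-source case from the single-atom case already settled in the preceding Proposition, by cutting the $1$-cell $f$ along its breakpoints and using the horizontal-composition isomorphism. Write the breakpoints of $f$ as $0 = j_0 < j_1 < \dots < j_m = n$, so that $f = \sum_{k=1}^m \atom{j_{k-1}, j_k}$ and, since $s$ is non-degenerate (\ie strictly increasing), $\On{s}(f) = \sum_{k=1}^m \atom{s_{j_{k-1}}, s_{j_k}}$. For $1 \le k \le m$ set $n_k = j_k - j_{k-1}$, let $s^{(k)} \colon \Deltan{n_k} \to E$ be the non-degenerate sub-simplex $p \mapsto s_{j_{k-1}+p}$, and let $S^{(k)} = \sum_{j_{k-1} < l \le j_k} \atom{s_{l-1}, s_l}$ be the corresponding segment of the spine, so that $S = S^{(1)} \comp_0 \dots \comp_0 S^{(m)}$ and $\On{s}(f) = \atom{s_{j_0}, s_{j_1}} \comp_0 \dots \comp_0 \atom{s_{j_{m-1}}, s_{j_m}}$.

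First I would record the segment-wise isomorphisms. Applying Corollary~\ref{coro:suboriental} to the inclusion $\Deltan{n_k} \to \Deltan{n}$, $p \mapsto j_{k-1}+p$, identifies $\Homi_{\On{n}}(\atom{j_{k-1}, j_k}, S^{(k)})$ with $\Homi_{\On{n_k}}(\atom{0, n_k}, \atom{0,1} + \dots + \atom{n_k-1, n_k})$, while the preceding Proposition applied to $s^{(k)}$ identifies the latter with $\Homi_{\On{E}}(\atom{s_{j_{k-1}}, s_{j_k}}, S^{(k)})$; since $s^{(k)}$ is the composite of $s$ with this face, the resulting isomorphism is induced by $\On{s}$. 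Thus $\On{s}$ restricts to an isomorphism on each segment, so that the induced map $\prod_{k=1}^m \On{s}$ is an isomorphism. Next I would assemble these in the square
\[
\begin{tikzcd}[column sep=large]
\prod_{k} \Homi_{\On{n}}(\atom{j_{k-1}, j_k}, S^{(k)}) \ar[r, "\comp_0"] \ar[d, "\prod \On{s}"'] &
\Homi_{\On{n}}(f, c) \ar[d, "\On{s}"] \\
\prod_{k} \Homi_{\On{E}}\bigl(\On{s}\atom{j_{k-1}, j_k}, S^{(k)}\bigr) \ar[r, "\comp_0"] &
\Homi_{\On{E}}(\On{s}(f), S)
\end{tikzcd}
\]
with $c = \atom{0,1} + \dots + \atom{n-1,n}$; it commutes because $\On{s}$ is an \oo-functor and hence preserves $\comp_0$. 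The top horizontal map is an isomorphism by Proposition~\ref{prop:2-cells_orientals}, and the left vertical map is an isomorphism by the preceding step. Consequently the map $\On{s}$ we are after is an isomorphism as soon as the bottom horizontal map is.

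The main obstacle is therefore exactly the bottom map, \ie the statement that horizontal composition along the partition $s_{j_0} < \dots < s_{j_m}$ yields an isomorphism $\prod_k \Homi_{\On{E}}(\atom{s_{j_{k-1}}, s_{j_k}}, S^{(k)}) \to \Homi_{\On{E}}(\On{s}(f), S)$ in the oriental $\On{E}$. This is the same horizontal-composition phenomenon as Proposition~\ref{prop:2-cells_orientals}, now for $\On{E}$ in place of $\On{n}$: injectivity is immediate because $\On{s}$ is a monomorphism (orientals preserve monomorphisms, and $s$ is injective), and the essential content is surjectivity, namely that every cell of $\Homi_{\On{E}}(\On{s}(f), S)$ lies in the image of $\On{s}$. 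I would obtain this from the Steiner description of $\On{E} = \nu\cC N(E)$, which is a strong Steiner \oo-category: because the target $S$ is the maximal spine of the chain $\{s_0 < \dots < s_n\} = \mathrm{im}(s)$, the loop-free support structure of the basis forces any cell with source $\On{s}(f)$ and target $S$ to be supported on simplices whose vertices all lie in $\mathrm{im}(s)$, hence to come from the sub-oriental $\On{\mathrm{im}(s)} \cong \On{n}$, where Proposition~\ref{prop:2-cells_orientals} applies verbatim. Equivalently, this is the natural generalisation of the preceding Proposition from single-atom sources to arbitrary subdivided sources, and verifying this support confinement is the one step that does not follow formally from the decomposition alone.
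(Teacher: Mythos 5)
Your skeleton is the right one, and the square you draw is essentially how the statement relates to the two preceding results; but there are two problems, one repairable and one not. The repairable one: you may not invoke Corollary~\ref{coro:suboriental} here, since in the paper's logical order that corollary comes \emph{after} the present statement and is proved \emph{from} it (its proof presents the target $g$ by a simplex $\bar g$ and applies this very corollary). Luckily the instance you need does not require it: the face inclusion $\delta_k \colon \Deltan{n_k} \to \Deltan{n}$, $p \mapsto j_{k-1}+p$, is itself a non-degenerate simplex of the poset $\Deltan{n}$, so the preceding Proposition applied to $\delta_k$ identifies $\Homi_{\On{n_k}}(\atom{0,n_k}, \atom{0,1}+\dots+\atom{n_k-1,n_k})$ with $\Homi_{\On{n}}(\atom{j_{k-1},j_k}, S^{(k)})$, and applied to $s\circ\delta_k$ it identifies the same source with $\Homi_{\On{E}}(\atom{s_{j_{k-1}},s_{j_k}}, S^{(k)})$; functoriality of $\Or$ then makes your left vertical map an isomorphism, using only the Proposition. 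With the top map handled by Proposition~\ref{prop:2-cells_orientals}, your reduction of the corollary to the bottom horizontal map is correct.

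The substantive gap is exactly the step you flag: surjectivity of the bottom composition in $\On{E}$, i.e.\ the claim that every cell of $\Homi_{\On{E}}(\iota_s(f), S)$ is supported on simplices whose vertices lie in $\mathrm{im}(s)$. Your justification --- that ``the loop-free support structure of the basis forces'' this confinement --- is not a proof. Loop-freeness is a condition on the preorders $\le_i$ on the basis and does not by itself exclude a positive chain $x_k$ whose support leaves $\mathrm{im}(s)$: a basis element outside the chain can appear in $d(x_k)$ with opposite signs coming from different simplices, so cancellation among strictly positive coefficients is possible a priori, and ruling it out requires a genuine combinatorial induction over all levels $k \ge 2$ of the table of the cell, not just the $2$-chains. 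Worse, the confinement claim \emph{is} Corollary~\ref{coro:suboriental} for the inclusion $\mathrm{im}(s) \hookto E$, so appealing to it is circular in the same way as before, and this time irreparably so within your argument. For comparison, the paper does not prove this step either: its proof of the corollary is a citation, observing that the statement is an equivalent formulation of Proposition~1.5 of~\cite{AraMaltsiCondE} (of which the preceding Proposition is the single-atom case), and it is that external result that carries the combinatorial weight you are trying to recreate. As written, your proposal correctly reduces the corollary to this one claim but leaves it unproved.
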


\begin{proof}
	This is an equivalent formulation of Proposition 1.5 of~\cite{AraMaltsiCondE}
	and follows immediately from the preceding two propositions.
\end{proof}

\begin{coro}\label{coro:suboriental}
	Let $j \colon E \hookto F$ an inclusion of posets. Then for any two parallel
	$1$-cells $f$ and $g$ of $E$, the \oo-functor $\iota_j = \On{j} \colon \On{E} \to \On{F}$
	induces an isomorphism
	\[
	\Homi_{\On{E}}(f, g) \to \Homi_{\On{F}}(\iota_j(f), \iota_j(g))
	\]
	of \oo-categories.
\end{coro}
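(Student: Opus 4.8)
The plan is to reduce the corollary to the preceding one, applied to a single non-degenerate simplex $s$ of $\SN(E)$ and to its image $js$ in $\SN(F)$, the two being tied together by the functoriality of $\Or$.

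First I would unwind the free-generation description of $\On E = \nu\cC\SN(E)$ from paragraph~\ref{paragr:oriental_poset}: a $1$-cell of $\On E$ with $0$-source $a$ and $0$-target $b$ is a monotone chain $a = e_0 < e_1 < \dots < e_n = b$ of elements of $E$, encoded as the positive $1$-cell $\sum_{i=0}^{n-1}\atom{e_i, e_{i+1}}$, and two such cells $f, g$ are parallel exactly when they share their endpoints. I would then recall, from the combinatorics of the orientals (the characterization of $2$-cells, cf.\ Lemma~10.4 of~\cite{AraMaltsiCondE}), that $\Homi_{\On E}(f, g)$ is non-empty precisely when the chain of $f$ is a sub-chain of that of $g$, i.e.\ when $g$ refines $f$. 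Since $j$ is an inclusion of posets, $\iota_j$ carries the chain of $f$ (resp.\ $g$) to the chain with the same combinatorial pattern in $F$, and $j$ is injective and reflects the order; hence $g$ refines $f$ if and only if $\iota_j(g)$ refines $\iota_j(f)$. Consequently, when $g$ does not refine $f$ both hom-$\infty$-categories are empty and the induced map is vacuously an isomorphism, so I may assume that $g$ refines $f$.

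In that case, write the vertices of $g$ as $a = e_0 < \dots < e_n = b$ and let $s \colon \Deltan n \to E$ be the non-degenerate $n$-simplex $i \mapsto e_i$, so that $g = \sum_i \atom{s_i, s_{i+1}} = \iota_s(\ell)$, where $\ell = \atom{0,1} + \dots + \atom{n-1,n}$ is the maximal chain of $\On n$. Because $g$ refines $f$, the vertices of $f$ form a sub-chain, so there is a unique $1$-cell $f'$ of $\On n$ from $0$ to $n$ with $\iota_s(f') = f$. The composite $js \colon \Deltan n \to F$ is again non-degenerate, $j$ being a monomorphism, and functoriality of $\Or$ gives $\iota_{js} = \iota_j \circ \iota_s$ together with $\iota_{js}(f') = \iota_j(f)$ and $\iota_{js}(\ell) = \iota_j(g)$.

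Finally I would invoke the preceding corollary twice: applied to $s$ in $E$ it yields an isomorphism $\Homi_{\On n}(f', \ell) \xto{\sim} \Homi_{\On E}(f, g)$, and applied to $js$ in $F$ it yields an isomorphism $\Homi_{\On n}(f', \ell) \xto{\sim} \Homi_{\On F}(\iota_j f, \iota_j g)$. Since $\infty$-categories are enriched in $\infty$-categories, an $\infty$-functor induces $\infty$-functors on hom-$\infty$-categories and these compose; thus the two isomorphisms and the map $(\iota_j)_{f,g} \colon \Homi_{\On E}(f, g) \to \Homi_{\On F}(\iota_j f, \iota_j g)$ induced by $\iota_j$ fit into a commuting triangle
\[
\Homi_{\On n}(f', \ell) \xto{\sim} \Homi_{\On E}(f, g) \xto{(\iota_j)_{f, g}} \Homi_{\On F}(\iota_j f, \iota_j g),
\]
whose composite is the second isomorphism. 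Hence $(\iota_j)_{f,g}$ is the composite of an isomorphism with the inverse of an isomorphism, and is therefore itself an isomorphism, proving the corollary. The main obstacle is the second step: identifying an arbitrary parallel pair $(f,g)$ with data to which the preceding corollary literally applies—pinning down, via the refinement characterization, exactly when the hom-$\infty$-category is non-empty and checking that this property is preserved and reflected by the poset inclusion—together with the verification that the assignment of induced maps on hom-$\infty$-categories is genuinely functorial, so that the triangle commutes.
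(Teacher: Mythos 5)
Your proof is correct and takes essentially the same route as the paper, whose entire argument is to invoke the preceding corollary for the simplex $\bar{g} \colon \Deltan{n} \to E$ determined by writing $g = \sum_i \atom{\bar{g}_i, \bar{g}_{i+1}}$ (and, implicitly, for its image $j\bar{g}$ in $F$) and compose the two resulting isomorphisms with the hom-$\infty$-category of $\On{n}$. Your only addition is the explicit treatment of the case where the chain of $f$ is not a sub-chain of that of $g$, so that both hom-$\infty$-categories are empty — a point the paper's one-line proof leaves tacit, and which you settle correctly using the injectivity of $j$.
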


\begin{proof}
	This follows immediately from the previous proposition by considering
	the simplex $\bar{g} \colon \Deltan{n} \to E$ of $N(E)$, where
	\[
	g = \sum_{i=0}^{n-1} \atom{\bar{g}_i, \bar{g}_{i+1}}\,. \qedhere
	\]
\end{proof}

\bibliographystyle{myamsplain}
\bibliography{biblio}

\ifx\undefined\bysame
\newcommand{\bysame}{\leavevmode\hbox to3em{\hrulefill}\,}
\fi
\begin{thebibliography}{10}

\bibitem{AraMaltsiNThom}
D.~Ara and G.~Maltsiniotis, {\em Vers une structure de catégorie de modèles
  à la {T}homason sur la catégorie des {$n$}-catégories strictes}, Advances
  in Mathematics {\bf 259} (2014), 557--654.

\bibitem{AraMaltsiCondE}
\bysame, {\em The homotopy type of the $\infty$-category associated to a
  simplicial complex},
  \href{http://arxiv.org/abs/1503.02720}{arxiv:1503.02720}, 2015, preprint.

\bibitem{Joint}
\bysame, {\em Joint et tranches pour les {$\infty$}-catégories strictes},
  2020, to appear in \emph{Mémoires de la SMF}.

\bibitem{FiniteSpaces}
J.~A. Barmak, {\em Algebraic topology of finite topological spaces and
  applications}, Lecture Notes in Mathematics, vol. 2032, Springer, 2011.

\bibitem{BergerNerve}
C.~Berger, {\em A cellular nerve for higher categories}, Adv. Math. {\bf 169}
  (2002), no.~1, 118--175.

\bibitem{BullejosCegarra}
M.~Bullejos and A.~M. Cegarra, {\em On the geometry of 2-categories and their
  classifying spaces}, $K$-Theory {\bf 29} (2003), no.~3, 211--229.

\bibitem{BullejosFaroBlanco}
M.~Bullejos, E.~Faro, and V.~Blanco, {\em A full and faithful nerve for
  2-categories}, Appl. Categ. Structures {\bf 13} (2005), no.~3, 223--233.

\bibitem{Cegarra}
A.~M. Cegarra, {\em Homotopy fiber sequences induced by 2-functors}, J. Pure
  Appl. Algebra {\bf 215} (2011), no.~4, 310--334.

\bibitem{chiche_homotopy}
J.~Chiche, {\em Th\'{e}ories homotopiques des 2-cat\'{e}gories}, Cah. Topol.
  G\'{e}om. Diff\'{e}r. Cat\'{e}g. {\bf 56} (2015), no.~1, 15--75.

\bibitem{CisinskiHigherCats}
D.-C. Cisinski, {\em Higher categories and homotopical algebra}, Cambridge
  Studies in Advanced Mathematics, vol. 180, Cambridge University Press,
  Cambridge, 2019.

\bibitem{delHoyo}
M.~L. del Hoyo, {\em On the subdivision of small categories}, Topology Appl.
  {\bf 155} (2008), 1189–1200.

\bibitem{del-Hoyo}
\bysame, {\em Espacios clasificantes de categor\'{i}as fibradas}, Ph.D. thesis,
  Universidad de Buenos Aires, 2009.

\bibitem{grothendieck_techniques_III}
A.~Grothendieck, {\em Techniques de construction et th\'{e}or\`emes d'existence
  en g\'{e}om\'{e}trie alg\'{e}brique. {III}. {P}r\'{e}schemas quotients},
  S\'{e}minaire {B}ourbaki, vol.~6, Soc. Math. France, Paris, 1995,
  pp.~Exp.~No.~212, 99--118.

\bibitem{GurskiCoherence}
N.~Gurski, {\em Coherence in three-dimensional category theory}, Cambridge
  Tracts in Mathematics, vol. 201, Cambridge University Press, Cambridge, 2013.

\bibitem{cotangent}
L.~Illusie, {\em Complexe cotangent et deformations {I} \& {II}}, Lecture Notes
  in Mathematics, vol. 239 and 283, Springer-Verlag, 1972.

\bibitem{JoyalDisks}
A.~Joyal, {\em Disks, duality and {$\Theta$}\hyp{}categories}, Preprint, 1997.

\bibitem{LackPaoli}
S.~Lack and S.~Paoli, {\em 2-nerves for bicategories}, $K$-Theory {\bf 38}
  (2008), no.~2, 153--175.

\bibitem{kerodon}
J.~Lurie, {\em Kerodon}, \url{https://kerodon.net}, 2018.

\bibitem{MakkaiZawadowskiDuality}
M.~Makkai and M.~Zawadowski, {\em Duality for simple {$\omega$}-categories and
  disks}, Theory Appl. Categ. {\bf 8} (2001), 114--243.

\bibitem{Oury}
D.~Oury, {\em On the duality between trees and disks}, Theory and Applications
  of Categories {\bf 24} (2010), no.~16, 418--450.

\bibitem{OzornovaRovelliDisk}
V.~Ozornova and M.~Rovelli, {\em The duskin nerve of 2-categories in joyal's
  disk category {$\Theta_2$}},
  \href{http://arxiv.org/abs/1910.06103}{arxiv:1910.06103}, 2019, preprint.

\bibitem{GordonPowerStreet}
G.~R., A.~Power, and R.~Street, {\em Coherence for tricategories}, 558 ed.,
  vol. 117, American Mathematical Society, 9 1995.

\bibitem{Steiner1}
R.~Steiner, {\em Omega-categories and chain complexes}, Homology, Homotopy and
  Applications {\bf 6} (2004), no.~1, 175--200.

\bibitem{Street}
R.~Street, {\em The algebra of oriented simplexes}, Journal of Pure and Applied
  Algebra {\bf 49} (1987), no.~3, 283--335.

\bibitem{StreetCatStructures}
\bysame, {\em Categorical structures}, Handbook of algebra, {V}ol.~1, Handb.
  Algebr., vol.~1, Elsevier/North-Holland, Amsterdam, 1996, pp.~529--577.

\bibitem{Conspectus}
\bysame, {\em An australian conspectus of higher categories}, Towards Higher
  Categories (J.~C. Baez and J.~P. May, eds.), The IMA Volumes in Mathematics
  and its Applications, vol. 152, Springer, 2009, pp.~237--264.

\bibitem{Cat_closed}
R.~W. Thomason, {\em Cat as a closed model category}, Cahiers de topologie et
  g{\'e}om{\'e}trie diff{\'e}rentielle cat{\'e}goriques {\bf 21} (1980), no.~3,
  305--324.

\end{thebibliography}

\printindex

\end{document}